\newcommand{\brd}[1]{\mathbb{#1}}
\newcommand{\R}{\brd{R}}
\newcommand{\N}{\brd{N}}
\newcommand{\eps}{\varepsilon}
\newcommand{\abs}[1]{\left\lvert {#1} \right\rvert}
\newcommand{\norm}[2]{\left\Vert {#1} \right\Vert_{#2}}
\newcommand{\loc}{{{\tiny{\mbox{loc}}}}}
\newcommand\ddfrac[2]{\frac{\displaystyle #1}{\displaystyle #2}}
\newtheorem{teo}{Theorem}[section]
\newtheorem{corollary}[teo]{Corollary}
\newtheorem{lemma}[teo]{Lemma}
\newtheorem{theorem}[teo]{Theorem}
\newtheorem{proposition}[teo]{Proposition}
\theoremstyle{definition}
\newtheorem{definition}[teo]{Definition}
\newtheorem{remark}[teo]{Remark}
\newcommand{\be}{\begin{equation}}
\newcommand{\ee}{\end{equation}}
\definecolor{navyblue2}{RGB}{0, 0, 128}
\definecolor{navyblue}{RGB}{0, 25, 111}
\definecolor{blus}{RGB}{0,102,204}
\definecolor{granata}{RGB}{140,29,28}%
\definecolor{red2}{RGB}{220,10,48}
\definecolor{greeny}{RGB}{34,55,58}
\begin{document}

\author[G. Tortone]{Giorgio Tortone}\thanks{}
\address{Giorgio Tortone \newline \indent Dipartimento di Matematica
	\newline\indent
	Alma Mater Studiorum Universit\`a di Bologna
	\newline\indent
	 Piazza di Porta San Donato 5, 40126 Bologna, Italy}
\email{giorgio.tortone@unibo.it}
\title[The nodal set of solutions to some nonlocal sublinear problems]{The nodal set of solutions to some nonlocal sublinear problems}
\date{\today}
\subjclass[2010] {
35J70, 
35R11, 
35B40, 
35B44, 
35B05, 
35R35, 
}

\keywords{Nodal set, sublinear equations, nonlocal diffusion, monotonicity formulas, stratification}

\thanks{Work partially supported by the ERC Advanced Grant 2013 n.~339958 COMPAT and by the GNAMPA project ``Esistenza e
proprietà qualitative per soluzioni di EDP non lineari ellittiche e paraboliche''.}

\maketitle
\begin{abstract}
We study the nodal set of solutions to equations of the form
$$
(-\Delta)^s u = \lambda_+ (u_+)^{q-1} - \lambda_- (u_-)^{q-1}\quad\text{in $B_1$},
$$
where $\lambda_+,\lambda_->0, q \in [1,2)$, and $u_+$ and $u_-$ are respectively
 the positive and negative part of $u$.
This collection of nonlinearities includes the unstable two-phase membrane problem $q=1$ as well as sublinear equations for $1<q<2$.\\
We initially prove the validity of the strong unique continuation property and the finiteness of the vanishing order, in order to implement a blow-up analysis of the nodal set.\\
As in the local case $s=1$, we prove that the admissible vanishing orders can not exceed the critical value $k_q= 2s/(2- q)$. Moreover, we study the regularity of the nodal set and we prove a stratification result. Ultimately, for those parameters such that $k_q< 1$, we prove a remarkable difference with the local case: solutions can only vanish with order $k_q$ and the problem admits one dimensional solutions.\\
Our approach is based on the validity of either a family of  Almgren-type or a 2-parameter family of Weiss-type
monotonicity formulas, according to the vanishing order of the solution.
\end{abstract}
\tableofcontents
\section{Introduction}
The analysis of the nodal set of solutions of elliptic equations has been the subject of an intense study in the last decades, starting from the works \cite{MR943927,MR1305956,MR1639155, MR1090434}, with a special focus on the measure theoretical features of its singular part.\\
These works provide a fairly complete picture of the geometric structure of the nodal set in the case of solutions of linear equations and they easily extend to a wide class of superlinear equations of type $-\Delta u = f(u)$, provided that the nonlinearity is locally Lipschitz continuous, that $f(0) = 0$ and that $u \in L^\infty_\loc$. From a geometric point of view,
the nodal set of a weak solution of class $C^1$ splits into a regular part, which is locally a $C^1$ graph, and a singular set which is a countable union of subsets of sufficiently smooth $(n-2)$-dimensional manifolds. Moreover these equations satisfy the strong unique continuation principle and the solutions vanish with finite integer order (see e.g. \cite{MR833393, MR882069, MR1090434}). A similar structure also holds under weaker assumptions, that is, for weak solutions of linear equations in divergence form with Lipschitz coefficients and bounded first and zero order terms (see \cite{MR1305956}).\\
Instead, the picture change drastically if we switch to  semi-linear elliptic equations with non-Lipschitz nonlinearities: given $q \in [1,2)$, let us consider for example the class of equations
\be\label{local}
-\Delta u =  \lambda_+ (u_+)^{q-1} - \lambda_- (u_-)^{q-1} \quad \mbox{in }B_1,
\ee
where $\lambda_+,\lambda_->0, q \in [1,2), B_1$ is the unit ball in $\R^n$ and $u_+=\max(u,0)$ and $u_-=\max(-u,0)$ are respectively the positive and negative part of $u$. Notice that the main feature of these equations stays in the fact that the right hand side is not locally Lipschitz continuous as function of $u$, and precisely has sublinear character for $q \in (1,2)$ and discontinuous behaviour for $q=1$.
It is well known in the literature that in the case $\lambda_+,\lambda_-\leq 0$, the features of the nodal set of
solutions are substantially different in comparison with the linear case since dead cores appear and no unique continuation can be expected.\\
However, in the unstable setting the solutions resembles some features of the linear case. Indeed, recently in \cite{soaveweth} have been proved the validity of the unique continuation principle for every $q \in [1,2)$ by controlling the oscillation of the Almgren-type frequency formula for solutions with a dead core. On the other hand, in \cite{MR3857504} has been shown that the strong unique continuation principle holds for every $q \in (1,2)$, with an alternative approach based on Carleman’s estimate: in both papers it has been emphasized that the standard aprroaches are not applicable in a standard way in the sublinear
and discontinuous cases and have to be considerably adjusted. Finally, in \cite{soavesublinear} the authors investigate the geometric properties of the nodal set and the local behaviour
of the solutions by proving the finiteness of the vanishing order at every point and by studying the regularity of the nodal set of any solution. More precisely, they show that the nodal set is a
locally finite collection of regular codimension one manifolds up to a residual singular set having Hausdorff dimension at most $(n-2)$.\\
Ultimately, the main features of the nodal set are strictly related to those of the solutions to linear (or superlinear) equations, with a remarkable difference: the admissible vanishing orders can not exceed the critical value $k_q=2/(2 - q)$. Moreover, at this threshold, they proved the non-validity of any estimates of the $(n - 1)$-dimensional measure of the nodal set of a solution in terms of the vanishing order.\\\\
The purpose of this paper is to study the structure of the nodal sets of solutions to
\be\label{ini}
(-\Delta)^s u = \lambda_+ (u_+)^{q-1} - \lambda_- (u_-)^{q-1}\quad\text{in $B_1$},
\ee
where $\lambda_+,\lambda_->0, q \in [1,2)$ and $s\in (0,1)$. This study is driven by the wish to extend the previous theory to the fractional setting emphasizing the possible difference between the two type of operators due to the nonlocal attitude of the equations. Starting from the problem of unique continuation, many result have been achieved in the study of the nodal set of solution of nonlocal elliptic equation, in particular by using local realisation of the fractional powers of the Laplacian based on the extension technique popularized by the authors in \cite{CS2007}. Also in this setting, the key tools in proving unique continuation in the linear case are based on the validity of an Almgren-type monotonicity formula (see e.g. \cite{fallfelli2,fallfelli1}), or Carleman estimates (see e.g. \cite{ruland1,ruland2}), which are not applicable in a standard way in our case.\\ In a slightly different direction,
researcher also analyzed the structure of the nodal sets from the geometric point of view by classifying the possible local behaviour of solution near their nodal set: recently in \cite{STT2020} the authors provided a stratification result for the nodal set of linear equation by applying a geometric-theoretic analysis of the nodal set of solutions
to degenerate or singular equations associated to the extension operator of the fractional Laplacian. In particular, they proved the existence of two stratified singular sets where the solution either resembles a classical harmonic function or a generic polynomial: in the first case, the stratification coincides with the one of the nodal set of solutions of local elliptic equations; in the second one a stratification still occurs but the bigger stratum is contained in a countable union of $(n-1)$-dimensional $C^{1,\alpha}$ manifolds, in contrast with the local case $s=1$ (see \cite[Section 8]{STT2020} for more detail in this direction).\\
On the other hand, the picture changes considerably in the case of solution with discontinuous nonlinearity, as in \eqref{ini}. Indeed, it is clear that in the case $\lambda_+,\lambda_-\leq 0$ (where the signs of the coefficients are opposite to ours), the features of the nodal set of solutions are substantially different in comparison with the linear case: dead cores appear
and no unique continuation can be expected. In those scenarios one may try to describe the structure and the regularity of the free boundary $\partial\{u=0\}$. When $q \in (1,2)$ we refer to \cite{yang, yijing} where the authors consider an Alt-Phillips type functional in the fractional setting for the case of non-negative solutions $u\geq 0$; while for the case $q=1$ the equation is the so called two phase obstacle problem and we refer to \cite{allenlindgrenpetro, allenpetro} and reference therein. Since in the fractional case minimisers of the two-phase obstacle problem do not change sign, we refer to \cite{MR4018099, silvestrepaper} for some general result in the one-phase setting.\\
In contrast, very little is known about the structure of the nodal sets in the case $\lambda_+,\lambda_->0$. In \cite{allengarcia} the authors considered the unstable two-phase obstacle problem $q=1$ and they proved that separation of phases does not occur in the unstable setting. Moreover, they characterized the local behaviour of minimisers near the free-boundary and they proved a bound on the Hausdorff dimension of the singular set.\\

In this paper we deal with the two phases problem \eqref{ini}, treating simultaneously the case $q = 1$, which we call unstable two phase membrane problem, in analogy with the local case, and
the case $q\in (1,2)$, a prototype of sublinear equation. Notice that our results extend the classification of blow-up limit obtained for local minimisers in \cite{allengarcia} to weak solution of \eqref{ini}.\vspace{0.35cm}\\
{\textbf{Statements of the main results. }}Exploiting the local realisation of the fractional Laplacian, through the paper we will consider solution of \eqref{ini} as a bounded weak-solution $u \in H^{1,a}_\loc(B^+_1)$ of the extend problem
\begin{equation}\label{system}
\begin{cases}
  L_a u=0 & \mbox{in } B^+_1 \\
  -\partial^a_y u = \lambda_+ (u_+)^{q-1} - \lambda_- (u_-)^{q-1} &\mbox{on } \partial^0 B^+_1,
\end{cases}
\end{equation}
where $a=1-2s \in (-1,1)$,
$$
L_a u = \mathrm{div}(y^a \nabla u), \quad \partial^a_y u(x,0) = \lim_{y \to 0^+}y^a \partial_y u(x,y)
$$
and
$$
B^+_r(X_0) = B_r(X_0) \cap \{y>0\}, \quad\partial^0 B^+_r(X_0)= B_r(X_0) \cap \{y=0\},
$$
where $B_r(X_0)$ denote the ball of center $X_0$ and radius $r$ in $\R^{n+1}$ (through the paper we will simply denote $B^+_r(0)$ with $B^+_r$). From now on, we simply write ``solution'' instead of ``weak solution'', for the sake of brevity. Through the paper we will always denote with $\Gamma(u) = \{(x,0)\colon u(x,0)=0\}$ the restriction of the nodal set of $u$ on $\{y=0\}$.\\
Inspired by \cite{soavesublinear}, we introduce two different notions of vanishing order, which will be proved a posteriori to be equal.
\begin{definition}
  Let $u \in H^{1,a}_\loc(B_1^+)$ be a solution of \eqref{system} and $X_0 \in \Gamma(u)$. The \emph{$H^{1,a}$-vanishing order} of $u$ at $X_0$ is defined as $\mathcal{O}(u,X_0) \in \R^+$, with the property that
  \begin{equation}\label{H1a.vanishing}
  \limsup_{r\to 0^+} \frac{1}{r^{2k}} \norm{u}{H^{1,a}(B_r(X_0))}^2 = \begin{cases}
      0, & \mbox{if } 0< k < \mathcal{O}(u,X_0) \\
      +\infty, & \mbox{if } k >\mathcal{O}(u,X_0).
    \end{cases}
  \end{equation}
  Moreover, if such number does not exist, i.e.
  $$
  \limsup_{r\to 0^+} \frac{1}{r^{2k}} \norm{u}{H^{1,a}(B_r(X_0))}^2 = 0 \quad\mbox{for any }k>0,
  $$
  we set $\mathcal{O}(u,X_0)=+\infty$.
\end{definition}
The advantage of this formulation relays in the fact that we have better control of both the behaviour of the trace of solutions on $\partial^0 B^+_1$ and the character of the solution in the whole extended space. Instead, we recall here the classical definition of vanishing order, which will be used as well through the paper.
\begin{definition}\label{nu}
Let $u \in H^{1,a}_\loc(B_1^+)$ be a solution of \eqref{system} and $X_0 \in \Gamma(u)$. The \emph{vanishing order} of $u$ at $X_0$ is defined as $\mathcal{V}(u,X_0) \in \R^+$, with the property that
  \begin{equation}\label{L2a.vanishing}
  \limsup_{r\to 0^+} \frac{1}{r^{n+a+2k}}\int_{\partial^+ B^+_r(X_0)}y^a u^2 = \begin{cases}
      0, & \mbox{if } 0< k < \mathcal{V}(u,X_0) \\
      +\infty, & \mbox{if } k >\mathcal{V}(u,X_0).
    \end{cases}
  \end{equation}
\end{definition}
By \eqref{poincare} we will easily deduce that $\mathcal{O}(u,X_0) \leq \mathcal{V}(u,X_0)$. The following result establishes the validity of the strong unique continuation principle for every $q \in [1,2), \lambda_+>0,\lambda_-\geq0$ and $s\in (0,1)$
\begin{theorem}\label{strong-unique}
  Let $q \in [1,2), \lambda_+>0,\lambda_-\geq 0$ and $u \in H^{1,a}_\loc(B_1^+)$ a solution of \eqref{system} such that $X_0 \in \Gamma(u)$. If $\mathcal{V}(u,X_0)=+\infty$, then necessarily $u \equiv 0$; in particular, if
for every $\beta>0$ we have
$$
\lim_{\abs{X-X_0}\to 0^+}\frac{\abs{u(x)}}{\abs{X-X_0}^\beta}=0,
$$
it follows that $u\equiv 0$.
\end{theorem}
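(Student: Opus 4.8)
The plan is to establish the strong unique continuation principle through an Almgren-type monotonicity formula adapted to the sublinear/discontinuous right-hand side, following the strategy in \cite{soaveweth, soavesublinear} but in the degenerate-weighted extension setting of \cite{STT2020, fallfelli1, fallfelli2}. First I would introduce, for $X_0 \in \Gamma(u)$ and small $r>0$, the weighted height and energy functionals
\be\label{plan.HD}
H(X_0,r) = \frac{1}{r^{n+a}} \int_{\partial^+ B^+_r(X_0)} y^a u^2\, d\sigma, \qquad
D(X_0,r) = \frac{1}{r^{n+a-1}} \left( \int_{B^+_r(X_0)} y^a \abs{\nabla u}^2\, dX - \int_{\partial^0 B^+_r(X_0)} \left( \lambda_+ (u_+)^q + \lambda_- (u_-)^q \right) \frac{2}{q}\, dx \right),
\ee
and the Almgren frequency $N(X_0,r) = D(X_0,r)/H(X_0,r)$. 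The key point, and the main obstacle, is that because the nonlinearity is only $C^{0,q-1}$ (or discontinuous when $q=1$), $N$ is \emph{not} monotone; instead one must show that $r \mapsto e^{Cr^{\gamma}} (N(X_0,r) + 1)$ is monotone nondecreasing for suitable $C, \gamma>0$, by carefully estimating the boundary term $\int_{\partial^0 B^+_r} \left( \lambda_+ (u_+)^q + \lambda_- (u_-)^q \right) dx$ against $H(X_0,r)$ and $D(X_0,r)$ using the sublinear scaling $q<2$ together with trace and Poincaré inequalities on $B^+_r$ (this is where the condition $q\in[1,2)$ enters decisively: the exponent $q$ produces a genuinely lower-order perturbation after rescaling). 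One must also handle the possible dead core $\{u=0\}$, which obstructs the naive computation $\frac{d}{dr}\log H(X_0,r) = \frac{2}{r} N(X_0,r)$: as in \cite{soaveweth} one controls the \emph{oscillation} of $N$ rather than exact monotonicity, or works with $\max\{N, k_q\}$.

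Granting the almost-monotonicity, the proof of Theorem~\ref{strong-unique} proceeds as follows. If $\mathcal{V}(u,X_0)=+\infty$, then by Definition~\ref{nu} we have $H(X_0,r) = o(r^{2k})$ as $r\to 0^+$ for every $k>0$, i.e. $\log H(X_0,r) / \log r \to +\infty$. On the other hand, from the differential inequality for $\log H(X_0,r)$ in terms of $N(X_0,r)$, together with the fact that $r\mapsto e^{Cr^\gamma}(N(X_0,r)+1)$ is monotone and hence has a finite limit $N(X_0,0^+) =: \gamma_0 \in [0,+\infty]$ as $r\to 0^+$, one obtains a two-sided bound: if $\gamma_0 < +\infty$ then $H(X_0,r) \geq c\, r^{2\gamma_0 + \eps}$ for all small $r$ and every $\eps>0$, contradicting $\mathcal{V}(u,X_0)=+\infty$; so necessarily $\gamma_0 = +\infty$. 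One then shows $\gamma_0=+\infty$ forces $u\equiv 0$ by a doubling-type argument: the a priori upper bound on admissible frequencies (the critical value $k_q = 2s/(2-q)$, which presumably is proven earlier or in parallel via a Weiss-type formula, and in any case $N(X_0,r)$ stays bounded on a fixed annulus by the energy of $u$) contradicts $N(X_0,0^+)=+\infty$ unless $H\equiv 0$ near $X_0$, hence $u\equiv 0$ in a neighborhood of $X_0$; a standard connectedness/continuation argument then propagates $u\equiv 0$ to all of $B_1^+$.

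For the second assertion, if $\abs{u(X)}/\abs{X-X_0}^\beta \to 0$ for every $\beta>0$, then in particular $\abs{u(X)} \leq C_\beta \abs{X-X_0}^\beta$ near $X_0$, so
\be\label{plan.Hbound}
H(X_0,r) = \frac{1}{r^{n+a}}\int_{\partial^+ B^+_r(X_0)} y^a u^2\, d\sigma \leq C_\beta^2\, r^{2\beta}
\ee
for every $\beta>0$, which is exactly the condition $\mathcal{V}(u,X_0)=+\infty$ from \eqref{L2a.vanishing}; hence the first part applies and $u\equiv 0$.

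I expect the main technical obstacle to be the derivation of the almost-monotonicity of the frequency in the presence of both the non-Lipschitz nonlinearity and the degenerate weight $y^a$: one must track, with the correct powers of $r$, how the sublinear boundary term behaves under the Rellich--Nečas (Pohozaev-type) identity for $L_a$, and separately handle $q=1$ where $(u_\pm)^{q-1}$ is merely $L^\infty$ — there the estimate $\int_{\partial^0 B^+_r} (\lambda_+ u_+ + \lambda_- u_-)\, dx \leq C r^{(n-1)/2} \left( \int_{\partial^0 B^+_r} u^2 \right)^{1/2}$ combined with a weighted trace inequality is needed to absorb the term into $H$ and $D$. The rest is bookkeeping along the lines already developed in \cite{soavesublinear} and \cite{STT2020}.
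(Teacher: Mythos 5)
Your overall architecture (introduce $H$, $D$, $N$; establish an almost-monotonicity; translate it into a growth/non-degeneracy statement for $H$; then deduce the pointwise version from $H(X_0,r)\leq C_\beta^2 r^{2\beta}$) is the right spirit, and your handling of the second assertion is exactly the paper's. However, the key lemma you rely on — that $r\mapsto e^{Cr^{\gamma}}\bigl(N(X_0,r)+1\bigr)$ is monotone nondecreasing after ``carefully estimating the boundary term'' — cannot be established under the hypothesis $\mathcal{V}(u,X_0)=+\infty$, and this is a genuine gap. The natural estimate (see \eqref{deriv.N1}) gives
$$
\frac{d}{dr}\log\bigl(N(X_0,u,r)+1\bigr)\geq -\frac{C^s_{n,q}}{q\,r^{n+a}\bigl(E(X_0,u,r)+H(X_0,u,r)\bigr)}\int_{\partial^0 B^+_r(X_0)}F_{\lambda_+,\lambda_-}(u)\,\mathrm{d}x,
$$
and to absorb the sublinear term one must lower-bound $E+H$. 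The paper's \eqref{upper E+H} gives $E(r)+H(r)\geq \norm{u}{X_0,r}^q\bigl(\norm{u}{X_0,r}^{2-q}-C_1 r^{2s}\bigr)$, and the bracket is bounded below by a positive power $r^{-\alpha}$ \emph{only if} $\norm{u}{X_0,r}\gtrsim r^{k_q-\alpha/(2-q)}$, i.e. only if $\mathcal{O}(u,X_0)<k_q$. But your hypothesis forces $\norm{u}{X_0,r}=o(r^K)$ for every $K>0$, so the bracket is eventually negative, $E(r)$ itself may be negative, and the Almgren quotient loses the sign structure the argument requires. It is precisely for this reason that the paper's Almgren almost-monotonicity (Proposition~\ref{almgren.lower}) carries the extra hypothesis $\mathcal{O}(u,X_0)\leq k_q-\delta$, which is the \emph{opposite} of what you have here; so the ``genuinely lower-order perturbation after rescaling'' you invoke is not lower order in this regime.

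The paper avoids the obstruction by switching to the 2-parameter Weiss-type functional $W_{k,t}$ from \eqref{two-para.monotonicity} with $t=2$: by Proposition~\ref{weiss.mon}, for $k\geq k_q$ the derivative $\tfrac{d}{dr}W_{k,2}$ is nonnegative \emph{unconditionally}, because the $(2-t)$ surface term vanishes and the coefficient $C^s_{n,2}-2k(2-q)$ is $\leq 0$; no lower bound on $\norm{u}{X_0,r}$ is needed. This yields a finite transition exponent $\overline{k}=\inf\{k>0:W_{k,2}(X_0,u,0^+)=-\infty\}$ (Lemma~\ref{absurd.gamma} and Corollary~\ref{overline.k}, whose proof relies on the unique continuation in measure, Theorem~\ref{ucp}, proved separately by a dead-core Almgren argument). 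Theorem~\ref{strong-unique} then follows by fixing $k>\overline{k}$, choosing $\beta=2k/q>k$ in the super-fast $H^{1,a}$-vanishing, and using the trace inequality \eqref{poincare} to bound $W_{k,2}(X_0,u,r)$ from below uniformly, which contradicts $W_{k,2}(X_0,u,0^+)=-\infty$. You should replace your Almgren-based step with this Weiss-type argument (or some other functional whose monotonicity in the relevant range of $k$ does not require a lower growth bound on $u$).
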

This result implies the validity of the strong unique continuation principle also for the one-phase case $\lambda_-=0$, which resembles the result in the local setting. Moreover, in the case $\lambda_+,\lambda_->0$, we can improve the previous result by characterizing all
the admissible vanishing orders. Thus, let $\beta_q \in \N$ be the larger positive integer strictly smaller
than $k_q = 2s/(2 - q)$, that is
\be\label{betaq}
\beta_q :=
\begin{cases}
  \Big\lfloor\frac{2s}{2-q}\Big\rfloor, & \mbox{if } \frac{2s}{2-q} \not\in \N \vspace{0.1cm}\\
  \frac{2s}{2-q}-1 & \mbox{if } \frac{2s}{2-q} \in \N.
\end{cases}
\ee
Then, the admissible vanishing orders are all the positive integers smaller or equal than $\beta_q$ and the critical value $k_q$ itself.
\begin{theorem}\label{boundsopra}
  Let $q \in [1,2), \lambda_+,\lambda_->0$ and $u\in H^{1,a}_\loc(B^+)$ be a non-trivial solution of \eqref{system} with $X_0 \in \Gamma(u)$. Then
  $$
  \mathcal{V}(u,X_0) \in \{n \in \N\setminus \{0\}\colon n\leq \beta_q\} \cup \left\{\frac{2s}{2-q}\right\}.
  $$
  In particular, if $2s/(2-q)\leq 1$ then $\mathcal{V}(u,X_0)=2s/(2-q)$.
\end{theorem}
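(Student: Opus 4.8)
The plan is to reduce the statement to a blow-up analysis governed by an Almgren-type frequency, reading off the admissible exponents from the classification of the homogeneous blow-up profiles. Since $u$ is non-trivial, Theorem~\ref{strong-unique} gives $\mathcal{V}(u,X_0)<+\infty$. Set
\[
H(X_0,r)=\int_{\partial^+B^+_r(X_0)}y^a u^2,\qquad
D(X_0,r)=\int_{B^+_r(X_0)}y^a|\nabla u|^2-\int_{\partial^0B^+_r(X_0)}\big(\lambda_+ u_+^q+\lambda_- u_-^q\big),
\]
the latter being the natural generalized Dirichlet energy (testing \eqref{system} with $u$ produces exactly the combination $u\cdot(-\partial^a_y u)=\lambda_+ u_+^q+\lambda_- u_-^q$ on $\partial^0 B^+_r$, so that $D(X_0,r)$ equals the outer flux $\int_{\partial^+B^+_r(X_0)}y^a u\,\partial_\nu u$). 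I would then invoke the family of Almgren-type monotonicity formulas for the frequency $N(X_0,r)=rD(X_0,r)/H(X_0,r)$. The delicate point here is that the boundary nonlinearity is merely sublinear, or even discontinuous when $q=1$: one controls it by weighted trace and H\"older inequalities, obtaining that $r\mapsto N(X_0,r)$, suitably corrected by an exponential factor, is monotone, so that $\gamma:=\lim_{r\to 0^+}N(X_0,r)$ exists. The same estimates yield two-sided bounds $c\,r^{n+a+2\gamma}\le H(X_0,r)\le C\,r^{n+a+2\gamma}$ for small $r$, whence $\mathcal{V}(u,X_0)=\gamma$.

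Next I would perform the blow-up. With $\rho(r)=\big(r^{-(n+a)}H(X_0,r)\big)^{1/2}\simeq r^\gamma$ and $u_r(X)=u(X_0+rX)/\rho(r)$, normalized so that $\int_{\partial^+B^+_1}y^a u_r^2=1$, the boundedness of $N$ together with the monotonicity formula makes $\{u_r\}$ bounded in $H^{1,a}_\loc$; along a subsequence, $u_r\rightharpoonup u_0$ weakly in $H^{1,a}_\loc$, strongly in $L^2_\loc(y^a\,dX)$ and on traces, with $u_0$ non-trivial and homogeneous of degree $\gamma$. The crucial point is that $u_r$ solves, on $\partial^0 B^+_{1/r}$,
\[
-\partial^a_y u_r=r^{1-a}\rho(r)^{q-2}\big(\lambda_+ ((u_r)_+)^{q-1}-\lambda_- ((u_r)_-)^{q-1}\big),
\]
and since $1-a=2s$ and $\rho(r)\simeq r^\gamma$ the prefactor behaves like $r^{(2-q)(k_q-\gamma)}$; everything hinges on the sign of $k_q-\gamma$.

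If $\gamma<k_q$, the prefactor vanishes, so $u_0$ is an entire $L_a$-harmonic function homogeneous of degree $\gamma$ with $\partial^a_y u_0=0$ on $\{y=0\}$; its even reflection across $\{y=0\}$ is an entire $L_a$-harmonic function of polynomial growth, and the Liouville-type classification of such functions forces $\gamma\in\N$ with trace a harmonic polynomial of degree $\gamma$. As $X_0\in\Gamma(u)$ and $u_0\not\equiv0$, we get $\gamma\ge1$, and then $\gamma\le\beta_q$ by the definition of $\beta_q$ in \eqref{betaq}. If $\gamma=k_q$, the prefactor tends to a positive constant, so $u_0$ is an entire homogeneous solution of \eqref{system} of degree $k_q$ — an admissible value; here nothing further need be proved, since the statement asserts only an inclusion.

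It remains to rule out $\gamma>k_q$, which I expect to be the main obstacle. In this regime the prefactor $r^{(2-q)(k_q-\gamma)}$ blows up, so passing to the limit would force the trace of $u_0$ to vanish identically and $u_0$ to reduce to an odd-in-$y$ homogeneous $L_a$-harmonic profile — incompatible with $u$ solving \eqref{system} with a bounded right-hand side. Concretely, in this regime the negative nonlinear contribution dominates the generalized Dirichlet energy at small scales, so that $D(X_0,r)$, hence $N(X_0,r)$, would become negative near $r=0$, contradicting $N\ge0$. I would make this rigorous through the critical member $W_{k_q}(u,X_0,\cdot)$ of the two-parameter family of Weiss-type monotonicity formulas together with a rigidity argument: $W_{k_q}$ being monotone, $\gamma>k_q$ together with the scaling above forces the blow-up to be homogeneous of degree exactly $k_q$, against its homogeneity $\gamma$. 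Collecting the three cases gives $\mathcal{V}(u,X_0)=\gamma\in\{n\in\N\setminus\{0\}:n\le\beta_q\}\cup\{k_q\}$; and if $k_q\le1$ then $\beta_q=0$, the first set is empty, and therefore $\mathcal{V}(u,X_0)=k_q$.
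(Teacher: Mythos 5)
There is a genuine circularity in the proposal. The corrected Almgren monotonicity that you invoke at the very start — the statement that $r\mapsto e^{Cr^\alpha}(N(X_0,u,r)+1)$ is non-decreasing — is \emph{not} available unconditionally. Tracing the remainder term in $\tfrac{d}{dr}\log(N+1)$ one finds, with $F_{\lambda_+,\lambda_-}(u)\sim |u|^q$ and $|u|\sim r^{\mathcal{O}(u,X_0)}$ near $X_0$, a negative contribution of order $r^{(q-2)\mathcal{O}(u,X_0)+2s-1}$, which is integrable near $0$ precisely when $\mathcal{O}(u,X_0)<k_q=2s/(2-q)$. This is exactly what Proposition \ref{almgren.lower} makes precise: its hypothesis is the a priori bound $\mathcal{O}(u,X_0)\le k_q-\delta$. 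You cannot derive the frequency limit $\gamma$ and the doubling $H(X_0,r)\asymp r^{n+a+2\gamma}$ in order to then bound $\gamma\le k_q$; the bound on the vanishing order must come first, by a tool that does not rely on Almgren.

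The paper breaks this circularity by leading with the two-parameter family of Weiss-type functionals $W_{k,t}$ from \eqref{two-para.monotonicity}: for $t=2$ and $k\ge k_q$ these are \emph{unconditionally} monotone (Proposition \ref{weiss.mon}), which already yields the existence of the transition exponent $\overline{k}\ge k_q$ (Corollary \ref{overline.k}) and then — via the auxiliary functional $W_{k,\tilde t}$, Lemma \ref{r}, and a blow-up contradiction with unique continuation — the identity $\overline{k}=k_q$ and hence $\mathcal{O}(u,X_0)\le k_q$ (Propositions \ref{claim}, \ref{2s/(2-q)}). Only after this bound is established can Section \ref{5} apply the Almgren machinery, and only in the sub-critical regime $\mathcal{O}(u,X_0)<k_q$. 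The critical case $\mathcal{O}(u,X_0)=k_q$ is handled separately in Section \ref{6} with an $H^{1,a}$-normalized blow-up (not the $H$-normalized one), because Almgren-based doubling fails at the threshold; Corollary \ref{ottimo} then gives $\mathcal{V}(u,X_0)=k_q$ there. Your sketch for ruling out $\gamma>k_q$ — that Weiss ``forces the blow-up to be $k_q$-homogeneous, contradicting homogeneity $\gamma$'' — does not match the actual mechanism: if the prefactor $\alpha_r\to+\infty$ one cannot pass to the limit in the extension equation at all, and the contradiction is with strong unique continuation for traces of even $L_a$-harmonic functions (cf.\ the proof of Proposition \ref{claim}), not with a homogeneity mismatch. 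Once these gaps are repaired by reordering the argument — Weiss first, Almgren second — the blow-up classification you describe (Liouville-type rigidity forcing $\gamma\in 1+\N$ when $\gamma<k_q$) does match the paper's Corollary \ref{blow-up.lim} and the final assembly of the theorem.
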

\begin{remark}
In the case $s=1$, our result coincides with the case considered in \cite{soavesublinear}. However, Theorem \ref{boundsopra} reveals a deep difference between the local and nonlocal equations for small value of $s\in (0,1)$: while the vanishing orders of solution of \eqref{local} have a universal bound $k_q=2/(2-q)$, which is always greater or equal than $1$ for $q \in (0,2)$ (see \cite{soavesublinear} for the sublinear case $q\in [1,2)$, \cite{soavesingular} for the singular case $q \in (0,1)$), in the fractional setting this is not always true even in the sublinear case and it implies, for some values of $s\in (0,1)$ and $q\in [1,2)$, the occurrence of solutions which vanish only with order $k_q\leq 1$. Moreover, this particular phenomena will also affect the structure and the regularity of the nodal set.
\end{remark}
Now, using a blow-up argument based on two different types of monotonicity formulas, we proved the validity of a generalized Taylor expansion of the solutions near the nodal set: while in the linear (and superlinear) case solutions behave like homogeneous $L_a$-harmonic functions $p \in \mathfrak{B}_k^a(\R^{n+1}_+)$ symmetric with respect to $\{y=0\}$ (see \cite{STT2020} for a complete characterization of the space $\mathfrak{B}_k^a$), in the sublinear setting this is not necessary the case.
\begin{theorem}\label{caldo}
  Let $q \in [1,2),\lambda_+,\lambda_->0$ and $u\in H^{1,a}_\loc(B^+_1)$ be a solution of \eqref{system} with $X_0 \in \Gamma(u)$. Then, the following alternative holds:
  \begin{enumerate}
    \item if $\mathcal{V}(u,X_0) \in \{n \in \N\setminus \{0\}\colon n\leq \beta_q\} $, then there exists a $k$-homogeneous entire $L_a$-harmonic function $\varphi^{X_0} \in \mathfrak{B}_k^a(\R^{n+1})$ symmetric with respect to $\{y=0\}$, such that
        \begin{equation}\label{eq.continuationa}
u(X)=\varphi^{X_0}(X-X_0) + o(\abs{X-X_0}^{k+\delta}),
\end{equation}
for some $\delta \in \N, \delta >0$;
    \item if $\mathcal{V}(u,X_0)=2s/(2-q)$, then for every sequence $r_k \searrow 0^+$ we have, up to a subsequence, that
$$
\frac{u(X_0 + r_k X)}{\norm{u}{X_0,r_k}}
\to \overline{u} \quad\mbox{in } C^{0,\alpha}_\loc(\R^{n}),
$$
for every $\alpha \in (0,\min(1,2s/(2-q)))$, where $\overline{u}$ is a $2s/(2 - q)$-homogeneous non-trivial solution to
\be\label{limite}
\begin{cases}
  L_a \overline{u}=0 & \mbox{in } \R^{n+1}_+ \\
  -\partial^a_y \overline{u} = \mu\left(\lambda_+ (\overline{u}_+)^{q-1} - \lambda_- (\overline{u}_-)^{q-1}\right) &\mbox{on } \R^n \times \{0\},
\end{cases}
\ee
for some $\mu \geq 0$. Moreover, the case $\mu = 0$ is possible if and only if $2s/(2 - q)\in \N$.
  \end{enumerate}
\end{theorem}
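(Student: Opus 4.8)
Normalize $X_0=0\in\Gamma(u)$ and set $k:=\mathcal V(u,0)$, which by Theorem~\ref{boundsopra} is either a positive integer $\le\beta_q$ or the critical value $k_q=2s/(2-q)$. In both regimes the plan is a blow-up analysis along the rescalings $u_r(X):=u(rX)/\norm{u}{0,r}$, where $\norm{u}{0,r}$ is the normalizing quantity appearing in Theorem~\ref{caldo}, comparable to $\big(r^{-(n+a)}\int_{\partial^+B^+_r}y^a u^2\big)^{1/2}$. A direct computation shows
$$
L_a u_r=0 \ \text{ in }B^+_{1/r},\qquad -\partial^a_y u_r=\mu_r\big(\lambda_+((u_r)_+)^{q-1}-\lambda_-((u_r)_-)^{q-1}\big)\ \text{ on }\partial^0B^+_{1/r},
$$
with $\mu_r:=\norm{u}{0,r}^{q-2}r^{2s}$. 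The first task is precompactness of $\{u_r\}$ in $C^{0,\alpha}_\loc(\R^n)$ and in $H^{1,a}_\loc(\R^{n+1}_+)$: the doubling and growth bounds furnished by the monotonicity formula appropriate to the regime — an Almgren-type frequency when $k<k_q$, the $2$-parameter Weiss-type functional when $k=k_q$ — together with the finiteness of the vanishing order (Theorem~\ref{strong-unique}) yield uniform $H^{1,a}$ bounds on compacta, and then De Giorgi--Nash--Moser estimates for the degenerate Neumann problem upgrade these to uniform $C^{0,\alpha}_\loc$ bounds; here one uses $\norm{u}{0,r}\simeq r^k$, so that $\mu_r$ stays bounded.

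\emph{Case $\mathcal V(u,0)=k_q$.} Then $\mu_r\simeq r^{2s-k_q(2-q)}=r^0$ is bounded, and the monotonicity of the Weiss functional forces $\mu_r$ to converge; along a subsequence $r_j\downarrow0$ one gets $u_{r_j}\to\overline u$ in $C^{0,\alpha}_\loc\cap H^{1,a}_\loc$ and $\mu_{r_j}\to\mu\ge0$. Passing to the limit in the weak formulation of \eqref{system} shows $\overline u$ solves \eqref{limite}; the normalization $\norm{u_{r_j}}{0,1}=1$ passes to the limit, so $\overline u\not\equiv0$; and $k_q$-homogeneity of $\overline u$ follows because the Weiss functional is constant (equal to its limit at $0$) along the blow-up, via the Rellich--Pohozaev identity encoded in it. Finally $\mu=0$ forces $\overline u$ to be a nontrivial $k_q$-homogeneous $L_a$-harmonic function even in $y$, i.e.\ $\overline u\in\mathfrak B^a_{k_q}$, and by the characterization of that space such functions exist only for $k_q\in\N$; conversely, when $k_q\in\N$ one exhibits a solution realizing $\mu=0$ starting from a degree-$k_q$ element of $\mathfrak B^a_{k_q}$, the nonlinear feedback — which sits at degree $k_q(q-1)+2s=k_q$ — being only logarithmically relevant in the integer case.

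\emph{Case $\mathcal V(u,0)=k\in\{1,\dots,\beta_q\}$.} Since $k<k_q$ one has $\mu_r\simeq r^{\,2s-k(2-q)}\to0$, so every blow-up limit is a nontrivial global $L_a$-harmonic function with vanishing conormal derivative on $\{y=0\}$ — equivalently, it extends evenly to an entire $L_a$-harmonic function — and the value $k$ of the Almgren frequency at $0$ forces it to be $k$-homogeneous; thus $\overline u=:\varphi^0\in\mathfrak B^a_k$. Uniqueness of the blow-up (hence that $\varphi^0$ is independent of the sequence and $\norm{u}{0,r}/r^k\to c>0$) follows from the strict monotonicity of the frequency away from the value $k$ together with the fact that $\varphi^0$ is a homogeneous polynomial in the extended variables. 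To pass from $u(rX)/r^k\to\varphi^0$ to the expansion \eqref{eq.continuationa}, put $w:=u-\varphi^0$; since $\varphi^0$ is $L_a$-harmonic and even, $w$ solves the \emph{linear} problem $L_a w=0$ in $B^+_1$, $-\partial^a_y w=g$ on $\partial^0B^+_1$ with $\abs{g}\le C\abs{u}^{q-1}\le C\abs{X}^{k(q-1)}$ near the origin. Boundary Schauder estimates for $L_a$ then give $w=o(\abs{X}^{k+\delta})$ for some $\delta>0$ — the exponent $k(q-1)+2s$ exceeds $k$, and the residual homogeneous $L_a$-harmonic part of $w$ has degree $\ge k+1$ — and iterating, by subtracting successive homogeneous $L_a$-harmonic pieces of $w$, produces the stated integer gain.

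\emph{Main obstacle.} The heart of the matter is the construction and exploitation of the correct monotonicity formula in each regime in the presence of a non-Lipschitz (and, for $q=1$, discontinuous) nonlinearity: the almost-monotonicity of the Almgren frequency when $k<k_q$, with the error term controlled precisely by the decay $\mu_r\to0$, and the monotonicity of the $2$-parameter Weiss functional at the threshold $k=k_q$. The second delicate point is the trichotomy in the critical case — in particular identifying when $\mu=0$, which reduces to the (non)existence of $k_q$-homogeneous even $L_a$-harmonic functions, hence to whether $k_q\in\N$ — together with the uniqueness of the blow-up in the integer case, which is what turns the scaling limit into the genuine pointwise expansion \eqref{eq.continuationa}.
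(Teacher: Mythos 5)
Your high-level architecture matches the paper's: blow-up along $u_r=u(X_0+r\cdot)/\norm{u}{X_0,r}$, monotonicity of Weiss- and Almgren-type functionals to get compactness and homogeneity of limits, and the dichotomy on $\norm{u}{X_0,r}/r^{k_q}$ in the critical case to identify $\mu$. But the passage from the blow-up limit to the \emph{quantitative} expansion \eqref{eq.continuationa} in case (1) contains a genuine gap.

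You write $w=u-\varphi^{X_0}$, observe $L_a w=0$ with $\abs{\partial^a_y w}\le C\abs{X}^{k(q-1)}$, and invoke ``boundary Schauder estimates for $L_a$'' plus an iteration (subtracting successive $L_a$-harmonic pieces) to conclude $w=o(\abs{X}^{k+\delta})$ with $\delta\in\N$, $\delta>0$. This is exactly the [fermi]-type argument that works in the local case $s=1$: there, a growth estimate on $\Delta u$ transfers directly to a Taylor expansion of $u$ in harmonic polynomials. The paper's remark following Theorem \ref{caldo} explicitly states that the fractional/extension analogue of that lemma is \emph{not known}; the whole point of Section \ref{5} is to sidestep it. What the paper actually does is: (i) establish uniqueness of the tangent map via a Monneau-type monotonicity formula (Proposition \ref{monneau.pot}, Theorem \ref{unique}) — not the ``strict monotonicity of the frequency away from $k$'', which by itself does not pin down a unique polynomial — and (ii) prove an Almgren-type monotonicity formula \emph{for $w$ itself} (Theorem \ref{basta}), run a second blow-up analysis on $w$ (Proposition \ref{basta2}), and show that the limiting frequency $N(X_0,w,0^+)$ is an integer strictly larger than $k$ because the blow-up of $w$ is an entire homogeneous $L_a$-harmonic function. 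That classification forces the gap $\delta$ to be a positive integer. Your Schauder argument, even if made rigorous, gives at best a gain of $2s-k(2-q)$ (not an integer) in a single step, and the iteration you gesture at is precisely the unproven statement.

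Two smaller issues in case (2): the claim ``the monotonicity of the Weiss functional forces $\mu_r$ to converge'' overstates what is true (it only converges along subsequences, which is the dichotomy \eqref{casi}); and when $\mu=0$, the homogeneity of $\overline u$ does not follow from the Weiss functional being constant on the nose as in the $\mu>0$ case — the paper must argue separately from the vanishing-order bound $\mathcal O(\overline u,0)\ge k_q$ combined with the spectral classification of entire $L_a$-harmonic functions to conclude both homogeneity and $k_q\in 1+\N$.
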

\begin{remark}
While the proof of the latter part of the Theorem is based on the validity of a perturbed Weiss-type monotonicity formula, as its local counterpart in \cite{soavesublinear}, in the proof of the first one of we used a completely different approach. The idea is to take advantage of the bound on the vanishing order to ensure the validity of an asymptotic limit of the Dirichlet-to-Neumann operator $\partial^a_y$ near the nodal set. Then, the blow-up analysis is based on an application of an Almgren and Monneau-type monotonicity formulas. Moreover, to improve the convergence estimate of the remainder in the Taylor expansion \eqref{eq.continuationa}, we apply a blow-up analysis on the difference between the function and its tangent map: this improvement will be crucial in order to estimate the $C^{1,\alpha}$-regularity of the strata of the nodal set.\\
While in the case of local diffusion $s=1$, it is known that a growth estimate of the Laplacian of a function near its nodal set immediately implies the validity of a Taylor expansion of the function itself in terms of harmonic polynomials, in the nonlocal setting the validity of a similar result is still unknown: we think that our strategy could be extended to a more general setting in order to prove a fractional counterpart of the fundamental Lemma in \cite[Lemma 3.1]{fermi}.
\end{remark}
This result leads to a partial stratification of the nodal set and, via the dimension reduction principle due to Federer, to an estimate of the Hausdorff dimension of the nodal and singular set.\\
In the light of the previous results, let us define with $\mathcal{R}(u)$ and $\mathcal{S}(u)$ the regular and singular part of $\Gamma(u)$ defined by
$$
\mathcal{R}(u)=\{X \in \Gamma(u) \colon \abs{\nabla u}(X)\neq 0 \}\quad\mbox{and}\quad
      \mathcal{S}(u)= \{ X \in \Gamma(u) \colon 1<\mathcal{V}(u,X)\leq \beta_q \}.
   $$
and with $\mathcal{T}(u)$ the ``purely sublinear'' part of the nodal set
$$
    \mathcal{T}(u)= \left\{ X \in \Gamma(u) \colon \mathcal{V}(u,X)=\frac{2s}{2-q}\right\}.
    $$
While in the local case $s=1$ the sets $\mathcal{S}(u)\cup \mathcal{T}(u)$ coincides with those points with vanishing gradient, in the fractional setting $s \in (0,1)$ this is not always the case since the critical value is not necessary greater than $1$. Indeed, this slightly different decomposition of $\Gamma(u)$ seems more natural in the fractional setting: by Theorem \ref{boundsopra}, we already know that if $k_q>1$ then
$$
\{X \in \Gamma(u) \colon \abs{\nabla u}(X)= 0 \} = \mathcal{S}(u)\cup \mathcal{T}(u),
$$
while if $k_q\leq 1$ we get $$
\Gamma(u)=\mathcal{T}(u).
$$
Indeed we will see that, for those value of $s\in (0,1)$ and $q\in[1,2)$ such that $k_q>1$, near the points of the nodal set where the function vanishes with order strictly less then $k_q$, the nodal set resembles the picture of the nodal set of $s$-harmonic functions.
\begin{theorem}\label{hau}
  Let $q \in [1,2),\lambda_+,\lambda_->0$ and $u \in H^{1,a}_\loc(B^+_1)$ be a solution of \eqref{system}. The nodal set $\Gamma(u)$ splits as
   $$
   \Gamma(u) = \mathcal{R}(u) \cup \mathcal{S}(u) \cap \mathcal{T}(u),
   $$
   where
  \begin{enumerate}
  \item the regular part $\mathcal{R}(u)$ is locally a $C^{1,\alpha}$-regular $(n-1)$-hypersurface on $\R^{n}$;
  \item the singular part $\mathcal{S}(u)$ satisfies
  $$
  \mathcal{S}(u)= \mathcal{S}^*(u) \cup \mathcal{S}^s(u)
  $$
  where $\mathcal{S}^*(u)$ is contained in a countable union of $(n-2)$-dimensional $C^{1,\alpha}$ manifolds and $\mathcal{S}^s(u)$ is contained in a countable union of $(n-1)$-dimensional $C^{1,\alpha}$ manifolds. Moreover
  $$
   \mathcal{S}^*(u)=\bigcup_{j=0}^{n-2} \mathcal{S}^*_j(u)\quad\mbox{and}\quad
   \mathcal{S}^s(u)=\bigcup_{j=0}^{n-1}\mathcal{S}^s_j(u),
  $$
  where both $\mathcal{S}^*_j(u)$ and $\mathcal{S}^s_j(u)$ are contained in a countable union of $j$-dimensional $C^{1,\alpha}$ manifolds.
\item the sublinear part $\mathcal{T}(u)$ has Hausdorff dimension at most $(n-1)$. Moreover, for $k_q\leq1$ the nodal set coincides with the sublinear stratum and the Haudorff estimate is optimal in the sense that there exists a collection of $2$-dimensional $k_q$-homogeneous solutions such that \be\label{like}
    u_1(x,0)=A_1\left(x_+^{k_q} - x_-^{k_q}\right) \quad\mbox{or}\quad u_2(x,0)=A_2\abs{x}^{k_q}\quad\mbox{for every }x \in \R.
    \ee
    \end{enumerate}
\end{theorem}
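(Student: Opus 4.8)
The plan is to assemble the stratification from the three regimes identified in Theorem \ref{caldo}, treating the regular part, the ``integer'' singular part, and the purely sublinear part separately, and then invoking Federer's dimension reduction in each case.

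For part (1), at any point $X_0 \in \mathcal{R}(u)$ we have $\abs{\nabla u}(X_0)\neq 0$, hence $\mathcal{V}(u,X_0)=1$ and by Theorem \ref{caldo}(1) the blow-up is the $1$-homogeneous $L_a$-harmonic function $\varphi^{X_0}(X)=\nabla u(X_0)\cdot(X-X_0)$, with the remainder of order $o(\abs{X-X_0}^{1+\delta})$. First I would use this expansion together with the implicit function theorem applied to $u(\cdot,0)$ (whose trace gradient is nonzero at $X_0$ by the structure of $\varphi^{X_0}$ restricted to $\{y=0\}$) to conclude that $\Gamma(u)$ is locally a graph near $X_0$; the $C^{1,\alpha}$ regularity then follows from the uniformity of the error estimate \eqref{eq.continuationa} as $X_0$ varies in a compact subset of $\mathcal{R}(u)$, i.e.\ from the fact that $X_0 \mapsto \varphi^{X_0}$ is continuous in an appropriate sense. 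This is the most classical piece.

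For part (2), each point $X_0\in\mathcal{S}(u)$ has $\mathcal{V}(u,X_0)=k\in\{2,\dots,\beta_q\}$ and, by Theorem \ref{caldo}(1), a $k$-homogeneous $L_a$-harmonic tangent map $\varphi^{X_0}\in\mathfrak{B}^a_k(\R^{n+1})$ symmetric in $\{y=0\}$. At this stage the situation is exactly that of the linear nonlocal problem studied in \cite{STT2020}: I would stratify $\mathcal{S}(u)$ by the dimension of the spine $S(\varphi^{X_0})=\{Z : \varphi^{X_0}(\cdot+Z)=\varphi^{X_0}\}$ of the (unique, by Monneau monotonicity) blow-up, obtaining $\mathcal{S}_j$ as the set of points where the spine has dimension $j$. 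Following \cite{STT2020}, the blow-ups fall into two classes — those whose restriction to $\{y=0\}$ is a nontrivial harmonic polynomial (giving $\mathcal{S}^*$, stratified up to dimension $n-2$ by the classical Federer argument, since the top stratum of a nonzero harmonic polynomial's nodal set has dimension $n-2$), and those which vanish identically on $\{y=0\}$ (giving $\mathcal{S}^s$, where the trace constraint is empty and one only loses one dimension, so the stratification runs up to $n-1$). Whitney-type $C^{1,\alpha}$ rectifiability of each $\mathcal{S}_j$ follows from the uniform decay in \eqref{eq.continuationa} plus the non-degeneracy of the tangent map on its orthogonal complement to the spine, exactly as in \cite[Section 8]{STT2020}; I would cite that argument rather than reproduce it.

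For part (3), the set $\mathcal{T}(u)$ carries the $k_q$-homogeneous blow-ups of Theorem \ref{caldo}(2). Here one cannot expect better than a Hausdorff-dimension bound, so I would run a pure Federer dimension-reduction argument directly on $\mathcal{T}(u)$: upper semicontinuity of $\mathcal{V}$ along blow-ups (which must be checked from the Weiss-type monotonicity formula underlying Theorem \ref{caldo}(2)), compactness of blow-up sequences in $C^{0,\alpha}_{\loc}$, and homogeneity of the limits combine to give $\dim_{\mathcal H}\mathcal{T}(u)\le n-1$; the extra dimension drop relative to the ambient $\R^n$ comes from the fact that a nontrivial $k_q$-homogeneous solution of \eqref{limite} cannot be a function of zero variables, i.e.\ constants are excluded. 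For the optimality claim and the case $k_q\le 1$ (where Theorem \ref{boundsopra} forces $\mathcal{V}\equiv k_q$, hence $\Gamma(u)=\mathcal{T}(u)$), I would exhibit the explicit $1$-dimensional profiles: one verifies by direct computation that $u_1(x,y)$ obtained as the $L_a$-harmonic extension of $A_1(x_+^{k_q}-x_-^{k_q})$, respectively $u_2$ extending $A_2\abs{x}^{k_q}$, solves the extension system \eqref{system} for a suitable choice of $A_i$ matching the Neumann datum through the known value of the constant $\partial^a_y(\abs{x}^{k_q})=c_{s,q}\abs{x}^{k_q-2s}$ on $\{y=0\}$; these give solutions whose nodal set on $\R^n$ is the hyperplane $\{x_1=0\}$, of dimension $n-1$.

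The main obstacle I anticipate is \textbf{part (2)}: assembling the full $C^{1,\alpha}$-rectifiable stratification of $\mathcal{S}(u)$ requires not merely the pointwise Taylor expansion \eqref{eq.continuationa} but its local uniformity and the uniqueness and continuous dependence of the tangent map $\varphi^{X_0}$ on $X_0$, which in turn rest on a Monneau-type monotonicity formula along the nodal set; making this machinery (inherited from \cite{STT2020} and, ultimately, the Almgren/Monneau formulas announced in the remark after Theorem \ref{caldo}) run in the present setting — where the right-hand side of \eqref{system} is only controlled because $k<k_q$ forces the Neumann datum to be negligible at scale — is the delicate point. The sublinear stratum $\mathcal{T}(u)$, by contrast, only asks for a dimension bound and is comparatively soft; the regular part is routine.
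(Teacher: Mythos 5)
Your overall architecture tracks the paper's closely: parts (1) and (2) are proved exactly as you outline (implicit function theorem on the trace plus the quantitative Taylor expansion of Corollary \ref{holderblow} for the regular part; reduction to the linear machinery of \cite{STT2020} — their Theorems~7.7 and~7.8, with Corollary \ref{holderblow} replacing their Theorem~5.12 and the generalized Whitney extension for $C^{m,\omega}$ data — for the stratification of $\mathcal{S}(u)$), and the Federer dimension-reduction step for $\mathcal{T}(u)$ matches the paper's use of \cite[Theorem 8.5]{MR1637972} together with the observation that $d=n$ would force a zero blow-up. You also correctly flag the Monneau-type monotonicity and the uniqueness/continuous dependence of the tangent map as the load-bearing ingredient in part (2), which is exactly where the paper invests most of Section~\ref{5}.

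The genuine divergence — and, as written, a gap — is in the construction of the one-dimensional $k_q$-homogeneous solutions $u_1,u_2$ used to saturate the $(n-1)$ bound. You propose to ``verify by direct computation'' that the $L_a$-harmonic extension of $A\abs{x}^{k_q}$ (resp. the odd profile) solves the system, matching a supposedly known constant $\partial^a_y(\abs{x}^{k_q})=c_{s,q}\abs{x}^{k_q-2s}$. This does not go through as stated. Since $k_q=2s/(2-q)\ge 2s$ for $q\in[1,2)$, the Poisson integral $P_a\ast\abs{x}^{k_q}$ diverges, so there is no canonical ``$L_a$-harmonic extension'' to appeal to; the singular-integral form of $(-\Delta)^s\abs{x}^{k_q}$ is likewise not absolutely convergent in this range. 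One must instead select the correct $k_q$-homogeneous $L_a$-harmonic function by solving the angular ODE on $(0,\pi)$ and check that the resulting Neumann datum has the right (positive) sign to match the unstable nonlinearity $\lambda_\pm>0$ — and the sign is not for free. This is precisely the content of the paper's Lemma \ref{T} and Theorem~7.2: a variational argument for a Steklov-type sublinear eigenvalue problem on spherical slices, which for the even profile further requires locating a radius $T^*$ with $\lambda_1(T^*)=2s/(2-q)$. That step exists only when $k_q<1$, equivalently $s<1/2$ — a hypothesis you never invoke, and indeed the strict inequality $k_q<1$ is where the paper places this construction. In short, the existence of the one-dimensional examples is a nontrivial result (the bulk of Section \ref{7}), not a constant-matching exercise, and your sketch leaves the actual difficulty — the sign of the Neumann trace and the eligibility of the exponent — unaddressed.
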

The result on $\mathcal{T}(u)$ is remarkably different to its local counterpart: while for $s=1$ the bound $(n-2)$ on the Hausdorff dimension is optimal, we believe that the result on the $(n-1)$-dimension of $\mathcal{T}(u)$ in the case $k_q\leq 1$ can be easily generalized to all $s\in (0,1)$ and $q \in [1,2)$, thanks to the characterization of $L_a$-harmonic function in \cite{STT2020}.\\ Moreover, we claim that a viscosity approach, based on an improvement of flatness, could give a regularity result for those points where the blow-up limit behave like \eqref{like} (see Theorem \ref{esempio} for more detail in this direction), in the case $k_q\leq 1$. At the moment, we leave it as an open problem.\vspace{0.35cm}\\
{\textbf{Structure of the paper. }}The paper is organized as follows. In Section \ref{2} we recall some embedding results and we prove some preliminary results about the optimal regularity of solutions. Moreover, we introduce the notions of vanishing order used through the paper. Next, in Section \ref{3}, we prove the validity of the unique continuation principle in measure and Theorem \ref{strong-unique} by using a 2-parameter Weiss-type monotonicity formula which allows, in Section \ref{4}, to introduce a characterisation of the threshold $k_q$.\\
Finally, in Section \ref{5} we prove the first part of Theorem \ref{caldo} and Theorem \ref{hau} by developing a blow-up analysis based on the validity of two Almgren-type formulas for those points with vanishing order smaller than $k_q$ and, in Section \ref{6}, we complete the proof of Theorem \ref{caldo} by applying a blow-up analysis on those points with vanishing order equal to $k_q$. As byproduct, we will recover Theorem \ref{boundsopra}.\\
Finally, in Section \ref{7} we prove the existence of $k_q$-homogeneous solutions of the form \eqref{like}, for those values of $s$ and $q$ so that $k_q<1$. This result will lead to the Hausdorff estimate of $\mathcal{T}(u)$ in Theorem \ref{hau}.
\section{Preliminaries}\label{2}
In this section we start by showing preliminary results related to the trace embedding of the $H^{1,a}$-space and the optimal regularity of the solution of our problem. As we mentioned in the Introduction, we deal with weak-solution $u \in H^{1,a}_\loc(B^+_1)$ of the problem
$$
\begin{cases}
  L_a u=0 & \mbox{in } B^+_1 \\
  -\partial^a_y u = \lambda_+ (u_+)^{q-1} - \lambda_- (u_-)^{q-1} &\mbox{on } \partial^0 B^+_1,
\end{cases}
$$
where $a=1-2s \in (-1,1)$ and $q\in [1,2)$. With a slight abuse of notations, we will always denote with $u$ the $L_a$-harmonic extension of the solution of \eqref{ini}.\\
The existence of solution follows by standard methods of the calculus of variations and a straightforward application of the following trace embedding. Through the paper, for $X_0 \in \partial^0 B^+_1$ and $r \in (0,1-\abs{X_0})$, we will always consider the space $H^{1,a}(B_r^+(X_0))$ endowed with the norm
$$
\norm{u}{H^{1,a}(B_r^+(X_0))} = \left(\frac{1}{r^{n+a-1}}\int_{B^+_r(X_0)}{y^a \abs{\nabla u}^2 \mathrm{d}X} + \frac{1}{r^{n+a}}\int_{\partial^+B^+_r(X_0)}{y^a u^2 \mathrm{d}\sigma}\right)^{1/2}.
$$
From now on, we often use the notation $\norm{\cdot}{X_0,r}$ to simplify the notation of the norm in $H^{1,a}(B_r^+(X_0))$.
The equivalence of this norm with the classic one is a consequence of the trace theory and the Poincaré inequality. For the sake of completeness
\begin{lemma}\label{lem.poin}
Let $u\in H^{1,a}(B^+)$ and $q \in [1,2^\star]$, where $2^\star = 2n/(n-2s) = 2n/(n+a-1)$ is Sobolev's exponent for the fractional Laplacian. There exists a constant $C_1=C_1(n, p,a)$ such that
\begin{equation}\label{poincare}
  \left(\frac{1}{r^{n}}\int_{\partial^0 B^+_r}{\abs{u}^q\mathrm{d}x}\right)^{\frac{1}{q}} \leq C_1 \norm{u}{H^{1,a}(B_r^+)},
\end{equation}
for every $0 < r < 1$. Namely, the space $H^{1,a}(B^+_r(X_0))$ is continuously embedded in $L^q(\partial^0 B^+_r(X_0))$, for every $r \in (0,1)$
\end{lemma}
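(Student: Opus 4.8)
\emph{Plan.} The plan is to reduce \eqref{poincare} to the case $r=1$, $X_0=0$ by an affine rescaling, then to replace the norm $\norm{\cdot}{X_0,r}$ on $B^+_1$ by the classical weighted Sobolev norm via a Poincaré-type inequality with a spherical boundary term, and finally to invoke the standard weighted trace--Sobolev embedding associated with the Muckenhoupt weight $y^a$.

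First I would record the homogeneities. Given $u\in H^{1,a}(B^+_r(X_0))$ and $v(X):=u(X_0+rX)$ for $X\in B^+_1$, using $\nabla v(X)=r(\nabla u)(X_0+rX)$, the scaling $y\mapsto ry$ of the weight, and the change of variables on $B^+_r(X_0)$ and on the $n$-dimensional pieces $\partial^0 B^+_r(X_0)$, $\partial^+ B^+_r(X_0)$, one gets
$$
\int_{B^+_1}y^a|\nabla v|^2=\frac{1}{r^{n+a-1}}\int_{B^+_r(X_0)}y^a|\nabla u|^2,\qquad
\int_{\partial^+ B^+_1}y^a v^2=\frac{1}{r^{n+a}}\int_{\partial^+ B^+_r(X_0)}y^a u^2,
$$
$$
\int_{\partial^0 B^+_1}|v|^q=\frac{1}{r^{n}}\int_{\partial^0 B^+_r(X_0)}|u|^q,
$$
so that $\norm{v}{H^{1,a}(B^+_1)}=\norm{u}{X_0,r}$ and $\bigl(\int_{\partial^0 B^+_1}|v|^q\bigr)^{1/q}=\bigl(r^{-n}\int_{\partial^0 B^+_r(X_0)}|u|^q\bigr)^{1/q}$. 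Hence it suffices to prove \eqref{poincare} for $r=1$ with a constant $C_1=C_1(n,q,a)$, and since that constant is unaffected by the rescaling it automatically serves for every $r\in(0,1)$.

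Next I would establish the equivalence of $\norm{\cdot}{H^{1,a}(B^+_1)}$ with the classical norm $\bigl(\int_{B^+_1}y^a(|\nabla u|^2+u^2)\bigr)^{1/2}$, which amounts to the Poincaré inequality
$$
\int_{B^+_1}y^a u^2\le C\Bigl(\int_{B^+_1}y^a|\nabla u|^2+\int_{\partial^+ B^+_1}y^a u^2\Bigr),\qquad u\in H^{1,a}(B^+_1).
$$
I would argue by contradiction and compactness: if this failed there would be $u_k$ with $\int_{B^+_1}y^a u_k^2=1$ and $\int_{B^+_1}y^a|\nabla u_k|^2+\int_{\partial^+ B^+_1}y^a u_k^2\to0$. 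Since $a\in(-1,1)$, $y^a$ is an $A_2$ weight, so $H^{1,a}(B^+_1)$ embeds compactly into $L^2(B^+_1;y^a\,\mathrm{d}X)$ and the trace is bounded from $H^{1,a}(B^+_1)$ into $L^2(\partial^+ B^+_1;y^a\,\mathrm{d}\sigma)$. Along a subsequence $u_k\to u$ in $L^2(B^+_1;y^a)$ with $\nabla u_k\to0$ in $L^2(B^+_1;y^a)$, hence $\nabla u\equiv0$ and, $B^+_1$ being connected, $u\equiv c$ with $c^2\int_{B^+_1}y^a=1$, so $c\neq0$. Then $u_k\to c$ strongly in $H^{1,a}(B^+_1)$, and continuity of the trace would give $\int_{\partial^+ B^+_1}y^a u_k^2\to c^2\int_{\partial^+ B^+_1}y^a>0$ (the weight being positive and integrable on $\partial^+ B^+_1$ as $a>-1$), contradicting $\int_{\partial^+ B^+_1}y^a u_k^2\to0$. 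This is the step I expect to be the only delicate point: the spherical boundary term, rather than a bulk $L^2$ term, forces one to use both the Rellich-type compactness and the trace theory for the degenerate/singular weight $y^a$ --- both standard for $A_2$ weights and used elsewhere in the paper (e.g.\ Definition~\ref{nu}), but it is what legitimises the slightly nonstandard norm in the statement.

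Finally, with the equivalence of norms in hand, \eqref{poincare} for $r=1$ follows from the classical weighted trace--Sobolev embedding: the trace of $u\in H^{1,a}(B^+_1)$ on $\partial^0 B^+_1$ lies in $H^s(\partial^0 B^+_1)$ with $s=(1-a)/2$, which embeds continuously into $L^{2^\star}(\partial^0 B^+_1)$ with $2^\star=2n/(n-2s)=2n/(n+a-1)$ (this is where the extension framework of \cite{CS2007} enters), and then $L^{2^\star}(\partial^0 B^+_1)\hookrightarrow L^q(\partial^0 B^+_1)$ for every $q\in[1,2^\star]$ since $\partial^0 B^+_1$ has finite measure. Combining these with the previous step yields $\bigl(\int_{\partial^0 B^+_1}|u|^q\bigr)^{1/q}\le C_1\norm{u}{H^{1,a}(B^+_1)}$, and the rescaling from the first step completes the proof.
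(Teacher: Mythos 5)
Your proof is correct and uses the same key ingredients as the paper's own proof, namely Nekvinda's characterization of the trace space as the fractional Sobolev space $H^s(\partial^0 B^+)$ followed by the fractional Sobolev embedding into $L^q$. Your additional work --- making the rescaling to $r=1$ explicit to get an $r$-independent constant, and proving the Poincar\'e-type norm equivalence by a compactness argument --- supplies details that the paper asserts in the paragraph preceding the lemma ("the equivalence of this norm with the classic one is a consequence of the trace theory and the Poincar\'e inequality") but does not spell out.
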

\begin{proof}
This result is a direct consequence of the characterization of the class of trace of $H^{1,a}(B^+_r)$, with $r\in (0,1)$, and the Sobolev embedding in the context of fractional Sobolev-Slobodeckij spaces.

For the first inequality \eqref{poincare}, by \cite[Theorem 2.11]{Nekvinda}, the traces of $H^{1,a}(B^+)$ function of the set $\partial^0B_r^+$ coincides with the Sobolev-Slobodeckij space $H^s(\partial^0B^+_r)$. This is defined as the set of all functions $v : \partial^0B_r^+ \to \R$ with a finite norm
\[
  \norm{v}{H^s(\partial^0B^+_r)} := \left(\int_{\partial^0B^+_r}{\abs{v}^2\mathrm{d}x} + \frac{C(n,s)}{2} \int_{\partial^0B^+_r}\int_{\partial^0B^+_r}{\frac{\abs{v(x)-v(z)}^2}{\abs{x-z}^{n+2s}}\mathrm{d}x\mathrm{d}z} \right)^{1/2},
\]
where the term
\begin{equation}\label{gagliardo}
  \left[v\right]_{H^s(\partial^0B^+_r)}=\left(\frac{C(n,s)}{2}\int_{\partial^0B^+_r}\int_{\partial^0B^+_r}{\frac{\abs{v(x)-v(z)}^2}{\abs{x-z}^{n+2s}}\mathrm{d}x\mathrm{d}z} \right)^{1/2}
\end{equation}
is the Gagliardo seminorm of $v$ in $H^s(\partial^0B^+_r)$. Since $\partial^0 B^+_r$ is a Lipschitz domain with bounded boundary, the fractional Sobolev inequality states that
\[
  \norm{v}{L^q(\partial^0 B^+_r)} \leq C \norm{v}{H^s(\partial^0B^+_r)},
\]
for every $q \in [1,2^\star]$, where $2^\star = 2n/(n-2s) = 2n/(n+a-1)$.
\end{proof}
Since $\partial^0 B^+_r$ is a Lipschitz domain with bounded boundary in $\R^n$, the compact embedding in the fractional Sobolev spaces implies the following remark (see \cite{MR2944369} for further details).
\begin{lemma}\label{dafare}
Let $u\in H^{1,a}(B^+)$ and $q \in [1,2^\star)$, where $2^\star = 2n/(n-2s) = 2n/(n+a-1)$ is Sobolev's exponent for the fractional Laplacian. Then $H^{1,a}(B^+_r(X_0))$ is compactly embedded in $L^q(\partial^0 B^+_r(X_0))$, for every $r \in (0,1)$.
\end{lemma}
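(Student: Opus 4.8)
The plan is to combine the trace characterization already used in the proof of Lemma \ref{lem.poin} with the Rellich--Kondrachov-type compactness for Sobolev--Slobodeckij spaces. Fix $X_0 \in \partial^0 B_1^+$ and $r \in (0,1)$, and let $(u_k)_k$ be a sequence bounded in $H^{1,a}(B_r^+(X_0))$. By \cite[Theorem 2.11]{Nekvinda} the trace operator
$$
\mathrm{Tr}\colon H^{1,a}(B_r^+(X_0)) \to H^s(\partial^0 B_r^+(X_0))
$$
is bounded (its range coinciding with the full Sobolev--Slobodeckij space), so the traces $v_k := u_k(\cdot,0)$ form a bounded sequence in $H^s(\partial^0 B_r^+(X_0))$.

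Since $\partial^0 B_r^+(X_0)$ is a bounded Lipschitz domain of $\R^n$ — indeed a ball — the embedding $H^s(\partial^0 B_r^+(X_0)) \hookrightarrow L^q(\partial^0 B_r^+(X_0))$ is compact for every $q \in [1, 2^\star)$, with $2^\star = 2n/(n-2s)$; this is the compact fractional Sobolev embedding, see \cite[Theorem 7.1]{MR2944369} together with the extension argument for fractional Sobolev spaces on Lipschitz domains (\cite[Theorem 5.4]{MR2944369}). Hence, up to a subsequence, $v_k \to v$ strongly in $L^q(\partial^0 B_r^+(X_0))$ for some $v \in L^q(\partial^0 B_r^+(X_0))$. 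Therefore the composition of $\mathrm{Tr}$ with the inclusion $H^s \hookrightarrow L^q$ maps bounded sets of $H^{1,a}(B_r^+(X_0))$ into relatively compact subsets of $L^q(\partial^0 B_r^+(X_0))$, which is exactly the asserted compact embedding. If one prefers, the case of a general $X_0$ and $r$ can be reduced to $X_0=0$, $r=1$ by the scaling $X \mapsto X_0 + rX$, which preserves the $L_a$-harmonicity and transforms the norms into $\norm{\cdot}{X_0,r}$ up to explicit powers of $r$, and then one invokes the statement on $B^+$.

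There is no real obstacle here: the argument is a two-line composition of known facts. The only point that genuinely matters is that the critical exponent must be excluded, since at $q=2^\star$ the fractional Sobolev embedding recalled in Lemma \ref{lem.poin} is continuous but not compact (the usual concentration and translation phenomena obstruct compactness); this is why Lemma \ref{dafare} is stated for $q \in [1,2^\star)$ only, in contrast with the continuous embedding of Lemma \ref{lem.poin}. All constants involved depend only on $n$, $s$, $q$ and $r$, consistently with the normalization of $\norm{\cdot}{X_0,r}$.
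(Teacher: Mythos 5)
Your proof is correct and follows exactly the route the paper intends: compose the bounded trace operator $H^{1,a}(B^+_r)\to H^s(\partial^0 B^+_r)$ from \cite[Theorem 2.11]{Nekvinda} (the same ingredient used in Lemma~\ref{lem.poin}) with the compact Rellich--Kondrachov-type embedding $H^s \hookrightarrow L^q$, $q<2^\star$, on a bounded Lipschitz domain from \cite{MR2944369}. The paper leaves this as a one-line remark citing \cite{MR2944369}; you have simply written out the same composition in full, so there is nothing to add.
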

In this remaining part of the Section we consider the problem of the optimal regularity of solutions of problem \eqref{system}. Since the solutions of \eqref{system} are bounded in $L^\infty_\loc$ by a Moser's iteration argument, we easily deduce that solutions are locally H\"older continuous.
\begin{theorem}\label{holer.reg}
For any compact set $K\subset B$ we get $u \in C^{0,\alpha}(K \cap B^+)$, for every
$$
\alpha < \min\left\{1, \frac{2s}{2-q}\right\}.
$$
Moreover, if $2s/(2-q)<1$, then $u \in C^{0,\alpha^*}_\loc(\overline{B^+})$, with
      $$
      \alpha^* =\frac{2s}{2-q}.
      $$
\end{theorem}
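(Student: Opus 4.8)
The plan is to reduce the statement to a pointwise growth bound at the nodal points of $u$, and then to run a self-improving bootstrap on the Hölder exponent built on two ingredients of \eqref{system}: that an $L_a$-harmonic function with homogeneous Neumann datum reflects evenly across $\{y=0\}$ to an $L_a$-harmonic function, hence decays at least linearly around any of its zeros; and that the datum obeys the sublinear bound $|\lambda_+(u_+)^{q-1}-\lambda_-(u_-)^{q-1}|\le C|u|^{q-1}$ with $q-1\in[0,1)$, so it is small near $\Gamma(u)$. One first observes that the assertion is local and concerns only regularity up to $\partial^0B_1^+$: in $B_1^+$ the operator $L_a$ is uniformly elliptic with smooth coefficients, so $u\in C^\infty(B_1^+)$; near a point $(x_0,0)$ with $u(x_0,0)\neq0$ the nonlinearity is a smooth function of $u$, so there $u$ inherits the regularity of the linear Dirichlet-to-Neumann problem; hence it suffices to prove $\sup_{B_r^+(X_0)}|u|\le Cr^{\alpha}$ at every $X_0\in\Gamma(u)$. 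As a base step, $u\in L^\infty_\loc$ (by the Moser iteration recalled above) forces the datum $f(u):=\lambda_+(u_+)^{q-1}-\lambda_-(u_-)^{q-1}$ to be bounded on compact sets, so the degenerate De Giorgi--Nash--Moser theory for the $A_2$-weight $y^a$, together with boundary regularity for $\partial^a_y$ with $L^\infty$ datum, yields $u\in C^{0,\alpha_0}_\loc(\overline{B_1^+})$ for some $\alpha_0\in(0,\min\{1,2s\})$.

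For the iterative step, assume $u\in C^{0,\beta}_\loc$ with $\beta<1$ and Hölder seminorm $\le K$ on a fixed compact set, fix $X_0\in\Gamma(u)$, and split $u=h+w$ in $B_r^+(X_0)$: $h$ is $L_a$-harmonic with the trace of $u$ on the spherical part and $\partial^a_yh=0$ on $\partial^0B_r^+(X_0)$, while $w$ is $L_a$-harmonic with zero trace on the spherical part and $-\partial^a_yw=f(u)$ on $\partial^0B_r^+(X_0)$. Since $u(X_0)=0$ gives $|u|\le K|X-X_0|^\beta$ near $X_0$, one has $|f(u)|\le CK^{q-1}|x-x_0|^{\beta(q-1)}$, and rescaling $w$ to $B_1^+$ together with the comparison ($L^\infty$) bound for its Neumann problem gives $\sup_{B_r^+(X_0)}|w|\le CK^{q-1}r^{2s+\beta(q-1)}$; as $h(X_0)=-w(X_0)$ and $h$ decays linearly around $X_0$, this in turn yields $\sup_{B_{\theta r}^+(X_0)}|h|\le C\theta\sup_{B_r^+(X_0)}|h|+CK^{q-1}r^{2s+\beta(q-1)}$. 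Setting $\phi(r)=\sup_{B_r^+(X_0)}|u|$ and $\sigma=2s+\beta(q-1)$, these combine into $\phi(\theta r)\le C_1\theta\,\phi(r)+C_2K^{q-1}r^{\sigma}$, whence, when $\sigma<1$, the standard iteration lemma (with $\theta$ small enough that $C_1\theta<\theta^{\sigma}$) gives $\phi(r)\le C(M,K^{q-1})\,r^{\sigma}$ with \emph{no} loss in the exponent, and when $\sigma\ge1$ it gives $\phi(r)\le Cr^{1-\eps}$ for every $\eps>0$. Converting these into interior gradient estimates upgrades $u$ to $C^{0,\beta'}_\loc$, $\beta'=\min\{\sigma,1-\eps\}$, with seminorm controlled by $C(M,K^{q-1})$.

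Iterating the recursion $\beta\mapsto\min\{2s+\beta(q-1),1-\eps\}$ from $\alpha_0$, its unique fixed point below $1$ is $2s/(2-q)$ — reached in a single step when $q=1$, where $f(u)$ is merely bounded and discontinuous — so the exponents increase to $\min\{1,2s/(2-q)\}$; crucially, since $q-1<1$ the accompanying seminorms obey $K_{k+1}\le C(M+K_k^{q-1})$ and therefore stay bounded by some $K_*$. This gives $u\in C^{0,\alpha}$ on compact sets for all $\alpha<\min\{1,2s/(2-q)\}$. If in addition $2s/(2-q)<1$, the cap $1-\eps$ never activates, $\beta_k\uparrow\alpha^*:=2s/(2-q)$ with $[u]_{C^{0,\beta_k}}\le K_*$ for all $k$, and passing to the limit in $|u(x)-u(z)|\le K_*|x-z|^{\beta_k}$ (for $|x-z|\le1$) yields $[u]_{C^{0,\alpha^*}}\le K_*$, i.e. $u\in C^{0,\alpha^*}_\loc(\overline{B_1^+})$.

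I expect the real work to be in the iterative step, not in the bookkeeping: a ready-made nonlocal analogue of the ``source vanishing like a power of the distance to the nodal set implies an improved expansion'' lemma — the counterpart of \cite[Lemma 3.1]{fermi}, already noted above as unavailable in the fractional setting — does not exist off the shelf, so the Campanato-type decay for $L_a$ via the $h+w$ splitting, the comparison bound for the auxiliary Neumann problem, and the linear decay rate of $L_a$-harmonic functions around their zeros must all be built by hand; some care is also needed in the borderline case $q=1$, where only the base step of the iteration is meaningful and the sharp exponent $2s$ comes straight from the linear theory for bounded data.
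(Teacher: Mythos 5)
Your proof is correct in outline but takes a genuinely different route from the paper. The paper argues by contradiction: it assumes the $C^{0,\alpha}$ seminorm blows up along a sequence, rescales with two comparable auxiliary sequences $w_k=\eta(P_k)u(P_k+r_kX)/(L_kr_k^\alpha)$ and $\overline w_k=(\eta u)(P_k+r_kX)/(L_kr_k^\alpha)$, observes that the rescaled Neumann coefficient $M_k=(r_k^{\alpha^*-\alpha}\eta(P_k)/L_k)^{2-q}\to0$ because $\alpha\le\alpha^*$ and $L_k\to\infty$, and kills the resulting nonconstant entire $L_a$-harmonic even function of growth $\alpha<1$ by a Liouville theorem. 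You instead run a constructive Campanato bootstrap at nodal points: you split $u=h+w$ (with $h$ $L_a$-harmonic, $\partial^a_yh=0$, and $w$ carrying the nonlinear datum), bound $w$ by $CK^{q-1}r^{2s+\beta(q-1)}$ via the $L^\infty$ comparison estimate for the Neumann problem, use the linear decay of even $L_a$-harmonic functions, and iterate $\beta\mapsto\min\{2s+\beta(q-1),1-\eps\}$ to the fixed point $2s/(2-q)$, taking care that the seminorms stay bounded since $q-1<1$. The blow-up argument is shorter and directly produces the endpoint exponent $\alpha^*$ when $\alpha^*<1$, because $M_k\to0$ even at $\alpha=\alpha^*$; your bootstrap buys more quantitative information (explicit decay of $|u|$ at every nodal point, and a dependence of constants on $\|u\|_\infty$ and on $K^{q-1}$) at the cost of building the $h+w$ decomposition, the mixed Dirichlet--Neumann comparison bound, and the passage from pointwise growth at $\Gamma(u)$ to a genuine $C^{0,\alpha}$ estimate (via a dyadic argument near and away from $\Gamma(u)$), none of which is off the shelf in the degenerate $y^a$ setting. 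Two small cautions: when you claim $h$ ``decays linearly around $X_0$'' you really mean $|h(X)-h(X_0)|\le C|X-X_0|r^{-1}\sup_{B_r^+}|h|$ since $h(X_0)=-w(X_0)\ne0$ in general, and you should state that the Campanato iteration constant stays uniform because the target exponents $\sigma_k$ stay bounded away from $1$ precisely when $2s/(2-q)<1$, which is what the endpoint claim requires.
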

\begin{proof}
It is not restrictive to assume that $K= B_{1/2}$. Hence, we need  to prove that for every $\alpha \in (0,\alpha^*)$ we have
$$
\sup_{X_1,X_2 \in \overline{B}_{1/2}}\frac{\abs{u(X_1)-u(X_2)}}{\abs{X_1-X_2}^\alpha} \leq C,
$$
for some positive constant $C>0$. \\Thus, we proceed by contradiction and develop a blow-up argument: suppose there exists $\alpha \in (0,\alpha^*)$ such that, up to a subsequence, we have
$$
L_k = {\frac{\abs{(\eta u)(X_{1,k})-(\eta u)(X_{2,k})}}{\abs{X_{1,k}-X_{2,k}}^\alpha}} \to \infty
    $$
    where $(X_{1,k},X_{2,k}) \in \overline{B^+_1}\times \overline{B^+_1}$ and $\eta\in C^\infty_c(B_1)$ is a smooth function such that
    $$
    \begin{cases}
      \eta(X)=1, & 0 \leq \abs{X}\leq 1/2 \\
      0<\eta(X)\leq 1, & 1/2\leq \abs{X}\leq 1 \\
      \eta(X)=0, & \abs{X}=1.
    \end{cases}
    $$
Given $r_k=\abs{X_{1,k}-X_{2,k}}$ we can prove, as $k\to \infty$, that
    \begin{itemize}
      \item $r_k\to 0$
      \item $\ddfrac{\mbox{dist}(X_{1,k},\partial^+ B^+_1)}{r_k}\to \infty$, $\ddfrac{\mbox{dist}(X_{2,k},\partial^+ B^+_1)}{r_k}\to \infty$.
    \end{itemize}
    Before to continue, let us fix the notations $X_{1,k}=(x_{1,k},y_{1,k})$ and $X_{2,k}=(x_{2,k},y_{2,k})$.
    Now, since the solution are bounded we deduce
    \begin{equation}\label{lip.1}
    L_k \leq \ddfrac{\norm{u}{L^\infty(B^+_1)}}{r_k^\alpha}\left(\eta(X_{1,k})-\eta(X_{2,k})\right),
    \end{equation}
    which immediately implies that $r_k\to 0$. Now, since $\eta$ is compactly supported in $B_1$ and it vanishes on $\partial^+ B^+_1$, for every $X \in \overline{B^+_1}$ we have
    $$
    \eta(X) \leq \mbox{dist}(X, \partial^+ B^+_1) \mathrm{Lip}(\eta),$$
     where obviously $\mathrm{Lip}(\eta)$ denotes the Lipschitz constant of $\eta$. Finally, the inequality \eqref{lip.1} becomes
    $$
    \ddfrac{\mbox{dist}(X_{1,k},\partial^+ B^+_1)}{r_k}+\ddfrac{\mbox{dist}(X_{2,k},\partial^+ B^+_1)}{r_k} \geq\ddfrac{L_k r_k^{\alpha-1}}{\mathrm{Lip}(\eta)\norm{u}{L^\infty(B^+_1)}}\to \infty
    $$
    and the result follows by recalling that $\alpha <1$. The proof is based on two different blow-up sequences, indeed we introduce the auxiliary sequences
    $$
    w_{k}(X) = \eta(P_k) \frac{u(P_k + r_k X)}{L_k r_k^\alpha} \quad \mbox{and} \quad \overline{w}_{k}(X)=\frac{(\eta u)(P_k + r_k X)}{L_k r_k^\alpha}
    $$
    for $X \in B^+_{P_k,r_k}$ and $P_k=(p_{x,k},p_{y,k})$ a suitable sequence of points that will be choose later. On one hand the sequence $(w_k)_k$ has an uniform bound on the  $\alpha$ - H\"older seminorm, i.e.
     $$
     \sup_{X_1 \neq X_2 \in \overline{B^+_{P_k,r_k}}} \frac{\abs{\overline{w}_k(X_1)-\overline{w}_k(X_2)}}{\abs{X_1-X_2}^\alpha}\leq \abs{\overline{w}_k\left(\frac{X_{1,k}-P_k}{r_k}\right)-\overline{w}_k\left(\frac{X_{2,k}-P_k}{r_k}\right)}=1,
     $$
     while on the other hand
     \begin{equation}\label{blowup.merda.1}
\begin{cases}
-L_a^k w_k = 0 &\mbox{in } B^+_{P_k,r_k}\\
-\partial_y^{a,k} w_{k} = M_k \left( \lambda_+ (w_k)_+^{q-1} - \lambda_- (w_k)_-^{q-1}\right)& \mbox{on } \partial^0 B^+_{P_k,r_k}
\end{cases}
\end{equation}
with
$$
L_a^k=\mbox{div}\left(\left(y+\frac{p_{y,k}}{r_k}\right)^a\nabla\right),\quad
\partial_y^{a,k} = \lim_{y\to -\frac{p_{y,k}}{r_k}} \left(y+\frac{p_{y,k}}{r_k}\right)^a\partial_y,
$$
and
\begin{equation}\label{mk}
M_k = \left(\frac{r_k^{\alpha^*-\alpha}}{L_k}\eta(P_k)\right)^{2-q} \to 0^+,
\end{equation}
since $q \in [1,2)$ and $\alpha \in (0,\alpha^*)$.
Now, the importance of these two sequences lies in the fact that they have asymptotically equivalent behaviour. Namely, since
    \begin{align}
    \begin{aligned}\label{lip2.1}
      \abs{w_{k}(X)-\overline{w}_{k}(X)} \leq & \ddfrac{\norm{u_k}{L^\infty(B_1)}}{r_k^\alpha L_k}\abs{\eta(P_k +r_k X)-\eta(P_k)} \\
      \leq  & \frac{\mathrm{Lip}(\eta)r_k^{1-\alpha}}{ L_k}\norm{u_k}{L^\infty(B_1)}\abs{X}
    \end{aligned}
    \end{align}
    we get, for any compact $K\subset \R^{n+1}$, that
    \begin{equation}\label{asympt.1}
    \max_{X\in K\cap B^+_{P_k,r_k}}\abs{w_k(X)-\overline{w}_k(X)}\longrightarrow 0.
    \end{equation}
    Moreover, since $w_k(0)=\overline{w}_k(0)$  we note by \eqref{lip2.1} that
    \begin{align*}
    \abs{w_{k}(X)-w_{k}(0)} &\leq\abs{w_{k}(X)-\overline{w}_{k}(X)} + \abs{\overline{w}_{k}(X)-\overline{w}_{k}(0)}\\
    &\leq C\left(\frac{r_k^{1-\alpha}}{L_k}\abs{X}+\abs{X}^\alpha\right)
    \end{align*}
    and consequently, there exists $C=C(K)$ such that $\abs{w_k(X)-w_k(0)}\leq C$, for every $X \in K$.\\
    Let us prove that it is not restrictive to choose $P_k \in \Sigma$ in the definitions of the sequences $(w_k)_k$ $(\overline{w}_k)_k$, showing that $X_{1,k},X_{2,k}$ must converge to $\partial^0 B^+_1$, i.e. there exists $C>0$ such that, for $k$ sufficiently large,
    $$
    \ddfrac{\mbox{dist}(X_{1,k},\partial^0 B^+_1)+\mbox{dist}(X_{2,k},\partial^0 B^+_1)}{r_k}\leq C.
    $$
    The following proof follows directly the one of \cite[Lemma 4.5]{tvz2}) but for the sake of complexness we report some details. Arguing by contradiction, suppose that
    $$
    \ddfrac{\mbox{dist}(X_{1,k},\partial^0 B^+_1)+\mbox{dist}(X_{2,k},\partial^0 B^+_1)}{r_k}\longrightarrow \infty
    $$
    and let us choose $P_k = X_{1,k}$ in the definition of $w_k,\overline{w}_k$ so that $B^+_{P_k,r_k}\to \R^{n+1}$ and $p_{y,k}^{-1}r_k\to 0^+$. Given $W_k = w_k -w_k(0)$ and $\overline{W}_k = \overline{w}_k -\overline{w}_k(0)$, by construction   $\overline{W}_k$ is a sequence of functions which share the same bound on the $\alpha$ - H\"older seminorm and they are uniformly bounded in every compact $K\subset \R^{n+1}$ since $\overline{W}_k(0)=0$. Thus, by the Ascoli-Arzel\'a theorem, there exists $W \in C(K)$ which, up to a subsequence, is the uniform limit of $\overline{W}_k$. By \eqref{asympt.1}, we also find that $W_k \to W$ uniformly con compact sets.\\ In order to reach a contradiction we can prove that $W$ is a nonconstant globally H\"older harmonic function with $\alpha \in (0,\alpha^*)$.
    Since we already know that $W\in C^{0,\alpha}(\R^{n+1})$ it reamins to prove the harmonicity of the limit function. To this purpose, let $\varphi \in C^\infty_c(\R^{n+1})$ be a compactly supported smooth function and $\overline{k}$ be sufficiently large so that $\mbox{supp}\,\varphi \subset B^+_{P_k,r_k}$, for all $k\geq \overline{k}$. Fixed
$i = 1,\dots,h$, by testing the first equation in \eqref{blowup.merda.1} with $\varphi$ we get
$$
\int_{\R^{n+1}}{\mbox{div}\left(\left(1+y\frac{r_k}{p_{y,k}}\right)^a\nabla\varphi\right)w_{k}\mathrm{d}X} =0.
$$
Passing to the uniform limit and observing that
$$
\left(1+y\frac{r_k}{p_{y,k}}\right)^a \to 1\quad\mbox{in } C^\infty\left(\mbox{supp}\,\varphi\right),
$$
we deduce that $W$ is indeed harmonic. The contradiction follows by the classical Liouville Theorem once we show that $W$ is globally $\alpha$ - H\"older continuous and not constant. Hence, since $P_k =X_{1,k}$ then, up to a subsequence,
$$
\frac{X_{2,k}-P_k}{r_k}=\frac{X_{2,k}-X_{1,k}}{\abs{X_{2,k}-X_{1,k}}}
 \to X_2 \in \partial B_1.
$$
Finally, by the equicontinuity and the uniform convergence, we conclude
$$
\abs{\overline{W}_{k}\left(\frac{X_1-P_k}{r_k}\right)-\overline{W}_{k}\left(\frac{X_2-P_k}{r_k}\right)}=1 \longrightarrow\abs{\overline{W}(0)-\overline{W}(X_2)}=1.
$$
At this point, the choice $P_k=(x_{1,k},0)$ for every $k \in \N$ guarantees the convergence of the rescaled domains $B^+_{P_k,r_k} \to \R^{n+1}_+$, while for any compact set $K\subset\R^{n+1}$
 \begin{equation*}
    \max_{X\in K\cap B^+_{P_k,r_k}}\abs{w_k(X)-\overline{w}_k(X)}\longrightarrow 0.
    \end{equation*}
Hence, we are left with two possibilities:
    \begin{itemize}
      \item for any compact set $K \subset \R^n \times \{0\}$ we have $w_{k}(X)\neq 0$ for every $k \geq k_0$ and $X \in K$;
      \item there exists a sequence $(X_k)_k \subset \R^n \times \{0\}$ such that $w_k(X_k)=0$, for every $k\in \N$.
    \end{itemize}
In the first case, if we define again $W_k=w_k-w_k(0)$ and $\overline{W}_k = \overline{w}_k - \overline{w}_k(0)$ we obtain that the last sequence is uniformly bounded in $C^{0,\alpha}$ and hence $(W_k)_k$ converges uniformly on compact set to a nonconstant globally $\alpha$ - H\"older continuous $L_a$-harmonic function $W$ such that $\partial^a_y W\equiv 0$. Now, extending properly the $W$ to the whole $\R^{n+1}$ with an even reflection with respect to $\{y=0\}$, we find a contradiction with the Liouville theorem for entire $L_a$-harmonic function symmetric in the variable $y$, since $\alpha< 1$.\\
Similarly, in the second case $(w_k)_k$ itself does converge uniformly on compact sets to a nonconstant globally $\alpha$ - H\"older continuous function $w$, and the contradiction follows in the same way.\\\\
Moreover, if $2s/(2-q)<1$, we can repeat the proof as before with
$
\alpha=\alpha^*.
$
Indeed we can proceed by contradiction and develop a blow-up argument based on two subsequences
$$
    w_{k}(X) = \eta(P_k) \frac{u(P_k + r_k X)}{L_k r_k^{\alpha^*}} \quad \mbox{and} \quad \overline{w}_{k}(X)=\frac{(\eta u)(P_k + r_k X)}{L_k r_k^{\alpha^*}}.
    $$
    More precisely, it can be proved that the sequence $(w_k)_k$ does converge uniformly on compact set to a nonconstant global $\alpha^*$-H\"{o}lder continuous function, in contradiction with the Liouville type theorem con $L_a$-harmonic function even with respect to $\{y=0\}$.
\end{proof}
\section{Strong unique continuation principle}\label{3}
This section is devoted to the proof of the validity of the strong unique continuation principle for solution of \eqref{system}. In order to achieve the main result we start our analysis by proving the unique continuation principle in measure: if a solution $u$ is identically zero in a neighborhood in $\R^{n}\times \{0\}$ of a point $X_0 \in \partial^0 B^+_1$, then necessary $u\equiv 0$ on $\partial^0 B^+_1$. Moreover, since $q \in [1,2)$, this will also imply that $u\equiv 0$ on $B^+_1$ (see \cite{STT2020}).\\

Our proof of the unique continuation is deeply based on the validity of an Almgren-type monotonicity formula. Indeed, let
$$
F_{\lambda_+,\lambda_-}(u)=\lambda_+ (u_+)^q + \lambda_- (u_-)^q,
$$
then for $X_0 \in \Gamma(u)$ and $r \in (0,\mbox{dist}(X_0,\partial^+ B^+_1))$, we introduce the functionals
\begin{align}\label{E.H}
\begin{aligned}
E(X_0,u,r)&=\frac{1}{r^{n-1+a}}\left[\int_{B^+_r(X_0)}y^a\abs{\nabla w}^2 \mathrm{d}X+ \int_{\partial^0 B^+_r(X_0)}w\partial^a_y w \mathrm{d}x\right]\\
&=\frac{1}{r^{n+a-1}}\left[\int_{B^+_r(X_0)}{y^a \abs{\nabla u}^2\mathrm{d}X} - \int_{\partial^0 B^+_r (X_0)}{F_{\lambda_+,\lambda_-}(u)\mathrm{d}x}\right]\\
H(X_0,u,r)&=\frac{1}{r^{n+a}}\int_{\partial^+ B^+_r(X_0)}{y^a u^2 \mathrm{d}\sigma},
\end{aligned}
\end{align}
and the associated Almgren-type formula
\begin{equation}\label{almgren}
N(X_0,u,r)= \frac{E(X_0,u,r)}{H(X_0,u,r)}.
\end{equation}
Through the paper we will often abuse the notation $E(u,r),H(u,r)$ and $N(u,r)$ when it is not restrictive to assume that $X_0=0$. On one side, by the Gauss-Green formula we immediately obtain
$$
E(X_0,u,r)= \frac{1}{r^{n+a-1}}\int_{\partial^+ B^+_r (X_0)}{y^a u \partial_r u\mathrm{d}\sigma},
$$
while, if we differentiate the functions $r\mapsto H(X_0,u,r)$, we get
\begin{equation}\label{h.derivative}
  \frac{d}{dr}H(X_0,u,r) = \frac{d}{dr}\left(\int_{\partial^+ B^+_1}{y^a u^2(X_0+rx)\mathrm{d}\sigma} \right) =\frac{2}{r^{n+a}}\int_{\partial^+ B^+_r}{y^a u\partial_r u \mathrm{d}\sigma}= \frac{2}{r} E(X_0,u,r),
\end{equation}
While the derivative of the denominator of the Almgren-type quotient follows by the a direct computation, in the case of the energy $r\mapsto E(X_0,u,r)$ we need to take care of the sublinear term on the boundary $\partial^0 B^+_r$. 
Now, inspired by direct computations, we get
\begin{proposition}\label{E.derivative}
 Let $X_0 \in \Gamma(u)$ and $r \in (0,\mathrm{dist}(X_0,\partial^+ B^+_1))$. Then, it holds
 \begin{align*}
 \frac{d}{dr} E(X_0,u,r) = &\, \frac{2}{r^{n+a-1}} \int_{\partial^+ B^+_r(X_0)}{y^a (\partial_r u)^2\mathrm{d}\sigma}+\\
 &\, +  \frac{1}{r^{n+a-1}}\left[ \frac{2-q}{q}\int_{S^{n-1}_r(X_0)}{F_{\lambda_+,\lambda_-}(u)\mathrm{d}\sigma}-\frac{C_{n,q}^s}{q r}\int_{\partial^0 B^+_r(X_0)}{F_{\lambda_+,\lambda_-}(u)\mathrm{d}x}\right],
 \end{align*}
 where $C^s_{n,q} = 2n-q(n-2s)>0$.
\end{proposition}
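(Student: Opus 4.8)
The plan is to differentiate $E(X_0,u,r)$ directly using the rescaling that already proved useful for $H$. Without loss of generality take $X_0=0$. Writing $E(u,r)$ via the Gauss--Green identity as
$$
E(u,r)=\frac{1}{r^{n+a-1}}\int_{\partial^+ B^+_r}y^a u\,\partial_r u\,\mathrm d\sigma,
$$
I would instead work with the volume form of $E$, namely
$$
E(u,r)=\frac{1}{r^{n+a-1}}\left[\int_{B^+_r}y^a|\nabla u|^2\,\mathrm dX-\int_{\partial^0 B^+_r}F_{\lambda_+,\lambda_-}(u)\,\mathrm dx\right],
$$
and change variables $X=rZ$ so that the domains become fixed ($B^+_1$ and $\partial^0 B^+_1$) and the $r$-dependence is explicit in the integrand. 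Using $|\nabla_X u(rZ)|^2=r^{-2}|\nabla_Z(u(rZ))|^2$ and $y^a=r^a(y_Z)^a$, the first term becomes $r^{2-n-a}\cdot r^{n+a-2}\int_{B^+_1}(y_Z)^a|\nabla(u(rZ))|^2$; similarly the boundary term becomes $r^{-n}\cdot r^{n}\int_{\partial^0 B^+_1}F_{\lambda_+,\lambda_-}(u(rZ))$ after accounting for the $n$-dimensional surface measure. Differentiating in $r$ then produces two kinds of contributions: terms where the derivative hits $u(rZ)$ (giving $Z\cdot\nabla u=r\,\partial_r u$ factors, which upon converting back to $X$ variables and integrating by parts via $L_a u=0$ will yield the Rellich--Nečas / Pohozaev-type boundary term $\frac{2}{r^{n+a-1}}\int_{\partial^+ B^+_r}y^a(\partial_r u)^2$), and terms where the derivative hits the explicit powers of $r$ (giving the zero-order terms in the statement).

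More concretely, I expect the cleanest route is a Rellich--Pohozaev identity: multiply $L_a u=0$ by the vector field $X\cdot\nabla u$ (or $(X-X_0)\cdot\nabla u$) and integrate over $B^+_r$. Integration by parts yields, on the lateral boundary $\partial^+ B^+_r$, the combination $\frac{n+a-1}{2}\int y^a|\nabla u|^2$ minus $r\int y^a(\partial_r u)^2$ (after splitting $|\nabla u|^2$ into radial and spherical parts and using that $X\cdot\nabla u=r\partial_r u$ there); in the interior the bulk terms combine to $\frac{n+a-1}{2}\int_{B^+_r}y^a|\nabla u|^2$ up to the weight; and on the flat boundary $\partial^0 B^+_r$ the Neumann condition $-\partial^a_y u=\lambda_+(u_+)^{q-1}-\lambda_-(u_-)^{q-1}$ converts $\int_{\partial^0 B^+_r}(x\cdot\nabla_x u)\,\partial^a_y u$ into $\int_{\partial^0 B^+_r}(x\cdot\nabla_x u)\big(\lambda_+(u_+)^{q-1}-\lambda_-(u_-)^{q-1}\big)$, which is exactly $\frac1q\,x\cdot\nabla_x F_{\lambda_+,\lambda_-}(u)$; integrating this last term by parts on the $n$-dimensional half-ball face gives $-\frac{n}{q}\int_{\partial^0 B^+_r}F_{\lambda_+,\lambda_-}(u)+\frac{r}{q}\int_{S^{n-1}_r}F_{\lambda_+,\lambda_-}(u)$. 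Assembling these pieces and comparing with the direct differentiation of the normalized quantity $E(u,r)$ (whose $r$-derivative picks up the prefactor derivative $-(n+a-1)r^{-n-a}[\cdots]$ plus the boundary flux $\frac{2}{r^{n+a-1}}\int_{\partial^+ B^+_r}y^a(\partial_r u)^2$ from the $\partial_r$ of the domain) should produce precisely the claimed formula, with $C^s_{n,q}=2n-q(n-2s)$ emerging from combining the $-n/q$ coefficient with the $(n+a-1)/1$ contribution and the identity $a=1-2s$, i.e. $n+a-1=n-2s$.

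The book-keeping with the weight $y^a$ — which is degenerate or singular on $\{y=0\}$ since $a\in(-1,1)$ — is where care is needed: the vector field $X\cdot\nabla u$ has a component in the $y$-direction, so terms like $\int_{\partial^0 B^+_r} y^a\,y\,\partial_y u\,(\cdots)$ and the interior term $\int_{B^+_r}\partial_y(y^a)\,(\text{stuff})=a\int y^{a-1}(\cdots)$ appear and must be shown to combine correctly (in particular many of them vanish on $\{y=0\}$ only in a limiting/regularized sense). The standard fix is to run the Pohozaev computation on $\{y>\varepsilon\}$, or with the translated field that is tangent to $\{y=0\}$, and pass to the limit using the $H^{1,a}$-regularity of $u$ together with the optimal Hölder regularity from Theorem~\ref{holer.reg} (which controls $F_{\lambda_+,\lambda_-}(u)$ near the nodal set and makes the surface integrals $\int_{S^{n-1}_r}F_{\lambda_+,\lambda_-}(u)$ well-defined for a.e.\ $r$). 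The subtlety that $F_{\lambda_+,\lambda_-}(u)=\lambda_+(u_+)^q+\lambda_-(u_-)^q$ is only $C^1$ (not $C^2$) in $u$ when $q\in(1,2)$, and merely Lipschitz when $q=1$, is harmless here because we only ever differentiate $F$ composed with $u$ once and use the chain rule $\nabla F_{\lambda_+,\lambda_-}(u)=q\big(\lambda_+(u_+)^{q-1}-\lambda_-(u_-)^{q-1}\big)\nabla u$, which holds a.e.\ and in the weak sense for $W^{1,2}$ functions even for these exponents. Thus the main obstacle is not conceptual but the careful regularization in the $y$-variable and the justification that all boundary integrals converge; once that is handled, matching coefficients is routine and yields the stated expression for $\frac{d}{dr}E(X_0,u,r)$.
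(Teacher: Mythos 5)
Your proposal matches the paper's proof: the paper likewise differentiates $E(u,r)$ directly and then invokes the Rellich--Pohozaev identity obtained by testing $L_a u=0$ against $\langle X,\nabla u\rangle$ and integrating by parts over $B^+_r$, converting the flat-boundary contribution via $\nabla_x F_{\lambda_+,\lambda_-}(u)=q(-\partial^a_y u)\nabla_x u$ and integrating by parts on $\partial^0 B^+_r$ to produce the $\frac{r}{q}\int_{S^{n-1}_r}F - \frac{n}{q}\int_{\partial^0 B^+_r}F$ terms, exactly as you outline. The regularization/regularity issues you flag are addressed more tersely in the paper: for $q=1$ it observes that the Gauss--Green formula applies to continuous vector fields $Y$ with $\mathrm{div}\,Y\in L^1$, applied to $Y_1=F_{\lambda_+,\lambda_-}(u)(x,0)$, with integrability guaranteed by the trace characterization of $H^{1,a}$.
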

\begin{proof}
  Up to translation, let us assume that $X_0=0$ and consider
  \begin{align*}
  \frac{d}{dr}E(u,r) =&\, \frac{1-n-a}{r^{n+a}}\left[\int_{B^+_r}{y^a \abs{\nabla u}^2\mathrm{d}X} - \int_{\partial^0 B^+_r}{F_{\lambda_+,\lambda_-}(u)\mathrm{d}x}\right]+\\
  &\, + \frac{1}{r^{n+a-1}}\left[\int_{\partial^+B^+_r}{y^a \abs{\nabla v}^2\mathrm{d}\sigma} - \int_{S^{n-1}_r}{F_{\lambda_+,\lambda_-}(u)\mathrm{d}\sigma}\right].
  \end{align*}
  By multiplying the first equation in \eqref{system} with $\langle X,\nabla u\rangle$ and integrating by parts over $B^+_r$ we obtain
  $$
  \int_{B^+_r}{y^a \langle \nabla u, \nabla \langle X,\nabla u\rangle \rangle \mathrm{d}X} = r \int_{\partial^+ B^+_r}{y^a (\partial_r u)^2\mathrm{d}\sigma} + \int_{\partial^0 B^+_r}{\langle x, \nabla_x u\rangle (-\partial^a_y u)\mathrm{d}x}.
  $$
  Using the known identity
  $$
  \langle \nabla u, \langle X, \nabla u\rangle \rangle = \abs{\nabla u}^2 + \left\langle X , \nabla \left(\frac{1}{2}\abs{\nabla u}^2
\right)\right\rangle,$$
we finally get
$$
\frac{1-n-a}{2}\int_{B^+_r}{y^a \abs{\nabla u}^2\mathrm{d}X} + \frac{r}{2}\int_{\partial^+ B^+_r}{y^a \abs{\nabla u}^2\mathrm{d}\sigma} = r\int_{\partial^+ B^+_r}{y^a (\partial_r u)^2\mathrm{d}\sigma} + \int_{\partial^0 B^+_r}{\langle x, \nabla_x u\rangle (-\partial^a_y u)\mathrm{d}x}.
$$
Since $\nabla_x F_{\lambda_+,\lambda_-}(u) = q(-\partial^a_y u) \nabla_x u$ in $\partial^0 B^+_1$, we get
\begin{align}\label{gauss.green}
\begin{aligned}
\int_{\partial^0 B^+_r}{\langle x, \nabla_x u\rangle (-\partial^a_y u)\mathrm{d}x} & = \frac{1}{q}\int_{\partial^0 B^+_r}{\langle x, \nabla_x F_{\lambda_+,\lambda_-}(u)\rangle \mathrm{d}x}\\
&= \frac{r}{q}\int_{S^{n-1}_r}{F_{\lambda_+,\lambda_-}(u)\mathrm{d}\sigma} -\frac{n}{q}\int_{\partial^0 B^+_r}{F_{\lambda_+,\lambda_-}(u)\mathrm{d}x}.
\end{aligned}
\end{align}
Summing together the previous equalities, we finally get the claimed result. We remark that the previous computations are also valid in the case $q = 1$, but require some justification. More precisely, as observed in \cite[5, Proposition 2.7]{soaveweth}, the Gauss-Green formula holds for all vector fields $Y \in C(\overline{B_r},\R^{n+1})$ with $\mathrm{div}Y \in L^1(B_r)$. In particular in \eqref{gauss.green}, the Gauss-Green formula is applied to the vector fields
$$
Y_1 =  F_{\lambda_+,\lambda_-}(u) (x,0) = (\lambda_+ u_+ + \lambda_- u_-) (x,0),
$$
where
$$
\mathrm{div}Y_1 = \mathrm{sign}(\lambda_+ u_+ - \lambda_- u_-)\langle x, \nabla_x u\rangle +n(\lambda_+ u_+ + \lambda_- u_-) \quad\mbox{a.e. in } \partial^0 B^+_1.
$$
The previous quantity is absolutely integrable in $\partial^0 B^+_r$ as a direct consequence of the characterization of the class of trace of $H^{1,a}(B^+_r)$ with $r\in (0,1)$ (see \cite[Theorem 2.11]{Nekvinda}).
\end{proof}
Now, combining the previous estimate, we finally get a lower bound for the derivative of the Almgren-type frequency formula.
\begin{corollary}\label{N.derivative}
Let $X_0 \in \Gamma(u)$ and $r_1,r_2 \in (0,\mathrm{dist}(X_0,\partial^+ B^+_1))$ such that $H(X_0,u,r) \neq 0$ for a.e. $r\in (r_1,r_2)$. Then
$$
\frac{d}{dr}N(X_0,u,r) \geq 
\ddfrac{r\left(\frac{2-q}{q}\right)\int_{S^{n-1}_r(X_0)}{F_{\lambda_+,\lambda_-}(u)\mathrm{d}\sigma} - \frac{C^s_{n,q}}{q}\int_{\partial^0 B^+_r(X_0)}{F_{\lambda_+,\lambda_-}(u)\mathrm{d}x} }{\int_{\partial^+ B^+_r(X_0)}y^a u^2\mathrm{d}\sigma}
$$
for a.e. $r \in (r_1,r_2)$.
\end{corollary}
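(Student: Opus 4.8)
The plan is to differentiate the quotient $N = E/H$ directly and to exploit the already established identities, namely $\frac{\mathrm{d}}{\mathrm{d}r}H(X_0,u,r) = \frac{2}{r}E(X_0,u,r)$ from \eqref{h.derivative}, the Gauss--Green representation $E(X_0,u,r)= \frac{1}{r^{n+a-1}}\int_{\partial^+ B^+_r(X_0)}y^a u\,\partial_r u\,\mathrm{d}\sigma$, and the formula for $\frac{\mathrm{d}}{\mathrm{d}r}E(X_0,u,r)$ in Proposition \ref{E.derivative}. Since $r\mapsto E(X_0,u,r)$ and $r\mapsto H(X_0,u,r)$ are locally absolutely continuous (being, up to the normalising powers of $r$, integrals of $L^1$ functions on balls and spheres), on any interval $(r_1,r_2)$ where $H(X_0,u,\cdot)$ stays positive the quotient is differentiable for a.e. $r$, and one may write
\[
\frac{\mathrm{d}}{\mathrm{d}r}N(X_0,u,r) = \frac{E'(X_0,u,r)}{H(X_0,u,r)} - \frac{2\,E(X_0,u,r)^2}{r\,H(X_0,u,r)^2}.
\]

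Substituting Proposition \ref{E.derivative}, the first summand splits as the "radial derivative" contribution $\frac{2}{r^{n+a-1}H}\int_{\partial^+B^+_r(X_0)}y^a(\partial_r u)^2\,\mathrm{d}\sigma$ plus the two boundary terms carrying $F_{\lambda_+,\lambda_-}(u)$. The heart of the argument is to absorb the negative cross term $-2E^2/(rH^2)$ into the radial derivative contribution. For this I would apply the Cauchy--Schwarz inequality on $\partial^+ B^+_r(X_0)$ with weight $y^a$,
\[
\left(\int_{\partial^+ B^+_r(X_0)}y^a u\,\partial_r u\,\mathrm{d}\sigma\right)^2 \le \left(\int_{\partial^+ B^+_r(X_0)}y^a u^2\,\mathrm{d}\sigma\right)\left(\int_{\partial^+ B^+_r(X_0)}y^a (\partial_r u)^2\,\mathrm{d}\sigma\right),
\]
and then rewrite the first factor on the right as $r^{n+a}H(X_0,u,r)$ and the left-hand side as $r^{2(n+a-1)}E(X_0,u,r)^2$. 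This yields $\int_{\partial^+ B^+_r(X_0)}y^a(\partial_r u)^2\,\mathrm{d}\sigma \ge r^{n+a-2}\,E(X_0,u,r)^2/H(X_0,u,r)$, hence $\frac{2}{r^{n+a-1}H}\int_{\partial^+B^+_r(X_0)}y^a(\partial_r u)^2\,\mathrm{d}\sigma \ge \frac{2E^2}{rH^2}$, so the cross term is exactly cancelled and we are left with
\[
\frac{\mathrm{d}}{\mathrm{d}r}N(X_0,u,r) \ge \frac{1}{r^{n+a-1}H(X_0,u,r)}\left[\frac{2-q}{q}\int_{S^{n-1}_r(X_0)}F_{\lambda_+,\lambda_-}(u)\,\mathrm{d}\sigma - \frac{C^s_{n,q}}{qr}\int_{\partial^0 B^+_r(X_0)}F_{\lambda_+,\lambda_-}(u)\,\mathrm{d}x\right].
\]
It then suffices to use $r^{n+a-1}H(X_0,u,r) = \frac{1}{r}\int_{\partial^+ B^+_r(X_0)}y^a u^2\,\mathrm{d}\sigma$ to rewrite the prefactor as $r\big/\int_{\partial^+ B^+_r(X_0)}y^a u^2\,\mathrm{d}\sigma$, which produces precisely the claimed lower bound.

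The computation is essentially routine; the only points requiring a word of care are the a.e. differentiability of $N$ on $(r_1,r_2)$ (which follows from absolute continuity of $E$ and $H$ together with the assumption $H(X_0,u,r)\neq 0$ a.e.), and the fact that in the borderline case $q=1$ Proposition \ref{E.derivative} is still available, thanks to the Gauss--Green justification for vector fields in $C(\overline{B_r})$ with divergence in $L^1$ recalled in its proof. I do not expect a genuine obstacle here: the substantive ingredient is the weighted Cauchy--Schwarz cancellation of the negative term, which is the fractional counterpart of the classical Almgren monotonicity computation.
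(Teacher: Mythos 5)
Your proposal is correct and follows exactly the route the paper indicates: differentiate the quotient $N=E/H$, substitute the identities $H'=\frac{2}{r}E$ and Proposition \ref{E.derivative}, and use the weighted Cauchy--Schwarz inequality on $\partial^+B^+_r(X_0)$ to cancel the cross term $-2E^2/(rH^2)$ against the $(\partial_r u)^2$ contribution (the exponents $r^{-(2n+2a-1)}$ on both sides match, so the cancellation is exact). The algebraic bookkeeping and the final rewriting of the prefactor $r^{n+a-1}H$ are all correct, so this is essentially the paper's proof spelled out in detail.
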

\begin{proof}
The proof follows essentially the ideas of the similar results in literature and it is based on a straightforward combination \eqref{h.derivative}, Proposition \ref{E.derivative} and the validity of the Cauchy-Schwarz inequality on $\partial^+ B^+_r(X_0)$.
\end{proof}
Now, we are ready to show an intermediate statement in the existence of the vanishing order of out solution. The following is the classical unique continuation principle for solution of the sub-linear nonlocal equation.
\begin{theorem}\label{ucp}
  Let $q \in [1,2), \lambda_+>0, \lambda_-\geq 0$ and $u \in H^{1,a}_\loc(B^+_1)$ be a weak solution of \eqref{system} which vanishes in a neighbourhood in $\R^n\times \{0\}$ of a point on  $\Gamma(u)$. Then $u \equiv 0$ in $\partial^0 B^+_1$.
\end{theorem}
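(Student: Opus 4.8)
The strategy is to reduce to a unique continuation statement for the purely linear operator $L_a$, by observing that in the region where $u$ vanishes the discontinuous/sublinear boundary term is switched off, and then to combine the Almgren-type formula already established with the classification of homogeneous $L_a$-harmonic functions.

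Up to a translation I may assume $X_0=0$, so that $u\equiv 0$ on $\partial^0 B^+_\rho$ for some $\rho\in(0,1)$. The key remark is that the Neumann datum vanishes there too: since $u_+=u_-=0$ on $\partial^0 B^+_\rho$ we get $-\partial^a_y u=\lambda_+(u_+)^{q-1}-\lambda_-(u_-)^{q-1}=0$, hence also $F_{\lambda_+,\lambda_-}(u)\equiv 0$, on $\partial^0 B^+_\rho$. Thus the right-hand side in Corollary \ref{N.derivative} is identically zero for every $r<\rho$, so $r\mapsto N(0,u,r)$ — which on $(0,\rho)$ is the Almgren frequency of the linear problem $L_au=0$ with homogeneous Neumann condition — is monotone non-decreasing. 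Now suppose, aiming at a contradiction, that $u\not\equiv 0$ in every neighbourhood of the origin. Since $u$ also vanishes on $\partial^0 B^+_r$ for $r<\rho$, uniqueness for the $L_a$-harmonic Dirichlet problem forces $H(0,u,r)>0$ for all such $r$. Consequently the limit $k:=\lim_{r\to0^+}N(0,u,r)\in[0,+\infty)$ exists, and the standard Almgren blow-up procedure in this degenerate weighted framework (as in \cite{STT2020} and the references therein) shows that the normalized rescalings $u(r\,\cdot)/\sqrt{H(0,u,r)}$ converge, along a subsequence, in $H^{1,a}_\loc(\R^{n+1}_+)$ to a nontrivial $k$-homogeneous $p\in\mathfrak{B}_k^a(\R^{n+1}_+)$ symmetric with respect to $\{y=0\}$ (the symmetry and the $L_a$-harmonicity of the limit use precisely that $\partial^a_y u\equiv 0$ and $u\equiv 0$ on $\partial^0 B^+_\rho$). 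But every rescaling vanishes on $\R^n\times\{0\}$, so $p(\cdot,0)\equiv 0$; by the characterization of $\mathfrak{B}_k^a$ in \cite{STT2020} a symmetric $L_a$-harmonic function is determined by its trace, hence $p\equiv 0$ — contradicting the normalization. (Equivalently one may reflect $u$ evenly across $\partial^0 B^+_\rho$ to obtain an $L_a$-harmonic function in $B_\rho$ which is smooth in $(x,y^2)$ and, writing it as $\sum_{j\ge0}c_j(x)y^{2j}$ with $c_0=u(\cdot,0)$ and the recursion $c_{j+1}=-[(2j+2)(2j+1+a)]^{-1}\Delta_xc_j$ forced by the equation, deduce $c_0\equiv0\Rightarrow c_j\equiv0$ for all $j$.)

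It follows that $u\equiv 0$ on $B^+_\sigma$ for some $\sigma\in(0,\rho)$. Since $u$ solves the locally uniformly elliptic equation $L_au=0$ in the connected open set $B^+_1\subset\{y>0\}$, where the coefficient $y^a$ is real-analytic, $u$ is real-analytic in $B^+_1$; vanishing on the open subset $B^+_\sigma$, it must vanish identically in $B^+_1$, and then, by the trace embedding of Lemma \ref{lem.poin}, $u\equiv 0$ on $\partial^0 B^+_1$, which is the claim. The only genuinely delicate point is the central step: producing a well-defined, homogeneous, symmetric blow-up limit in $\mathfrak{B}_k^a(\R^{n+1}_+)$ and invoking the rigidity ``zero trace forces the zero function''; reflection, Corollary \ref{N.derivative}, interior analyticity and Lemma \ref{lem.poin} take care of the rest.
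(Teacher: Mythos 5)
Your proof is correct and takes a genuinely different route from the paper's. Both arguments start from the same observation that, where $u\equiv 0$ on $\partial^0 B^+_\rho$, the boundary datum $\lambda_+(u_+)^{q-1}-\lambda_-(u_-)^{q-1}$ and the potential $F_{\lambda_+,\lambda_-}(u)$ vanish, so the problem is purely linear there; from this point on the strategies diverge. The paper works at the \emph{boundary} $\partial U$ of the vanishing set: it picks $X_1\in U$ close to $\partial U$, identifies the transition radius $r_0=\sup\{r:d(r)=0\}$, shows $u\equiv 0$ in $B^+_{r_0}$ via the trace UCP and \cite[Proposition 5.9]{STT2020}, and then runs a delicate doubling argument on the interval $(r_3,r_2]$ where the nonlinearity has switched on: the lower bound $E(u,r)\geq\frac{2-q}{2}d(r)$ gives almost-monotonicity $N(u,r)e^{C_3 r}\nearrow$, which bounds $\frac{d}{dr}\log H(u,r)$ and contradicts $H(u,r_3)=0$. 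You instead work entirely \emph{inside} the vanishing set, prove $u\equiv 0$ in a half-ball $B^+_\sigma$ by a blow-up/rigidity argument, and then invoke interior analytic hypoellipticity for $L_a$ in $\{y>0\}$ to propagate globally. What the paper's route buys is that it never leaves the weighted-Sobolev/monotonicity toolbox — it does not use analyticity, which could matter if one wanted to generalize to non-analytic variable coefficients; what your route buys is brevity and a cleaner conceptual separation of the nonlinear and linear parts. Two small remarks: first, your blow-up step is not really needed — once you know $u=\partial^a_y u=0$ on the open set $\partial^0 B^+_\rho$, \cite[Proposition 5.9]{STT2020} (which the paper itself invokes inside this very proof) already gives $u\equiv 0$ in $B^+_\rho$ directly, so the Almgren blow-up and the classification of $\mathfrak{B}^a_k$ are a longer road to the same lemma; second, you have in effect noticed that the paper's own intermediate conclusion ``$u\equiv 0$ in $B^+_{r_0}$'' already suffices, via interior analyticity, to finish — so the paper's subsequent Almgren doubling estimate is, strictly for the purpose of this theorem, also dispensable.
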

\begin{proof}
  The proof follows the main idea of its local counterpart in \cite{soaveweth}. Hence, let us define the vanishing set on $\R^n\times \{0\}$ as
  $$
  U= \{ x \in \partial^0 B^+_1 \colon u\equiv 0 \mbox{ in a neighborhood of }x\} \subset \R^n.
  $$
  By hypothesis, since $u\not\equiv 0$ and $\partial^0 B^+_1$ is connected, we already know that $U$ is open, non-empty and $\partial U \cap \partial^0 B^+_1 \neq \emptyset$, where $\partial U$ is the topological boundary of $U$ as a subset of $\R^n$. \\Thus, given $X^* \in \partial U \cap \partial^0 B^+_1$ so that $u(X^*)=0$, we claim the existence of $R\in (0,1)$ such that,
  $$
  \mbox{for every }r \in (0,R),\,\, \left\{F_{\lambda_+,\lambda_-}(u) >0 \right\} \cap \partial^0 B^+_r(X^*) \neq \emptyset.
  $$
  This statement shows how the vanishing set of the nonlinearity $F_{\lambda_+,\lambda_-}(u)$ affects the one of the solution $u$. Indeed, while in the case $\lambda_+, \lambda_- >0$ it is obvious that $u \not\equiv 0$ implies the claimed result, if $\lambda_-=0$ the implication is not trivial. Suppose by contradiction that there exists $\overline{r}>0$ such that $u\leq 0$ in $\partial^0 B^+_{\overline{r}}(X^*)$, then by \eqref{system} the function $u$ satisfies
  \begin{equation}\label{sharmonic.ext}
  \begin{cases}
  L_a u=0 & \mbox{in } B^+_{\overline{r}}(X^*) \\
  -\partial^a_y u = 0 &\mbox{on } \partial^0 B^+_{\overline{r}}(X^*).
\end{cases}
  \end{equation}
  Since $X^*\in \partial U$, there exists an open set $\Omega\subsetneqq \partial^0 B^+_{\overline{r}}(X^*)$ such that $u\equiv 0$ in $\Omega$. As a consequence, the unique continuation principle for solution of \eqref{sharmonic.ext} (see e.g. \cite{CS2007, STT2020,fallfelli1}) yields that $u\equiv 0$ in the whole $\partial^0 B^+_{\overline{r}}(X^*)$, which ultimately imply that $X^* \in U$, in contradiction with the fact the $U$ is open.\\
  Finally, as in \cite{soavesublinear} let us consider $X_1 \in U$ such that $\abs{X_1-X^*} < \mathrm{dist}(X_1,S^{n-1}_1)$. Without loss of generality, we can assume that $X_1=0$ and $u\equiv 0$ on $B_R$, for some $R \in (0,1)$. Given
  $$
  d(r)=\frac{1}{q}\int_{\partial^0 B^+_r}{F_{\lambda_+,\lambda_-}(u)\mathrm{d}x}, \quad\mbox{such that}\quad d'(r)=\frac{1}{q}\int_{S^{n-1}_r}{F_{\lambda_+,\lambda_-}(u)\mathrm{d}x},
  $$
  let us set $r_0 = \sup \{ r\geq 0: d(r) =0\}\in (0,1)$. By definition, the function $r\mapsto d(r)$ is not identically zero on $(0,1)$ and, by the monotonicity of $d$, we get $d(r)=0$ for every $r \in (0,r_0)$ and $d(r)>0$ for $r>r_0$.\\Moreover, we claim that $E(u,r) = 0 $ for every $r \in (0,r_0)$: since $R\leq r_0$ and $F_{\lambda_+,\lambda_-}(u)$ is nonnegative, the solution $u$ satisfies
   $$
  \begin{cases}
  L_a u=0 & \mbox{in } B^+_{r} \\
  -\partial^a_y u = 0 &\mbox{on } \partial^0 B^+_{r}.
\end{cases}
  $$
  for every $r \in (0,r_0)$, and $u \equiv 0$ on $\partial B^+_R$. By the unique continuations principle, the solution is identically zero on $\partial^0 B^+_r$. Finally, by \cite[Proposition 5.9]{STT2020}, the function $u$ is identically zero in $B^+_{r_0}$.\\
  Now, by Proposition \ref{E.derivative} we get
  $$
  \frac{d}{dr} E(u,r) \geq \frac{2-q}{r^{n+a-1}}d'(r)-\frac{C_{n,q}^s}{ r^{n+a}}d(r),
  $$
  for every $r \in (0,1)$. In particular, since $q <2$, for $r \in (r_0,1)$
  $$
  \frac{d}{dr} E(u,r) \geq (2-q)d'(r)-C_1d(r) \quad\mbox{with } C_1= \frac{C^s_{n,q}}{r_0^{n+a}}.
  $$
  and integrating in $(r_0,r)$, with $r \in (r_0,1)$ we obtain
  \begin{align*}
  E(u,r) &\geq E(u,r_0) + (2-q)\left(d(r)-d(r_0)\right) - C_1 \int_{r_0}^r d(t)\mathrm{d}t\\
  &\geq (2-q)d(r) - C_1 (r-r_0)d(r),
  \end{align*}
  where we used that $E(u,r_0)=d(r_0)=0$ and the monotonicity of $d$. By choosing $r_1 \in (r_0,1)$ such that $C_1(r_1-r_0) < (2-q)/2$, we infer that
  \begin{equation}\label{estim}
      E(u,r) \geq \frac{2-q}{2}d(r)\quad\mbox{for }r \in (r_0,r_1).
  \end{equation}
  Now, since $u\not\equiv 0$ in $\partial^0B^+_{r_1}$, there must exist $r_2\in (r_0,r_1)$ such that $H(u,r_2)\neq 0$. Fixed the upper bound $r_2$, let us define
  $$
  r_3 = \inf\{r \in (0,r_2) : H(u,t) >0 \mbox{ for every }t \in (r,r_2)\}.
  $$
  Obviously $r_3\geq r_0 >0$ and by the continuity of $u$ we get $H(u,r_3)=0$ and $H(u,r)>0$ for $r\in (r_3,r_2]$. \\Finally, passing through the logarithmic derivative of the Almgren-type formula, by Corollary \ref{N.derivative} we get
$$
\frac{d}{dr}\log N(u,r) \geq -\frac{C^s_{n,q} d(r)}{r^{n+a}E(u,r)}\\
\geq -\frac{2C^s_{n,q}}{(2-q)r^{n+a}} \quad\mbox{for }r \in (r_3,r_2],
$$
where in the second inequality we use the estimate \eqref{estim} on the lower bound of energy $E(u,r)$ on $(r_0,r_1)$. Since $r_3\geq r_0>0$, by integrating the previous inequality we proved that
\begin{equation}\label{almg}
r \mapsto N(u,r) e^{C_3 r}, \quad\mbox{with }\,C_3= \frac{2C^s_{n,q}}{(2-q)r^{n+a}_0},
\end{equation}
is monotone non-decreasing in $(r_3,r_2]$. Since $H(u,r) \neq 0 $ on $(r_3,r_2]$, combining the monotonicity in \eqref{almg} with the estimate \eqref{h.derivative}, we get for $r \in (r_3,r_2]$
\begin{equation}\label{estim2}
\frac{d}{dr}\log H(u,r) = 2\frac{N(u,r)}{r} \leq 2\frac{N(u,r_2)e^{C_3r_2}}{r}\leq 2\frac{N(u,r_2)e^{C_3r_2}}{r_0}.
\end{equation}
The claimed result follows from the definition of $r_3$, namely since $H(u,r_3)=0$ we have
$$
\lim_{r\to r_3^+}\log H(u,r) = -\infty,
$$
in contradiction with the estimate \eqref{estim2}.
\end{proof}

Now, in order to improve the previous result by showing the validity of the strong unique continuation principle, we introduce several monotonicity formulas. These preliminary results  will be also crucial in the study of the local behavior of solutions near nodal points.
Thus, we introduce the fundamental objects of our analysis: inspired by \cite{soavesublinear}, for $X_0 \in \Gamma(u)$ and $r \in (0,\mbox{dist}(X_0,\partial^+ B^+_1))$ we consider the functional
\begin{equation}\label{E.t}
E_t(X_0,u,r) =  \frac{1}{r^{n+a-1}}\left[\int_{B^+_r(X_0)}{y^a \abs{\nabla u}^2\mathrm{d}X} - \frac{t}{q}\int_{\partial^0 B^+_r (X_0)}{F_{\lambda_+,\lambda_-}(u)\mathrm{d}x}\right],
\end{equation}
and similarly we introduce the two-parameters families of functionals
\begin{align}\label{two-para.monotonicity}
\begin{aligned}
  N_t(X_0,u,r) & = \frac{E_t(X_0,u,r)}{H(X_0,u,r)},\\
  W_{k,t}(X_0,u,r) & = \frac{H(X_0,u,r)}{r^{2k}}\left(N_t(X_0,u,r) - k \right)\\
  & = \frac{E_t(X_0,u,r)}{r^{2k}} - k \frac{H(X_0,u,r)}{r^{2k}}.
\end{aligned}
\end{align}
More precisely, the functionals $r\mapsto N_t(X_0,u,r)$ and $r\mapsto W_{k,t}(X_0,u,r)$ are respectively an Almgren-type frequency and a Weiss-type formula. Indeed, for $t=q$ we recover the functionals in \eqref{E.H} and their associated Almgren-type formula.\\
Proceeding exactly as in Proposition \ref{E.derivative} and Corollary \ref{N.derivative}, we get
\begin{equation}\label{E.derivative.t}
 \frac{d}{dr} E_t(X_0,u,r) = \frac{2}{r^{n+a-1}} \int_{\partial^+ B^+_r(X_0)}{y^a (\partial_r u)^2\mathrm{d}\sigma}+ R_t(X_0,u,r)
\end{equation}
where
\begin{equation}\label{E.derivative.t.resto}
  R_t(X_0,u,r) = \frac{1}{r^{n+a-1}}\left[ \frac{2-t}{q}\int_{S^{n-1}_r(X_0)}{F_{\lambda_+,\lambda_-}(u)\mathrm{d}\sigma}-\frac{C_{n,t}^s}{q r}\int_{\partial^0 B^+_r(X_0)}{F_{\lambda_+,\lambda_-}(u)\mathrm{d}x}\right]
\end{equation}
and $C^s_{n,t}= 2n-t(n-2s)$. Moreover, we get
\begin{proposition}\label{weiss.mon}
Let $X_0 \in \Gamma(u)$ and $r \in (0,\mathrm{dist}(X_0,\partial^+ B^+_1))$. Then we have
\begin{align}\label{weiss.general}
\begin{aligned}
  \frac{d}{dr}W_{k,t}(X_0,u,r) =&\, \frac{2}{r^{n+a-1+2k}}\int_{\partial^+ B^+_r(X_0)}{y^a \left(\partial_r u -\frac{k}{r}u\right)^2 \mathrm{d}\sigma} + \\
  &\,+  \frac{1}{r^{n+a-1+2k}}\left[ \frac{2-t}{q}\int_{S^{n-1}_r(X_0)}{F_{\lambda_+,\lambda_-}(u)\mathrm{d}\sigma} -\frac{C_{n,t}^s-2k(t-q)}{q r}\int_{\partial^0 B^+_r(X_0)}{F_{\lambda_+,\lambda_-}(u)\mathrm{d}x}\right].
  \end{aligned}
  \end{align}
In particular, for $t=2$ and $k\geq 2s/(2-q)$ the function $r\mapsto W_{k,2}(X_0,u,r)$ is monotone non-decreasing.
\end{proposition}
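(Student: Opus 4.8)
The plan is to differentiate $W_{k,t}(X_0,u,r)$ directly from the form $W_{k,t}=r^{-2k}E_t(X_0,u,r)-k\,r^{-2k}H(X_0,u,r)$ recorded in \eqref{two-para.monotonicity}, feeding in the two derivative identities already at hand: \eqref{h.derivative} for $r\mapsto H(X_0,u,r)$ and \eqref{E.derivative.t}--\eqref{E.derivative.t.resto} for $r\mapsto E_t(X_0,u,r)$. Assuming $X_0=0$ without loss of generality, I would first record two elementary identities. Testing the weak formulation of \eqref{system} with $u$ itself and integrating by parts over $B_r^+$ gives $\int_{B_r^+}y^a\abs{\nabla u}^2\,\mathrm{d}X=\int_{\partial^+B_r^+}y^a u\,\partial_r u\,\mathrm{d}\sigma+\int_{\partial^0B_r^+}F_{\lambda_+,\lambda_-}(u)\,\mathrm{d}x$, hence
\[
E_t(u,r)=\frac{1}{r^{n+a-1}}\int_{\partial^+B_r^+}y^a u\,\partial_r u\,\mathrm{d}\sigma+\frac{q-t}{q}\,\frac{1}{r^{n+a-1}}\int_{\partial^0B_r^+}F_{\lambda_+,\lambda_-}(u)\,\mathrm{d}x,
\]
so that $r^{1-n-a}\int_{\partial^+B_r^+}y^a u\,\partial_r u=E_q(u,r)$ (which coincides with $E(u,r)$ from \eqref{E.H}); moreover, by \eqref{h.derivative}, $\tfrac{d}{dr}H(u,r)=2\,r^{-n-a}\int_{\partial^+B_r^+}y^a u\,\partial_r u$, and trivially $r^{1-n-a}\int_{\partial^+B_r^+}y^a u^2=r\,H(u,r)$.

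From here the computation is purely algebraic. Differentiating the two summands of $W_{k,t}$ and inserting \eqref{E.derivative.t} and \eqref{h.derivative} one collects
\[
\frac{d}{dr}W_{k,t}(u,r)=\frac{2}{r^{n+a-1+2k}}\int_{\partial^+B^+_r}y^a(\partial_r u)^2\,\mathrm{d}\sigma+\frac{R_t(u,r)}{r^{2k}}-\frac{2k}{r^{2k+1}}E_t(u,r)-\frac{2k}{r^{2k+1}}E_q(u,r)+\frac{2k^2}{r^{2k+1}}H(u,r).
\]
The key manipulation is to complete the square: by the identities of the previous paragraph,
\[
\frac{2}{r^{n+a-1+2k}}\int_{\partial^+B^+_r}y^a\Bigl(\partial_r u-\frac{k}{r}u\Bigr)^2\mathrm{d}\sigma=\frac{2}{r^{n+a-1+2k}}\int_{\partial^+B^+_r}y^a(\partial_r u)^2\,\mathrm{d}\sigma-\frac{4k}{r^{2k+1}}E_q(u,r)+\frac{2k^2}{r^{2k+1}}H(u,r),
\]
so substituting this for the $(\partial_r u)^2$-term cancels the $H$-contributions, merges the two $E_q$-terms into $\tfrac{2k}{r^{2k+1}}E_q(u,r)$, and leaves $\tfrac{2k}{r^{2k+1}}\bigl(E_q(u,r)-E_t(u,r)\bigr)+r^{-2k}R_t(u,r)$. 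Since $E_q-E_t=\tfrac{t-q}{q}\,r^{1-n-a}\int_{\partial^0B_r^+}F_{\lambda_+,\lambda_-}(u)$ by the displayed formula for $E_t$, combining this with the explicit expression \eqref{E.derivative.t.resto} of $R_t$ produces the coefficient $\tfrac{2-t}{q}$ in front of $\int_{S^{n-1}_r}F_{\lambda_+,\lambda_-}(u)$ and the coefficient $-\tfrac{C^s_{n,t}-2k(t-q)}{qr}$ in front of $\int_{\partial^0B_r^+}F_{\lambda_+,\lambda_-}(u)$, which is precisely \eqref{weiss.general}. As in the remark after Proposition \ref{E.derivative}, all integrations by parts survive the borderline case $q=1$ because $F_{\lambda_+,\lambda_-}(u)(x,0)$ is a vector field with $L^1$ divergence.

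For the last assertion I would then set $t=2$ in \eqref{weiss.general}: the sphere term drops since $\tfrac{2-t}{q}=0$, while $C^s_{n,2}=2n-2(n-2s)=4s$, so
\[
\frac{d}{dr}W_{k,2}(X_0,u,r)=\frac{2}{r^{n+a-1+2k}}\int_{\partial^+B^+_r(X_0)}y^a\Bigl(\partial_r u-\frac{k}{r}u\Bigr)^2\mathrm{d}\sigma-\frac{4s-2k(2-q)}{q\,r^{n+a+2k}}\int_{\partial^0B^+_r(X_0)}F_{\lambda_+,\lambda_-}(u)\,\mathrm{d}x.
\]
When $k\ge 2s/(2-q)$ we have $2k(2-q)\ge 4s$, hence $4s-2k(2-q)\le 0$; since $F_{\lambda_+,\lambda_-}(u)=\lambda_+(u_+)^q+\lambda_-(u_-)^q\ge 0$ and the first term is visibly non-negative, $\tfrac{d}{dr}W_{k,2}(X_0,u,r)\ge0$, i.e. $r\mapsto W_{k,2}(X_0,u,r)$ is monotone non-decreasing.

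I do not anticipate a genuine obstacle: the statement is a differentiation identity of exactly the same flavour as Proposition \ref{E.derivative} and Corollary \ref{N.derivative}, and the only point requiring care is the bookkeeping of the $\partial^0B_r^+$-integral, whose coefficient receives contributions both from $R_t$ and from the gap between $E_t$ and the boundary Dirichlet integral $E_q$ — this is the mechanism that turns $C^s_{n,t}$ into $C^s_{n,t}-2k(t-q)$ and makes the sign favourable exactly at $t=2$, $k\ge 2s/(2-q)$.
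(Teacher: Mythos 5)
Your proof is correct and follows essentially the same route as the paper: differentiate $W_{k,t}=r^{-2k}E_t - k\,r^{-2k}H$, insert \eqref{h.derivative} and \eqref{E.derivative.t}--\eqref{E.derivative.t.resto}, complete the square using the Gauss--Green identity $E_q(u,r)=r^{1-n-a}\int_{\partial^+B^+_r}y^a u\,\partial_r u$, and then extract the extra $\int_{\partial^0B_r^+}F$ contribution from the gap $E_q-E_t$. One small merit of your write-up is that you keep $E_q$ and $E_t$ explicitly distinct throughout; the paper's display \eqref{bohpoi} momentarily writes the intermediate line as $\frac{1}{r^{2k}}\frac{d}{dr}E_t - 4k\,\frac{E_t}{r^{2k+1}} + 2k^2\frac{H}{r^{2k+1}}$, which would require $\frac{d}{dr}H=\frac{2}{r}E_t$ rather than $\frac{2}{r}E_q$; this is a typo that the paper corrects implicitly when it subsequently invokes the Gauss--Green form of $E_t$, whereas your bookkeeping avoids it from the outset. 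Your verification of the sign at $t=2$, $C^s_{n,2}=4s$, $k\ge 2s/(2-q)$ is also exactly the paper's argument.
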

\begin{proof}
  Up to translation, let us suppose $X_0=0$ and $r \in (0,1)$. In order to simplify the notations, through the proof we will omit the dependence of the functionals with respect to $u$ and $X_0$. A direct computation gives
  \begin{align}
  \begin{aligned}\label{bohpoi}
    \frac{d}{dr}W_{k,t}(u,r)  = &\, -2k\left( \frac{E_t(u,r)}{r^{2k+1}} - k \frac{H(u,r)}{r^{2k+1}}\right) + \frac{1}{r^{2k}}\left(\frac{d}{dr}E_t(u,r) -k\frac{d}{dr}H(u,r) \right)\\
    = &\,\frac{1}{r^{2k}}\frac{d}{dr}E_t(u,r) -4k\frac{E_t(u,r)}{r^{2k+1}}
        +2k^2\frac{H(u,r)}{r^{2k+1}}
  \end{aligned}
  \end{align}
  where in the second inequality we used the estimate \eqref{h.derivative}. By the Gauss-Green formula in \eqref{E.t} we get
   $$
   E_t(u,r)= \frac{1}{r^{n+a-1}}\left[\int_{\partial^+ B^+_r}{y^a u \partial_r u\mathrm{d}X}- \frac{t-q}{q}\int_{\partial^0 B^+_r }{F_{\lambda_+,\lambda_-}(u)\mathrm{d}x}\right]
   $$
   and taking care of the estimate \eqref{E.derivative.t}, we finally obtain
  \begin{align*}
  \frac{d}{dr}W_{k,t}(u,r) =&\, \frac{2}{r^{n+a-1+2k}}\int_{\partial^+ B^+_r}{y^a \left(\partial_r u -\frac{k}{r}u\right)^2 \mathrm{d}\sigma} +\\
  &\,+  \frac{1}{r^{n+a-1+2k}}\left[ \frac{2-t}{q}\int_{S^{n-1}_r}{F_{\lambda_+,\lambda_-}(u)\mathrm{d}\sigma}-\frac{C_{n,t}^s-2k(t-q)}{q r}\int_{\partial^0 B^+_r}{F_{\lambda_+,\lambda_-}(u)\mathrm{d}x}\right].
  \end{align*}
Now, for $t=2$ and $k\geq 2s/(2-q)$ the monotonicity follows straightforwardly by the previous computations. Indeed, we have
\begin{equation}\label{remainder}
\frac{d}{dr}W_{k,t}(u,r) \geq  -\frac{C_{n,t}^s-2k(t-q)}{qr^{n+a+2k}}
\int_{\partial^0 B^+_r}{F_{\lambda_+,\lambda_-}(u)\mathrm{d}x},
\end{equation}
and in our specific case $C_{n,2}^s - 2k(2-q)\leq 0$ if and only if $k\geq 2s/(2-q)$.
\end{proof}
Thus, as a simple corollary, we deduce the following results.
\begin{corollary}\label{homo}
  Let $X_0\in \Gamma(u)$ and $k\geq 2s/(2-q)$. Then, there exists the limit
$$
W_{k,2}(X_0,u,0^+) = \lim_{r \to 0^+}W_{k,2}(X_0,u,r).
$$
Moreover, for $k=2s/(2-q)$, the map $r\mapsto W_{k,2}(X_0,u,r)$ is constant if and only if $u$ is $2s/(2-q)$-homogeneous in $\overline{\R^{n+1}}$ with respect to $X_0$.
\end{corollary}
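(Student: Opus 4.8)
The plan is to read off both assertions directly from the differential identity in Proposition \ref{weiss.mon}, specialized to $t=2$ and $k\geq 2s/(2-q)$. For the existence of the limit, the key observation is the inequality \eqref{remainder}: for $t=2$ and $k\geq 2s/(2-q)$ the constant $C^s_{n,2}-2k(2-q)\leq 0$, so the only negative contribution to $\frac{d}{dr}W_{k,2}(X_0,u,r)$ in \eqref{weiss.general} is $-\frac{C^s_{n,2}-2k(2-q)}{qr}\int_{\partial^0B^+_r(X_0)}F_{\lambda_+,\lambda_-}(u)\,\mathrm{d}x\geq 0$, hence $\frac{d}{dr}W_{k,2}(X_0,u,r)\geq 0$. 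Thus $r\mapsto W_{k,2}(X_0,u,r)$ is monotone non-decreasing on $(0,\mathrm{dist}(X_0,\partial^+B^+_1))$, and a monotone function admits a (possibly infinite) limit as $r\to 0^+$. To see the limit is finite, I would note that $W_{k,2}(X_0,u,r)$ is bounded above near any fixed radius (by the definition in \eqref{two-para.monotonicity}, using $u\in H^{1,a}_\loc$), and being non-decreasing it is bounded below by its value at small $r$ minus nothing — more precisely monotonicity gives $W_{k,2}(X_0,u,0^+)\leq W_{k,2}(X_0,u,r_0)<+\infty$ for any fixed $r_0$, while a lower bound follows from $E_t\geq -\frac{t}{q}\int_{\partial^0B^+_r}F_{\lambda_+,\lambda_-}(u)$ together with the trace embedding in Lemma \ref{lem.poin} and the optimal regularity in Theorem \ref{holer.reg}, which control $H(X_0,u,r)/r^{2k}$ and the boundary integral from below; hence the limit $W_{k,2}(X_0,u,0^+)$ exists in $\R$.

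For the rigidity statement, I would argue by the standard Weiss-type dichotomy. Suppose $k=2s/(2-q)$ and $r\mapsto W_{k,2}(X_0,u,r)$ is constant on an interval $(0,r_0)$. Then $\frac{d}{dr}W_{k,2}(X_0,u,r)=0$ a.e., so by \eqref{weiss.general} both nonnegative terms on the right-hand side vanish identically: first,
\[
\int_{\partial^+B^+_r(X_0)}y^a\left(\partial_r u-\frac{k}{r}u\right)^2\mathrm{d}\sigma=0\quad\text{for a.e. }r,
\]
which forces $\partial_r u=\frac{k}{r}u$ on $\partial^+B^+_r(X_0)$ for a.e. $r$; this is precisely the Euler relation characterizing $k$-homogeneity, so $u$ is $k$-homogeneous with respect to $X_0$ in $\overline{B^+_{r_0}}$, and since $u$ solves an $L_a$-harmonic extension problem with a $(q-1)$-homogeneous boundary nonlinearity consistent with this exponent (note $k(q-1)=k-2s$ matches the scaling of $-\partial^a_y$), the homogeneity propagates to all of $\overline{\R^{n+1}_+}$ by unique continuation (Theorem \ref{ucp}) — or one simply extends the homogeneous profile. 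Conversely, if $u$ is $k$-homogeneous with respect to $X_0$, then $\partial_r u=\frac{k}{r}u$, so the first term in \eqref{weiss.general} vanishes; moreover $k$-homogeneity of $u$ gives that $r\mapsto E_2(X_0,u,r)/r^{2k}$ and $r\mapsto H(X_0,u,r)/r^{2k}$ are both constant (each integrand rescales by exactly $r^{2k}$ after the change of variables), hence $W_{k,2}(X_0,u,r)$ is constant.

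The one point requiring a little care — and the place I expect the argument to be most delicate — is verifying that in the rigidity direction the vanishing of the second, non-negative remainder term (the boundary integral of $F_{\lambda_+,\lambda_-}(u)$, which at $k=2s/(2-q)$, $t=2$ has coefficient exactly zero) does not give extra information, and conversely that $k$-homogeneity of $u$ is genuinely compatible with the nonlinear Neumann condition in \eqref{system}, i.e. that a $k$-homogeneous $u$ with $k=2s/(2-q)$ automatically has $F_{\lambda_+,\lambda_-}(u)(X_0+r\,\cdot\,)$ scaling like $r^{qk}=r^{2k-2s}$, so that $-\partial^a_y u$ and $\lambda_+(u_+)^{q-1}-\lambda_-(u_-)^{q-1}$ have matching homogeneity $k-2s=k(q-1)$. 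Since the coefficient $C^s_{n,2}-2k(2-q)=2n-2(n-2s)-2k(2-q)$ vanishes precisely when $k=2s/(2-q)$, the remainder term simply drops out of \eqref{weiss.general} at this critical exponent, and the whole dichotomy reduces cleanly to the single squared term $\int_{\partial^+B^+_r(X_0)}y^a(\partial_r u-\frac{k}{r}u)^2\,\mathrm{d}\sigma$, which is exactly the mechanism that makes the critical Weiss functional detect homogeneity.
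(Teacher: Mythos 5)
Your main line of attack — apply Proposition \ref{weiss.mon} with $t=2$, read off monotonicity from the sign of $C^s_{n,2}-2k(2-q)$, and then for $k=k_q$ exploit that the remainder coefficient vanishes exactly so the derivative reduces to the single squared term — is precisely the content of what the paper (silently) does; your rigidity discussion, including the scaling check that $F_{\lambda_+,\lambda_-}(u)$ of a $k_q$-homogeneous $u$ rescales so that $E_2/r^{2k_q}$ and $H/r^{2k_q}$ are both literally constant, is correct.

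However, there is a genuine error in your treatment of the existence of the limit. You assert that the limit $W_{k,2}(X_0,u,0^+)$ is finite (``exists in $\R$'') and try to produce a lower bound via the trace embedding in Lemma \ref{lem.poin} and the H\"older regularity of Theorem \ref{holer.reg}. Neither result gives a lower bound: both control $H(X_0,u,r)/r^{2k}$ and the boundary integral from \emph{above}, not from below, and nothing prevents $u$ from vanishing to very high order at $X_0$, in which case $H(X_0,u,r)/r^{2k}\to 0$ rapidly. In fact the paper explicitly records that the limit is $-\infty$ for $k$ large: see Lemma \ref{absurd.gamma} (``if $W_{k_1,2}(X_0,u,0^+)<0$ then $W_{k_2,2}(X_0,u,0^+)=-\infty$ for $k_2>k_1$'') and Corollary \ref{overline.k}. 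So your finiteness claim is false. Fortunately it is also unnecessary: the corollary only asserts that the limit \emph{exists}, and a monotone non-decreasing function of $r$ has a limit as $r\to 0^+$ in $[-\infty,+\infty)$ (bounded above by $W_{k,2}(X_0,u,r_0)$ for any fixed $r_0$). You should simply delete the paragraph on finiteness and let monotonicity give existence in the extended-real sense; with that correction the proposal coincides with the paper's argument.
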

\begin{lemma}\label{absurd.gamma}
  For $X_0 \in \Gamma(u)$, there exists $k\geq 2s/(2-q)$ such that $$W_{k,2}(X_0,u,0^+)<0.$$ Moreover, if $W_{k_1,2}(X_0,u,0^+)<0$ then $W_{k_2,2}(X_0,u,0^+)=-\infty$ for every $k_2>k_1$.
\end{lemma}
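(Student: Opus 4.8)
The plan is to deduce both statements from three ingredients already at hand: the monotonicity of $r\mapsto W_{k,2}(X_0,u,r)$ for $k\ge 2s/(2-q)$ (Proposition~\ref{weiss.mon} and Corollary~\ref{homo}), the finiteness of the energy $E_2(X_0,u,\cdot)$, and the positivity of $H(X_0,u,\cdot)$ at small radii; no new monotonicity computation is needed. Of course we work with $u\not\equiv 0$, otherwise $W_{k,2}(X_0,u,\cdot)\equiv 0$ and the statement is empty.

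For the first assertion I would first fix a radius $r_0\in\big(0,\mathrm{dist}(X_0,\partial^+B^+_1)\big)$ at which $H(X_0,u,r_0)>0$. Such a radius exists because $u\not\equiv 0$: if $H(X_0,u,r)=0$ for all $r$ in a small interval $(0,\rho)$, then $u\equiv 0$ on $B^+_\rho(X_0)$ and the unique continuation principle behind Theorem~\ref{ucp} forces $u\equiv 0$. At this $r_0$ the quantity $E_2(X_0,u,r_0)$ is finite: the Dirichlet integral is finite since $u\in H^{1,a}_\loc(B^+_1)$, and $\int_{\partial^0 B^+_{r_0}(X_0)}F_{\lambda_+,\lambda_-}(u)<+\infty$ by the trace embedding of Lemma~\ref{lem.poin}, as $q<2\le 2^\star$. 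Then I would simply choose $k\ge 2s/(2-q)$ large enough that $k\,H(X_0,u,r_0)>E_2(X_0,u,r_0)$, so that
\[
W_{k,2}(X_0,u,r_0)=\frac{E_2(X_0,u,r_0)-k\,H(X_0,u,r_0)}{r_0^{2k}}<0,
\]
and conclude $W_{k,2}(X_0,u,0^+)\le W_{k,2}(X_0,u,r_0)<0$ by the monotonicity of $r\mapsto W_{k,2}(X_0,u,r)$.

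For the second assertion the computation is purely algebraic: for $k_2>k_1$ and $r$ in the admissible range,
\[
W_{k_2,2}(X_0,u,r)=\frac{1}{r^{2(k_2-k_1)}}\left[W_{k_1,2}(X_0,u,r)-(k_2-k_1)\,\frac{H(X_0,u,r)}{r^{2k_1}}\right]\le\frac{W_{k_1,2}(X_0,u,r)}{r^{2(k_2-k_1)}},
\]
the inequality using $H\ge 0$ and $k_2>k_1$. Since $W_{k_1,2}(X_0,u,0^+)<0$, the non-decreasing function $r\mapsto W_{k_1,2}(X_0,u,r)$ satisfies $W_{k_1,2}(X_0,u,r)\le-\delta$ for all $r\in(0,\bar r)$, for suitable $\delta,\bar r>0$; plugging this into the displayed estimate yields $W_{k_2,2}(X_0,u,r)\le-\delta\,r^{-2(k_2-k_1)}\to-\infty$ as $r\to 0^+$, whence $W_{k_2,2}(X_0,u,0^+)=-\infty$.

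The only step that is not bookkeeping is the positivity $H(X_0,u,r_0)>0$ at some small $r_0$, i.e.\ the fact that a solution which vanishes on a half-ball around a nodal point must vanish identically; this is exactly where Theorem~\ref{ucp} (together with unique continuation for $L_a$-harmonic functions) is used. After that everything is a one-line choice of $k$ and the elementary identity above, so I do not anticipate any real obstacle.
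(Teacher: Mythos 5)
Your proposal is correct and follows the paper's own proof essentially line by line: use Theorem~\ref{ucp} to locate a radius $r_0$ with $H(X_0,u,r_0)>0$, pick $k\geq 2s/(2-q)$ large enough that $W_{k,2}(X_0,u,r_0)<0$, and invoke the monotonicity of Proposition~\ref{weiss.mon}; the second claim is the same algebraic rearrangement of $W_{k_2,2}$ in terms of $W_{k_1,2}$ followed by the limit $r\to 0^+$. The only difference is that you spell out a couple of details the paper leaves implicit (finiteness of $E_2(X_0,u,r_0)$ and the final $\delta$-argument), which is fine.
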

\begin{proof}
Up to translation, let us consider $X_0=0$ and $r \in (0,1)$.
By Theorem \ref{ucp}, since $u\not \equiv 0$ there exists $r_1 \in (0,1)$ such that $H(u,r_1)\neq 0$. Now, there exists $k\geq 2s/(2-q)$ sufficiently large, such that
$$
W_{k,2}(u,r_1)= \frac{E_2(u,r_1)}{r_1^{2k}} - k \frac{H(u,r_1)}{r_1^{2k}} <0,
$$
and by the monotonicity result in Proposition \ref{weiss.mon} we obtain $W_{k,2}(u,0^+) \leq W_{k,2}(u,r_1) <0$, for $k$ sufficiently large.\\
Now, fixed $k_1>0$ such that $W_{k_1,2}(u,0^+)<0$, let us consider $k_2>k_1$. Thus, for $r \in (0,1)$
\begin{align*}
W_{k_2,2}(u,r) &= \frac{E_2(u,r)}{r^{2k_2}} - k_2 \frac{H(u,r)}{r^{2k_2}}\\
&= \frac{1}{r^{2(k_2-k_1)}}\left[\frac{E_2(u,r)}{r^{2k_1}} - k_1 \frac{H(u,r)}{r^{2k_1}} \right] - \frac{k_2-k_1}{r^{2k_2}}H(u,r)\\
&\leq \frac{1}{r^{2(k_2-k_1)}}W_{k_1,2}(u,r),
\end{align*}
which implies the claimed conclusion.
\end{proof}
As corollary of Lemma \ref{absurd.gamma}, we get
\begin{corollary}\label{overline.k}
  For every  $X_0 \in \Gamma(u)$ such that $\mathcal{O}(u,X_0) \geq 2s/(2-q)$, there exists finite
  $$
  \overline{k} = \inf\{k>0\colon W_{k,2}(X_0,u,0^+)=-\infty\} \in \left[\frac{2s}{2-q},+\infty\right).
  $$
  Moreover, the limit $W_{k,2}(X_0,u,0^+)$ exists for every $k\geq 0$ and it satisfies
  $$
  \begin{cases}
    W_{k,2}(X_0,u,0^+)=0 & \mbox{if } 0<k<\frac{2s}{2-q} \\
    W_{k,2}(X_0,u,0^+)\geq0 & \mbox{if } \frac{2s}{2-q}\leq k < \overline{k} \\
    W_{k,2}(X_0,u,0^+)=-\infty & \mbox{if } k > \overline{k}.
  \end{cases}
  $$
\end{corollary}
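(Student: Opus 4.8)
The plan is to derive the three regimes for $W_{k,2}(X_0,u,0^+)$ from three facts already available: the existence of the limit $W_{k,2}(X_0,u,0^+)$ for $k\geq 2s/(2-q)$ (Proposition \ref{weiss.mon} and Corollary \ref{homo}), the rigidity of the map $k\mapsto W_{k,2}(X_0,u,0^+)$ contained in Lemma \ref{absurd.gamma}, and the hypothesis $\mathcal{O}(u,X_0)\geq 2s/(2-q)$, which governs the subcritical range. The only genuinely analytic step — and the one I expect to be the main obstacle — is to show that $W_{k,2}(X_0,u,r)\to 0$ as $r\to 0^+$ for every $0<k<2s/(2-q)$. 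Expanding $W_{k,2}$ from \eqref{E.t} and \eqref{two-para.monotonicity},
$$
W_{k,2}(X_0,u,r)=\frac{1}{r^{2k}}\cdot\frac{1}{r^{n+a-1}}\int_{B^+_r(X_0)}y^a\abs{\nabla u}^2\,\mathrm{d}X-\frac{2}{q}\cdot\frac{1}{r^{2k}}\cdot\frac{1}{r^{n+a-1}}\int_{\partial^0 B^+_r(X_0)}F_{\lambda_+,\lambda_-}(u)\,\mathrm{d}x-\frac{k}{r^{2k}}H(X_0,u,r).
$$
Since $k<2s/(2-q)\leq\mathcal{O}(u,X_0)$, the $H^{1,a}$-vanishing order gives $\norm{u}{H^{1,a}(B^+_r(X_0))}^2=o(r^{2k'})$ for every $k'<\mathcal{O}(u,X_0)$, so the first and third terms are bounded by $r^{-2k}\norm{u}{H^{1,a}(B^+_r(X_0))}^2\to 0$. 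For the nonlinear term I would combine $F_{\lambda_+,\lambda_-}(u)\leq\max\{\lambda_+,\lambda_-\}\abs{u}^q$ with the trace inequality \eqref{poincare}:
$$
\frac{1}{r^{n+a-1}}\int_{\partial^0 B^+_r(X_0)}F_{\lambda_+,\lambda_-}(u)\,\mathrm{d}x\leq C\,r^{1-a}\,\frac{1}{r^{n}}\int_{\partial^0 B^+_r(X_0)}\abs{u}^q\,\mathrm{d}x\leq C\,r^{2s}\,\norm{u}{H^{1,a}(B^+_r(X_0))}^q,
$$
using $1-a=2s$; multiplying by $r^{-2k}$ produces a factor $o(r^{2s-2k+qk'})$, and the inequality $k<2s/(2-q)$ forces $(2k-2s)/q<2s/(2-q)\leq\mathcal{O}(u,X_0)$, so a choice $k'\in(\max\{0,(2k-2s)/q\},\mathcal{O}(u,X_0))$ makes the exponent positive and this term tends to $0$ as well. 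Hence $W_{k,2}(X_0,u,0^+)=0$ for every $0<k<2s/(2-q)$; the delicate part is exactly the bookkeeping of the powers of $r$ in each piece of $W_{k,2}$ and their compatibility with the vanishing order.

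Next I would pin down $\overline{k}$. By Lemma \ref{absurd.gamma} there exists $k^*\geq 2s/(2-q)$ with $W_{k^*,2}(X_0,u,0^+)<0$, and its second part gives $W_{k_2,2}(X_0,u,0^+)=-\infty$ for all $k_2>k^*$; hence $A:=\{k>0\colon W_{k,2}(X_0,u,0^+)=-\infty\}$ contains $(k^*,+\infty)$, so $A\neq\emptyset$ and $\overline{k}=\inf A\leq k^*<+\infty$. On the other hand, the subcritical branch just obtained shows $A\cap(0,2s/(2-q))=\emptyset$, whence $\overline{k}\geq 2s/(2-q)$. This yields $\overline{k}\in[2s/(2-q),+\infty)$, and since every element of $A$ is then $\geq 2s/(2-q)$, Corollary \ref{homo} guarantees that $W_{k,2}(X_0,u,0^+)$ exists for every $k\geq 2s/(2-q)$; together with the subcritical branch, the limit exists for every $k\geq 0$.

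It remains to read off the two remaining branches from the definition of $\overline{k}$. If $2s/(2-q)\leq k<\overline{k}$, then $k\notin A$ by minimality, so $W_{k,2}(X_0,u,0^+)\neq-\infty$; moreover it cannot be strictly negative, for otherwise Lemma \ref{absurd.gamma} would force $(k,+\infty)\subseteq A$, hence $\overline{k}=\inf A\leq k$, contradicting $k<\overline{k}$. Therefore $W_{k,2}(X_0,u,0^+)\geq 0$ on this range. Finally, if $k>\overline{k}=\inf A$, pick $k_0\in A$ with $k_0<k$; then $W_{k_0,2}(X_0,u,0^+)=-\infty<0$, and a last application of Lemma \ref{absurd.gamma} (with the role of $k_1$ played by $k_0$ and $k_2=k>k_0$) gives $W_{k,2}(X_0,u,0^+)=-\infty$, which completes the proof.
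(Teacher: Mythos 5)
Your proof is correct and follows essentially the same route as the paper: the subcritical regime $0<k<2s/(2-q)$ is handled by combining the $H^{1,a}$-vanishing-order bound with the trace inequality \eqref{poincare} to estimate each piece of $W_{k,2}$, and the remaining two branches follow from the monotonicity of $r\mapsto W_{k,2}$ (Corollary~\ref{homo}) together with the rigidity statement in Lemma~\ref{absurd.gamma}, just as in the paper. The only difference is presentational: you make explicit the middle-branch contradiction (a strictly negative limit at some $k<\overline{k}$ would force $\overline{k}\leq k$) that the paper leaves as a brief remark.
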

\begin{proof}
  The existence of $\overline{k} \geq 0$ follows by Lemma \ref{absurd.gamma}. Now, let us consider separately the cases $k<2s/(2-q)$ and $k\geq 2s/(2-q)$.\\
  In the first case, since $\mathcal{O}(u,X_0)\geq 2s/(2-q)$, by \eqref{H1a.vanishing} there exists $\varepsilon>0$ such that
  $$
  k < \frac{2s}{2-q}-\varepsilon < \mathcal{O}(u,X_0),
  $$
  and two constant $C>0,r_0>0$, depending on $\varepsilon$, such that
  $$
  \norm{u}{H^{1,a}(B^+_r(X_0))}^2 \leq C r^{2\left(\frac{2s}{2-q}-\varepsilon\right)},
  $$
  for every $r\in (0,r_0)$. By definition of the Weiss-type formula, we get
  \begin{align*}
\abs{W_{k,2}(X_0,u,r)} &\leq C\frac{1}{r^{2k}}\left((1+k)\norm{u}{H^{1,a}(B_r(X_0))}^2 + \frac{2}{q} r^{1-a}\norm{u}{H^{1,a}(B_r(X_0))}^q  \right)\\
& \leq C\frac{1}{r^{2k}}\left(r^{2\alpha} + r^{2s+q\alpha} \right),
\end{align*}
with $\alpha = 2s/(2-q) -\varepsilon$. Finally, since $q\in [1,2)$, we get
$$
\abs{W_{k,2}(X_0,u,r)}\leq C r^{2\left(\frac{2s}{2-q}-k-\varepsilon\right)} + C r^{2\left(\frac{2s}{2-q}-k-\frac{q\varepsilon}{2}\right)}
$$
which leads to the claimed result ad $r\to 0^+$. In particular, this estimate suggests that $\overline{k}\geq 2s/(2-q)$.\\
Instead, in the case $k> 2s/(2-q)$ the existence of a non-negative limit for $k< \overline{k}$ follows by the monotonicity result in Proposition \ref{weiss.mon} and by Lemma \ref{absurd.gamma}.
\end{proof}
The previous result emphasizes an hidden relation between the notion of $H^{1,a}$-vanishing order and the transition exponent $\overline{k}$ defined in Corollary \ref{overline.k}, which will be deeply examined in the following section. Finally, we can prove the main result of the section
\begin{proof}[Proof of Theorem \ref{strong-unique}]
  By contradiction, suppose that $u\not\equiv 0$ on $\partial^0 B^+_1$ and $\mathcal{O}(u,X_0)=+\infty$, i.e.
  $$
   \limsup_{r\to 0^+} \frac{1}{r^{2\beta}} \norm{u}{H^{1,a}(B_r(X_0))}^2 = 0, \quad \mbox{for any }\beta>0.
  $$
   In particular, given $\overline{k}>0$ as in Corollary \ref{overline.k}, let us fix $k>\overline{k}$ and $\beta=2k/q$. Thus, there exists $r_0>0$ and $C>0$ such that
  \begin{equation}\label{beta}
  \frac{1}{r^{n+a-1}}\int_{B^+_r(X_0)}{y^a \abs{\nabla u}^2 \mathrm{d}X} + \frac{1}{r^{n+a}}\int_{\partial^+B^+_r(X_0)}{y^a u^2 \mathrm{d}\sigma} \leq C r^{\frac{4}{q}k} \quad \mbox{for every }r \in (0,r_0).
  \end{equation}
  On one side, since $2k/q>k$ for $q \in [1,2)$, by the previous inequality we easily have
  $$
  H(X_0,u,r) \leq C r^{2k}\quad \mbox{for every }r \in (0,r_0)
  $$
  while, by an integration by parts, fixed $\Lambda= \max\{\lambda_+,\lambda_-\}$ we get
  $$
  \frac{1}{r^{n+a-1}}\int_{\partial^0 B^+_r (X_0)}{F_{\lambda_+,\lambda_-}(u)\mathrm{d}x}  \leq \frac{\Lambda}{r^{n+a-1}}   \int_{\partial^0 B^+_r (X_0)}{\abs{u}^q \mathrm{d}x}
  \leq  C r^{1-a}  \norm{u}{H^{1,a}(B_r(X_0))}^{q}
  \leq  C r^{2k},
  $$
  where in the second inequality we use Lemma \ref{lem.poin} and in the last one \eqref{beta}. \\Finally, collecting the previous estimate, for every $r \in (0,r_0)$ we have
  $$
  W_{k,2}(X_0,u,r) \geq -\frac{1}{r^{n+a-1+2k}}  \frac{2}{q}\int_{\partial^0 B^+_r (X_0)}{F_{\lambda_+,\lambda_-}(u)\mathrm{d}x} - \frac{k}{r^{2k}}H(X_0,u,r) \geq -\left(\frac{2}{q}+ k\right)C,
  $$
  and in particular $W_{k,2}(X_0,u,0^+)>-\infty$, in contradiction with the fact that, being $k >\overline{k}$, by Corollary \ref{overline.k} we must have $W_{k,2}(X_0,u,0^+)=-\infty$ for any $k > \overline{k}$.
\end{proof}
\section{
The transition exponent for the Weiss-type formula
}\label{4}
In this section we prove a different characterization of the $H^{1,a}$-vanishing order in terms of the transition exponent for the Weiss-type monotonicity formula, taking care of the different case of $\mathcal{O}(u, X_0) < 2s/(2-q)$ and $\mathcal{O}(u, X_0) \geq 2s/(2-q)$.\\
First, let us consider the latter case and let us prove the following characterization of the transition exponent previously introduced.
\begin{proposition}\label{claim}
  For every  $X_0 \in \Gamma(u)$ such that $\mathcal{O}(u,X_0) \geq 2s/(2-q)$, we have
  $$
 \inf\left\{k>0\colon W_{k,2}(X_0,u,0^+)=-\infty\right\} = \frac{2s}{2-q}.
  $$
\end{proposition}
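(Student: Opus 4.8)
The plan is to prove $\overline k\le k_q$, where I write $k_q:=2s/(2-q)$ and $\overline k:=\inf\{k>0:W_{k,2}(X_0,u,0^+)=-\infty\}$; by Corollary~\ref{overline.k} the number $\overline k$ exists, is finite, and satisfies $\overline k\ge k_q$, so this inequality gives the statement. I would argue by contradiction: assume $\overline k>k_q$ and fix $k$ with $k_q<k<\overline k$. Then, by Corollary~\ref{overline.k}, $W_{k,2}(X_0,u,0^+)\in[0,+\infty)$, and by Proposition~\ref{weiss.mon} the map $r\mapsto W_{k,2}(X_0,u,r)$ is non-decreasing. The first observation is that since $C^s_{n,2}=2n-2(n-2s)=4s$ and $k>k_q$, the coefficient $C^s_{n,2}-2k(2-q)=4s-2k(2-q)$ is strictly negative, so the last term in the monotonicity identity \eqref{weiss.general} (with $t=2$) is non-negative. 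Integrating \eqref{weiss.general} on $(0,R)$ and using that the limit at $0^+$ is finite forces both non-negative contributions to lie in $L^1(0,R)$; in particular
\[
\int_0^R \frac{1}{\rho^{n+a+2k}}\Big(\int_{\partial^0 B^+_\rho(X_0)}F_{\lambda_+,\lambda_-}(u)\,\mathrm{d}x\Big)\mathrm{d}\rho<+\infty,
\]
and since $\rho\mapsto\int_{\partial^0 B^+_\rho(X_0)}F_{\lambda_+,\lambda_-}(u)$ is non-decreasing this yields the strictly super-critical decay
\[
\frac{1}{\rho^{n+a-1}}\int_{\partial^0 B^+_\rho(X_0)}F_{\lambda_+,\lambda_-}(u)\,\mathrm{d}x=o(\rho^{2k})\qquad(\rho\to 0^+),
\]
which is the key consequence of the assumption $k>k_q$.

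Next I would run a blow-up around $X_0$ with the normalization dictated by $H$, namely $\hat u_\rho(X):=u(X_0+\rho X)/H(X_0,u,\rho)^{1/2}$, so that $H(\hat u_\rho,1)=1$ for every $\rho$ and $\hat u_\rho$ solves $L_a\hat u_\rho=0$ together with $-\partial^a_y\hat u_\rho=\mu_\rho(\lambda_+(\hat u_\rho)_+^{q-1}-\lambda_-(\hat u_\rho)_-^{q-1})$, where $\mu_\rho:=\rho^{1-a}H(X_0,u,\rho)^{(q-2)/2}$. If the Almgren quotient $N(X_0,u,\cdot)$ were unbounded near $0$, then $\mathcal V(u,X_0)=+\infty$ and Theorem~\ref{strong-unique} would give $u\equiv 0$, a contradiction; hence $N(X_0,u,\cdot)$ is bounded, and combining this with $N_2(X_0,u,\rho)\ge k$ (equivalent to $W_{k,2}(X_0,u,\rho)\ge 0$) one controls the nonlinear term relative to $H$ and obtains $\norm{u}{X_0,\rho}^2\le C\,H(X_0,u,\rho)$ together with a doubling estimate for $\rho\mapsto H(X_0,u,\rho)$. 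Therefore $\{\hat u_\rho\}$ is bounded in $H^{1,a}_\loc(\overline{\R^{n+1}_+})$, and up to a subsequence $\hat u_\rho\to\overline u$ strongly in $H^{1,a}_\loc$ and in $L^q_\loc(\R^n\times\{0\})$, with $L_a\overline u=0$ in $\R^{n+1}_+$, $\overline u$ symmetric in $y$, and $H(\overline u,1)=1$, so $\overline u\not\equiv 0$. Moreover $N(X_0,u,\cdot)\ge\overline k$ gives $H(X_0,u,\rho)\le C\rho^{2\overline k}$, and since $\overline k>k_q$ this yields $H(X_0,u,\rho)=o(\rho^{2k_q})$, hence $\mu_\rho\to+\infty$.

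To conclude I would examine the trace of $\overline u$ (assuming, as in the theorems this proposition feeds into, $\lambda_+,\lambda_->0$). If $\lambda_+\overline u_+^{q-1}-\lambda_-\overline u_-^{q-1}\not\equiv0$ on $\R^n\times\{0\}$, set $w_\rho:=\hat u_\rho/\mu_\rho$: since $\mu_\rho\to+\infty$ and $\{\hat u_\rho\}$ is bounded in $H^{1,a}_\loc$, $w_\rho\to 0$ in $H^{1,a}_\loc$, so $-\partial^a_y w_\rho\to 0$ in $H^{-s}_\loc(\R^n\times\{0\})$ by continuity of the Dirichlet-to-Neumann operator; on the other hand $-\partial^a_y w_\rho=\lambda_+(\hat u_\rho)_+^{q-1}-\lambda_-(\hat u_\rho)_-^{q-1}\to\lambda_+\overline u_+^{q-1}-\lambda_-\overline u_-^{q-1}$ in $L^{q/(q-1)}_\loc(\R^n\times\{0\})\hookrightarrow H^{-s}_\loc$ (the embedding holds since $q<2^\star$), forcing $\lambda_+\overline u_+^{q-1}-\lambda_-\overline u_-^{q-1}\equiv0$ — a contradiction. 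The remaining possibility is that $\overline u\equiv0$ on $\R^n\times\{0\}$; in that case $\overline u$ is a nontrivial $L_a$-harmonic function in $\{y>0\}$, symmetric in $y$, homogeneous of degree $\ge\overline k>k_q$, and vanishing on $\{y=0\}$. Excluding this ``flat'' alternative — using the super-critical decay of the nonlinearity obtained above together with the classification of homogeneous $L_a$-harmonic profiles of \cite{STT2020} to show that such a profile cannot occur as a blow-up of a solution of \eqref{system} — produces the final contradiction, whence $\overline k\le k_q$ and therefore $\overline k=k_q$.

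The main obstacle is precisely this last step of the blow-up analysis: one must both prevent a degenerate vanishing rate for $H(X_0,u,\cdot)$ (handled through Theorem~\ref{strong-unique}) and, above all, rule out the flat homogeneous profiles $|y|^{1-a}Q(x)$ with $Q$ harmonic, which a priori vanish on $\{y=0\}$ and have homogeneity in $2s+\mathbb{N}$, hence possibly larger than $k_q$. The value $k_q=2s/(2-q)$ enters here through the matching relation $\gamma-2s=\gamma(q-1)$ forced on the homogeneity $\gamma$ of a nontrivial solution of the limit problem \eqref{limite} with coupling $\mu\neq0$, which gives $\gamma=k_q$; a blow-up of homogeneity $\ge\overline k>k_q$ is therefore incompatible with a surviving nonlinearity, and the super-critical decay of $F_{\lambda_+,\lambda_-}(u)$ must then be leveraged to collapse the remaining purely $L_a$-harmonic profile. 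The quantitative details parallel the local case treated in \cite{soavesublinear}, suitably adapted to the nonlocal framework.
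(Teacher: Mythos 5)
Your blow‑up strategy follows the same skeleton as the paper's: assume $\overline k>k_q$, establish decay of $\int_{\partial^0 B^+_\rho}F_{\lambda_+,\lambda_-}(u)$ at the scale $\rho^{n+a-1+2\overline k}$ (your integral test actually upgrades the paper's $\liminf$ from Lemma~\ref{r} to a full limit, which is correct but not essential), rescale by $\sqrt{H(X_0,u,\rho)}$, extract a nonzero limit $\overline u$, and show its trace on $\{y=0\}$ vanishes. Your $H^{-s}$ duality argument for $w_\rho=\hat u_\rho/\mu_\rho$ is a legitimate way to reach that conclusion (modulo the endpoint $q=1$, where $L^{q/(q-1)}$ convergence of $F'(\hat u_\rho)$ needs more care), and it is essentially equivalent to the paper's use of the scaled $L^1$ estimate on $F(u_n)$. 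But notice that your ``case~1'' never produces a contradiction on its own: for $\lambda_\pm>0$ and $q\in[1,2)$, the conclusion $\lambda_+\overline u_+^{q-1}-\lambda_-\overline u_-^{q-1}\equiv 0$ is literally the statement $\overline u\equiv 0$ on $\{y=0\}$, i.e.\ your ``case~2''. The dichotomy is vacuous; everything always lands in case~2.

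The genuine gap is the final step, which you flag as ``the main obstacle'' but do not carry out. What the paper does is show that the passage to the limit gives \emph{both} boundary conditions for $\overline u$ on $\{y=0\}$: $\overline u=0$ \emph{and} $\partial^a_y\overline u=0$. Once both hold, $\overline u$ is an $L_a$-harmonic function (even in $y$) with Cauchy data zero on a set of positive measure, and \cite[Proposition 5.9]{STT2020} forces $\overline u\equiv 0$ in $\R^{n+1}_+$, contradicting $\norm{\overline u}{L^{2,a}(\partial^+B^+_1)}=1$. You stop at the Dirichlet condition and then propose to ``rule out the flat profiles $|y|^{1-a}Q(x)$'' by a homogeneity count for solutions of \eqref{limite} with $\mu\neq 0$. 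That route doesn't close: (i) at this stage of the argument the blow-up limit is not known to be homogeneous — homogeneity of blow-up limits is proved only later, in Theorem~\ref{blow.uplimite}, and it relies on the conclusion of Proposition~\ref{claim}; and (ii) the profile $|y|^{1-a}Q(x)$ does not solve \eqref{limite} with $\mu\ne 0$ at all — it solves the \emph{purely linear} Cauchy problem with nonzero Neumann datum $(1-a)Q$, so the ``matching relation'' $\gamma-2s=\gamma(q-1)$ says nothing about it. The missing piece is precisely to show that $\mu_\rho(\lambda_+(\hat u_\rho)_+^{q-1}-\lambda_-(\hat u_\rho)_-^{q-1})$ converges weakly to $0$ in $H^{-s}_\loc$ (not merely that $F'(\hat u_\rho)\to 0$), so that $\partial^a_y\overline u=0$; only then does the flat alternative $|y|^{1-a}Q$ get excluded. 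Your proposal identifies the correct difficulty but does not resolve it, and the proposed resolution points at the wrong tool.

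A smaller point: your compactness argument (uniform $H^{1,a}_\loc$ bound on $\hat u_\rho$) is asserted via ``$N_2\ge k$ plus boundedness of the Almgren quotient''. The paper needs a specific auxiliary functional $W_{k,\tilde t}$ with $\tilde t\in(q,2)$ chosen so that $C^s_{n,\tilde t}-2\tilde k(\tilde t-q)=0$ (Lemmas~\ref{mon.t}, \ref{trans2}) precisely to turn $E_2\ge 0$ and $N_{\tilde t}$ bounded into a gradient bound. Your sketch skips this step; it is not automatic.
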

Moreover, combining the previous estimate with Corollary \ref{overline.k} we will deduce that $W_{k,2}(X_0,u,0^+)$ exists for every $k\geq 0$ and
\begin{equation}\label{transition.up}
  \begin{cases}
    W_{k,2}(X_0,u,0^+)=0 & \mbox{if } 0<k<\frac{2s}{2-q} \\
    W_{k,2}(X_0,u,0^+)=-\infty & \mbox{if } k > \frac{2s}{2-q}.
  \end{cases}
\end{equation}
Following the strategy presented in \cite{soavesublinear}, this result will be a consequence of the following Lemmata.
\begin{remark}\label{soto}
We observe that in the following lemmata we never used the assumption $\mathcal{O}(u,X_0)\geq 2s/(2-q)$, but only the absurd hypothesis $\overline{k}\geq 2s/(2-q)$. Indeed, this results will be used in the study of the local behaviour near point of the nodal set with $\mathcal{O}(u,X_0)< 2s/(2-q)$.
\end{remark}
\begin{lemma}\label{trans1}
Let $X_0 \in \Gamma(u)$ and $\overline{k}\geq 2s/(2-q)$. Then
$$
E_2(X_0,u,r) \geq 0\quad\mbox{and}\quad H(X_0,u,r)>0,
$$
for every $r \in (0,\mathrm{dist}(X_0,\partial^+ B^+_1))$. Moreover, if $k>\overline{k}$, we deduce
$$
\limsup_{r\to 0^+} N_2(X_0,u,r) \leq k \quad\mbox{and}\quad \liminf_{r\to 0^+} \frac{H(X_0,u,r)}{r^{2k}} = +\infty.
$$
\end{lemma}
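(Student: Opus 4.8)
The plan is to prove the four assertions in the order $E_2\ge 0$, then $H>0$, then $\limsup N_2\le k$, then $\liminf H/r^{2k}=+\infty$, since each one feeds the next (throughout, $u$ is the given non-trivial solution). \textbf{Non-negativity of $E_2$.} Because $\overline{k}\ge 2s/(2-q)$ I may fix $k$ with $2s/(2-q)\le k<\overline{k}$. By Corollary \ref{overline.k}, $W_{k,2}(X_0,u,0^+)\ge 0$, and by Proposition \ref{weiss.mon} the map $r\mapsto W_{k,2}(X_0,u,r)$ is monotone non-decreasing, so $W_{k,2}(X_0,u,r)\ge W_{k,2}(X_0,u,0^+)\ge 0$ for every admissible $r$. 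Since $W_{k,2}(X_0,u,r)=r^{-2k}\bigl(E_2(X_0,u,r)-kH(X_0,u,r)\bigr)$ and $H\ge 0$ trivially, this yields $E_2(X_0,u,r)=r^{2k}W_{k,2}(X_0,u,r)+kH(X_0,u,r)\ge 0$ (indeed $\ge \tfrac{2s}{2-q}H$, letting $k\downarrow 2s/(2-q)$). The borderline case $\overline{k}=2s/(2-q)$ has to be handled by running the same argument directly at $k=\overline{k}$; this is the one genuinely delicate point, discussed in the last paragraph.

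\textbf{Positivity of $H$.} From the identity $E(X_0,u,r)=E_2(X_0,u,r)+\tfrac{2-q}{q}\tfrac{1}{r^{n+a-1}}\int_{\partial^0 B^+_r(X_0)}F_{\lambda_+,\lambda_-}(u)$ together with $F_{\lambda_+,\lambda_-}\ge 0$ and $q<2$, the previous step gives $E(X_0,u,r)\ge E_2(X_0,u,r)\ge 0$; hence by \eqref{h.derivative}, $\tfrac{d}{dr}H(X_0,u,r)=\tfrac{2}{r}E(X_0,u,r)\ge 0$, so $r\mapsto H(X_0,u,r)$ is non-decreasing. If $H(X_0,u,\rho)=0$ for some admissible $\rho$, then $H\equiv 0$ on $(0,\rho]$, so $u\equiv 0$ on $\partial^+ B^+_r(X_0)$ for every $r\le\rho$, whence by continuity $u\equiv 0$ on $B^+_\rho(X_0)$ and in particular on a neighbourhood of $X_0$ in $\R^n\times\{0\}$. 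Theorem \ref{ucp} then forces $u\equiv 0$ on $\partial^0 B^+_1$, hence $u\equiv 0$ on $B^+_1$, contradicting non-triviality. Thus $H(X_0,u,r)>0$ throughout the range.

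\textbf{The asymptotics for $k>\overline{k}$.} For such $k$, Corollary \ref{overline.k} gives $W_{k,2}(X_0,u,0^+)=-\infty$, and since $k\ge 2s/(2-q)$ the monotonicity of Proposition \ref{weiss.mon} promotes this to $W_{k,2}(X_0,u,r)\to-\infty$ as $r\to 0^+$. If one had $\limsup_{r\to0^+}N_2(X_0,u,r)>k$, there would exist $\delta>0$ and $r_j\downarrow 0$ with $N_2(X_0,u,r_j)\ge k+\delta$, so $W_{k,2}(X_0,u,r_j)=r_j^{-2k}H(X_0,u,r_j)\bigl(N_2(X_0,u,r_j)-k\bigr)\ge \delta\,r_j^{-2k}H(X_0,u,r_j)>0$ using $H>0$, incompatible with $W_{k,2}(X_0,u,r_j)\to-\infty$. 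Hence $\limsup_{r\to0^+}N_2(X_0,u,r)\le k$. Finally, $E_2\ge 0$ and $H>0$ give $N_2(X_0,u,r)\ge 0$, so $W_{k,2}(X_0,u,r)\ge -k\,r^{-2k}H(X_0,u,r)$; letting $r\to 0^+$ and using $W_{k,2}(X_0,u,r)\to-\infty$ forces $r^{-2k}H(X_0,u,r)\to+\infty$, i.e. $\liminf_{r\to0^+}H(X_0,u,r)/r^{2k}=+\infty$.

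\textbf{Where the difficulty lies.} Everything is formal once $E_2\ge 0$ is known, and this is the only place where the hypothesis $\overline{k}\ge 2s/(2-q)$ is really used — through the Weiss monotonicity of Proposition \ref{weiss.mon}, valid only for exponents $\ge 2s/(2-q)$, and through the sign of $W_{k,2}(X_0,u,0^+)$ supplied by Corollary \ref{overline.k}. When $\overline{k}>2s/(2-q)$ there is room to choose $k\in(2s/(2-q),\overline{k})$ and the argument closes cleanly; in the threshold case $\overline{k}=2s/(2-q)$ one must argue at $k=\overline{k}$ itself, using that $r\mapsto W_{\overline{k},2}(X_0,u,r)$ is still monotone there (the remainder coefficient $C^s_{n,2}-2\overline{k}(2-q)$ vanishes) together with a sign analysis of $W_{\overline{k},2}(X_0,u,0^+)$, and this is the step I would expect to require the most care.
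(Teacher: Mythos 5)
Your proof follows essentially the same route as the paper: it runs on the same triad (Proposition \ref{weiss.mon}, Corollary \ref{overline.k}, Lemma \ref{absurd.gamma}), uses the sign $W_{k,2}(X_0,u,0^+)\geq 0$ for intermediate $k$ to extract $E_2\geq 0$, converts this into positivity of $H$ via Theorem \ref{ucp}, and then reads the two asymptotics off $W_{k,2}(X_0,u,0^+)=-\infty$ for $k>\overline{k}$. The only cosmetic deviations are that you deduce monotonicity of $H$ directly from $\frac{d}{dr}H=\frac{2}{r}E\ge 0$ instead of passing through $\frac{d}{dr}(H/r^{2k})=\frac{2}{r}W_{k,q}$ as the paper does (these are equivalent), and you argue the $\limsup$ bound by contradiction where the paper just reads it off the product $W_{k,2}=(H/r^{2k})(N_2-k)$. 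Both are fine.

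Your caveat about the threshold case $\overline{k}=2s/(2-q)$ is a legitimate observation, and it applies to the paper's own proof as well: the paper picks $k\in(2s/(2-q),\overline{k}]$, which is empty when $\overline{k}=2s/(2-q)$, and even when nonempty Corollary \ref{overline.k} only supplies the sign of $W_{k,2}(X_0,u,0^+)$ on $[2s/(2-q),\overline{k})$, not at $k=\overline{k}$. In the contexts where the lemma is later invoked with $\overline{k}=2s/(2-q)$ (notably the proof of Proposition \ref{2s/(2-q)}), the needed inequality $W_{\overline{k},2}(X_0,u,0^+)\geq 0$ is in fact supplied by the surrounding contradiction hypothesis rather than by Corollary \ref{overline.k}, and the lemma is implicitly re-run at $k=\overline{k}$ with that sign as input. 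So your flag is accurate; it points at a genuine imprecision in the paper's exposition, not at a gap in your own argument.
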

\begin{proof}
  Fixed $k \in (2s/(2-q),\overline{k}]$, we already know by Proposition \ref{weiss.mon} that $r \mapsto W_{k,2}(X_0,u,r)$ is monotone non-decreasing and, by Corollary \ref{overline.k}, that $W_{k,2}(X_0,u,0^+)\geq 0$.\\ Hence, for every $r \in (0,\mathrm{dist}(X_0,\partial B^+))$ we get
  $$
  0 \leq W_{k,2}(X_0,u,r) \leq  \frac{1}{r^{2k}} E_2(X_0,u,r).
  $$
  Moreover, since $W_{k,q}(X_0,u,r) \geq W_{k,2}(X_0,u,r) \geq 0$ for every $r \in (0,\mathrm{dist}(X_0,\partial B^+))$, by \eqref{h.derivative} we deduce
  \begin{equation}\label{mon.H.weiss}
  \frac{d}{dr} \frac{H(X_0,u,r)}{r^{2k}} = \frac{2}{r} W_{k,q}(X_0,u,r) \geq 0.
  \end{equation}
  Finally, if $H(X_0,u,r_1)=0$ for some $r_1 >0$, by the monotonicity of \eqref{mon.H.weiss}, we deduce that $u\equiv 0$ in $B^+_{r_1}(X_0)$, in contradiction with Theorem \ref{ucp}.\\ Hence, collecting the previous inequality, we get $N_2(X_0,u,r)\geq 0$ and in particular, since $k> \overline{k}$ we get
  $$
  -\infty = W_{k,2}(X_0,u,0^+) = \lim_{r \to 0^+} \frac{H(X_0,u,r)}{r^{2k}}(N_2(X_0,u,r)-k).
  $$
  Also, since $H(X_0,u,r)/r^{2k}\geq 0$, we finally deduce
  $$
  -k \leq \liminf_{r\to 0^+} (N_2(X_0,u,r)-k) \leq \limsup_{r \to 0^+} (N_2(X_0,u,r)-k) \leq 0,
  $$
  which implies the desired claim.
\end{proof}
As a consequence, for every $t \in (0,2)$ the associated Almgren-type formula $N_t(X_0,u,r)$ is non-negative for every $r \in (0,\mathrm{dist}(X_0,\partial^+ B+))$. \\Since in this Section we are proceeding by contradiction by assuming that $\overline{k}>2s/(2-q)$, let $t$ be the medium point between $2s/(2-q)$ and $\overline{k}$.
\begin{lemma}\label{mon.t}
  Let $X_0 \in \Gamma(u)$ and $\overline{k}> 2s/(2-q)$. Given
  $$
  \widetilde{k} = \frac{1}{2}\left(\frac{2s}{2-q}+\overline{k} \right) \quad\mbox{and}\quad \tilde{t} = \frac{2n + 2 kq}{2k+n-2s} \in (q,2),
  $$
  then the map $r \mapsto W_{k,\tilde{t}}(X_0,u,r)$ is monotone non-decreasing in $(0,\mathrm{dist}(X_0,\partial^+ B^+))$.
\end{lemma}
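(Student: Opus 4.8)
The plan is to deduce the monotonicity directly from the general differentiation formula \eqref{weiss.general} of Proposition \ref{weiss.mon}, exploiting the fact that the exponent $\tilde t$ has been tuned precisely so as to annihilate the only term of \eqref{weiss.general} that could carry a negative sign. Indeed, for every choice of exponents one has, by \eqref{weiss.general},
\begin{align*}
\frac{d}{dr}W_{\widetilde k,\tilde t}(X_0,u,r) =&\, \frac{2}{r^{n+a-1+2\widetilde k}}\int_{\partial^+ B^+_r(X_0)}{y^a \left(\partial_r u -\frac{\widetilde k}{r}u\right)^2 \mathrm{d}\sigma} + \\
&\,+  \frac{1}{r^{n+a-1+2\widetilde k}}\left[ \frac{2-\tilde t}{q}\int_{S^{n-1}_r(X_0)}{F_{\lambda_+,\lambda_-}(u)\mathrm{d}\sigma} -\frac{C_{n,\tilde t}^s-2\widetilde k(\tilde t-q)}{q r}\int_{\partial^0 B^+_r(X_0)}{F_{\lambda_+,\lambda_-}(u)\mathrm{d}x}\right],
\end{align*}
where $C^s_{n,\tilde t}=2n-\tilde t(n-2s)$. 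The square term is non-negative, and since $F_{\lambda_+,\lambda_-}(u)\ge 0$, the claim will follow once we check that $\tilde t<2$ and that $C^s_{n,\tilde t}-2\widetilde k(\tilde t-q)\le 0$.

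First I would verify that $\tilde t\in(q,2)$. Clearing denominators, $\tilde t>q$ is equivalent to $n(2-q)+2qs>0$, which is automatic for $q\in[1,2)$ and $s\in(0,1)$; and $\tilde t<2$ is equivalent to $2\widetilde k(2-q)>4s$, i.e.\ to $\widetilde k>2s/(2-q)$. This last inequality holds because $\widetilde k=\tfrac12\bigl(\tfrac{2s}{2-q}+\overline k\bigr)$ is the midpoint of $\tfrac{2s}{2-q}$ and $\overline k$, and by hypothesis $\overline k>\tfrac{2s}{2-q}$, so $\tfrac{2s}{2-q}<\widetilde k<\overline k$. In particular $\widetilde k\le\overline k$, so that by Lemma \ref{trans1} we have $H(X_0,u,\cdot)>0$ and all the relevant quantities are well defined.

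Next I would examine the coefficient $C^s_{n,\tilde t}-2\widetilde k(\tilde t-q)=2n+2\widetilde kq-\tilde t\,(n-2s+2\widetilde k)$, which is affine in $\tilde t$ and vanishes exactly at $\tilde t=\dfrac{2n+2\widetilde kq}{2\widetilde k+n-2s}$, the value prescribed in the statement. Hence the last term in the differentiation formula is identically zero, while the middle term is non-negative because $\tilde t<2$; adding the square term we conclude $\frac{d}{dr}W_{\widetilde k,\tilde t}(X_0,u,r)\ge0$ on $(0,\mathrm{dist}(X_0,\partial^+ B^+))$, which is the assertion.

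There is no real obstacle here: the lemma is a purely algebraic corollary of Proposition \ref{weiss.mon}. The only point deserving attention is the bookkeeping showing that the specific $\tilde t$ is the unique root of the affine function $t\mapsto C^s_{n,t}-2\widetilde k(t-q)$ and that this root lands in the admissible window $(q,2)$ --- and it is precisely in the upper bound $\tilde t<2$ that the strict inequality $\overline k>2s/(2-q)$ is used.
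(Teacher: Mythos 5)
Your proof is correct and follows essentially the same route as the paper's: the paper likewise observes that $\tilde{t}$ is chosen precisely as the root of the affine map $t\mapsto C^s_{n,t}-2\widetilde{k}(t-q)$ and then invokes the derivative formula of Proposition \ref{weiss.mon} (via \eqref{remainder}). Your version simply spells out the bookkeeping — in particular the verification $\tilde{t}\in(q,2)$, which is where $\overline{k}>2s/(2-q)$ enters — that the paper leaves implicit.
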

\begin{proof}
  The proof is a direct corollary of Proposition \ref{weiss.mon}. More precisely, since $q \in [1,2)$ and $\tilde{k}>2s/(2-q)$ we get that
  $$
  \overline{t} = \frac{2n + 2 \tilde{k}q}{2\tilde{k}+n-2s} \longleftrightarrow C_{n,\overline{t}}^s - 2\tilde{k}(\tilde{t}-q) =0,
  $$
  which implies, by \eqref{remainder}, the claimed result.
\end{proof}
Inspired by Corollary \ref{overline.k} let us consider the same quantity associated to the limit $W_{k,\tilde{t}}(X_0,u,0^+)$.
\begin{lemma}\label{trans2}
If $\overline{k}>2s/(2-q)$, then
\begin{equation}\label{kk}
\overline{k} = \inf\{k \geq \tilde{k} \colon W_{k,\tilde{t}}(X_0,u,0^+)=-\infty \}.
\end{equation}
In particular, for every $k > \overline{k}$ we get
$$
\limsup_{r\to 0^+} N_{\tilde{t}}(X_0,u,r) \leq k.
$$
\end{lemma}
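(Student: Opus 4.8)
The plan is to mimic, at the level of the family $W_{k,\tilde t}$, the argument behind Corollary \ref{overline.k} for $W_{k,2}$, and then to transfer information between the two families. Write $\overline{k}'$ for the infimum on the right-hand side of \eqref{kk}. Since $q<\tilde t<2$ and $\tilde k>2s/(2-q)$, Proposition \ref{weiss.mon} gives that $r\mapsto W_{k,\tilde t}(X_0,u,r)$ is monotone non-decreasing not only for $k=\tilde k$ but for every $k\geq\tilde k$ (the sign of the remainder coefficient $C^s_{n,\tilde t}-2k(\tilde t-q)$ only improves as $k$ grows), so $W_{k,\tilde t}(X_0,u,0^+)$ exists in $[-\infty,+\infty)$ for all such $k$. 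Finiteness of $\overline{k}'$ I would obtain exactly as in Lemma \ref{absurd.gamma}: by Theorem \ref{ucp} there is $r_1$ with $H(X_0,u,r_1)\neq0$, for $k$ large $W_{k,\tilde t}(X_0,u,r_1)<0$, hence by monotonicity $W_{k,\tilde t}(X_0,u,0^+)<0$; and the elementary identity
\[
W_{k_2,\tilde t}(X_0,u,r)=\frac1{r^{2(k_2-k_1)}}W_{k_1,\tilde t}(X_0,u,r)-(k_2-k_1)\frac{H(X_0,u,r)}{r^{2k_2}}\leq\frac1{r^{2(k_2-k_1)}}W_{k_1,\tilde t}(X_0,u,r)
\]
upgrades this to $W_{k_2,\tilde t}(X_0,u,0^+)=-\infty$ for every $k_2>k_1$. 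Thus $\overline{k}'\in[\tilde k,+\infty)$.

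For the inequality $\overline{k}'\geq\overline{k}$, I fix $\tilde k\leq k<\overline{k}$. Since $\tilde t<2$ and $F_{\lambda_+,\lambda_-}(u)\geq0$,
\[
W_{k,\tilde t}(X_0,u,r)=W_{k,2}(X_0,u,r)+\frac{2-\tilde t}{q\,r^{n+a-1+2k}}\int_{\partial^0 B^+_r(X_0)}F_{\lambda_+,\lambda_-}(u)\,\mathrm{d}x\geq W_{k,2}(X_0,u,r),
\]
and $k_q<\tilde k\leq k<\overline{k}$ allows Corollary \ref{overline.k}, so $W_{k,2}(X_0,u,0^+)\geq0$ and hence $W_{k,\tilde t}(X_0,u,0^+)\geq0$; thus no $k<\overline{k}$ enters the defining set of $\overline{k}'$. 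Along the way I would record two facts: $N_t(X_0,u,r)\geq0$ for every $t\in(0,2)$ and every $r$ (Lemma \ref{trans1} and its remark); and, taking $k=\tilde k$ above, $W_{\tilde k,2}(X_0,u,r)\geq0$ for all $r$, hence $N_2(X_0,u,r)\geq\tilde k$, so $\frac{d}{dr}\log H(X_0,u,r)=\frac2r N_q(X_0,u,r)\geq\frac{2\tilde k}{r}$ and therefore $H(X_0,u,r)\leq C\,r^{2\tilde k}$ near $0$.

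The remaining and main point is $\overline{k}'\leq\overline{k}$, i.e.\ $W_{k,\tilde t}(X_0,u,0^+)=-\infty$ for every $k>\overline{k}$. Writing $W_{k,\tilde t}(X_0,u,r)=\frac{H(X_0,u,r)}{r^{2k}}\big(N_{\tilde t}(X_0,u,r)-k\big)$ and recalling $H(X_0,u,r)/r^{2k}\to+\infty$ for $k>\overline{k}$ (Lemma \ref{trans1}), it suffices to show $\limsup_{r\to0^+}N_{\tilde t}(X_0,u,r)\leq\overline{k}$; since $N_{\tilde t}=N_2+(2-\tilde t)\,d/H$ with $d(X_0,u,r):=\frac1{q\,r^{n+a-1}}\int_{\partial^0 B^+_r(X_0)}F_{\lambda_+,\lambda_-}(u)\,\mathrm{d}x$ and $\limsup_{r\to0^+}N_2(X_0,u,r)\leq\overline{k}$, everything collapses to showing that the nonlinear ratio $d(X_0,u,r)/H(X_0,u,r)$ tends to $0$. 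This is the delicate step, and I expect it to be the main obstacle: the only lower rate available for $H$ is the weak one $H(X_0,u,r)\gtrsim r^{2k}$ ($k>\overline{k}$), so it must be played against the upper rate $H(X_0,u,r)\lesssim r^{2\tilde k}$ and the trace inequality in a self-improving way. Concretely I would combine Lemma \ref{lem.poin}, giving $d(X_0,u,r)\leq C\,r^{2s}\norm{u}{H^{1,a}(B^+_r(X_0))}^q=C\,r^{2s}(D+H)^{q/2}$ with $D:=\frac1{r^{n+a-1}}\int_{B^+_r(X_0)}y^a\abs{\nabla u}^2$, together with $D\leq 2d+(\overline{k}+1)H$ near $0$ (from $E_2\geq0$ and $\limsup N_2\leq\overline{k}$) and a bootstrap on the exponent of $d$ exploiting $H\leq C r^{2\tilde k}$; a more robust alternative is a blow-up analysis of $u(X_0+r\,\cdot)/\sqrt{H(X_0,u,r)}$, whose nonlinear coefficient is $r^{1+a}H(X_0,u,r)^{(q-2)/2}$ and whose limit is a homogeneous $L_a$-harmonic function, from which $W_{k,\tilde t}(X_0,u,0^+)=-\infty$ for $k>\overline{k}$ can be read off. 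Finally, once $\overline{k}'=\overline{k}$, the last assertion is immediate: for $k>\overline{k}=\overline{k}'$ one has $W_{k,\tilde t}(X_0,u,0^+)=-\infty$, and since $W_{k,\tilde t}(X_0,u,r)=\frac{H(X_0,u,r)}{r^{2k}}\big(N_{\tilde t}(X_0,u,r)-k\big)$ with $H(X_0,u,r)/r^{2k}>0$, a sequence $r_j\to0^+$ with $N_{\tilde t}(X_0,u,r_j)\geq k+\delta$ would force $W_{k,\tilde t}(X_0,u,r_j)\geq\delta\,H(X_0,u,r_j)/r_j^{2k}\geq0$, contradicting $W_{k,\tilde t}(X_0,u,r)\to-\infty$; hence $\limsup_{r\to0^+}N_{\tilde t}(X_0,u,r)\leq k$.
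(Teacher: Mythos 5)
Your proof is correct up to and including the inequality $\overline{k}'\geq\overline{k}$, and the final deduction of $\limsup_{r\to0^+}N_{\tilde t}\leq k$ from $\overline{k}'=\overline{k}$ is also sound. The problem is the central inequality $\overline{k}'\leq\overline{k}$, where you explicitly stop short: you reduce it to showing $d(X_0,u,r)/H(X_0,u,r)\to0$, flag it as ``the delicate step'' and ``the main obstacle,'' and then only outline two candidate strategies (a bootstrap on the exponent of $d$, or a blow-up analysis) without carrying either one out. As written, the proof is therefore incomplete at its crux.

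Moreover, your reduction points in a harder direction than needed. The paper avoids showing $d/H\to0$ entirely and instead argues by contradiction, using only tools you already have on the table: if $\overline{k}'>\overline{k}$, pick $k\in(\overline{k},\overline{k}')$, so by definition $W_{k,\tilde t}(X_0,u,0^+)\geq0$, and by the monotonicity of $r\mapsto W_{k,\tilde t}$ (which you already established for $k\geq\tilde k$) this forces $W_{k,\tilde t}(X_0,u,r)\geq0$ for all $r$; since $\tilde t\in(q,2)$ one has $W_{k,q}\geq W_{k,\tilde t}\geq0$, hence by \eqref{mon.H.weiss} the map $r\mapsto r^{-2k}H(X_0,u,r)$ is monotone non-decreasing and so has a \emph{finite} limit as $r\to0^+$; this contradicts Lemma \ref{trans1}, which gives $\liminf_{r\to0^+}r^{-2k}H(X_0,u,r)=+\infty$ for $k>\overline{k}$. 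This collapses the whole missing step into three lines using the ingredients you had already assembled (the ordering $W_{k,q}\geq W_{k,\tilde t}$, the identity $\frac{d}{dr}(H/r^{2k})=\frac2r W_{k,q}$, and Lemma \ref{trans1}); I would replace your open-ended ``$d/H\to0$'' programme with this contradiction.
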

\begin{proof}
  Following the reasoning in Lemma \ref{absurd.gamma}, we can immediately deduce the existence of $k \geq \tilde{k}$ such that $W_{k,\tilde{t}}(X_0,u,0^+) <0$. Hence, we can reasonably define the quantity
  $$
  \overline{\overline{k}} =  \inf\{k \geq \tilde{k} \colon W_{k,\tilde{t}}(X_0,u,0^+)=-\infty \} ,
  $$
  for which
  $$
  \begin{cases}
      W_{k,\tilde{t}}(X_0,u,0^+)\geq0 & \mbox{if } \overline{k}\leq k < \overline{\overline{k}} \\
    W_{k,\tilde{t}}(X_0,u,0^+)=-\infty & \mbox{if } k > \overline{\overline{k}}.
  \end{cases}
  $$
  Since $\tilde{t}<2$, we fist have $W_{k,\tilde{t}}(X_0,u,r) \geq W_{k,2}(X_0,u,r)$ for every $0<r<R$ and $k>0$. Now, on one side $W_{k,\tilde{t}}(X_0,u,0^+)=-\infty$ implies $W_{k,2}(X_0,u,0^+)=-\infty$ and hence $\overline{\overline{k}}\geq \overline{k}$.
  So, let us suppose by contradiction that $\overline{\overline{k}}>\overline{k}$, hence there exists $k \in (\overline{k},\overline{\overline{k}})$ such that $W_{k,\tilde{t}}(X_0,u,0^+)\geq 0$.\\
  By the monotonicity result in Lemma \ref{mon.t} we get $W_{k,\tilde{t}}(X_0,u,r) \geq 0$ for $r>0$ and, since $\tilde{t}\in (q,2)$, we deduce
  \begin{equation}\label{mon.H.weiss.q}
  W_{k,q}(X_0,u,r) \geq   W_{k,\tilde{t}}(X_0,u,r) \geq 0,
  \end{equation}
  for every $r \in (0,\mathrm{dist}(X_0,\partial^+ B^+))$. Finally, recalling the relation in \eqref{mon.H.weiss}, by \eqref{mon.H.weiss.q} it follows that $r \mapsto r^{-2k}H(X_0,u,r)$ is monotone non-decreasing and in particular there exists finite
  $$
  \lim_{r\to 0^+} \frac{H(X_0,u,r)}{r^{2k}} \in (0,+\infty),
  $$
  which contradicts Lemma \ref{trans1}.
\end{proof}
\begin{lemma}\label{r}
Let $X_0 \in \Gamma(u)$ and $\overline{k}\geq 2s/(2-q)$. There exists a sequence $(r_n)_n$ such that $r_i \to 0^+$ and
$$
\frac{1}{r_n^{n+a-1+2\overline{k}}} \int_{\partial^0 B^+_{r_n}(X_0)}{F_{\lambda_+,\lambda_-}(u)\mathrm{d}x} \to 0.
$$
\end{lemma}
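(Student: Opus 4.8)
The plan is to compare the two Weiss‑type functionals $W_{\overline{k},q}$ and $W_{\overline{k},2}$ based at $X_0$ and then to integrate the monotone quantity $r\mapsto H(X_0,u,r)/r^{2\overline{k}}$. Write $d(r)=\tfrac1q\int_{\partial^0 B^+_r(X_0)}F_{\lambda_+,\lambda_-}(u)\,\mathrm{d}x\ge 0$, so that the claim is equivalent to $\liminf_{r\to 0^+}d(r)/r^{\,n+a-1+2\overline{k}}=0$. Since $E_t$ depends on $t$ only through the linear term $-\tfrac{t}{q}\int_{\partial^0 B^+_r(X_0)}F_{\lambda_+,\lambda_-}(u)$, from the definitions \eqref{E.t} and \eqref{two-para.monotonicity} one gets, for every $r\in(0,\mathrm{dist}(X_0,\partial^+ B^+_1))$,
$$
W_{\overline{k},q}(X_0,u,r)-W_{\overline{k},2}(X_0,u,r)=\frac{E_q(X_0,u,r)-E_2(X_0,u,r)}{r^{2\overline{k}}}=\frac{(2-q)\,d(r)}{r^{\,n+a-1+2\overline{k}}}.
$$

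The second step is to record the sign $W_{\overline{k},2}(X_0,u,r)\ge 0$ for every admissible $r$. This is precisely the information extracted in the proof of Lemma \ref{trans1}: for the exponent $k=\overline{k}$, Proposition \ref{weiss.mon} gives that $r\mapsto W_{\overline{k},2}(X_0,u,r)$ is non-decreasing and Corollary \ref{overline.k} gives $W_{\overline{k},2}(X_0,u,0^+)\ge 0$, hence $W_{\overline{k},2}(X_0,u,r)\ge W_{\overline{k},2}(X_0,u,0^+)\ge 0$. (Equivalently, one lets $k\uparrow\overline{k}$ in the inequality $E_2(X_0,u,r)\ge k\,H(X_0,u,r)$, valid for $k<\overline{k}$ by Corollary \ref{overline.k} and the monotonicity of $W_{k,2}$, and uses $H(X_0,u,r)>0$ from Lemma \ref{trans1}.) Combined with the identity of the first step this yields $W_{\overline{k},q}(X_0,u,r)\ge (2-q)\,d(r)/r^{\,n+a-1+2\overline{k}}\ge 0$.

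Then comes the integration. By \eqref{mon.H.weiss}, $\frac{d}{dr}\big(H(X_0,u,r)/r^{2\overline{k}}\big)=\frac{2}{r}\,W_{\overline{k},q}(X_0,u,r)\ge 0$, so $r\mapsto H(X_0,u,r)/r^{2\overline{k}}$ is non-decreasing on $(0,\mathrm{dist}(X_0,\partial^+ B^+_1))$ and, being positive, has a finite limit $\ell\ge 0$ as $r\to 0^+$. Fixing an admissible $R$, integrating on $(r,R)$ and letting $r\to 0^+$,
$$
\int_0^R \frac{2(2-q)\,d(\rho)}{\rho^{\,n+a+2\overline{k}}}\,\mathrm{d}\rho\ \le\ \int_0^R \frac{2}{\rho}\,W_{\overline{k},q}(X_0,u,\rho)\,\mathrm{d}\rho\ =\ \frac{H(X_0,u,R)}{R^{2\overline{k}}}-\ell\ <\ +\infty .
$$
Since $2-q>0$, the nonnegative function $g(\rho):=d(\rho)/\rho^{\,n+a-1+2\overline{k}}$ satisfies $\int_0^R \rho^{-1}g(\rho)\,\mathrm{d}\rho<\infty$, and therefore $g$ cannot stay bounded away from $0$ near the origin; choosing $r_n\searrow 0^+$ along which $g(r_n)\to 0$ and recalling $\int_{\partial^0 B^+_{r_n}(X_0)}F_{\lambda_+,\lambda_-}(u)\,\mathrm{d}x=q\,d(r_n)$ finishes the argument. (The borderline value $\overline{k}=2s/(2-q)$, which is anyway the assertion of Proposition \ref{claim} and where $(2s/(2-q),\overline{k}]$ degenerates, is handled by the very same integration, now using $k=\overline{k}=2s/(2-q)$ in Proposition \ref{weiss.mon}.)

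The only genuinely delicate point is the sign $W_{\overline{k},2}(X_0,u,r)\ge 0$ of the second step: with merely a finite negative lower bound $W_{\overline{k},2}(X_0,u,r)\ge -A$, the integration above would carry a divergent logarithmic term $A\int_r^R\rho^{-1}\,\mathrm{d}\rho\to+\infty$ and the estimate would not close. Everything else is bookkeeping with the monotonicity formulas already established.
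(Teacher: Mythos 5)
Your proof is correct and is close in spirit to the paper's, but the integration target is different. The paper integrates $\frac{d}{dr}W_{\overline{k},2}(X_0,u,r)$ directly and reads off, from the explicit remainder in \eqref{remainder} with $t=2$, the lower bound $\frac{d}{dr}W_{\overline{k},2}\ge \frac{2(2-q)(\overline{k}-k_q)}{q\,r^{n+a+2\overline{k}}}\int_{\partial^0 B^+_r(X_0)}F_{\lambda_+,\lambda_-}(u)$; integrability of this quantity then follows from the finite total variation $W_{\overline{k},2}(X_0,u,\overline{r})-W_{\overline{k},2}(X_0,u,0^+)<+\infty$. You instead exploit the elementary identity $W_{\overline{k},q}-W_{\overline{k},2}=(2-q)\,d(r)/r^{n+a-1+2\overline{k}}$ together with the sign $W_{\overline{k},2}\ge 0$ to lower-bound $W_{\overline{k},q}$, and then integrate $\frac{d}{dr}(H/r^{2\overline{k}})=\tfrac{2}{r}W_{\overline{k},q}$, using that $H/r^{2\overline{k}}$ is monotone and bounded. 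Both routes rest on the same two ingredients --- $W_{\overline{k},2}(X_0,u,r)\ge 0$ and the boundedness of the total variation of a monotone Weiss-type quantity --- but your version carries the clean constant $2(2-q)$ instead of the coefficient $2(2-q)(\overline{k}-k_q)/q$, which in the paper's computation degenerates when $\overline{k}=k_q$; this is the reason your parenthetical about the borderline case goes through painlessly, while the paper implicitly relies on the fact that Lemma~\ref{r} is only invoked under the contradiction hypothesis $\overline{k}>k_q$ in the proof of Proposition~\ref{claim}. One small imprecision: Corollary~\ref{overline.k} as stated gives $W_{k,2}(X_0,u,0^+)\ge 0$ only for $k<\overline{k}$, not for $k=\overline{k}$ directly; your parenthetical remark (letting $k\uparrow\overline{k}$ in $E_2\ge kH$ and using $H>0$) fixes this, and is in fact the paper's own continuity-in-$k$ argument.
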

\begin{proof}
  Let $k \in [2s/(2-q),\overline{k})$, by Corollary \ref{weiss.mon} and Corollary \ref{overline.k} we have $W_{k,2}(X_0,u,r)\geq 0$ for every $r \in (0,\mathrm{dist}(X_0,\partial^+ B^+))$. Since that, for any fixed radius $r>0$, the function $k \mapsto W_{k,2}(X_0,u,r)$ is continuous, we infer as $k \to \overline{k}^-$ that $W_{\overline{k},2}(X_0,u,r)\geq 0$, which implies by continuity that $W_{\overline{k},2}(X_0,u,0^+)\geq 0$.\\ Thus, for any $\overline{r}\in (0,\mathrm{dist}(X_0,\partial^+ B^+))$ we get
  $$
  0 \leq \int_0^{\overline{r}}{\frac{d}{dr}W_{\overline{k},2}(X_0,u,s) \mathrm{d}s} = W_{\overline{k},2}(X_0,u,\overline{r})- W_{\overline{k},2}(X_0,u,0^+) < +\infty.
  $$
  On the other side, by \eqref{remainder} we deduce
  \begin{equation}\label{would}
  \int_0^{\overline{r}}{\frac{1}{s}\left(\frac{1}{s^{n+a-1+2\overline{k}}}
\int_{\partial^0 B^+_s(X_0)}{F_{\lambda_+,\lambda_-}(u)\mathrm{d}x}\right)\mathrm{d}s} < +\infty,
  \end{equation}
  which implies, combined with the non-integrability of $s\mapsto s^{-1}$ in $0$, that if
  $$
  \liminf_{r\mapsto 0^+}\frac{1}{r^{n+a-1+2\overline{k}}}
\int_{\partial^0 B^+_r(X_0)}{F_{\lambda_+,\lambda_-}(u)\mathrm{d}x}>0,
  $$
  then \eqref{would} would not be true. Thus, this implication suggests that the previous liminf has to be null.
\end{proof}
\begin{proof}[Proof of Proposition \ref{claim}]
  The proof is based on a blow-up argument: indeed, assuming that $\overline{k}> 2s/(2-q)$, for $X_0 \in \Gamma(u)$ let us consider the sequence $(r_n)_n$ introduced in Lemma \ref{r} and the associated blow-up sequence
  $$
  u_n(X) = \frac{u(X_0+r_nX)}{\sqrt{H(X_0,u,r_n)}} \quad\mbox {for } X \in B_{R/r_n}^+
  $$
  where $R= \mathrm{dist}(X_0,\partial^+ B^+)$. Thanks to Lemma \ref{trans1}, we have $H(X_0,u,r_n)>0$ and $E_2(X_0,u,r_n)\geq 0$, which lead to
  $$
  \int_{\partial^+ B^+_1}{y^a u_n^2 \mathrm{d}\sigma}= 1 \quad\mbox{and}\quad \int_{B^+_1}{y^a \abs{\nabla u_n}^2 \mathrm{d}X} = \ddfrac{\frac{1}{r_n^{n+a-1}}\int_{ B^+_{r_n}(X_0)}{y^a \abs{\nabla u}^2\mathrm{d}X}}{\frac{1}{r_n^{n+a}}\int_{\partial^+ B^+_{r_n}(X_0)}{y^a u^2\mathrm{d}\sigma}}.
  $$
  On the other hand by Lemma \ref{trans1} we deduce
  $$
  \int_{B^+_{r_n}(X_0)}{y^a\abs{\nabla u}^2\mathrm{d}X}\geq \frac{2}{q}\int_{\partial^0 B^+_{r_n}(X_0)}{F_{\lambda_+,\lambda_-}(u)\mathrm{d}x},
  $$
  which implies, since $\tilde{t}<2$ that
  \begin{align*}
  \frac{1}{r_n^{n+a-1}}\int_{B^+_{r_n}(X_0)}{y^a\abs{\nabla u}^2\mathrm{d}X} &\leq \frac{2}{2-\tilde{t}}  \frac{1}{r_n^{n+a-1}}\left(\int_{B^+_{r_n}(X_0)}{y^a\abs{\nabla u}^2\mathrm{d}X}-\frac{\tilde{t}}{q}\int_{\partial^0 B^+_{r_n}(X_0)}{F_{\lambda_+,\lambda_-}(u)\mathrm{d}x}\right)\\ &\leq \frac{2}{2-\tilde{t}}E_{\tilde{t}}(X_0,u,r_n).
  \end{align*}
  As a consequence of the previous estimates and Lemma \ref{trans2}, we get
  $$
  \int_{B^+_1}{y^a \abs{\nabla u_n}^2 \mathrm{d}X} \leq \frac{2}{2-\tilde{t}}N_{\tilde{t}}(X_0,u,r_n) \leq C.
  $$
  Thus, since the sequence $(u_n)_n$ is uniformly bounded in $H^{1,a}(B^+_1)$, the compactness of the Sobolev embedding implies that $(u_n)_n$ converges weakly in $H^{1,a}(B_+^1)$ and strongly in $L^{2,a}(\partial^+ B_1^+)$ to a function $\overline{u}\in H^{1,a}(B^1_+)$.\\
  Moreover, since by \cite[Theorem 2.11]{Nekvinda} the traces of functions in $H^{1,a}(B^+)$ on the set $\partial^0B^+$ coincides with the Sobolev-Slobodeckij space $H^s(\partial^0B^+)$ and, since $\partial^0 B^+$ itself is a Lipschitz domain with bounded boundary, by the Riesz–Frechet–Kolmogorov theorem, the trace operator
  \[
  H^{1,a}(B^+_1) \hookrightarrow\hookrightarrow L^p(\partial^0 B^+_1)
  \]
  is well defined an compact for every $p\in [1,2]$ (see Lemma \ref{dafare}). Hence, since $q \in [1,2)$, we get
  \begin{equation}\label{limit}
  \int_{\partial^+ B^+_1}{y^a \overline{u}^2\mathrm{d}\sigma} = 1 \quad\mbox{and}\quad \lim_{n\to \infty}\int_{\partial^0 B^+_1}{F_{\lambda_+,\lambda_-}(u_n)\mathrm{d}x} = \int_{\partial^0 B^+_1}{F_{\lambda_+,\lambda_-}(\overline{u})\mathrm{d}x}.
  \end{equation}
  Since the first equality implies that $\overline{u}\not\equiv 0$ on $\partial^+ B^+_1$, we deduce by the trace embedding that  $\overline{u}\not\equiv 0$ on the whole $B^+_1$.    
  On the other side, we get
$$
  \int_{\partial^0 B^+_1}{F_{\lambda_+,\lambda_-}(u_n)\mathrm{d}x} =   \left(\frac{r^{(a-1+2\overline{k})/q}}{\sqrt{H(X_0,u,r_n)}}\right)^{q} \frac{1}{r_n^{n+a-1+2\overline{k}}} \int_{\partial^0 B^+_{r_n}(X_0) }{F_{\lambda_+,\lambda_-}(u)\mathrm{d}x}
  $$
  Since we are assuming $\overline{k}> 2s/(2-q)$, we have $2(2\overline{k}+a-1)/q > 2 \overline{k}$ and, for $n$ sufficiently large, we get
  $$
  \int_{\partial^0 B^+_1}{F_{\lambda_+,\lambda_-}(u_n)\mathrm{d}x} \leq \left(\frac{H(X_0,u,r_n)}{r_n^{2\overline{k}}} \right)^{-q/2}\frac{1}{r_n^{n+a-1+2\overline{k}}} \int_{\partial^0 B^+_{r_n}(X_0) }{F_{\lambda_+,\lambda_-}(u)\mathrm{d}x}
  $$
  where the right hand side goes to $0$ as $n\mapsto +\infty$ by Lemma \ref{trans1} and Lemma \ref{r}. By \eqref{limit} we infer that
  $$
  \int_{\partial^0 B^+_1}{F_{\lambda_+,\lambda_-}(\overline{u})\mathrm{d}x} = 0\ \longleftrightarrow \ \overline{u} \equiv 0 \ \mbox{ on }\ \partial^0 B^+_1.
  $$
  By standard argument, since $(u_n)_n$ is uniformly bounded in $H^{1,a}(B_1^+)$ and $u_n \rightharpoonup \overline{u}$ weakly in $H^{1,a}$, from
  $$
-\partial^a_y u_n = \left(\ddfrac{r_n^{2\overline{k}}}{H(X_0,u,r_n)} \right)^{\frac{2-q}{2}}\left(\lambda_+ (u_n)_+^{q-1} - \lambda_- (u_n)_-^{q-1} \right) \quad \mbox{on }\, \frac{\partial^0 B^+_R-X_0}{r_n},
$$ we deduce that
  the limit function $\overline{u} \in H^{1,a}_\loc(\R^{n+1}_+)$ is a weak solution of
  \begin{equation}\label{zero.sharm}
  \begin{cases}
  L_a \overline{u}=0 & \mbox{in } \R^{n+1}_+ \\
  \partial^a_y \overline{u} = 0 &\mbox{on } \R^{n}\times \{0\}\\
  \overline{u} = 0 &\mbox{on } \R^{n}\times \{0\},
  \end{cases}
  \end{equation}
  such that $\overline{u}\not\equiv 0$ on $\R^{n+1}_+$. The contradiction follows immediately by the unique continuation principle for the traces of $L_a$-harmonic functions, see \cite[Proposition 5.9]{STT2020}.
\end{proof}
The following result allows to characterize the $H^{1,a}$-vanishing order in terms of the Weiss-type functional in the case $\mathcal{O}(u,X_0)\geq 2s/(2-q)$. More precisely, it implies that the solutions of \eqref{system} can vanish with order less or equal than $\overline{k}=2s/(2-q)$, with $\overline{k}$ the transition exponent.
\begin{proposition}\label{2s/(2-q)}
  Let $u$ be a solution of \eqref{system} and $X_0 \in \Gamma(u)$ such that $\mathcal{O}(u,X_0)\geq 2s/(2-q)$. Then, the vanishing order $\mathcal{O}(u,X_0)$ is characterized by
  $$
  \mathcal{O}(u,X_0)= \inf\left\{\ k >0 \colon W_{k,2}(X_0,u,0^+) = -\infty \right\} = \frac{2s}{2-q}.
  $$
  Furthermore, we get
  $$
  \begin{cases}
    W_{k,2}(X_0,u,0^+)=0 & \mbox{if }\, 0<k< \mathcal{O}(u,X_0) \\
    W_{k,2}(X_0,u,0^+)=-\infty & \mbox{if }\, k > \mathcal{O}(u,X_0).
  \end{cases}  $$
\end{proposition}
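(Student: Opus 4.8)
The plan is to deduce the statement from the two characterizations of the transition exponent $\overline{k}$ obtained so far, namely Corollary \ref{overline.k} and Proposition \ref{claim}, combined with the lower bound on $H(X_0,u,r)$ from Lemma \ref{trans1}.

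First, since $\mathcal{O}(u,X_0)\geq 2s/(2-q)$, both Corollary \ref{overline.k} and Proposition \ref{claim} apply at $X_0$. Proposition \ref{claim} gives that the transition exponent $\overline{k}=\inf\{k>0\colon W_{k,2}(X_0,u,0^+)=-\infty\}$ equals $2s/(2-q)$; inserting this value into the three-case description of Corollary \ref{overline.k} collapses the intermediate range $2s/(2-q)\leq k<\overline{k}$ and yields exactly $W_{k,2}(X_0,u,0^+)=0$ for $0<k<2s/(2-q)$ and $W_{k,2}(X_0,u,0^+)=-\infty$ for $k>2s/(2-q)$, which is the displayed dichotomy \eqref{transition.up}. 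So the only remaining point is the identity $\mathcal{O}(u,X_0)=2s/(2-q)$; by the standing hypothesis it suffices to prove $\mathcal{O}(u,X_0)\leq 2s/(2-q)$, and then all three quantities in the statement coincide.

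I would prove $\mathcal{O}(u,X_0)\leq 2s/(2-q)$ by contradiction. Suppose $\mathcal{O}(u,X_0)>2s/(2-q)=\overline{k}$ and pick any $k$ with $2s/(2-q)<k<\mathcal{O}(u,X_0)$. On the one hand, Lemma \ref{trans1} (applicable since $\overline{k}\geq 2s/(2-q)$ and $k>\overline{k}$) gives $\liminf_{r\to 0^+}H(X_0,u,r)/r^{2k}=+\infty$. On the other hand, from the definition of the rescaled norm one has $0\leq H(X_0,u,r)\leq \norm{u}{H^{1,a}(B_r(X_0))}^2$, and since $k<\mathcal{O}(u,X_0)$ the very definition of the $H^{1,a}$-vanishing order gives $\limsup_{r\to 0^+}r^{-2k}\norm{u}{H^{1,a}(B_r(X_0))}^2=0$, hence $\lim_{r\to 0^+}H(X_0,u,r)/r^{2k}=0$. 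These two conclusions are incompatible, so no such $k$ exists and $\mathcal{O}(u,X_0)\leq 2s/(2-q)$.

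Thus $\mathcal{O}(u,X_0)=2s/(2-q)=\overline{k}$, which is the first equality, and the dichotomy above is precisely the ``furthermore'' part since now $\mathcal{O}(u,X_0)=2s/(2-q)$. The argument is short once Proposition \ref{claim} and Lemma \ref{trans1} are available; the only step requiring care is the elementary comparison $H(X_0,u,r)\leq \norm{u}{H^{1,a}(B_r(X_0))}^2$, which transfers the vanishing rate of the full $H^{1,a}$-norm to the surface quantity $H$. I do not foresee a genuine obstacle: the substantive work was already carried out in Proposition \ref{claim}, whose blow-up reduces the matter to the unique continuation principle for traces of $L_a$-harmonic functions.
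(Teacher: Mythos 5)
Your structure (derive the dichotomy from Proposition \ref{claim} and Corollary \ref{overline.k}, then close the argument with a contradiction via $H\leq\norm{u}{X_0,r}^2$) is sound, and it is noticeably shorter than the paper's proof, but the crucial appeal to Lemma \ref{trans1} is not yet justified. The proof of Lemma \ref{trans1} begins by picking $k\in(2s/(2-q),\overline{k}]$ and using that $W_{k,2}(X_0,u,0^+)\geq 0$ from Corollary \ref{overline.k}; this step (and hence the conclusions $E_2\geq 0$, $H>0$, $N_2\geq 0$, from which $\liminf_{r\to 0^+}H/r^{2k}=+\infty$ for $k>\overline{k}$ follows) only works when the interval $(2s/(2-q),\overline{k})$ is nonempty, i.e. $\overline{k}>2s/(2-q)$. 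Once Proposition \ref{claim} is applied you have $\overline{k}=2s/(2-q)$ exactly, the interval is empty, and \eqref{transition.up} says nothing about $W_{\overline{k},2}(X_0,u,0^+)$ itself. So at the point where you cite Lemma \ref{trans1} you do not yet know $W_{\overline{k},2}(X_0,u,0^+)\geq 0$, and the $\liminf$ you need is unproven. To repair this, observe that your contradiction hypothesis $\mathcal{O}(u,X_0)>\overline{k}$ gives $\norm{u}{X_0,r}^2/r^{2\overline{k}}\to 0$, and then the elementary estimate in the proof of Corollary \ref{overline.k} (with $k=\overline{k}$, where the exponents $2s$ and $(2-q)\overline{k}$ exactly cancel) yields $|W_{\overline{k},2}(X_0,u,r)|\leq C\,\norm{u}{X_0,r}^2/r^{2\overline{k}}+C\left(\norm{u}{X_0,r}^2/r^{2\overline{k}}\right)^{q/2}\to 0$, so $W_{\overline{k},2}(X_0,u,0^+)=0$; with this in hand the argument of Lemma \ref{trans1} applies (taking $k=\overline{k}$), and your contradiction goes through.

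Even after this fix, it is worth noting that you do not obtain what the paper actually proves here, namely the quantitative nondegeneracy $\liminf_{r\to 0^+}\norm{u}{X_0,r}^2/r^{2\overline{k}}>0$ (equation \eqref{eq.2s/(2-q)}). This is strictly stronger than $\mathcal{O}(u,X_0)\leq\overline{k}$ and is the reason the paper performs the full blow-up using Lemma \ref{nic.lemma} and the unique continuation principle; the lower bound is needed later, e.g. in Lemma \ref{compact}, to keep the coefficient $\alpha_k$ in the rescaled equation bounded. Your argument proves the stated Proposition (once the gap above is closed) by a genuinely different, more elementary route, but it would have to be supplemented by the blow-up argument anyway for the later sections of the paper.
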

\begin{proof}
The proof of this result follows the one of its local counterpart in \cite{soavesublinear}. For the sake of simplicity, let us denote with $\norm{\cdot}{H^{1,a}(B^+_r(X_0))} = \norm{\cdot}{X_0,r}$. Now, fixed $X_0 \in \Gamma(u)$, let us prove that
  \be\label{eq.2s/(2-q)}
  \liminf_{r\to 0^+} \frac{\norm{u}{X_0,r_n}^2}{r^{2\overline{k}}} >0,
\ee
  where $\overline{k}=2s/(2-q)$. After that, the result will follow by Proposition \ref{claim} and \eqref{transition.up}.\\
By contradiction, let us suppose there exists a sequence $r_n\to 0^+$ such that
  \begin{equation}\label{abs}
  \lim_{n\to \infty}   \frac{\norm{u}{X_0,r_n}^2}{r^{2\overline{k}}_n}=0.
    \end{equation}
  Then, consider the blow-up sequence associated to the $H^{1,a}$-norm, defined as
  \begin{equation}\label{unif.norm}
  u_r(X) = \frac{u(X_0 + r X)}{\norm{u}{X_0,r}},\quad \mbox{such that }\norm{u_n}{0,1} = 1.
  \end{equation}
  As we deduce in the proof of Proposition \ref{claim}, since the blow-up sequence $(u_n)_n$ is uniformly bounded in $H^{1,a}(B^+_1)$, the compactness of the Sobolev embedding implies that $(u_n)_n$ converges weakly in $H^{1,a}(B_+^1)$ and strongly in $L^{2,a}(\partial^+ B_1^+)$ to a function $\overline{u}\in H^{1,a}(B^1_+)$. Moreover, the traces on $\partial^0 B^+_1$ converge strongly in $L^q(\partial^0 B^+_1)$, for every $q\in [1,2)$. In particular,
  \begin{align*}
  \lim_{n\to \infty} W_{\overline{k},2}(X_0,u,r_n) = &\,\lim_{n\to \infty}\left[ \frac{\norm{u}{X_0,r_n}^2}{r_n^{2\overline{k}}}\left(\int_{B^+_1}{y^a \abs{\nabla u_n}^2\mathrm{d}X} - \overline{k}\int_{\partial^+ B^+_1}{y^a u_n^2\mathrm{d}\sigma}\right)+\right. \\
  &\, \left.- \frac{2}{q}\left(\frac{\norm{u}{X_0,r_n}^2}{r_n^{2\overline{k}}}\right)^{q/2} \int_{\partial^0 B^+_1}{F_{\lambda_+,\lambda_-}(u_n)\mathrm{d}x}\right]= 0.
  \end{align*}
  Thus, since the limit $W_{\overline{k},2}(X_0,u,0^+)$ exists, by the monotonicity result in Proposition \ref{weiss.mon}, we get that $W_{\overline{k},2}(X_0,u,r)\geq 0$ and $W_{\overline{k},q}(X_0,u,r)\geq 0$ for every $r \in (0,\mathrm{dist}(X_0,\partial^+ B^+_1))$ and $q< 2$. First, by Lemma \ref{trans1}, we know that $E_2(X_0,u,r)\geq 0$ and $H(X_0,u,r)>0$ for every $r \in (0,\mathrm{dist}(X_0,\partial^+ B^+_1))$ and, for every $k > \overline{k}$ we get
  \begin{equation}\label{liminf}
  \liminf_{r\to 0^+} \frac{H(X_0,u,r)}{r^{2k}} = +\infty.
  \end{equation}
Now, let us compute the same limit in the case $\overline{k}$. Since the function $r \mapsto H(X_0,u,r)/r^{2\overline{k}}$ is monotone non-decreasing, there exists the limit as $r\to 0^+$ and, by \eqref{abs}, we get
$$
0\leq \frac{H(X_0,u,r_n)}{r_n^{2\overline{k}}} \leq \frac{\norm{u}{X_0,r_n}^2}{r_n^{2\overline{k}}} \to 0
$$
which implies
\begin{equation}\label{sppos}
\lim_{r\to 0^+} \frac{H(X_0,u,r)}{r^{2\overline{k}}} =0.
\end{equation}
In order to reach a contradiction, in the end of the proof we will prove that the blow-up limit satisfies $\overline{u}\equiv 0$, in contradiction with the normalization \eqref{unif.norm}.
\end{proof}
\begin{lemma}\label{nic.lemma}
  Fixed $X_0 \in \Gamma(u)$ and $\overline{k}=2s/(2-q)$ let us suppose that \eqref{sppos} holds true.
  Then, we get
  \begin{equation}\label{nic}
  \liminf_{r \to 0^+} \frac{W_{\overline{k},q}(X_0,u,r) r^{2\overline{k}}}{H(X_0,u,r)} = 0 \quad \mbox{and}\quad \lim_{r \to 0^+}\frac{W_{\overline{k},2}(X_0,u,r) r^{2\overline{k}}}{H(X_0,u,r)} = 0.
  \end{equation}
\end{lemma}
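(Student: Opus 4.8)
The plan is to reformulate \eqref{nic} in terms of the Almgren quotients and then play the monotonicity of $r\mapsto H(X_0,u,r)/r^{2\overline{k}}$ against the rigidity \eqref{liminf}. From the very definition \eqref{two-para.monotonicity} one has, for $t\in\{q,2\}$,
$$
\frac{W_{\overline{k},t}(X_0,u,r)\,r^{2\overline{k}}}{H(X_0,u,r)}\;=\;N_t(X_0,u,r)-\overline{k},
$$
so \eqref{nic} is equivalent to the pair of statements $\liminf_{r\to0^+}N_q(X_0,u,r)=\overline{k}$ and $\lim_{r\to0^+}N_2(X_0,u,r)=\overline{k}$. Write $K(r):=H(X_0,u,r)/r^{2\overline{k}}$, which is strictly positive and, by \eqref{sppos}, tends to $0$ as $r\to0^+$. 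Since $W_{\overline{k},q}(X_0,u,\cdot)\geq W_{\overline{k},2}(X_0,u,\cdot)\geq0$ — the second inequality from $W_{\overline{k},2}(X_0,u,0^+)=0$ (established in the proof of Proposition \ref{2s/(2-q)}) together with the monotonicity in Proposition \ref{weiss.mon}, and $W_{\overline{k},q}\geq W_{\overline{k},2}$ because $E_q\geq E_2$ for $q<2$ — one gets $N_q(X_0,u,r)-\overline{k}\geq N_2(X_0,u,r)-\overline{k}\geq0$ for every $r$; thus in both statements the lower limit is automatically $\geq0$, and only the reverse bounds remain.

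For the second identity I would simply invoke Lemma \ref{trans1}: for every $k>\overline{k}$ it gives $\limsup_{r\to0^+}N_2(X_0,u,r)\leq k$, and letting $k\downarrow\overline{k}$ yields $\limsup_{r\to0^+}N_2(X_0,u,r)\leq\overline{k}$. Together with $N_2(X_0,u,r)-\overline{k}\geq0$ this forces $\lim_{r\to0^+}N_2(X_0,u,r)=\overline{k}$, which is the second equality in \eqref{nic}.

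The only real content is the lower limit for $N_q$, which I would treat by contradiction. Suppose $\liminf_{r\to0^+}\bigl(N_q(X_0,u,r)-\overline{k}\bigr)=L>0$ and fix $\ell\in(0,L)$, so that $N_q(X_0,u,r)-\overline{k}\geq\ell$ for all small $r$. By \eqref{mon.H.weiss} and the identity above,
$$
\frac{d}{dr}\log K(r)\;=\;\frac{K'(r)}{K(r)}\;=\;\frac{2}{r}\,\frac{W_{\overline{k},q}(X_0,u,r)}{K(r)}\;=\;\frac{2\bigl(N_q(X_0,u,r)-\overline{k}\bigr)}{r}\;\geq\;\frac{2\ell}{r}
$$
for small $r$; integrating over an interval $(r,\overline{r})$ gives $K(r)\leq C\,r^{2\ell}$, hence $\limsup_{r\to0^+}K(r)/r^{2\ell}<+\infty$. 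On the other hand \eqref{liminf}, applied with the exponent $k=\overline{k}+\ell>\overline{k}$, says precisely that $\liminf_{r\to0^+}K(r)/r^{2\ell}=\liminf_{r\to0^+}H(X_0,u,r)/r^{2(\overline{k}+\ell)}=+\infty$, a contradiction. Therefore $\liminf_{r\to0^+}\bigl(N_q(X_0,u,r)-\overline{k}\bigr)=0$, which is the first equality in \eqref{nic}.

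The step I expect to be the crux is this last one: recognizing that the logarithmic-derivative identity $\tfrac{d}{dr}\log\bigl(H(X_0,u,r)/r^{2\overline{k}}\bigr)=2\bigl(N_q(X_0,u,r)-\overline{k}\bigr)/r$ converts a strictly positive frequency gap into a genuine power-type \emph{upper} bound for the growth of $H(X_0,u,r)/r^{2\overline{k}}$, which cannot coexist with the blow-down rigidity \eqref{liminf} furnished by the transition exponent. Everything else is bookkeeping with the two-parameter monotonicity formulas already at hand.
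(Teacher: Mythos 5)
Your proof is correct. For the first limit in \eqref{nic} you follow essentially the same path as the paper: rewriting the quotient as $N_q(X_0,u,r)-\overline{k}$ via \eqref{two-para.monotonicity}, assuming a positive lower bound $\ell$, integrating the log-derivative identity $\tfrac{d}{dr}\log\bigl(H(X_0,u,r)/r^{2\overline{k}}\bigr)=\tfrac{2}{r}\bigl(N_q(X_0,u,r)-\overline{k}\bigr)$ (from \eqref{mon.H.weiss}) to obtain $H(X_0,u,r)\lesssim r^{2(\overline{k}+\ell)}$, and contradicting $\liminf_{r\to0^+}H(X_0,u,r)/r^{2(\overline{k}+\ell)}=+\infty$ from Lemma \ref{trans1}.

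For the second limit your route is genuinely different and shorter. You invoke Lemma \ref{trans1} directly: since $\limsup_{r\to0^+}N_2(X_0,u,r)\leq k$ for every $k>\overline{k}$, you let $k\downarrow\overline{k}$ to get $\limsup N_2\leq\overline{k}$, and you pair this with $N_2(X_0,u,r)\geq\overline{k}$ (from $W_{\overline{k},2}(X_0,u,r)\geq 0$, which is already available in the ambient proof of Proposition \ref{2s/(2-q)}). This gives $\lim_{r\to0^+}N_2(X_0,u,r)=\overline{k}$ without ever touching the first limit. The paper instead computes $\bigl(H/r^{2\overline{k}}\bigr)^2\tfrac{d}{dr}\bigl(r^{2\overline{k}}W_{\overline{k},2}/H\bigr)$, applies a Cauchy--Schwarz inequality on $\partial^+B^+_r$ to show that $r\mapsto r^{2\overline{k}}W_{\overline{k},2}(X_0,u,r)/H(X_0,u,r)$ is monotone nondecreasing (using also that the remainder in Proposition \ref{weiss.mon} vanishes exactly at $t=2$, $k=k_q$), and then deduces the limit from the monotonicity together with the first limit and $W_{\overline{k},2}\leq W_{\overline{k},q}$. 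Your approach buys brevity and decouples the two conclusions; the paper's approach buys the extra structural information that $r\mapsto N_2(X_0,u,r)$ is monotone near $0$, though that information is not used later, so your substitute argument is perfectly adequate for the intended application.
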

\begin{proof}
  Let us consider first the limit associated to the case $t=q$ and, by contradiction, assume that $\varepsilon >0$ and $r_0 \in (0,\mathrm{dist}(X_0,\partial^+ B^+_1))$ such that
  $$
  \frac{W_{\overline{k},q}(X_0,u,r) r^{2\overline{k}}}{H(X_0,u,r)}  \geq \varepsilon \quad \mbox{for every } r \in (0,r_0).
  $$
  By \eqref{h.derivative}, we deduce that
  $$
  \frac{d}{dr} \log \left( \frac{H(X_0,u,r)}{r^{2\overline{k}}} \right)= \frac{2}{r}\frac{W_{\overline{k},q}(X_0,u,r) r^{2\overline{k}}}{H(X_0,u,r)} \geq \frac{2 \varepsilon}{r},
  $$
  and integrating by parts the previous inequality between $r \in (0,r_0)$ and $r_0$ we  get
  $$
  \frac{H(X_0,u,r)}{r^{2\overline{k}+2\varepsilon}} \leq   \frac{H(X_0,u,r_0)}{r_0^{2\overline{k}+2\varepsilon}} < \infty \quad \mbox{for every }r \in (0,r_0).
  $$
  In particular
  $$
  \limsup_{r\to 0^+} \frac{H(X_0,u,r)}{r^{2\overline{k}+2\varepsilon}} <+\infty,
  $$
  in contradiction with \eqref{liminf} with $k = \overline{k}+\varepsilon$.\\
  Now, for $t=2$ and $\overline{k}=2s/(2-q)$ we already know by Proposition \ref{weiss.mon} that
  \[
  \frac{d}{dr}W_{\overline{k},2}(X_0,u,r) =  \frac{2}{r^{n+a-1+2\overline{k}}}\int_{\partial^+ B^+_r(X_0)}{y^a \left(\partial_r u -\frac{\overline{k}}{r}u\right)^2 \mathrm{d}\sigma}.
  \]
  In the remaining part of the proof, for the sake of simplicity we omit the dependence with respect to $u$ and $X_0$. Hence, combining the previous derivative with \eqref{mon.H.weiss} we get
  \begin{align*}
  \left(\frac{H(r)}{r^{2\overline{k}}}\right)^2 \frac{d}{dr}\left(\frac{r^{2\overline{k}}W_{\overline{k},2}(r)}{H(r)} \right) 
   &=\frac{H(r)}{r^{2\overline{k}}} \frac{d}{dr}W_{\overline{k},2}(r) - \frac{2}{r}W_{\overline{k},2}(r) W_{\overline{k},q}(r),
  \end{align*}
and since $0\leq W_{\overline{k},2}(r) \leq W_{\overline{k},q}(r)$ we infer that
\begin{equation*}
\begin{split}
&\left(\frac{H(r)}{r^{2\overline{k}}}\right)^2 \frac{d}{dr}\left(\frac{r^{2\overline{k}}W_{\overline{k},2}(r)}{H(r)}\right) \geq\\
&\qquad \geq \frac{2}{r^{2n+2a-1+4\overline{k}}} \int_{\partial^+ B^+_r}{y^a u^2 \mathrm{d}\sigma}\int_{\partial^+ B^+_r}{y^a \left(\partial_r u -\frac{\overline{k}}{r}u\right)^2 \mathrm{d}\sigma} - \frac{2}{r}\left(W_{\overline{k},q}(r)\right)^2 \\
& \qquad\geq \frac{2}{r^{2n+2a-1+4\overline{k}}} \left[
\int_{\partial^+ B^+_r}{y^a u^2\mathrm{d}\sigma}\int_{\partial^+ B^+_r}{y^a (\partial_r u)^2\mathrm{d}\sigma}-\left(\int_{\partial^+ B^+_r}{y^a u \partial_r u} \mathrm{d}\sigma\right)^2 \right],
\end{split}
\end{equation*}
which is non-negative by the Cauchy-Schwarz inequality. Since $H(r)>0$ and $0\leq W_{\overline{k},2}(r) \leq W_{\overline{k},q}(r)$, the previous part of the proof yields that the second limit in \eqref{nic} exists and is equal to zero.
  \end{proof}
  \begin{proof}[Conclusion of the proof of Proposition \ref{2s/(2-q)} ]
    Since $\norm{u}{X_0,r}^2 \geq H(X_0,u,r)$, by Lemma \ref{nic.lemma}, there exists a sequence $r_m\to 0^+$ such that
    \begin{equation}\label{nic2}
    \lim_{m\to \infty}\frac{r_m^{2\overline{k}}W_{\overline{k},q}(X_0,u,r_m)}{\norm{u}{X_0,r_m}^2} =     \lim_{m\to \infty}\frac{r_m^{2\overline{k}}W_{\overline{k},2}(X_0,u,r_m)}{\norm{u}{X_0,r_m}^2}=0.
  \end{equation}
    Now, fixed $u_m$ the blow-up sequence in \eqref{unif.norm} associated to the sequence $(r_m)_m$, we already know by the $H^{1,a}$-normalization that $(u_m)_m$ converges weakly in $H^{1,a}(B^+_1)$ and strongly in $L^q(\partial^0 B^+_1)$ to a limit function $\overline{u}$. First, by \eqref{nic} we infer
    \begin{align*}
    0\leq \left(\frac{r_m^{2\overline{k}} }{H(r_m)}\right)^{\frac{2-q}{2}}\int_{\partial^0 B^+_1}{F_{\lambda_+,\lambda_-}(u_m)\mathrm{d}x}
    &\leq \frac{r_m^{2s} \norm{u}{X_0,r_m}^q}{H(r_m)}\int_{\partial^0 B^+_1}{F_{\lambda_+,\lambda_-}(u_m)\mathrm{d}x}\\
    &= \frac{1}{r_m^{n+a-1}H(r_m)}\int_{\partial^0 B^+_{r_m} (X_0)}{F_{\lambda_+,\lambda_-}(u)\mathrm{d}x}\\
     &= \frac{q}{2-q} \frac{r_m^{2\overline{k}}\left(W_{k,q}(r_m)-W_{k,2}(r_m) \right)}{H(r_m)}\to 0^+,
    \end{align*}
    which implies by the strong convergence and \eqref{sppos} that $\overline{u} \equiv 0$ on $\partial^0 B^+_1$. On the other side, by \eqref{nic2} we deduce that
    \begin{align*}
      0&\leq \frac{r_m^{2s}}{\norm{u}{X_0,r_m}^{2-q}}\int_{\partial^0 B^+_1}{F_{\lambda_+,\lambda_-}(u_m)\mathrm{d}x}\\ &= \frac{1}{r_m^{n+a-1}\norm{u}{X_0,r_m}^{2}}\int_{\partial^0 B^+_{r_m}(X_0)}{F_{\lambda_+,\lambda_-}(u) \mathrm{d}x}\\
      &= \frac{q}{2-q} \frac{r_m^{2\overline{k}}\left(W_{k,q}(X_0,u,r_m)-W_{k,2}(X_0,u,r_m) \right)} {\norm{u}{X_0,r_m}^{2}}\to 0^+,
    \end{align*}
    as $m\to +\infty$. Therefore, collecting the previous result we get
    \begin{align*}
    0 &= \lim_{m\to \infty}  \frac{r_m^{2\overline{k}}W_{\overline{k},2}(X_0,u,r_m)}{\norm{u}{X_0,r_m}^2}\\
    &= \lim_{m \to \infty} \left(\int_{B^+_1}{y^a \abs{\nabla u_m}^2\mathrm{d}X} - \frac{2r_m^{1-a}}{q\norm{u}{X_0,r_m}^{2-q}}\int_{\partial^0 B^+_1}{F_{\lambda_+,\lambda_-}(u_m)\mathrm{d}x}-\overline{k}\int_{\partial^+ B^+_1}{y^a u_m^2\mathrm{d}\sigma}\right)\\
    &= \lim_{m \to \infty} \left(\int_{B^+_1}{y^a \abs{\nabla u_m}^2\mathrm{d}X} -\overline{k}\int_{\partial^+ B^+_1}{y^a u_m^2\mathrm{d}\sigma}\right),
    \end{align*}
    which implies that $\norm{u_m}{0,1}^2 \to (\overline{k}+1)\norm{\overline{u}}{L^{2,a}(\partial^+ B^+_1)}^2$. Since by \eqref{unif.norm} $\norm{u_m}{0,1}=1$ for every $m$, we immediately deduce that $\overline{u} \not \equiv 0$ in $B^+_1$. Finally, the conclusion follows as in the proof of Proposition \ref{claim}.
  \end{proof}
  \section{Blow-up analysis for $\mathcal{O}(u,X_0)< k_q$}\label{5}
  In this Section we initiate the blow-up analysis of the nodal set starting from those points with vanishing order smaller than the critical value $k_q=2s/(2-q)$. The main idea is to develop a blow-up argument based on the validity of two Almgren-type monotonicity formulas, which provide a Taylor expansion of the solutions near the nodal set in terms of $L_a$-harmonic polynomials symmetric with respect to $\{y=0\}$. As noticed in \cite{STT2020}, this class of polynomials can be treated as the fractional counterpart of harmonic polynomials.\\
  In the following result, using the upper bound on the $H^{1,a}$-vanishing order of $u$, we prove the validity of a monotonicity result for the Almgren-type functional $N(X_0,u,r)=N_q(X_0,u,r)$ introduced in \eqref{almgren}.
\begin{proposition}\label{almgren.lower}
Let $K \subset \subset \partial^0 B^+_1$ and suppose there exists $\delta >0$ such that
\be\label{vanishing.bound}
\mathcal{O}(u,X_0) \leq k_q -\delta \quad \text{for every $X_0 \in \Gamma(u) \cap K$}.
\ee
Then there exists $r_0>0$ such that for every $X_0 \in \Gamma(u)\cap K$
$$
r \mapsto e^{\tilde{C} r^\alpha}\!\left(N(X_0,u,r)+1\right)
$$
is monotone non-decreasing for $r \in (0,\min(r_0,\mathrm{dist}(K,\partial^0 B^+)))$, for some constant $\alpha = \alpha(\delta,n,s,q)$ and $\tilde C=\tilde C(\delta,n,s,q)$. Moreover, for every $X_0 \in \Gamma(u)$ such that $\mathcal{O}(u,X_0)< k_q$ there exits the limit
$$
N(X_0,u,0^+)= \lim_{r \to 0^+}e^{\tilde{C} r^\alpha}\left(N(X_0,u,r)+1\right)-1
$$
and the map $X_0\mapsto N(X_0,u,0^+)$ is upper semi-continuous on $\Gamma(u)$.
\end{proposition}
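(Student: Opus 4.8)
The plan is to control the nonlinear remainder in the Almgren quotient by a function of $r$ that is integrable near $0$, so that an exponential integrating factor restores monotonicity, and then to read off the limit and its upper semi-continuity.

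First I would start from the lower bound of Corollary~\ref{N.derivative}. Discarding the non-negative term $r\frac{2-q}{q}\int_{S^{n-1}_r(X_0)}F_{\lambda_+,\lambda_-}(u)$ and writing $\int_{\partial^+ B^+_r(X_0)}y^a u^2=r^{n+a}H(X_0,u,r)$, it becomes
$$
\frac{d}{dr}N(X_0,u,r)\ \geq\ -\frac{C^s_{n,q}}{q}\,\frac{D(X_0,u,r)}{r\,H(X_0,u,r)},\qquad D(X_0,u,r):=\frac1{r^{n+a-1}}\int_{\partial^0 B^+_r(X_0)}F_{\lambda_+,\lambda_-}(u)\geq 0.
$$
By Lemma~\ref{lem.poin} applied on $B^+_r(X_0)$ and the identity $1-a=2s$ one gets $D(X_0,u,r)\leq C\,r^{2s}\norm{u}{X_0,r}^q$, while the Gauss--Green expression of $E$ in \eqref{E.H} gives the algebraic identity $\norm{u}{X_0,r}^2=(N(X_0,u,r)+1)\,H(X_0,u,r)+D(X_0,u,r)$; since $q<2$, the term $D$ is absorbed for $r$ small, leaving
$$
D(X_0,u,r)\ \leq\ C\,r^{2s}\big((N(X_0,u,r)+1)\,H(X_0,u,r)\big)^{q/2}.
$$

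The heart of the proof, and the step I expect to be the main obstacle, is to convert the hypothesis \eqref{vanishing.bound} into a genuine two-sided control of $H$ and $N$ near $0$: the error bound above is useful only once we know that $N$ stays bounded away from $k_q$, while boundedness of $N$ is precisely what one wants. I would first show that, uniformly for $X_0\in\Gamma(u)\cap K$, one has $\limsup_{r\to0^+}N(X_0,u,r)\leq M$ for some $M=M(\delta,n,s,q)<k_q$: if the frequency were too large at small scales, then, integrating $\frac{d}{dr}\log H(X_0,u,r)=2N(X_0,u,r)/r$, the quantity $H(X_0,u,r)$ -- hence $\norm{u}{X_0,r}^2$, via the absorbed estimate and a bootstrap on $q<2$ -- would decay faster than $r^{2(k_q-\delta)}$, contradicting $\mathcal{O}(u,X_0)\leq k_q-\delta$; the borderline case $\limsup N=k_q$ is ruled out using the characterization of the Weiss transition exponent of Section~\ref{4}, and uniformity on $K$ follows from the continuity of $X_0\mapsto N(X_0,u,r)$ at fixed $r$ together with a covering argument. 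Once $N(X_0,u,r)\leq M<k_q$ on $(0,r_0)$ uniformly, integrating $\frac{d}{dr}\log H=2N/r$ downward yields $H(X_0,u,r)\geq c\,r^{2M}$ with $c>0$ uniform on $K$ (by compactness), and inserting this together with $N\leq M$ and $N\geq 0$ (the latter valid for $r$ small, so that $(N+1)^{q/2}\leq N+1$) into the displayed bound gives
$$
\frac{d}{dr}N(X_0,u,r)\ \geq\ -\tilde C\alpha\,r^{\alpha-1}\big(N(X_0,u,r)+1\big),\qquad \alpha:=2s-M(2-q)>0,
$$
that is $\frac{d}{dr}\log\!\big(N(X_0,u,r)+1\big)\geq -\tilde C\alpha\, r^{\alpha-1}$, which is exactly the assertion that $r\mapsto e^{\tilde C r^{\alpha}}\big(N(X_0,u,r)+1\big)$ is non-decreasing; one checks that $r_0,\alpha,\tilde C$ depend only on $\delta,n,s,q$.

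Finally, for a point $X_0$ with $\mathcal{O}(u,X_0)<k_q$ one applies the above on a small compact $K\ni X_0$; the monotonicity just proved gives the existence of $\lim_{r\to0^+}e^{\tilde C r^{\alpha}}\big(N(X_0,u,r)+1\big)$, hence of $N(X_0,u,0^+)$, and moreover
$$
N(X_0,u,0^+)+1\ =\ \inf_{0<r<r_0}e^{\tilde C r^{\alpha}}\big(N(X_0,u,r)+1\big).
$$
For each fixed $r\in(0,r_0)$ the map $X_0\mapsto e^{\tilde C r^{\alpha}}(N(X_0,u,r)+1)$ is continuous, since $E(X_0,u,r)$ and $H(X_0,u,r)$ depend continuously on $X_0$ for fixed $r$ and $H(X_0,u,r)>0$ by Theorem~\ref{ucp}; an infimum of continuous functions being upper semi-continuous, the map $X_0\mapsto N(X_0,u,0^+)$ is upper semi-continuous on $\Gamma(u)$, which completes the proof.
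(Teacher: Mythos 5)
The final steps (reading off the limit from the monotone quantity, and upper semi-continuity from the representation of $N(X_0,u,0^+)+1$ as an infimum over $r$ of continuous maps) match the paper, but the key middle step is both a different route and gapped. You propose to first show a uniform frequency bound $\limsup_{r\to 0^+}N(X_0,u,r)\leq M<k_q$ on $K$, and then derive $H(X_0,u,r)\geq c\,r^{2M}$ by integrating $\frac{d}{dr}\log H=2N/r$. The contradiction argument you sketch for this frequency bound is circular: the implication ``if the frequency were too large at small scales then $H$ would decay faster than $r^{2(k_q-\delta)}$'' has no force without the very monotonicity of $N$ that the Proposition is asserting --- before monotonicity is established, $N$ could be large on a sequence of radii and small in between, and no uniform decay of $H$ follows. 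Moreover, the intended contradiction is with $\mathcal{O}(u,X_0)$, which is defined through $\norm{u}{X_0,r}$, not through $H$; the identification of these two vanishing orders is exactly Corollary~\ref{equivalence1}, which is proved \emph{after} and \emph{using} this Proposition. Finally, invoking ``the characterization of the Weiss transition exponent of Section~\ref{4}'' to rule out $\limsup N=k_q$ is not legitimate here, since those lemmata run under the hypothesis (or absurd hypothesis) $\overline{k}\geq k_q$, whereas here $\mathcal{O}(u,X_0)<k_q$ and the transition exponent $\overline{k}$ has not even been defined.

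The idea the paper actually uses --- and which is missing from your proposal --- is much more direct and bypasses any control of $N$ itself: the hypothesis $\mathcal{O}(u,X_0)\leq k_q-\delta$ is read off, with $k_2:=k_q-\delta/2>\mathcal{O}(u,X_0)$, as a quantitative lower bound $\norm{u}{X_0,r}\geq C_2\,r^{k_2}$ for all small $r$, uniformly for $X_0\in\Gamma(u)\cap K$. Substituting into $E+H\geq\norm{u}{X_0,r}^{q}\bigl(\norm{u}{X_0,r}^{2-q}-C_1 r^{2s}\bigr)$ gives $\norm{u}{X_0,r}^{2-q}r^{-2s}-C_1\geq C_2'\,r^{-\alpha}$ with $\alpha=(2-q)\delta/2>0$, and then $\frac{d}{dr}\log(N+1)\geq-\tilde C\,r^{\alpha-1}$ follows directly; no intermediate control of $N$ or of $H$ is needed. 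Your auxiliary bound $D\leq C\,r^{2s}\bigl((N+1)H\bigr)^{q/2}$, obtained by ``absorbing'' $D$ out of $D\leq C\,r^{2s}\bigl((N+1)H+D\bigr)^{q/2}$, also does not follow in the form stated: with $q/2<1$ one cannot absorb $D^{q/2}$ into $D$ without already having a lower bound on $D$. The paper sidesteps this by never rewriting $\norm{u}{X_0,r}^2$ as $(N+1)H+D$, comparing instead $D$ and $E+H$ directly through Lemma~\ref{lem.poin}.
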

\begin{proof}
Let $K\subset\subset \partial^0 B^+_1$ and $\alpha>0$ to be made precise later. Let $X_0 \in K$ and, for the sake of simplicity, we omit the dependence of the functionals with respect to $u$ and $X_0$. By Corollary \ref{N.derivative}, we easily get
\begin{align}\label{deriv.N1}
\begin{aligned}
\frac{d}{dr}\log ( N(r) +1 ) 
& \geq \frac{1}{E(r) + H(r)}\frac{1}{r^{n+a-1}}\left[\frac{2-q}{q}\int_{S^{n-1}_r}F_{\lambda_+,\lambda_-}(u)\mathrm{d}\sigma - \frac{C^s_{n,q}}{qr} \int_{\partial^0 B^+_r}{F_{\lambda_+,\lambda_-}(u)\mathrm{d}x}\right]\\
& \geq -\frac{C^s_{n,q}}{q(E(r) + H(r))}\frac{1}{r^{n+a}}\int_{\partial^0 B^+_r}{F_{\lambda_+,\lambda_-}(u)\mathrm{d}x}
\end{aligned}
\end{align}
with $C^s_{n,q} = 2n - q(n-2s)$. By Lemma \ref{lem.poin}, we get
\begin{align}\label{upper E+H}
\begin{aligned}
E(r) + H(r) &\geq \norm{u}{r}^q \left(\norm{u}{r}^{2-q} - C_1 r^{2s}\right)\\
& \geq \frac{C}{r^{n}}\left(\norm{u}{r}^{2-q} - C_1 r^{2s}\right)\int_{\partial^0 B^+_r}{\abs{u}^q\mathrm{d}x},
\end{aligned}
\end{align}
where $\norm{\cdot}{r}=\norm{\cdot}{H^{1,a}(B^+_r)}$. Now, we want to show that there exists $\alpha, r_0, C_2>0$ such that
\be\label{utiledopo}
\ddfrac{\norm{u}{r}^{2-q}}{r^{2s}} - C_1 > C_2\frac{1}{r^\alpha},
\ee
for every $r\in (0,r_0)$. Then, combining the previous inequality with \eqref{deriv.N1} and \eqref{upper E+H}, we will get
$$
  \frac{d}{dr}\log(N(r)+1) \geq -\ddfrac{\tilde{C}}{r\left(\frac{\norm{u}{r}^{2-q}}{r^{2s}}-C_1\right)}\geq -\tilde{C} r^{\alpha-1},
$$
as we claimed. First, by \eqref{vanishing.bound}, let us choose $\alpha = \delta/2$ and consider $$k_2=\frac{2s}{2-q}-\alpha \geq \mathcal{O}(u,X_0),$$
for every $X_0 \in \Gamma(u)\cap K$.
Indeed, by the definition of $H^{1,a}$-vanishing order, there exists $r_2>0$ and $C_2>0$ such that, for every $r\in (0,r_2)$
\be \label{alpha}
\norm{u}{r} \geq C_2 r^{k_2} \longleftrightarrow \ddfrac{\norm{u}{r}^{2-q}}{r^{2s}} \geq C_2 r^{(2-q)k_2-2s} = C_2 r^{-\alpha}.
\ee
Since $\delta=\delta(K)$, the constant $C_2,\alpha$ and $r_2$ depend only on the choice of the compact $K$. Finally, the upper semi-continuity follows by a standard argument.
\end{proof}
Using this monotonicity result we can prove the equivalence between the notion of $H^{1,a}$-vanishing order $\mathcal{O}(u,X_0)$ and the one introduced in Definition \ref{nu}.
\begin{corollary}\label{equivalence1}
  Let $X_0 \in \Gamma(u)$ be such that $\mathcal{O}(u,X_0)<k_q$. Then
  $$
  \mathcal{O}(u,X_0) = \mathcal{V}(u,X_0).
  $$
\end{corollary}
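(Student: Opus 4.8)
The plan is to show that both vanishing orders coincide with the limit frequency $N_0 := N(X_0,u,0^+)$ furnished by Proposition \ref{almgren.lower} under the standing assumption $\mathcal{O}(u,X_0)<k_q$. Write $g(r):=\norm{u}{X_0,r}^2$ and note the identity
$$
g(r) = E(X_0,u,r) + H(X_0,u,r) + \frac{1}{r^{n+a-1}}\int_{\partial^0 B^+_r(X_0)}F_{\lambda_+,\lambda_-}(u)\,\mathrm{d}x,
$$
obtained by combining the definition of $E$ in \eqref{E.H} with $\norm{u}{X_0,r}^2 = r^{-(n+a-1)}\int_{B^+_r(X_0)}y^a\abs{\nabla u}^2 + H(X_0,u,r)$. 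As in the proof of Theorem \ref{strong-unique}, Lemma \ref{lem.poin} bounds the last term by $C\,r^{2s}g(r)^{q/2}$ with $C=C(n,s,q,\lambda_+,\lambda_-)$, since $1-a=2s$.

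First I would compute $\mathcal{V}(u,X_0)$. Because $e^{\tilde C r^\alpha}\to 1$ as $r\to0^+$, Proposition \ref{almgren.lower} gives $N(X_0,u,r)\to N_0$; fix $\eps>0$ and $r_\eps>0$ with $\abs{N(X_0,u,r)-N_0}<\eps$ on $(0,r_\eps)$. Using $\frac{d}{dr}\log H(X_0,u,r)=\frac{2}{r}N(X_0,u,r)$ (from \eqref{h.derivative} and \eqref{almgren}, and recalling $H(X_0,u,\cdot)>0$ near $0$ by Theorem \ref{ucp}, as in Lemma \ref{trans1}) and integrating between $r$ and $r_\eps$, one obtains $c_\eps\,r^{2(N_0+\eps)}\le H(X_0,u,r)\le C_\eps\,r^{2(N_0-\eps)}$ on $(0,r_\eps)$. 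Inserting this into \eqref{L2a.vanishing} and letting $\eps\to0^+$ forces $\mathcal{V}(u,X_0)=N_0$; in particular $N_0\in\R^+$.

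Next I would bootstrap a matching bound for $g$. Writing $E(X_0,u,r)=N(X_0,u,r)H(X_0,u,r)\le(N_0+\eps)H(X_0,u,r)$ and using the nonlinear estimate, for $r\in(0,r_\eps)$ we have $g(r)\le(1+N_0+\eps)H(X_0,u,r)+C r^{2s}g(r)^{q/2}$. Since $q/2<1$, Young's inequality yields $C r^{2s}g(r)^{q/2}\le\tfrac q2 g(r)+C_* r^{4s/(2-q)}=\tfrac q2 g(r)+C_* r^{2k_q}$, so after absorbing and invoking the previous step,
$$
g(r)\le C_\eps'\,r^{2(N_0-\eps)} + C_*'\,r^{2k_q}, \qquad r\in(0,r_\eps).
$$
Now the hypothesis $\mathcal{O}(u,X_0)<k_q$ enters to rule out $N_0\ge k_q$: if $N_0\ge k_q$ this display gives $g(r)\le C\,r^{2(k_q-\eps)}$ for every small $\eps$, hence $\limsup_{r\to0^+}g(r)/r^{2k}=0$ for all $k<k_q$, so $\mathcal{O}(u,X_0)\ge k_q$ by \eqref{H1a.vanishing} — a contradiction. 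Therefore $N_0<k_q$, and for any $k<N_0$ one chooses $\eps\in(0,N_0-k)$, so that $N_0-\eps\in(k,k_q)$ and the display reduces to $g(r)\le C\,r^{2(N_0-\eps)}$; thus $\limsup_{r\to0^+}g(r)/r^{2k}=0$, giving $\mathcal{O}(u,X_0)\ge N_0$. Together with the elementary bound $\mathcal{O}(u,X_0)\le\mathcal{V}(u,X_0)$ from \eqref{poincare} and the first step, this yields $\mathcal{O}(u,X_0)=\mathcal{V}(u,X_0)$.

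The genuinely delicate point is this last bootstrap: the boundary nonlinearity scales exactly like $r^{2k_q}$, so it can be absorbed into the $H^{1,a}$-energy only while the vanishing order stays strictly below the critical exponent $k_q$; hence $N_0<k_q$ cannot be assumed but must be extracted a posteriori from $\mathcal{O}(u,X_0)<k_q$. Everything else is a routine integration of the Almgren-type monotonicity of Proposition \ref{almgren.lower} together with the weighted trace inequality of Lemma \ref{lem.poin}.
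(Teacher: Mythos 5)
Your proof is correct, and it reaches the conclusion by a somewhat different route from the paper's. The paper assumes for contradiction that $\mathcal{O}(u,X_0)<\mathcal{V}(u,X_0)$, picks $k$ strictly in between and writes $k=(2s-\alpha)/(2-q)$ with $\alpha>0$, then manipulates the trace estimate $\norm{u}{X_0,r}^q(\norm{u}{X_0,r}^{2-q}-C_1r^{2s})\le H(X_0,u,r)(N(X_0,u,r)+1)$ together with the lower bound $\norm{u}{X_0,r}^{2-q}-C_1r^{2s}\ge C_0\,r^{k(2-q)}$ (the estimate \eqref{alpha} extracted from the definition of the $H^{1,a}$-vanishing order) to derive $\norm{u}{X_0,r}^2/r^{2k}\le C\left(H(X_0,u,r)/r^{2k}\right)^{2/q}$, and the contradiction follows because the right-hand side tends to $0$ while the left must blow up. You instead first identify $\mathcal{V}(u,X_0)$ with $N_0=N(X_0,u,0^+)$ by integrating $\frac{d}{dr}\log H=\frac{2}{r}N$ — this is in substance the paper's Corollary \ref{equivalence2}, which you reprove inline without its stated hypothesis $\mathcal{V}(u,X_0)<k_q$ — and then absorb the boundary nonlinearity with Young's inequality, exploiting that it scales exactly like $r^{2k_q}$, to obtain the linear comparison $\norm{u}{X_0,r}^2\le C\,H(X_0,u,r)+C\,r^{2k_q}$. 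This has two small advantages: you do not need the pointwise lower bound \eqref{alpha} inside the corollary's proof, and you extract the fact $N_0<k_q$ as an explicit intermediate step, whereas the paper leaves it implicit (it is what makes the choice $\alpha>0$ legitimate). Both proofs rest on Proposition \ref{almgren.lower} and Lemma \ref{lem.poin} and are of comparable length; your version is arguably a bit more transparent about where the hypothesis $\mathcal{O}(u,X_0)<k_q$ intervenes. One cosmetic remark: the positivity of $H(X_0,u,\cdot)$ near the origin that you invoke is more naturally a consequence of the monotonicity in Proposition \ref{almgren.lower} together with Theorem \ref{ucp}, rather than of Lemma \ref{trans1}, which is formulated under the opposite regime $\overline{k}\ge k_q$.
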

\begin{proof}
Suppose by contradiction that $\mathcal{O}(u,X_0) < \mathcal{V}(u,X_0)$ and consider $k \in (\mathcal{O}(u,X_0),\mathcal{V}(u,X_0))$. Let us write
$$
k = \frac{2s-\alpha}{2-q},
$$
for some $\alpha>0$. Now, let $r \in (0,\mathrm{dist}(X_0,\partial^0 B^+_1))$, by \eqref{upper E+H} we get
  \begin{equation}\label{compare1}
  \norm{u}{X_0,r}^q \left(\norm{u}{X_0,r}^{2-q} - C_1 r^{2s}\right) \leq E(X_0,u,r)+H(X_0,u,r)=H(X_0,u,r)(N(X_0,u,r)+1)
  \end{equation}
  which implies
  \begin{equation}\label{compare2}
  \frac{\norm{u}{X_0,r}^2}{r^{2k}} \leq \left[\ddfrac{N(X_0,u,r)+1}{\norm{u}{X_0,r}^{2-q}-C_1r^{2s}} r^{k(2-q)}\right]^{2/q} \left(\frac{H(X_0,u,r)}{r^{2k}}\right)^{2/q}.
  \end{equation}
  As in \eqref{alpha}, in the proof of Proposition \ref{almgren.lower}, there exists $r_0>0$ and $C_0>0$ such that
  $$
  \norm{u}{X_0,r}^{2-q}-C_1r^{2s} \geq C_0 r^{2s-\alpha} = C_0 r^{k(2-q)},
  $$
  for every $r\in (0,r_0)$. With a slight abuse of notations, it is not restrictive to assume that $r_0$ corresponds to the radius introduced in Proposition \ref{almgren.lower}.\\ Finally, by the monotonicity result, fixed $R=\min\{r_0,\mathrm{dist}(X_0,\partial^0 B^+)\}$ we deduce, for every $r \in (0,R)$, that
  \begin{align*}
\frac{\norm{u}{X_0,r}^2}{r^{2k}} &\leq C \left[(N(X_0,u,r)+1)\right]^{2/q} \left(\frac{H(X_0,u,r)}{r^{2k}}\right)^{2/q}\\
& \leq C \left[e^{\tilde{C}R}(N(X_0,u,R)+1)\right]^{2/q}  \left(\frac{H(X_0,u,r)}{r^{2k}}\right)^{2/q}
  \end{align*}
where $C>0$ depends only on $C_0$. Thus, by Definition \ref{nu} we get that $\mathcal{O}(u,X_0)\geq \mathcal{V}(u,X_0)$ that, in combination with the opposite inequality, implies the desired result.
\end{proof}
Similarly, we show that in the case $\mathcal{V}(u,X_0)< 2s/(2-q)$, the possible vanishing orders correspond to the possible limits of the Almgren-type frequency formula. For the sake of completeness, we report the proof of this result which is deeply based on the validity of the Almgren-type monotonicity result.
\begin{corollary}\label{equivalence2}
    Let $X_0 \in \Gamma(u)$ be such that $\mathcal{V}(u,X_0)<k_q$. Then $\mathcal{V}(u,X_0) = N(X_0,u,0^+).$
\end{corollary}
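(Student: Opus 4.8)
The plan is to extract from the Almgren-type monotonicity of Proposition \ref{almgren.lower} sharp two-sided bounds on the growth of $r\mapsto H(X_0,u,r)$ near $X_0$, and then to read off $\mathcal{V}(u,X_0)$ directly from Definition \ref{nu}. First I would observe that the hypothesis $\mathcal{V}(u,X_0)<k_q$, together with the inequality $\mathcal{O}(u,X_0)\le \mathcal{V}(u,X_0)$ coming from \eqref{poincare}, forces $\mathcal{O}(u,X_0)<k_q$, so that Proposition \ref{almgren.lower} applies: the map $g(r):=e^{\tilde C r^\alpha}\bigl(N(X_0,u,r)+1\bigr)$ is monotone non-decreasing and $\lim_{r\to 0^+}g(r)=N(X_0,u,0^+)+1$. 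Set $k:=N(X_0,u,0^+)$. Moreover, by the unique continuation principle of Theorem \ref{ucp}, $H(X_0,u,r)>0$ on the whole interval $\bigl(0,\mathrm{dist}(X_0,\partial^+ B^+_1)\bigr)$, so that $N(X_0,u,r)$ and $\log H(X_0,u,r)$ are well defined there.

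The second step is to convert the monotonicity of $g$ into pointwise bounds on the frequency. From $g(r)\ge k+1$ for every $r$ one gets $N(X_0,u,r)\ge (k+1)e^{-\tilde C r^\alpha}-1\ge k-C r^\alpha$ for $r$ small, with $C$ depending only on $k$ and $\tilde C$. On the other hand, since $g$ is non-decreasing with $g(0^+)=k+1$, for every $\varepsilon>0$ there is $r_\varepsilon>0$ such that $g(r)\le k+1+\varepsilon$ on $(0,r_\varepsilon)$, whence $N(X_0,u,r)\le k+\varepsilon$ on that interval.

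Next I would integrate the identity $\frac{d}{dr}\log H(X_0,u,r)=\frac{2}{r}N(X_0,u,r)$, which follows from \eqref{h.derivative}, between a fixed small radius $R\in(0,r_\varepsilon)$ and a variable $r\in(0,R)$. Using $N(X_0,u,t)\le k+\varepsilon$ gives $H(X_0,u,r)\ge c_\varepsilon\, r^{2(k+\varepsilon)}$ with $c_\varepsilon=H(X_0,u,R)\,R^{-2(k+\varepsilon)}>0$; using $N(X_0,u,t)\ge k-C t^\alpha$ and bounding $\int_r^R C t^{\alpha-1}\,dt\le \frac{C}{\alpha}R^\alpha$ gives the upper bound $H(X_0,u,r)\le C' r^{2k}$ for all small $r$, with $C'$ independent of $r$. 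Recalling that $H(X_0,u,r)=r^{-(n+a)}\int_{\partial^+ B^+_r(X_0)}y^a u^2$, so that $r^{-(n+a+2j)}\int_{\partial^+ B^+_r(X_0)}y^a u^2=r^{-2j}H(X_0,u,r)$, the lower bound yields $\limsup_{r\to 0^+}r^{-2(k+\varepsilon)}H(X_0,u,r)\ge c_\varepsilon>0$, which by Definition \ref{nu} rules out $\mathcal{V}(u,X_0)>k+\varepsilon$ and hence gives $\mathcal{V}(u,X_0)\le k+\varepsilon$; letting $\varepsilon\to 0$ yields $\mathcal{V}(u,X_0)\le k$. The upper bound yields $\limsup_{r\to 0^+}r^{-2j}H(X_0,u,r)=0$ for every $j<k$, which by Definition \ref{nu} rules out $j>\mathcal{V}(u,X_0)$ for such $j$ and hence gives $\mathcal{V}(u,X_0)\ge k$. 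Combining the two inequalities gives $\mathcal{V}(u,X_0)=k=N(X_0,u,0^+)$, as claimed.

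I do not expect a serious obstacle here: the argument is the nonlocal, perturbed analogue of the classical Almgren frequency computation, and the only technical care needed is in tracking the factor $e^{\tilde C r^\alpha}$ (which is harmless since it tends to $1$ and only contributes a bounded multiplicative constant after integration) and in making sure the logarithmic-derivative identity is applied on an interval where $H(X_0,u,\cdot)$ does not vanish, which is precisely where Theorem \ref{ucp} enters.
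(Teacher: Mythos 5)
Your proposal is correct and essentially reproduces the paper's argument: both integrate $\frac{d}{dr}\log H(X_0,u,r)=\frac{2}{r}N(X_0,u,r)$ over $(r,R)$ using the two-sided frequency bounds supplied by the perturbed Almgren monotonicity of Proposition \ref{almgren.lower}, and then read off $\mathcal{V}(u,X_0)$ from the resulting growth of $H$ via Definition \ref{nu}. The only differences are cosmetic—you argue directly (establishing $\mathcal{V}\le N(X_0,u,0^+)$ and $\mathcal{V}\ge N(X_0,u,0^+)$ separately) whereas the paper argues by contradiction, and your observation that $\int_r^R t^{\alpha-1}\,dt$ stays bounded as $r\to 0^+$ gives a marginally cleaner upper bound $H(X_0,u,r)\le C'r^{2N(X_0,u,0^+)}$ than the paper's bound with exponent $\underline{N}$ in \eqref{van.1}.
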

\begin{proof}
  By \eqref{E.H} and Definition \ref{nu}, we claim that
$$
  \limsup_{r\to 0^+} \frac{H(X_0,u,r)}{r^{2k}} = \begin{cases}
      0, & \mbox{if } 0< k < N(X_0,u,0^+)\\
      +\infty, & \mbox{if } k >N(X_0,u,0^+).
    \end{cases}
$$
It is not restrictive to assume that $X_0=0$ and $r \in (0,R)$, for some $R>0$ that will be choose later. By definition of $r\mapsto H(0,u,r)=H(u,r)$ we immediately get for every $r \in (0,R)$ that
\be\label{derivative.H}
\frac{d}{dr}\log H(u,r) = \frac2r N(u,r)
\ee
and in particular for every $k>0$, by Proposition \ref{almgren.lower}, there exists $\alpha, \tilde{C}>0$ such that
\be\label{van.1}
\left(\frac{H(u,R)}{R^{2\overline{N}}}\right) r^{2(\overline{N}-k)} \leq \frac{H(u,r)}{r^{2k}} \leq \left(\frac{H(u,R)}{R^{2\underline{N}}}\right) r^{2(\underline{N}-k)},
\ee
with
$$
\underline{N} = e^{-\tilde{C}R^\alpha}(N(u,0^+)+1)-1 \quad\mbox{and}\quad
\overline{N} = e^{\tilde{C}R^\alpha}(N(u,R)+1)-1.
$$
Suppose first $\mathcal{V}(u,0)< N(u,0^+)$, so there exists $\eps>0$ such that $k := N(u,0^+) - \eps >\mathcal{V}(u,0)$. Let $R>0$ be such that
$$
(1-e^{-\tilde{C}R^\alpha})(N(u,0^+)+1) < \frac\eps2,
$$
where $\tilde{C},\alpha>0$ are introduced in Proposition \ref{almgren.lower}. Thus, we get $\underline{N}-k >\eps/2 $ and consequently by \eqref{van.1}
$$
\frac{H(u,r)}{r^{2k}} \leq \left(\frac{H(u,R)}{R^{2\underline{N}}}\right) r^{2(\underline{N}-k)}< C_2 r^\eps,
$$
for some constant $C_2>0$ depending only on $R>0$. The absurd follows immediately since $k>\mathcal{V}(u,0)$, namely
$$
+\infty= \limsup_{r\to 0^+}\frac{H(u,r)}{r^{2k}} \leq \left(\frac{H(u,R)}{R^{2\underline{N}}}\right) r^{2(\underline{N}-k)}< C_2 \lim_{r \to 0^+} r^\eps = 0.
$$
Similarly, if $\mathcal{V}(u,0)> N(u,0^+)$ consider $k=N(u,0^+)+\eps$, with $\eps>0$ sufficiency small so that $\mathcal{V}(u,0) > k$.
By the monotonicity result Proposition \ref{almgren.lower}, let $R>0$ be such that
$$
e^{\tilde{C}R^\alpha}(N(u,R)+1)- (N(u,0^+)+1) < \frac{\eps}{2}.
$$
Hence, since $\overline{N}-k <  -\eps/2$, we get by \eqref{van.1}
$$
\frac{H(u,r)}{r^{2k}} \geq \left(\frac{H(u,R)}{R^{2\overline{N}}}\right) r^{2(\overline{N}-k)}\geq C_2 r^{-\eps}
$$
for some constant $C_2>0$ depending only on $R>0$. The contradiction follows by Definition \ref{nu}.
\end{proof}
In particular, from the previous equivalences we deduce that, for every $k_1<N(X_0,u,0^+)<k_2$ there exist $C_1,C_2>0$ such that
\be\label{equiv}
C_2 r^{2k_2} \leq \norm{u}{X_0,r}^2 \leq C_1 r^{2k_1},
\ee
for $r \in (0,R)$, for some $R>0$ sufficiently small.\\

Finally, we can introduce the following notion of stratum of the nodal set.
\begin{definition}
Let $k< k_q$ we define
$$
\Gamma_k(u)  \coloneqq  \{X_0 \in \Gamma(u) \colon \mathcal{O}(u,X_0)=k\},
$$
where $\mathcal{O}(u,X_0)=\mathcal{V}(u,X_0)=N(X_0,u,0^+)$.
\end{definition}

While in the local case, in \cite{soaveweth, soavesublinear} the authors proved the existence of a generalized Taylor expansion of the solution near the nodal set by applying an iteration argument based on the results of \cite{fermi}, we apply a blow-up analysis in order to understand how the solutions behave near the nodal set $\Gamma(u)$.\\
Thus, we start by proving a convergence result for blow-up sequences based on the validity of the Almgren-type monotonicity formula. Hence, given $X_0 \in \Gamma(u)$, for any $r_k \downarrow 0^+$, we define as normalized  blow-up sequence
    \begin{equation*}\label{blow.up.almgren}
    u_k(X)= \ddfrac{u(X_0 + r_k X)}{\sqrt{H(X_0,u,r_k)}}\quad \mbox{for } X\in B^+_{X_0,r_k}=\frac{B_1^+ - X_0}{r_k},
    \end{equation*}
    such that
\be\label{blow.up.equation}
\begin{cases}
-L_a u_k=0 & \mbox{in } B_{X_0,r_k}^+\\
  -\partial^a_y u_k = \left(\ddfrac{r_k^{2s/(2-q)}}{\sqrt{H(X_0,u,r_k)}}\right)^{2-q} \left[\lambda_+ (u_k)_+^{q-1} - \lambda_- (u_k)_-^{q-1}\right] & \mbox{on } \partial^0 B^+_{X_0,r_k}.
\end{cases}
\ee
Let us introduce the notation
$$
0 < \alpha_{k} =  \left(\ddfrac{r_k^{2s/(2-q)}}{\sqrt{H(X_0,u,r_k)}}\right)^{2-q}< +\infty,
$$
Since we are considering the case $\mathcal{O}(u,X_0)< 2s/(2-q)$, the sequence $(\alpha_k)_k$ is bounded and converges to $0$ as $k \to \infty$.
\begin{theorem}\label{blowup.lower}
Let $X_0 \in \Gamma(u)$ be such that $\mathcal{O}(u,X_0)< k_q$ and $u_k$ be a normalized blow-up sequence centered in $X_0$ and associated with some $r_k \downarrow 0^+$. Then, there exists $p \in H^{1,a}_{\loc}(\R^{n+1})$ such that, up to a subsequence, $u_k\to p$ in $C^{0,\alpha}_{\loc}(\R^{n+1})$ for every $\alpha \in (0,1)$ and strongly in $H^{1,a}_{\loc}(\R^{n+1})$. In particular, the blow-up limit satisfy
\be\label{La.even}
\begin{cases}
-L_a p=0 & \mbox{in } \R^{n+1}_+\\
  -\partial^a_y p = 0 & \mbox{on } \R^n \times \{0\}.
\end{cases}
\ee
\end{theorem}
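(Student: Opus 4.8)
The plan is to run the standard blow-up scheme: establish uniform $H^{1,a}$ and $C^{0,\alpha}$ bounds for the rescaled functions $u_k$, extract a limit, upgrade the convergence, and pass to the limit in \eqref{blow.up.equation}. The only nonstandard points are the uniform bounds, which is where the hypothesis $\mathcal{O}(u,X_0)<k_q$ enters, and the book-keeping of the sublinear boundary term, which is multiplied by the vanishing factor $\alpha_k$.

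\textbf{Uniform bounds.} First I would note that the normalization gives $\int_{\partial^+ B^+_1}y^a u_k^2=1$, and that
\[
\int_{B^+_1}y^a|\nabla u_k|^2\,\mathrm{d}X = N(X_0,u,r_k) + \frac{1}{r_k^{n+a-1}H(X_0,u,r_k)}\int_{\partial^0 B^+_{r_k}(X_0)}F_{\lambda_+,\lambda_-}(u)\,\mathrm{d}x .
\]
By Proposition~\ref{almgren.lower} the frequency $N(X_0,u,r_k)$ stays bounded as $r_k\to 0^+$. For the second term, Lemma~\ref{lem.poin} bounds $\int_{\partial^0 B^+_r(X_0)}F_{\lambda_+,\lambda_-}(u)$ by $C r^{n}\norm{u}{X_0,r}^q$, while Corollary~\ref{equivalence1} and \eqref{equiv} give $\norm{u}{X_0,r}^2\le C r^{2k_1}$ and $H(X_0,u,r)\ge c\,r^{2k_2}$ with $k_1,k_2$ as close as we like to $\mathcal{O}(u,X_0)$; since $\mathcal{O}(u,X_0)<k_q=2s/(2-q)$, a short computation shows the term is $O(r_k^{\theta})$ for some $\theta>0$ and hence negligible. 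Thus $\norm{u_k}{H^{1,a}(B^+_1)}\le C$; using the doubling of $H$ (integrate $\frac{d}{dr}\log H=\frac2r N$, cf.\ \eqref{derivative.H}, with $N$ bounded) and the same identity on balls of radius $\rho r_k$, I get $\norm{u_k}{H^{1,a}(B^+_\rho)}\le C(\rho)$ for every fixed $\rho>0$; the rescaled domains $B^+_{X_0,r_k}$ exhaust $\R^{n+1}_+$ because $X_0\in\partial^0 B^+_1$.

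\textbf{Compactness and equicontinuity.} Since $q-1<1$ and $(\alpha_k)_k$ is bounded, the Moser iteration used for Theorem~\ref{holer.reg} applies uniformly in $k$ and yields $\norm{u_k}{L^\infty(B^+_\rho)}\le C(\rho)$; hence the Neumann datum $\alpha_k(\lambda_+(u_k)_+^{q-1}-\lambda_-(u_k)_-^{q-1})$ is bounded in $L^\infty_\loc$, and the boundary regularity for $L_a$ (as in Theorem~\ref{holer.reg}, now with no sublinear obstruction) gives a uniform bound in $C^{0,\beta}_\loc$ for every $\beta<1$. A diagonal extraction over $\beta_j\uparrow 1$ then produces, along a subsequence, $u_k\to p$ in $C^{0,\alpha}_\loc$ for all $\alpha\in(0,1)$, with also $u_k\rightharpoonup p$ weakly in $H^{1,a}_\loc$ and the traces converging strongly in $L^q_\loc(\R^n)$ by Lemma~\ref{dafare}; passing to the limit in the normalization through the trace compactness on $\partial^+B^+_1$ gives $\int_{\partial^+B^+_1}y^a p^2=1$, so $p\not\equiv 0$.

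\textbf{Strong convergence, the limit equation, and the main obstacle.} Testing the weak form of \eqref{blow.up.equation} with $\eta^2(u_k-p)$, $\eta\in C^\infty_c$, the boundary integral carries the vanishing factor $\alpha_k$ against a locally bounded quantity and disappears; the term $\int y^a(u_k-p)\,\nabla u_k\cdot\nabla(\eta^2)$ disappears because $u_k-p\to 0$ uniformly while $\nabla u_k$ is bounded in $L^{2,a}_\loc$; and $\int y^a\eta^2\nabla p\cdot\nabla(u_k-p)\to 0$ by weak convergence. Subtracting gives $\int y^a\eta^2|\nabla(u_k-p)|^2\to 0$, i.e.\ strong $H^{1,a}_\loc$ convergence. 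Finally, for $\phi\in C^\infty_c(\overline{\R^{n+1}_+})$ and $k$ large the weak formulation reads $\int y^a\nabla u_k\cdot\nabla\phi=\alpha_k\int_{\partial^0}(\lambda_+(u_k)_+^{q-1}-\lambda_-(u_k)_-^{q-1})\phi$, and letting $k\to\infty$ with $\alpha_k\to 0$ yields $\int_{\R^{n+1}_+}y^a\nabla p\cdot\nabla\phi=0$, which is exactly \eqref{La.even}. I expect the genuine difficulty to be the uniform energy bound of the first step: it needs both the Almgren monotonicity and the fact that the sublinear boundary term is of lower order, and the latter fails precisely at $\mathcal{O}(u,X_0)=k_q$, which is why the endpoint case requires the separate Weiss-type analysis of Section~\ref{6}; everything else is routine given the degenerate regularity theory for $L_a$.
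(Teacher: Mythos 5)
Your blueprint (Almgren-controlled $H^{1,a}$ bounds, compactness, strong convergence, passage to the limit) is the paper's blueprint, and your first and third steps are substantively identical to Lemma~\ref{bounded.H1}: the normalization $\int_{\partial^+B_1^+}y^a u_k^2=1$, the identity expressing $\int_{B_1^+}y^a|\nabla u_k|^2$ as $N(X_0,u,r_k)$ plus the rescaled sublinear integral, the bound on that integral via Lemma~\ref{lem.poin} and the doubling estimate from \eqref{equiv}, and the strong $H^{1,a}_{\loc}$ convergence by testing against $\eta^2(u_k-p)$.

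The gap is in your equicontinuity step. You assert that once the Neumann datum $\alpha_k\big(\lambda_+(u_k)_+^{q-1}-\lambda_-(u_k)_-^{q-1}\big)$ is bounded in $L^\infty_{\loc}$, ``the boundary regularity for $L_a$ [...] gives a uniform bound in $C^{0,\beta}_{\loc}$ for every $\beta<1$.'' This is not what $L^\infty$ Neumann data buys for $L_a$: in the range $s<1/2$ (i.e.\ $2s<1$), a bounded Neumann datum only gives $C^{0,2s}$, and bootstrapping through the $(q-1)$-H\"older continuity of the nonlinearity raises the exponent along $\beta_{j+1}=\beta_j(q-1)+2s$, whose fixed point is $k_q$; you therefore cap out at $\min(1,k_q)$, not at $1$, and nothing in your argument certifies $k_q>1$ a priori (that follows only \emph{after} the blow-up classification in Corollary~\ref{blow-up.lim}). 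The paper's Lemma~\ref{holder} avoids this by running the contradiction/blow-up argument of Theorem~\ref{holer.reg} and a Liouville theorem for $L_a$-harmonic functions even in $y$; the crucial point, which your write-up never invokes, is that $\alpha_k\to 0$ (not merely that $\alpha_k$ is bounded) makes the rescaled Neumann datum vanish in the second-level blow-up, so the limit is a genuine $L_a$-harmonic function and the critical exponent becomes $1$. As written, ``bounded Neumann datum $\Rightarrow$ $C^{0,\beta}$ for all $\beta<1$'' is false, and ``now with no sublinear obstruction'' is exactly the claim that needs the decay $\alpha_k\to 0$ (and, to be fully honest, a rate-tracking or compactness argument) rather than the $L^\infty$ bound you cite.
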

The proof will be presented in a series of lemmata.
\begin{lemma}\label{bounded.H1}
  Let $X_0 \in \Gamma(u)$ such that $\mathcal{O}(u,X_0)<k_q$. For any given $R>0$, we have  $$\norm{u_{k}}{R}\leq C
$$
where $C>0$ is a constant independent on $k>0$. Moreover $u_k \to p$ strongly in $H^{1,a}(B^+_R)$ for every $R>0$, for some $p\in H^{1,a}_\loc(\R^{n+1})$ such that $\norm{p}{L^{2,a}(\partial^+ B^+)}=1$.
\end{lemma}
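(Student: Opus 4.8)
The plan is to combine the Almgren monotonicity formula of Proposition~\ref{almgren.lower} with the doubling property it yields for $H$, and then to upgrade the resulting weak $H^{1,a}_\loc$-compactness to strong convergence, exploiting that the nonlinearity in \eqref{blow.up.equation} carries the vanishing factor $\alpha_k\to 0^+$.

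First I would record the scaling identities. A change of variables gives $H(0,u_k,\rho)=H(X_0,u,\rho r_k)/H(X_0,u,r_k)$, so in particular $H(0,u_k,1)=1$; and, since $u_k$ solves the rescaled problem \eqref{blow.up.equation} with natural energy $\mathcal{E}_k(\rho)=E(X_0,u,\rho r_k)/H(X_0,u,r_k)$, the associated Almgren quotient of $u_k$ at radius $\rho$ equals $N(X_0,u,\rho r_k)$, and $\tfrac{d}{d\rho}\log H(0,u_k,\rho)=\tfrac2\rho N(0,u_k,\rho)$ as in \eqref{h.derivative}. Fix $R>0$. By Proposition~\ref{almgren.lower} the map $r\mapsto e^{\tilde C r^\alpha}(N(X_0,u,r)+1)$ is non-decreasing near $0$, hence $N(X_0,u,\rho r_k)\le C_0$ for all $\rho\in(0,R]$ once $R r_k<r_0$, i.e.\ for $k$ large, with $C_0$ independent of $k$; and from \eqref{upper E+H} together with the bound $\mathcal{O}(u,X_0)<k_q$ (used as in \eqref{alpha}) one has $E(X_0,u,\rho r_k)+H(X_0,u,\rho r_k)>0$ for small radii, so $N(X_0,u,\rho r_k)>-1$. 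Thus $-1<N(0,u_k,\rho)\le C_0$ on $(0,R]$ uniformly in large $k$.

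Next I would prove the uniform bound $\norm{u_k}{R}\le C$. Integrating $\tfrac{d}{d\rho}\log H(0,u_k,\rho)=\tfrac2\rho N(0,u_k,\rho)$ between $1$ and $\rho$ and using $-1<N(0,u_k,\cdot)\le C_0$, the function $\rho\mapsto H(0,u_k,\rho)$ is bounded above and below (away from $0$) on every compact subset of $(0,R]$ by constants independent of $k$; in particular $H(0,u_k,R)\le C_R$. On the other hand, by the scaling and the definition of the Almgren quotient,
\[
\frac{1}{R^{n+a-1}}\int_{B^+_R}{y^a\abs{\nabla u_k}^2\mathrm{d}X}=N(0,u_k,R)\,H(0,u_k,R)+\frac{\alpha_k}{R^{n+a-1}}\int_{\partial^0 B^+_R}{F_{\lambda_+,\lambda_-}(u_k)\mathrm{d}x},
\]
and, with $\Lambda=\max\{\lambda_+,\lambda_-\}$, Lemma~\ref{lem.poin} bounds the last term by $C\alpha_k\norm{u_k}{R}^q$. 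Collecting terms, $\norm{u_k}{R}^2\le C+C\alpha_k\norm{u_k}{R}^q$; since $q\in[1,2)$ and $\alpha_k$ is bounded, Young's inequality absorbs the last term and yields $\norm{u_k}{R}\le C=C(R)$ independent of $k$.

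Finally, I would extract the limit and prove strong convergence. A diagonal argument produces $p\in H^{1,a}_\loc(\R^{n+1})$ with $u_k\rightharpoonup p$ weakly in $H^{1,a}(B^+_R)$ for every $R$; the compact trace embedding of Lemma~\ref{dafare} gives $u_k\to p$ strongly in $L^q(\partial^0 B^+_R)$ and in $L^{2,a}(\partial^+ B^+_R)$, so $\norm{p}{L^{2,a}(\partial^+ B^+)}^2=\lim_k H(0,u_k,1)=1$, and the compact interior embedding $H^{1,a}(B^+_R)\hookrightarrow\hookrightarrow L^{2,a}(B^+_R)$ gives $u_k\to p$ strongly in $L^{2,a}_\loc$. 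To get strong convergence of the gradients, fix $\eta\in C^\infty_c(B_{R'})$ with $\eta\equiv1$ on $B_R$ and test the weak formulation of \eqref{blow.up.equation} with $(u_k-p)\eta^2$: the boundary term is of order $\alpha_k\norm{u_k}{L^q(\partial^0 B^+_{R'})}^{q-1}\norm{u_k-p}{L^q(\partial^0 B^+_{R'})}\to0$ since $\alpha_k\to0^+$, while $\int_{B^+_{R'}}y^a(u_k-p)\,2\eta\,\nabla u_k\cdot\nabla\eta\to0$ by the strong $L^{2,a}_\loc$ convergence of $u_k-p$; hence $\int_{B^+_{R'}} y^a\eta^2\,\nabla u_k\cdot\nabla(u_k-p)\to0$. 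Writing $\int y^a\eta^2\abs{\nabla(u_k-p)}^2=\int y^a\eta^2\nabla u_k\cdot\nabla(u_k-p)-\int y^a\eta^2\nabla p\cdot\nabla(u_k-p)$ and noting the second integral tends to $0$ by weak convergence, we conclude $\nabla u_k\to\nabla p$ in $L^{2,a}(B^+_R)$, i.e.\ $u_k\to p$ strongly in $H^{1,a}(B^+_R)$ for every $R$.

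I expect the main obstacle to be the uniform $H^{1,a}$-bound: one must first turn the one-sided frequency bound of Proposition~\ref{almgren.lower} into genuine two-sided control of $H(0,u_k,\cdot)$ through the doubling inequality, and then handle the sublinear boundary term — which would be problematic for $q\ge2$ but is harmless here precisely because $q<2$ lets Young's inequality absorb it (and because $\alpha_k\to0$). The passage from weak to strong $H^{1,a}$-convergence is then routine, once the nonlinearity is seen to vanish in the limit.
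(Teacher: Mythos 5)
Your proof is correct and follows the same overall strategy as the paper: uniform $H^{1,a}$-bound from the Almgren-type monotonicity (doubling for $H$ plus a bound on the gradient through the frequency quotient and Lemma~\ref{lem.poin}), then weak-to-strong upgrade by testing the equation against a cut-off multiple of $u_k-p$ and using $\alpha_k\to 0^+$. The only substantive deviation is in the gradient bound: you record the scaling identity $N(0,u_k,\rho)=N(X_0,u,\rho r_k)$ explicitly and then absorb the sublinear boundary term $C\alpha_k\|u_k\|_R^q$ via Young's inequality (using only $q<2$ and boundedness of $\alpha_k$), whereas the paper instead invokes the explicit vanishing-order estimate $\|u\|_{X_0,Rr_k}^q\leq C\rho_k^{2-q}(Rr_k)^{\alpha-2s}$ from the analogue of Corollary~\ref{equivalence1} to bound $\|u_k\|_R^q$ and make the $r_k^{2s}$ factor cancel; your route is a bit cleaner and achieves the same conclusion.
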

\begin{proof}
Let us consider $\rho^2_k = H(X_0,u,r_k)$, then by definition of the blow-up sequence $u_k$, \eqref{derivative.H} and Proposition \ref{almgren.lower} we obtain
 \begin{align*}
    \int_{\partial^+ B^+_R}{y^a u^2_k \mathrm{d}\sigma} & = \frac{1}{\rho_k^2}\int_{\partial^+ B^+_{R}}{y^a u^2(X_0 + r_k X) \mathrm{d}\sigma} \\
&=\frac{1}{\rho_k^2 r_k^{n+a}}\int_{\partial B_{Rr_k}(X_0)}{y^a u^2 \mathrm{d}\sigma}\\
    &= R^{n+a}\frac{H(X_0,u,Rr_k)}{H(X_0,u,r_k) }\\
    & \leq R^{n+a}\left( \frac{R r_k}{r_k}\right)^{\!2\tilde{C}}
\end{align*}
which gives us $\norm{u_k}{L^{2,a}(\partial^+ B^+_R)}^2\leq C(R) R^{n+a}$. Instead, inspired by Corollary \ref{equivalence1}, let $$
k=\frac{2s-\alpha}{2-q},
$$
for some $\alpha>0$, then
\begin{align*}
  \norm{u_k}{R}^q  &= \frac{1}{\rho^q_k}  \norm{u}{X_0,r_k R}^q\\
 & \leq C \left[e^{\tilde{C}R}(N(X_0,u,R)+1)\right]^{2/q}\frac{H(X_0,u,Rr_k)}{H(X_0,u,R)}\rho_k^{2-q} (Rr_k)^{\alpha-2s}\\
 & \leq C(R) \rho_k^{2-q} (R r_k)^{\alpha -2s}
\end{align*}
and by Lemma \ref{lem.poin} and Proposition \ref{almgren.lower}, we infer
\begin{align*}
\frac{1}{R^{n+a-1}}\int_{B^+_R}{y^a \abs{\nabla u_k}^2\mathrm{d}X} &= E(X_0,u,R r_k) + \left(\frac{r_k^{\frac{2s}{2-q}}}{\sqrt{H(X_0,u, r_k)}}\right)^{2-q} \frac{1}{R^{n+a-1}}\int_{\partial^0 B^+_R }{F_{\lambda_+,\lambda_-}(u_k)\mathrm{d}x}\\
&\leq C N(X_0,u,R r_k)\frac{1}{R^{n+a}}\int_{\partial^+ B^+_R}{y^a u_k^2\mathrm{d}\sigma} + R^{1-a} \frac{r_k^{2s}}{\rho^{2-q}_k}\norm{u_k}{R}^q\\
&\leq C(R) N(X_0,u,Rr_k)+ C(R) R^{1-a} \frac{r_k^{2s}}{\rho^{2-q}_k}\rho_k^{2-q} (R r_k)^{\alpha -2s}\\
& \leq C(R) (1+R^\alpha),
\end{align*}
which finally implies the uniform bound.\\
Thus, up to a subsequence, we have proved the existence of a non trivial function $p \in H^{1,a}_\loc(\R^n)$ such that $\norm{p}{L^{2,a}(\partial B^+_1)}=1$ and $u_k \rightharpoonup p$ in $H^{1,a}(B^+_R)$ for every $R>0$.\\
On the other side, the strong convergence in $H^{1,a}(B^+_R)$ follows easily testing the equation for $u_k$ against $(u_k -p)\eta$, where $\eta\in C^\infty_c(B_R)$ is
an arbitrary cut-off function, and passing then to the limit. Indeed, we have
\begin{align*}
\int_{B^+_R}{y^a \eta \langle \nabla u_k , \nabla (u_k - p)\rangle\mathrm{d}X} = &\, \alpha_k\int_{\partial^0 B^+_R}{\eta(u_k - p ) \left[\lambda_+ (u_k)_+^{q-1} - \lambda_- (u_k)_-^{q-1}\right]\mathrm{d}x} + \\
& \, - \int_{B^+_R}{(u_k-p)\langle \nabla u_k , \nabla p\rangle \mathrm{d}X}.
\end{align*}
Since $u_k$ is uniformly bounded in $H^{1,a}(B^+_R)$, up to a subsequence,  we get that $u_k \to p$ strongly in $L^2(B^+_R)$ and in $L^p (\partial^0 B^+_R)$, for every $p \in [1,2^*)$. In the end, since $\alpha_k \to 0^+$ we get
\begin{align*}
\abs{\int_{B^+_R}{y^a \eta \langle \nabla u_k , \nabla (u_k - p)\rangle\mathrm{d}X}} \leq  & \, \alpha_k \norm{u_k - p}{L^q(\partial^0 B^+_R)}\norm{u_k}{L^{q}(\partial^0 B^+_R)}+\\
&\, + C \norm{u_k - p}{L^{2,a}(B^+_R)} \norm{u_k}{H^{1,a}(B^+_R)}
\end{align*}
Finally, since by weak convergence
$$
\lim_{k \to \infty}\int_{B^+_R}{y^a \eta \langle \nabla u_k , \nabla (u_k - p)\rangle\mathrm{d}X} =
\lim_{k \to \infty}\int_{B^+_R}{y^a \eta \left( \abs{\nabla u_k}^2 - \abs{\nabla p}^2\right)\mathrm{d}X},
$$
we reach the desired result.
\end{proof}
So far we have proved the existence of a nontrivial function $p\in H^{1,a}_{\loc}(\R^{n+1})\cap L^\infty_{\loc}(\R^{n+1})$ such that, up to a subsequence, we have $u_k \to p$ strongly in $H^{1,a}_{\loc}(\R^{n+1})$ and $L_a p = 0$ in $\mathcal{D}'(\R^{n+1})$. The next step is to prove that for $X_0 \in \Gamma(u) \cap \Sigma$ the convergence $u_k \to p$ is uniformly on copact sets of $\overline{\R^{n+1}}$ and strong in $C^{0,\alpha}_{\loc}$ for $\alpha \in (0,1)$.

\begin{lemma}\label{holder}
  For every $R > 0$ there exists $C > 0$, independent of $k$, such that
  $$
  \left[ u_k \right]_{C^{0,\alpha}(B_R)} = \sup_{X_1,X_2 \in \overline{B}_R}\frac{\abs{u(X_1)-u(X_2)}}{\abs{X_1-X_2}^\alpha} \leq C
  $$
  for every $\alpha \in (0,1)$.
\end{lemma}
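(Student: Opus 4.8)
The plan is to carry out, uniformly along the sequence $(u_k)$, the blow-up/contradiction scheme already used for Theorem \ref{holer.reg}. First I would record a uniform $L^\infty$ bound: by Lemma \ref{bounded.H1} the $u_k$ are bounded in $H^{1,a}(B^+_R)$ for every $R$, and since the boundary term in \eqref{blow.up.equation} is sub-critical --- one has $q-1\in[0,1)$ and the constants $\alpha_k$ are bounded, in fact $\alpha_k\to0^+$ because $\mathcal{O}(u,X_0)<k_q$ --- a Moser iteration gives $\sup_{B^+_R}|u_k|\le C(R)$ with $C(R)$ independent of $k$ (alternatively this follows from \eqref{equiv} and the boundedness of $u$).

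Then, fixing $R>0$ and $\alpha\in(0,1)$, I would argue by contradiction: assume $[u_k]_{C^{0,\alpha}(\overline{B^+_R})}\to\infty$ along a subsequence. Inserting a cut-off $\eta$ as in Theorem \ref{holer.reg} to neutralise the lateral boundary, one finds points $X_{1,k},X_{2,k}$ with $L_k:=|(\eta u_k)(X_{1,k})-(\eta u_k)(X_{2,k})|\,|X_{1,k}-X_{2,k}|^{-\alpha}\to\infty$, and --- using the uniform $L^\infty$ bound exactly as in \eqref{lip.1}--\eqref{lip2.1} together with $\alpha<1$ --- one obtains $\rho_k:=|X_{1,k}-X_{2,k}|\to0$, $\mathrm{dist}(X_{i,k},\partial^+ B^+_R)/\rho_k\to\infty$, and (Lipschitz bound on $\eta$) $\mathrm{dist}(X_{i,k},\{y=0\})/\rho_k\le C$. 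Centring at $P_k:=(x_{1,k},0)$ one forms the two rescalings
$$
w_k(X)=\eta(P_k)\,\frac{u_k(P_k+\rho_kX)}{L_k\rho_k^{\alpha}},\qquad \overline{w}_k(X)=\frac{(\eta u_k)(P_k+\rho_kX)}{L_k\rho_k^{\alpha}},
$$
on domains exhausting $\R^{n+1}_+$; they have $C^{0,\alpha}$-seminorm at most $1$, coincide asymptotically on compact sets, and $w_k$ satisfies an equation of the type \eqref{blowup.merda.1} whose boundary coefficient is $M_k=\alpha_k\big(\rho_k^{\,k_q-\alpha}\,\eta(P_k)/L_k\big)^{2-q}$. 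Since we work under the hypothesis $\mathcal{O}(u,X_0)<k_q$ --- which forces $k_q>1$, hence $\alpha<1<k_q$ --- and since $\rho_k\to0$, $L_k\to\infty$, we get $M_k\to0^+$. After subtracting the value at the origin, Ascoli--Arzel\'a produces a non-constant, globally $\alpha$-H\"older limit $W$ on $\R^{n+1}_+$; running, exactly as in Theorem \ref{holer.reg}, the dichotomy according to whether $u_k$ stays away from $0$ near $P_k$ or vanishes somewhere there, and using $M_k\to0$, one checks that $W$ is $L_a$-harmonic with $-\partial^a_y W\equiv0$. Reflecting $W$ evenly in $y$ yields an entire $L_a$-harmonic function that is globally $\alpha$-H\"older with $\alpha<1$, hence constant by the Liouville theorem --- contradicting the non-constancy of $W$. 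This proves the uniform bound; combined with the uniform $H^{1,a}_{\loc}$ bounds of Lemma \ref{bounded.H1} it then yields the convergence in $C^{0,\alpha}_{\loc}\cap H^{1,a}_{\loc}$ asserted in Theorem \ref{blowup.lower}, as well as \eqref{La.even}.

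The delicate point --- exactly as in the proof of Theorem \ref{holer.reg} --- is the bookkeeping of the rescaling: proving $\rho_k\to0$ and that the blow-up points approach $\{y=0\}$, so that the rescaled domains converge to $\R^{n+1}_+$ and $M_k$ stays under control; and handling the vanishing/non-vanishing dichotomy so that in the limit the nonlinear boundary term disappears and one genuinely lands on an entire $L_a$-harmonic function even in $y$, for which the Liouville theorem with exponent $\alpha<1$ applies.
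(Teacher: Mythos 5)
Your proposal carries out the blow-up/Liouville scheme that the paper only indicates (the published proof of this lemma is a single sentence pointing to \cite{tvz1,tvz2,STT2020} and the Liouville theorem of \cite{vita2020}), and the overall route --- double rescaling, vanishing of the scaled Neumann coefficient, even reflection, Liouville at exponent $1$ --- is the intended one. However, the step $M_k\to 0^+$ rests on the parenthetical assertion that $\mathcal{O}(u,X_0)<k_q$ ``forces $k_q>1$, hence $\alpha<1<k_q$''. This is not available at this point of the paper: it is the content of Corollary \ref{blow-up.lim} and Theorem \ref{boundsopra}, both of which lie downstream and depend on the present lemma through Theorem \ref{blowup.lower}. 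Without it, if $k_q\le 1$ and $\alpha\in(k_q,1)$ then $\rho_k^{\,k_q-\alpha}\to\infty$, and the mere facts $\alpha_k\to 0$, $L_k\to\infty$ do not control $M_k$ in the absence of rates. The clean way out is to show directly, before invoking the blow-up, that $\mathcal{O}(u,X_0)<k_q$ is impossible when $k_q\le 1$ (so the lemma is vacuous there): by Theorem \ref{holer.reg}, choosing $\beta\in(\mathcal{O}(u,X_0),k_q)$ gives $|u(X_0+rX)|\le C r^{\beta}|X|^{\beta}$, while Corollary \ref{equivalence2} together with the estimate \eqref{van.1} yields $H(X_0,u,r)\ge c\,r^{2k_2}$ for any $k_2>\mathcal{O}(u,X_0)$; taking $k_2\in(\mathcal{O}(u,X_0),\beta)$ forces the normalised blow-ups to tend uniformly to $0$ on $\overline{B^+_1}$, contradicting $\int_{\partial^+B^+_1}y^a u_k^2=1$. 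Only with such an argument in hand may you restrict to $\alpha<k_q$ and conclude $M_k\to 0$.

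A smaller imprecision: you attribute $\mathrm{dist}(X_{i,k},\{y=0\})/\rho_k\le C$ to the Lipschitz bound on $\eta$. In the proof of Theorem \ref{holer.reg} that bound only produces $\mathrm{dist}(X_{i,k},\partial^+ B^+_1)/\rho_k\to\infty$; the fact that the blow-up points stay within $O(\rho_k)$ of $\{y=0\}$ is an independent contradiction argument (Ascoli--Arzel\`a plus the interior Liouville theorem for harmonic functions, as in the passage around \eqref{lip2.1}) and should be invoked as such rather than as a consequence of the cut-off's Lipschitz constant.
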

\begin{proof}
    The proof follows essentially the ideas of the similar results in \cite{tvz1, tvz2,STT2020}:  the critical exponent $\alpha=1$ is related to a Liouville type theorem for $L_a$-harmonic function in $\R^{n+1}$ symmetric with respect to the characteristic manifold $\{y=0\}$, as given in \cite{vita2020}.
\end{proof}
As a first Corollary we deduce that the possible vanishing orders of $u$ in the case $\mathcal{O}(u,X_0)<2s/(2-q)$ are completely classified as the possible vanishing orders of $L_a$-harmonic function even with respect to $\{y=0\}$. More precisely
\begin{corollary}\label{blow-up.lim}
Let $X_0 \in \Gamma(u)$ be such that $k=\mathcal{O}(u,X_0)< k_q$. Then $k\in 1+\N$ and every blow-up limit centered at $X_0$ is a $k$-homogeneous solution of \eqref{La.even}.
\end{corollary}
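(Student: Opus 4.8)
The plan is to read off the homogeneity of the blow-up limit from the Almgren frequency formula and then to invoke the classification of homogeneous $L_a$-harmonic functions symmetric with respect to $\{y=0\}$. The two ingredients already at our disposal are the strong convergence of the normalized blow-up sequence (Theorem \ref{blowup.lower}, with $\norm{p}{L^{2,a}(\partial^+ B^+)}=1$ from Lemma \ref{bounded.H1}) and the identification $N(X_0,u,0^+)=\mathcal{O}(u,X_0)=\mathcal{V}(u,X_0)=k$ coming from Corollaries \ref{equivalence1}--\ref{equivalence2} together with Proposition \ref{almgren.lower}.

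First I would record how the Almgren quotient scales along the blow-up. Writing $\rho_k^2=H(X_0,u,r_k)$ and $u_k(X)=u(X_0+r_kX)/\rho_k$ as in \eqref{blow.up.equation}, a change of variables shows that the frequency naturally attached to the rescaled problem, namely $\widetilde{N}(0,u_k,\rho):=\widetilde{E}(0,u_k,\rho)/H(0,u_k,\rho)$ with
\[
\widetilde{E}(0,u_k,\rho)=\frac{1}{\rho^{n+a-1}}\left[\int_{B^+_\rho}{y^a\abs{\nabla u_k}^2\mathrm{d}X}-\alpha_k\int_{\partial^0 B^+_\rho}{F_{\lambda_+,\lambda_-}(u_k)\mathrm{d}x}\right],
\]
satisfies the exact identity $\widetilde{N}(0,u_k,\rho)=N(X_0,u,r_k\rho)$ for every admissible $\rho$. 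Since by Proposition \ref{almgren.lower} the limit $\lim_{r\to 0^+}N(X_0,u,r)=N(X_0,u,0^+)$ exists and equals $k$, we get $\widetilde{N}(0,u_k,\rho)\to k$ as $k\to\infty$, for each fixed $\rho>0$.

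Next I would pass to the limit. By Theorem \ref{blowup.lower}, up to a subsequence $u_k\to p$ strongly in $H^{1,a}_\loc(\R^{n+1})$ and in $C^{0,\alpha}_\loc(\R^{n+1})$, with $p(0)=0$ (because $u_k(0)=0$) and $p$ solving \eqref{La.even}. The strong $H^{1,a}_\loc$-convergence lets the Dirichlet part of $\widetilde{E}$ pass to the limit; the perturbation term $\alpha_k\int_{\partial^0 B^+_\rho}F_{\lambda_+,\lambda_-}(u_k)$ vanishes in the limit since $\alpha_k\to 0^+$ while the integral stays bounded by Lemma \ref{lem.poin}; and $H(0,u_k,\rho)\to H(0,p,\rho)$ by convergence of the traces on $\partial^+ B^+_\rho$. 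Moreover $H(0,p,\rho)>0$ for every $\rho>0$: otherwise $p\equiv 0$ on $B^+_\rho$ and, by unique continuation for $L_a$-harmonic functions (\cite[Proposition 5.9]{STT2020}), $p\equiv 0$, against the normalization $\norm{p}{L^{2,a}(\partial^+ B^+)}=1$. Since $\partial^a_y p\equiv 0$, the quotient $E_0(0,p,\rho)/H(0,p,\rho)$ with $E_0(0,p,\rho):=\rho^{1-n-a}\int_{B^+_\rho}y^a\abs{\nabla p}^2$ is exactly the classical Almgren frequency of $p$, and we conclude $N(0,p,\rho)=\lim_k\widetilde{N}(0,u_k,\rho)=k$ for every $\rho>0$.

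Finally I would run the standard rigidity argument: for the $L_a$-harmonic function $p$ the map $\rho\mapsto N(0,p,\rho)$ is monotone non-decreasing with derivative controlled by a Cauchy--Schwarz defect on $\partial^+ B^+_\rho$, which vanishes identically because $N(0,p,\cdot)\equiv k$, forcing $\partial_r p=(k/r)p$ on every sphere; integrating in $\rho$ yields that $p$ is $k$-homogeneous. After an even reflection across $\{y=0\}$, $p$ is a nontrivial $k$-homogeneous entire $L_a$-harmonic function symmetric in $y$, i.e. $p\in\mathfrak{B}^a_k(\R^{n+1})$; the characterization of this space in \cite{STT2020} --- equivalently, $p(\cdot,0)$ is $s$-harmonic on $\R^n$ with polynomial growth, hence a polynomial by Liouville, homogeneous of non-negative integer degree and not identically zero by unique continuation --- forces $k\in\N$, and $p(0)=0$ rules out $k=0$, so $k\in 1+\N$. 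Since this reasoning applies to every blow-up sequence and every subsequential limit, every blow-up limit at $X_0$ is $k$-homogeneous and solves \eqref{La.even}. The only delicate point is the exact identity $\widetilde{N}(0,u_k,\rho)=N(X_0,u,r_k\rho)$, which ensures that the sublinear perturbation $\alpha_k$ does not corrupt the limiting frequency, together with the positivity $H(0,p,\rho)>0$; both are dealt with as indicated above.
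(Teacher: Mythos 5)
Your proof is correct and follows essentially the same route as the paper: pass the Almgren quotient to the limit along the blow-up via the exact scaling identity and the identification $N(X_0,u,0^+)=k$, then classify the constant-frequency $L_a$-harmonic limit. You simply make explicit a few points the paper leaves terse --- the vanishing of the $\alpha_k$-weighted boundary term, the positivity $H(0,p,\rho)>0$, and the rigidity computation that the paper outsources to \cite[Lemma 4.7]{STT2020}.
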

\begin{proof}
Let $k= \mathcal{O}(u,X_0)<k_q$. By Theorem \ref{blowup.lower} we already know that given $(u_j)_j$ a normalized blow-up sequence centered in $X_0$ and associated to some $r_j\to 0^+$, it converges strongly in $H^{1,a}_\loc(\R^{n+1})$ and uniformly on every compact set of $\overline{\R^{n+1}_+}$ to some $p \in H^{1,a}_{\loc}(\R^{n+1})$ such that
  $$
\begin{cases}
-L_a p=0 & \mbox{in } \R^{n+1}_+\\
  -\partial^a_y p = 0 & \mbox{on } \R^n \times \{0\}.
\end{cases}
$$
On the other hand, by Corollary \ref{equivalence1} and Corollary \ref{equivalence2} we get $N(X_0,u,0^+)=k$. By the strong convergence of $(u_j)_j$, we have
$$
N(0,p,r)=\lim_{j\to \infty}N(0,u_j,r) =
\lim_{j\to \infty}N(X_0,u,r r_j) =
N(X_0,u,0^+) \quad\mbox{for every }r>0,
$$
where
$$
N(0,p,r) = \ddfrac{r\int_{B^+_r}{y^a \abs{\nabla p}^2\mathrm{d}X}}{\int_{\partial^+ B^+_r}{y^a p^2 \mathrm{d}\sigma}}.
$$
Since $p$ is a global $L_a$-harmonic function even with respect to $\{y=0\}$, by \cite[Lemma 4.7]{STT2020} we deduce that $p$ is $k$-homogeneous in $\R^{n+1}_+$ with $k=1+\N$.
\end{proof}
In order to conclude the local analysis near the points of the nodal set such that $\mathcal{O}(u,X_0)<2s/(2-q)$ we introduce the following Weiss-type monotonicity formula.
\begin{proposition}\label{weiss.lower}
  Let $X_0 \in \Gamma(u)$ be such that $k=\mathcal{O}(u,X_0)< k_q$. Given $\delta = 2s-(2-q)k >0$, there exist $R_1>0$ and $C_2>0$ such that
  $$
r \mapsto W_k(X_0,u,r) + C_2(n,s,q,\Lambda,k) r^{\delta-\eps}
  $$
 is monotone non-decreasing, for every $r \in (0,\min\{R_1,\mathrm{dist}(X_0,\partial^+ B^+_1)\})$ and $\eps<\delta$. In particular, we get
 \be\label{weiss.limit}
 W_k(X_0,u,0^+)= \lim_{r\to 0^+} W_k(X_0,u,r) = 0.
 \ee
\end{proposition}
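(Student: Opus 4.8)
\emph{Plan.} The plan is to differentiate $W_k=W_{k,q}$ using Proposition \ref{weiss.mon} with $t=q$. Since $C^s_{n,t}-2k(t-q)$ collapses to $C^s_{n,q}$ when $t=q$, that identity reads
\[
\frac{d}{dr}W_k(X_0,u,r)=\frac{2}{r^{n+a-1+2k}}\int_{\partial^+ B^+_r(X_0)}y^a\Big(\partial_r u-\frac kr u\Big)^2\mathrm{d}\sigma+\frac{1}{r^{n+a-1+2k}}\Big[\frac{2-q}{q}\int_{S^{n-1}_r(X_0)}F_{\lambda_+,\lambda_-}(u)\mathrm{d}\sigma-\frac{C^s_{n,q}}{qr}\int_{\partial^0 B^+_r(X_0)}F_{\lambda_+,\lambda_-}(u)\mathrm{d}x\Big],
\]
so, dropping the two non-negative terms (here $q<2$), $\tfrac{d}{dr}W_k(X_0,u,r)\ge -\tfrac{C^s_{n,q}}{q\,r^{n+a+2k}}\int_{\partial^0 B^+_r(X_0)}F_{\lambda_+,\lambda_-}(u)\mathrm{d}x$. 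The first step is to estimate this boundary term. Using $F_{\lambda_+,\lambda_-}(u)\le\Lambda\abs{u}^q$, the trace inequality of Lemma \ref{lem.poin}, and the two-sided growth bound \eqref{equiv} — available because $k=\mathcal{O}(u,X_0)=\mathcal{V}(u,X_0)=N(X_0,u,0^+)<k_q$ by Corollaries \ref{equivalence1} and \ref{equivalence2} — with exponent $k_1=k-\eps/q$, one gets $\int_{\partial^0 B^+_r(X_0)}F_{\lambda_+,\lambda_-}(u)\mathrm{d}x\le C r^{n+qk_1}$ for $r<R_1$; note $k_1>0$ since $k\in 1+\N$ by Corollary \ref{blow-up.lim}, whence $s<1\le k$ and $\eps<\delta<qk$. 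Plugging in and using $a=1-2s$, the exponent telescopes exactly to $\delta-\eps-1$, i.e. $\tfrac{d}{dr}W_k(X_0,u,r)\ge -Cr^{\delta-\eps-1}$; choosing $C_2=C/(\delta-\eps)$ makes $r\mapsto W_k(X_0,u,r)+C_2r^{\delta-\eps}$ non-decreasing on $(0,\min\{R_1,\mathrm{dist}(X_0,\partial^+ B^+_1)\})$.

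Next I would show $r\mapsto W_k(X_0,u,r)$ is bounded near $0$, so that the limit exists and is finite. By Proposition \ref{almgren.lower} the map $r\mapsto e^{\tilde{C}r^\alpha}(N(X_0,u,r)+1)$ is non-decreasing with limit $k+1$, hence $N(X_0,u,r)\ge (k+1)e^{-\tilde{C}r^\alpha}-1\ge k-Cr^\alpha$ and $N(X_0,u,r)\le e^{\tilde{C}R_1^\alpha}(N(X_0,u,R_1)+1)-1$. Integrating $\tfrac{d}{dr}\log\big(H(X_0,u,r)/r^{2k}\big)=\tfrac{2}{r}(N(X_0,u,r)-k)$ from \eqref{mon.H.weiss} and using that the defect $N-k$ has an integrable lower bound at $0$ yields $H(X_0,u,r)/r^{2k}\le C$. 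Since $E_q(X_0,u,r)=N(X_0,u,r)H(X_0,u,r)$ with $N$ bounded and non-negative near $0$, both $E_q(X_0,u,r)/r^{2k}$ and $H(X_0,u,r)/r^{2k}$ are bounded, hence $\abs{W_k(X_0,u,r)}\le C$; combined with the monotonicity just proved, $W_k(X_0,u,0^+)=\lim_{r\to0^+}W_k(X_0,u,r)$ exists and is finite (the term $C_2r^{\delta-\eps}$ vanishes in the limit).

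It remains to identify this limit with $0$, which I would do by blow-up. Fix $r_j\downarrow0^+$ and the normalized blow-up $u_j$ at $X_0$; by Lemma \ref{bounded.H1} and Corollary \ref{blow-up.lim}, along a subsequence $u_j\to p$ strongly in $H^{1,a}_{\loc}(\R^{n+1})$, with $p$ a nontrivial $k$-homogeneous $L_a$-harmonic function, even in $y$, and $\int_{\partial^+ B^+_1}y^a p^2=1$, so $\int_{B^+_1}y^a\abs{\nabla p}^2=k$. Rescaling, and using $\int_{\partial^+ B^+_1}y^a u_j^2=1$,
\[
W_k(X_0,u,r_j)=\frac{H(X_0,u,r_j)}{r_j^{2k}}\Big(\int_{B^+_1}y^a\abs{\nabla u_j}^2-k\Big)-\frac{H(X_0,u,r_j)^{q/2}\,r_j^{1-a}}{r_j^{2k}}\int_{\partial^0 B^+_1}F_{\lambda_+,\lambda_-}(u_j)\mathrm{d}x.
\]
In the first term $H(X_0,u,r_j)/r_j^{2k}$ is bounded and $\int_{B^+_1}y^a\abs{\nabla u_j}^2-k\to0$ by the strong $H^{1,a}$ convergence; in the second, $\int_{\partial^0 B^+_1}F_{\lambda_+,\lambda_-}(u_j)\mathrm{d}x$ converges by the compact trace embedding (Lemma \ref{dafare}) while $H(X_0,u,r_j)^{q/2}r_j^{1-a}/r_j^{2k}\le Cr_j^{\delta}\to0$ by the bound $H(X_0,u,r_j)\le Cr_j^{2k}$. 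Hence $W_k(X_0,u,r_j)\to0$, and since the full limit exists this gives \eqref{weiss.limit}. The main obstacle is the sharp exponent bookkeeping: one must run the boundary estimate through \eqref{equiv} with the tuned exponent $k_1=k-\eps/q$ and exploit $a+2s=1$ so that the remainder is precisely of order $r^{\delta-\eps-1}$; a cruder bound would produce a perturbation that does not vanish at the origin. A secondary subtlety is that in the blow-up only the upper bound on $H(X_0,u,r)/r^{2k}$ is available — which is nevertheless enough, because the Dirichlet defect $\int_{B^+_1}y^a\abs{\nabla u_j}^2-k$ tends to zero.
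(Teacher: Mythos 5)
Your proposal is correct, and the monotonicity half is essentially the paper's own argument: differentiate the Weiss functional via Proposition \ref{weiss.mon}, bound $\int_{\partial^0 B^+_r}F_{\lambda_+,\lambda_-}(u)$ by $\Lambda\|u\|_{X_0,r}^q$ using Lemma \ref{lem.poin}, use the $H^{1,a}$-vanishing-order bound with the tuned exponent $k_1=k-\eps/q$, and telescope (via $a+2s=1$) to a remainder of order $r^{\delta-\eps-1}$. One small point of bookkeeping: the paper's displayed constant is $C^s_{n,2}$, which is consistent with $W_k=W_{k,2}$, whereas you take $W_k=W_{k,q}$ and obtain $C^s_{n,q}$; since both constants are positive and the nonnegative terms are discarded anyway, the argument is insensitive to this choice, and your reading ($W_{k,q}$) is actually what makes the identity $\tfrac{d}{dr}\bigl(H/r^{2k}\bigr)=\tfrac{2}{r}W_k$ used in Proposition \ref{monneau.pot} hold.

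Where you diverge is in identifying $W_k(X_0,u,0^+)=0$. The paper is terse here: it directly writes $W_k(X_0,u,0^+)=\lim_{r\to0^+}\tfrac{H(r)}{r^{2k}}(N(r)-k)=0$, invoking $N(0^+)=k$ from Corollary \ref{equivalence2} but leaving the boundedness of $H(r)/r^{2k}$ implicit (this is precisely Lemma \ref{growth}, stated \emph{after} this Proposition, though its proof is independent so there is no circularity). You instead (i) re-derive the bound $H(r)/r^{2k}\le C$ from the Almgren monotonicity, (ii) rescale $W_k$ and pass to a blow-up limit using Lemma \ref{bounded.H1} and Corollary \ref{blow-up.lim}, and (iii) kill the nonlinear term via $H(r_j)^{q/2}r_j^{1-a}/r_j^{2k}\le Cr_j^\delta\to0$. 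This is longer than the paper's step but more self-contained; once $H/r^{2k}\le C$ is in hand, you could also finish in one line via the paper's identity. One very minor caveat applying to both your write-up and the paper's statement: the constant $C_2=C/(\delta-\eps)$ you obtain does depend on $\eps$ (it blows up as $\eps\to\delta^-$), so the claimed $\eps$-independence of $C_2$ in the Proposition statement only holds after restricting, say, $\eps\le\delta/2$; this does not affect the conclusion \eqref{weiss.limit}.
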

\begin{proof}
For $k>0$, by Proposition \ref{weiss.mon} and Lemma \ref{lem.poin}, we get
$$
  \frac{d}{dr}W_{k}(X_0,u,r) \geq -\frac{C^s_{n,2} \Lambda}{q r^{n+a+2k}} \int_{\partial^0 B^+_r}{\abs{u}^q\mathrm{d}x}\geq -C r^{2s-1}\frac{\norm{u}{X_0,r}^q}{r^{2k}}
$$
where $C=C(n,q,s,\Lambda)$. By definition of $H^{1,a}$-vanishing order, for every $k_1< \mathcal{O}(u,X_0)$ there exists $R_1,C_1>0$ such that
$$
\norm{u}{X_0,r}^2 \leq Cr^{2k} \longrightarrow \frac{d}{dr}W_k(X_0,u,r) \geq -C_1 r^{2s-1-2k+qk_1},
$$
for every $r<R_1$. Since $k< k_q$, there exist $\delta >0$ such that
$$
k = \frac{2s-\delta}{2-q}.
$$
Thus, for every $\eps<\delta$, if we take $k_1=k-\eps/q$ we get that $r\mapsto W_k(X_0,u,r) + C_2 r^{\delta-\eps}$ is monotone non-decreasing, where $C_2$ does not depend on $\eps>0$.\\
Finally, since by Corollary \ref{equivalence1} and Corollary \ref{equivalence2} we have $k=\mathcal{O}(u,X_0)=N(X_0,u,0^+)$, we get
$$
W_k(X_0,u,0^+)=\lim_{r \to 0^+}\frac{H(X_0,u,r)}{r^{2k}}\left( N(X_0,u,r)-k\right) = 0.
$$
\end{proof}
\begin{proposition}\label{monneau.pot}
  Let $X_0 \in \Gamma(u)$ such that $k=\mathcal{O}(u,X_0)<k_q$. Given $\delta = 2s-(2-q)k >0$, there exist $R_1>0$ and $C_2>0$ such that, for every homogenous $L_a$-harmonic polynomial $p\in \mathfrak{sB}^a_k(\R^{n+1})$, the map
  $$
  r \mapsto \frac{H(X_0,u-p_{X_0},r)}{r^{2k}}= \frac{1}{r^{n+a+2k}}\int_{\partial^+ B^+_r(X_0)}{y^a \left(u -p_{X_0}\right)^2\mathrm{d}\sigma}
  $$
  satisfies
  $$
  \frac{d}{dr}\frac{H(X_0,u-p_{X_0},r)}{r^{2k}} \geq -C(1+\norm{p_{X_0}}{L^\infty(B^+_1)})r^{-1+\delta-\eps},
  $$
  for every $r \in (0,\min\{R_1,\mathrm{dist}(X_0,\partial^+ B^+_1)\}$ and $\eps<\delta$, with $p_{X_0}(X)=p(X-X_0)$.
\end{proposition}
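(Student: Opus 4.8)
The plan is to derive an exact identity for $\frac{d}{dr}\big(r^{-2k}H(X_0,u-p_{X_0},r)\big)$ in which the right–hand side splits into the Weiss functional $W_k(X_0,u,r)$ of $u$ alone plus one explicit boundary remainder, and then to estimate the two pieces separately: the first by Proposition~\ref{weiss.lower}, the second by the trace inequality of Lemma~\ref{lem.poin} together with the vanishing–order estimate \eqref{equiv}.

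First I would set $w:=u-p_{X_0}$ and record the basic facts. Since $p\in\mathfrak{sB}^a_k(\R^{n+1})$ is $k$-homogeneous, $L_a$-harmonic and even in $y$, its translate $p_{X_0}(X)=p(X-X_0)$ satisfies $L_a p_{X_0}=0$ in $B_1^+$, $-\partial^a_y p_{X_0}=0$ on $\partial^0 B_1^+$, and the Euler relation $\partial_r p_{X_0}=\frac kr p_{X_0}$ on each sphere $\partial^+ B_r^+(X_0)$; in particular $\norm{p_{X_0}}{L^\infty(B_r^+(X_0))}=r^k\norm{p}{L^\infty(B_1^+)}\le C\, r^k\norm{p_{X_0}}{L^\infty(B_1^+)}$. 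Consequently $w$ solves $L_a w=0$ in $B_1^+$ with $-\partial^a_y w=\lambda_+(u_+)^{q-1}-\lambda_-(u_-)^{q-1}=:f$ on $\partial^0 B_1^+$, the very same nonlinearity as for $u$. Differentiating $r\mapsto r^{-2k}H(X_0,w,r)$ exactly as in \eqref{h.derivative} gives
\[
\frac{d}{dr}\frac{H(X_0,w,r)}{r^{2k}}=\frac2r\Big[\frac{\mathcal E(X_0,w,r)}{r^{2k}}-k\,\frac{H(X_0,w,r)}{r^{2k}}\Big],\qquad
\mathcal E(X_0,w,r):=\frac1{r^{n+a-1}}\int_{\partial^+B_r^+(X_0)}y^a w\,\partial_r w\,\mathrm d\sigma,
\]
and, by the Gauss--Green formula (valid in the discontinuous case $q=1$ as well, exactly as in Proposition~\ref{E.derivative}), $\mathcal E(X_0,w,r)=\frac1{r^{n+a-1}}\big[\int_{B_r^+(X_0)}y^a|\nabla w|^2-\int_{\partial^0B_r^+(X_0)}wf\big]$.

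Expanding $w=u-p_{X_0}$, using $uf=F_{\lambda_+,\lambda_-}(u)$ on $\partial^0B_1^+$, and integrating the cross term $\int_{B_r^+(X_0)}y^a\nabla u\cdot\nabla p_{X_0}$ by parts by means of $L_a u=L_a p_{X_0}=0$, $-\partial^a_y u=f$, $-\partial^a_y p_{X_0}=0$ and $\partial_r p_{X_0}=\frac kr p_{X_0}$ (so that the Green second identity for the pair $(u,p_{X_0})$ on $B_r^+(X_0)$ produces only a $\partial^0$-boundary contribution), I expect every term quadratic in $p_{X_0}$ and every term $\int y^a u\,p_{X_0}$ to cancel, leaving the clean identity
\[
\frac{d}{dr}\frac{H(X_0,u-p_{X_0},r)}{r^{2k}}=\frac2r\,W_k(X_0,u,r)+\frac{2}{r^{n+a+2k}}\int_{\partial^0B_r^+(X_0)}p_{X_0}\,f\,\mathrm dx .
\]
The careful bookkeeping of the boundary terms produced by these integrations by parts, and the justification of Gauss--Green for the merely bounded field carrying $f$ when $q=1$, is the step I expect to be the main obstacle.

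Granting the identity, the first term is controlled by Proposition~\ref{weiss.lower}: since $r\mapsto W_k(X_0,u,r)+C_2 r^{\delta-\eps}$ is nondecreasing with limit $W_k(X_0,u,0^+)=0$, one gets $W_k(X_0,u,r)\ge -C_2 r^{\delta-\eps}$, hence $\frac2r W_k(X_0,u,r)\ge -2C_2\, r^{-1+\delta-\eps}$. For the remainder I would use $|f|\le\Lambda|u|^{q-1}$ with $\Lambda=\max\{\lambda_+,\lambda_-\}$ (for $q=1$ simply $|f|\le\Lambda$ on $\{u\neq0\}$), the homogeneity bound above, and Hölder's inequality on $\partial^0B_r^+(X_0)$ together with Lemma~\ref{lem.poin} to obtain $\int_{\partial^0B_r^+(X_0)}|u|^{q-1}\,\mathrm dx\le C\, r^{n}\norm{u}{X_0,r}^{q-1}$; this yields
\[
\frac{2}{r^{n+a+2k}}\Big|\int_{\partial^0B_r^+(X_0)}p_{X_0}f\,\mathrm dx\Big|\le C\,\norm{p_{X_0}}{L^\infty(B_1^+)}\,r^{-1+2s-k}\,\norm{u}{X_0,r}^{q-1},
\]
using $a=1-2s$. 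Finally, since $k=\mathcal O(u,X_0)<k_q$, \eqref{equiv} gives $\norm{u}{X_0,r}^{q-1}\le C r^{(q-1)k_1}$ for every $k_1<k$; choosing $k_1=k-\frac{\eps}{q-1}$ when $q>1$ (any $k_1<k$ when $q=1$) and recalling $\delta=2s-(2-q)k$, the exponent becomes $-1+2s-k+(q-1)k_1=-1+\delta-\eps$. Adding this to the bound on $\frac2r W_k(X_0,u,r)$ produces the stated inequality with a constant $C=C(n,s,q,\Lambda,k)$, valid on $(0,\min\{R_1,\mathrm{dist}(X_0,\partial^+B_1^+)\})$ after shrinking $R_1$ and enlarging $C_2$ relative to Proposition~\ref{weiss.lower}.
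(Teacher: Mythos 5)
Your proposal is correct and follows essentially the same route as the paper: the paper proves the identity $W_k(X_0,u,r)=W_k(X_0,u-p_{X_0},r)+\frac{1}{r^{n+a-1+2k}}\int_{\partial^0 B_r^+(X_0)}p_{X_0}\,\partial^a_y u\,\mathrm dx$ by expanding $W_k$ at $w+p_{X_0}$, using $L_ap_{X_0}=0$, $\partial^a_y p_{X_0}=0$, the Euler relation $\partial_r p_{X_0}=\tfrac{k}{r}p_{X_0}$, and $W_k(X_0,p_{X_0},r)=0$ to make the cross and quadratic terms collapse, which is exactly the ``clean identity'' you anticipate; it then combines Proposition~\ref{weiss.lower} and the trace/vanishing-order bounds of Lemma~\ref{lem.poin} and \eqref{equiv} precisely as you do to reach $-C(1+\norm{p_{X_0}}{L^\infty(B^+)})r^{-1+\delta-\eps}$. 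The cancellation you flagged as a potential obstacle is straightforward once one notes $W_k(X_0,p_{X_0},r)=0$ for $k$-homogeneous $L_a$-harmonic $p$ and that $\partial^a_y w=\partial^a_y u$, so no genuine gap remains.
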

\begin{proof}
First, since $k=\mathcal{O}(u,X_0)$ we already know $W_k(X_0,u,0^+)=0$. Now, let $w = u - p_{X_0}$, then on one hand we have
 \begin{align*}
 \frac{d}{dr}\left(\frac{1}{r^{n+a+2k}}\int_{\partial^+ B^+_r(X_0)}{y^a w^2 \mathrm{d}\sigma}\right)
 =& \frac{2}{r^{n+a+1+2k}}\int_{\partial^+ B^+_r(X_0)}{y^a w(\langle X-X_0, \nabla w\rangle -k w)\mathrm{d}\sigma}\\
 =& \frac{2}{r}W_k(X_0,w,r).
  \end{align*}
On the other hand, looking at the expression of the $k$-Weiss functional, we have
\begin{align*}
    W_k(X_0,u,r) =&\, W_k(X_0,w+p_{X_0},r)\\ 
     =&\, \frac{1}{r^{n+a-1+2k}}\left(\int_{B^+_r(X_0)}{y^a (\abs{\nabla w}^2 +2  \langle \nabla w,\nabla p\rangle)\mathrm{d}X}- \frac{k}{r}\int_{\partial^+ B^+_r(X_0)}{y^a (w^2 +2  w p) \mathrm{d}\sigma}\right) \\
     &+ \frac{1}{r^{n+a-1+2k}}\int_{\partial^+B^+_r(X_0)}{(w+p_{X_0})\partial^a_y (w+p_{X_0}) \mathrm{d}x}\\
     =&\, W_k(X_0,w,r) + \frac{1}{r^{n+a-1+2k}}\int_{\partial^0B^+_r(X_0)}{p_{X_0}\partial^a_y w\mathrm{d}x} +\\
     &+ \frac{2}{r^{n+a+2k}}\int_{\partial^+ B^+_r(X_0)}{y^a w( \langle \nabla p_{X_0},X-X_0\rangle - k p) \mathrm{d}\sigma}   \\
   =&\, W_k(X_0,u-p_{X_0},r)+ \frac{1}{r^{n+a-1+2k}}\int_{\partial^0B^+_r(X_0)}{p_{X_0}\partial^a_y u\mathrm{d}x} ,
  \end{align*}
  where $C=C(\lambda_+,\lambda_-)$ and in the second equality we used the $k$-homogeneity of $p_{X_0}\in \mathfrak{sB}^a_k(\R^{n+1})$.
Hence 
we finally infer
\begin{align*}
\frac{d}{dr} \frac{H(X_0,u-p_{X_0},r)}{r^{2k}} &=\frac{2}{r}W_k(X_0,u-p_{X_0},r)\\ &\geq \frac{2}{r}W_k(X_0,u,r)+\frac{2C}{r^{n+a+2k}}\int_{\partial^0 B^+_r(X_0)}{p_{X_0}\abs{u}^{q-2}u\mathrm{d}x}.
\end{align*}
On one side by Proposition \ref{weiss.lower} we have
$$
W_k(X_0,u,r) =W_k(X_0,u,r) -W_k(X_0,u,0^+) \geq -C_2(n,s,q,\Lambda,k) r^{\delta-\eps},
$$
with $\delta = 2s-(2-q)k >0$ and $\eps<\delta$. On the other, under the notations of Proposition \ref{weiss.lower}, for every $\eps \in (0,\delta)$ let us introduce
$$
k_1 = k -\frac{\eps}{q-1} <  k = \frac{2s-\delta}{2-q}.
$$
Then, by \eqref{equiv} we infer the existence of $R>0$ sufficiently small such that
\begin{align}\label{utile}
\begin{aligned}
\abs{\frac{C}{r^{n+a+2k}}\int_{\partial^0 B^+_r(X_0)}{p_{X_0}\abs{u}^{q-2}u\mathrm{d}x}}  &\leq
\norm{p_{X_0}}{L^\infty(B^+_r)} \frac{C}{r^{n+a+2k}} \left(\int_{\partial^0 B^+_r(X_0)}{\abs{u}^q\mathrm{d}x}\right)^{(q-1)/q}\abs{B_r}^{1/q}\\
& \leq \norm{p_{X_0}}{L^\infty(B^+)} \frac{C}{r^{a+k}} \norm{u}{H^{1,a}(B^+_r)}^{q-1}\\
& \leq C \norm{p_{X_0}}{L^\infty(B^+)} r^{2s-1-k+(q-1)k_1}\\
& \leq C \norm{p_{X_0}}{L^\infty(B^+)} r^{-1+\delta - \eps},
\end{aligned}
\end{align}
for $r \in (0,R)$. Hence, there exist $R_1>0$  and $C=C(n,s,q,\Lambda,k)$ such that
$$
r\mapsto \frac{H(X_0,u-p_{X_0},r)}{r^{2k}} + C(1+\norm{p_{X_0}}{L^\infty(B^+)})r^{\delta-\eps},
$$
is monotone nondecreasing $r \in (0,\min\{R_1,\mathrm{dist}(X_0,\partial^+ B^+_1)\})$ and $\eps<\delta$.
\end{proof}
For the sake of simplicity, we will use through the paper the following notation for the previous monotonicity formula
$$
M(X_0,u,p_{X_0},r)=\frac{H(X_0,u-p_{X_0},r)}{r^{2k}}. 
$$
Starting from these results, we will improve our knowledge of the blow-up convergence by proving the existence of a unique non trivial blow-up limit at every point of the nodal set $\Gamma(u)$, which will be called the tangent map $\varphi^{X_0}$ of $u$ at $X_0$.
\begin{lemma}\label{growth}
  Let $X_0 \in \Gamma(u)$ be such that $k=\mathcal{O}(u,X_0)< k_q$. Then, there exists $r_0>0$ and $C>0$ such that
  $$
  H(X_0,u,r)\leq C r^{2k} \quad\mbox{for } r \in (0,r_0).
  $$
\end{lemma}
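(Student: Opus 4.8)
The plan is to deduce the growth bound directly from the Almgren-type monotonicity formula of Proposition~\ref{almgren.lower}, combined with the logarithmic-derivative identity $\frac{d}{dr}\log H(X_0,u,r)=\frac{2}{r}N(X_0,u,r)$ in \eqref{derivative.H}. The point is that a sharp lower bound for the frequency $N$ near $X_0$ integrates to a sharp upper bound for $H$.

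First I would record that, by Corollary~\ref{equivalence1} and Corollary~\ref{equivalence2}, the Almgren frequency admits the limit $N(X_0,u,0^+)=k$; in particular $H(X_0,u,r)>0$ for $r$ in some interval $(0,r_0)$, so that $N(X_0,u,r)$ is well defined there. Next, invoking the monotonicity of $r\mapsto e^{\tilde C r^\alpha}(N(X_0,u,r)+1)$ from Proposition~\ref{almgren.lower} together with the fact that this quantity decreases to $N(X_0,u,0^+)+1=k+1$ as $r\to 0^+$, I get $e^{\tilde C r^\alpha}(N(X_0,u,r)+1)\geq k+1$ on $(0,r_0)$, hence, using $e^{-t}\geq 1-t$, the effective lower bound $N(X_0,u,r)\geq k-C_1 r^\alpha$ with $C_1:=(k+1)\tilde C$.

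Then I would plug this into \eqref{derivative.H} to obtain $\frac{d}{dr}\log H(X_0,u,r)\geq \frac{2k}{r}-2C_1 r^{\alpha-1}$ and integrate between $r$ and $r_0$. Since $\alpha>0$, the contribution $\int_r^{r_0}t^{\alpha-1}\,dt=(r_0^\alpha-r^\alpha)/\alpha$ stays bounded as $r\to 0^+$, so the integration yields $\log\bigl(H(X_0,u,r)/r^{2k}\bigr)\leq \log\bigl(H(X_0,u,r_0)/r_0^{2k}\bigr)+\frac{2C_1}{\alpha}r_0^\alpha$, that is $H(X_0,u,r)\leq C r^{2k}$ with $C=(H(X_0,u,r_0)/r_0^{2k})\,e^{2C_1 r_0^\alpha/\alpha}$, for every $r\in(0,r_0)$.

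There is no serious obstacle here: essentially all the content is already packaged in Proposition~\ref{almgren.lower} and the equivalences of Section~\ref{5}. The only mildly delicate point is the integrability at $r=0$ of the error term $r^{\alpha-1}$ produced by the exponential corrector in the monotonicity formula --- this is precisely what allows the \emph{almost}-monotone (rather than genuinely monotone) frequency to still yield the sharp power $r^{2k}$ and not merely $r^{2k-\varepsilon}$ for every $\varepsilon>0$; and one should also not forget to note that $H(X_0,u,\cdot)$ does not vanish for small $r$, which follows from $\mathcal{O}(u,X_0)<\infty$ together with the unique continuation principle of Theorem~\ref{ucp}.
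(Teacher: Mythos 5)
Your proof is correct and follows essentially the same path as the paper's: both start from Proposition~\ref{almgren.lower} and Corollaries~\ref{equivalence1}--\ref{equivalence2} to turn the almost-monotonicity of $e^{\tilde C r^\alpha}(N(X_0,u,r)+1)$ into the lower bound $N(X_0,u,r)\geq k-C_1r^\alpha$, and then integrate $\frac{d}{dr}\log H=\frac{2}{r}N$ using the integrability of $r^{\alpha-1}$ at the origin. The only cosmetic difference is that you linearize the exponential via $e^{-t}\geq 1-t$, whereas the paper keeps the error term in the form $(e^{-\tilde{C}\rho^\alpha}-1)/\rho$ and notes its integrability directly; these are the same estimate.
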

\begin{proof}
  Let $k=\mathcal{O}(u,X_0)$ and $\delta = 2s/(2-q)-k$. By \eqref{derivative.H} and Proposition \ref{almgren.lower}, there exist $r_0>0, \alpha = \alpha(\delta,n,s,q)$ and $\tilde C=\tilde C(\delta,n,s,q)$ such that
  \begin{align*}
  \frac{d}{d\rho}\log\left(\frac{H(X_0,u,\rho)}{\rho^{2k}}\right) &= \frac{2}{\rho}\left(N(X_0,u,\rho)-k \right)\\
  &= \frac{2}{\rho}\left(e^{-\tilde{C}\rho^\alpha} e^{\tilde{C}\rho^\alpha}(N(X_0,u,\rho)+1)-1-k \right)\\
  &\geq 2(k+1)\frac{e^{-\tilde{C}\rho^\alpha} -1}{\rho},
  \end{align*}
  for every $\rho \in (0,r_0)$. Thus, given $r<r_0$ and integrating between $r$ and $r_0$ we get
  $$
  \frac{H(X_0,u,r)}{r^{2k}} \leq   \frac{H(X_0,u,r_0)}{r_0^{2k}} \exp\left(2(k+1)\int_0^{r_0}\frac{e^{-\tilde{C}\rho^\alpha}-1}{\rho}\mathrm{d}\rho\right)\leq C,
  $$
  as we claimed.
\end{proof}
\begin{lemma}\label{nondegeneracy}
Let $X_0 \in \Gamma(u)$ be such that $k=\mathcal{O}(u,X_0)< k_q$. Then, there exists $C>0$ such that
$$
\sup_{\partial B_r(X_0)}{\abs{u(X)}}\geq C r^{k}\quad \mathrm{for}\,\,0<r<R
$$
where $R=1-\mathrm{dist}(X_0,\partial^0 B_1)$.
\end{lemma}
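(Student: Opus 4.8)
The plan is to reduce the statement to a non-degeneracy estimate for the spherical mass functional $H$, and then to establish the latter by a blow-up/compactness argument. For the reduction, observe that pointwise
$$
H(X_0,u,r)\le \Big(\sup_{\partial B_r(X_0)}|u|\Big)^2\,\frac{1}{r^{n+a}}\int_{\partial^+ B^+_r(X_0)}y^a\,\mathrm d\sigma = C_n\Big(\sup_{\partial B_r(X_0)}|u|\Big)^2 ,
$$
so $\sup_{\partial B_r(X_0)}|u|\ge C_n^{-1/2}\sqrt{H(X_0,u,r)}$; hence it is enough to prove
$$
\ell:=\liminf_{r\to 0^+}\frac{H(X_0,u,r)}{r^{2k}}>0 ,
$$
since then $\sup_{\partial B_r(X_0)}|u|\ge C_n^{-1/2}\sqrt{\ell/2}\;r^k$ for $r$ small, and the estimate extends to the whole range $(0,R)$ because $u\not\equiv 0$, so $r\mapsto H(X_0,u,r)$ is continuous and strictly positive there by Theorem \ref{ucp}.

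To prove $\ell>0$ I would argue by contradiction: assume $r_j\downarrow 0$ with $H(X_0,u,r_j)/r_j^{2k}\to 0$. By Theorem \ref{blowup.lower} and Corollary \ref{blow-up.lim} the normalized blow-ups $u_j(X)=u(X_0+r_jX)/\sqrt{H(X_0,u,r_j)}$ converge, along a subsequence, in $C^0_{\loc}$ and strongly in $H^{1,a}_{\loc}$, to a $k$-homogeneous, $L_a$-harmonic function $p$, even across $\{y=0\}$, with $\|p\|_{L^{2,a}(\partial^+ B^+_1)}=1$; being homogeneous and not identically zero, $p$ satisfies $\sup_{\partial B_1}|p|\ge c_*>0$, with $c_*$ uniform over this (finite-dimensional, hence compact) family of normalized profiles. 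Writing $w_r(Y)=u(X_0+rY)/r^{k}$ one has $\|w_r\|^2_{L^{2,a}(\partial^+ B^+_1)}=H(X_0,u,r)/r^{2k}$ and, by Lemma \ref{bounded.H1}, $\|w_{r_j}\|_{H^{1,a}(B^+_R)}=(\sqrt{H(X_0,u,r_j)}/r_j^k)\,\|u_j\|_{H^{1,a}(B^+_R)}\to 0$ for every $R$; together with the rescaled Hölder bound of Lemma \ref{holder} and $w_r(0)=0$, this forces $w_{r_j}\to 0$ in $C^0_{\loc}$. The contradiction is then reached by confronting this collapse with the monotonicity formulas: using the $H^{1,a}$-form $W_k(X_0,u,r)=\int_{B^+_1}y^a|\nabla w_r|^2-k\int_{\partial^+ B^+_1}y^a w_r^2-r^{\delta}\int_{\partial^0 B^+_1}F_{\lambda_+,\lambda_-}(w_r)$, the vanishing $W_k(X_0,u,0^+)=0$ (Proposition \ref{weiss.lower}), the Monneau almost-monotonicity (Proposition \ref{monneau.pot}), the identities $\mathcal O(u,X_0)=\mathcal V(u,X_0)=N(X_0,u,0^+)=k$, and the two-sided bound $c\,r^{2k_2}\le\|u\|^2_{X_0,r}\le C\,r^{2k_1}$ for $k_1<k<k_2<k_q$ from \eqref{equiv}, one pins down the decay rate of $H(X_0,u,r)$. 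Concretely, one upgrades the one-sided estimate $N(X_0,u,r)-k\ge -C r^{\alpha}$ of Proposition \ref{almgren.lower} to a genuine two-sided rate $|N(X_0,u,r)-k|\le C r^{\gamma}$, $\gamma>0$ — bounding the Dirichlet-to-Neumann remainder in $\tfrac{d}{dr}N(X_0,u,r)$ by an integrable power of $r$ through Lemma \ref{lem.poin} and \eqref{equiv} — and then integrates $\tfrac{d}{dr}\log\big(H(X_0,u,r)/r^{2k}\big)=\tfrac2r\big(N(X_0,u,r)-k\big)$ over $(r,r_0)$ to obtain $H(X_0,u,r)/r^{2k}\ge c_0>0$ for small $r$, contradicting $H(X_0,u,r_j)/r_j^{2k}\to 0$.

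The main obstacle is exactly this last point: passing from the soft almost-monotonicity of the Almgren frequency to an algebraic rate of convergence $N(X_0,u,r)\to k$, which is what rules out the sub-polynomial decay of $H(X_0,u,r)/r^{2k}$ that would otherwise be compatible with $\mathcal V(u,X_0)=k$ alone. In the local case $s=1$ this is automatic, since a growth bound on $\Delta u$ near the nodal set yields directly a Taylor expansion (cf. \cite[Lemma 3.1]{fermi}); here one must instead extract the rate from the size $\lesssim r^{2s}\|u\|^q_{X_0,r}$ of the nonlinear boundary term together with \eqref{equiv} and Lemma \ref{lem.poin}. Once $\ell=\liminf_{r\to 0}H(X_0,u,r)/r^{2k}>0$ is secured, the lemma follows from the reduction in the first paragraph, with the constant $C$ any number below $C_n^{-1/2}\sqrt{\ell}$; equivalently, the same blow-up gives $\sup_{\partial B_r(X_0)}|u|=\sqrt{H(X_0,u,r)}\,\sup_{\partial B_1}|u_r|$ with $\sup_{\partial B_1}|u_r|\ge c_*/2$ for $r$ small, so $C$ can be taken of order $c_*\sqrt{\ell}$.
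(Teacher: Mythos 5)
Your reduction to the statement $\liminf_{r\to0^+}H(X_0,u,r)/r^{2k}>0$ and the blow-up set-up (normalised sequence $u_j$ converging to a nonzero $k$-homogeneous, even $L_a$-harmonic $p$; the auxiliary rescaling $w_r = u(X_0+r\cdot)/r^k$ collapsing to zero along $r_j$) both match the paper. The gap is in the step you yourself flag as the main obstacle: upgrading the Almgren almost-monotonicity to a \emph{two-sided} rate $|N(X_0,u,r)-k|\le Cr^\gamma$. Proposition \ref{almgren.lower} gives monotonicity of $r\mapsto e^{\tilde Cr^\alpha}(N(X_0,u,r)+1)$, which is genuinely one-sided: since the limit as $r\to0^+$ is $k+1$ and the function is non-decreasing, one gets $N(X_0,u,r)\ge k-C(k+1)r^\alpha$, but for $r<r_0$ the only upper bound is the constant $e^{\tilde Cr_0^\alpha}(N(X_0,u,r_0)+1)-1$, with no rate. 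Structurally this is unavoidable from the Almgren identity alone: writing $N'=\frac{E'}{H}-\frac{2}{r}\frac{E^2}{H^2}$ and using $E'=\frac{2}{r^{n+a-1}}\int_{\partial^+B^+_r}y^a(\partial_r u)^2+R$, Cauchy--Schwarz makes the principal part non-negative, so the nonlinear remainder $R$ (which is what Lemma \ref{lem.poin} and \eqref{equiv} bound by a power of $r$) controls only the \emph{negative} part of $N'$. There is no mechanism to bound $N'$ from above, and hence no way to pin the rate at which $N(X_0,u,r)\searrow k$; this is exactly why the two-sided bounds of Corollary \ref{equivalence2} pick up uncontrolled losses $r^{\pm2\eta}$ as $R\to0^+$. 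So the proposed integration of $\frac{d}{dr}\log(H/r^{2k})=\frac{2}{r}(N-k)$ cannot yield the positive lower bound $H/r^{2k}\ge c_0$ by this route.

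The paper's own proof avoids any rate for $N$ and instead works with the Monneau-type functional $M(X_0,u,p_{X_0},r)=H(X_0,u-p_{X_0},r)/r^{2k}$ of Proposition \ref{monneau.pot}, whose almost-monotonicity error \emph{does} carry a quantitative rate $r^{\delta-\eps}$, $\delta=2s-(2-q)k>0$. Along the assumed bad sequence $r_j$ with $H(X_0,u,r_j)^{1/2}/r_j^k\to0$, the cross term and the pure-$u$ term in $M$ vanish, so $M(X_0,u,p_{X_0},0^+)=\int_{\partial^+B_1^+}y^a p^2>0$. Expanding the square and rescaling, the Monneau inequality $M(X_0,u,p_{X_0},r)+Cr^{\delta-\eps}\ge\int y^ap^2$ becomes, after dividing by $H(X_0,u,r_j)^{1/2}/r_j^k$,
\begin{equation*}
\int_{\partial^+B_1^+}y^a\Big(\tfrac{H(X_0,u,r_j)^{1/2}}{r_j^k}u_j^2-2u_jp\Big)\,\mathrm d\sigma\ \ge\ -C\,\frac{r_j^{k+\delta-\eps}}{H(X_0,u,r_j)^{1/2}} ,
\end{equation*}
and since $\mathcal V(u,X_0)=k<k+\delta-\eps$ forces $\limsup_j H(X_0,u,r_j)/r_j^{2(k+\delta-\eps)}=+\infty$, passing to the limit gives $-2\int y^ap^2\ge0$, a contradiction. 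This is the content you would need to supply in place of the rate claim; the Monneau formula is the right tool precisely because its error is quantified even though $N$'s is not.
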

\begin{proof}
Fix $X_0\in \Gamma(u)$ and suppose by contradiction, given a decreasing sequence $r_j\downarrow 0$, that
$$
\lim_{j\to \infty}\frac{H(X_0,u,r_j)^{1/2}}{r_j^{k}}=\lim_{j\to\infty}{\bigg(\frac{1}{r_j^{n+a+2k}}\int_{\partial^+ B^+_{r_j}(X_0)}{y^a u^2\,\mathrm{d}\sigma\bigg)^{1/2}}}\!\!=0.
$$
For $r_j\leq R=\min(r_0,\mathrm{dist}(X_0,\partial^0 B^+))$, consider the blow-up sequence
$$
u_j(X)=\frac{u(X_0+r_j X)}{\rho_j}\quad \mbox{where }\, \rho_j = H(X_0,u,r_j)^{1/2}, 
$$
centered in $X_0 \in \Gamma(u)$. By Theorem \ref{blowup.lower} and Corollary \ref{blow-up.lim} the sequence $(u_j)_j$ converges, up to a subsequence, strongly in $H^{1,a}_\loc(\R^{n+1})$ and uniformly on every compact set of $\R^{n+1}_+$ to some $L_a$-harmonic homogenous polynomial $p$ of degree $k$ symmetric with respect to $\{y=0\}$ such that $H(0,p,1)=1$.\\
Let us focus our attention on the functional $M(X_0,u,p_{X_0},r)$ with $p_{X_0}$ as above. Under the notations in Proposition \ref{monneau.pot}, we get
\begin{align*}
M(X_0,u,p_{X_0},0^+) &= \lim_{r\to 0} \frac{1}{r^{n+a+2k}}\int_{\partial^+ B_r^+(X_0)}{y^a(u-p_{X_0})^2\,\mathrm{d}\sigma}+ C(1+\norm{p_{X_0}}{L^\infty(B^+)})r^{\delta-\eps}\\
&=\lim_{r\to 0} \int_{\partial^+ B_1^+}{y^a\Big(\frac{u(X_0+rX)}{r^k}-p(X) \Big)^2\,\mathrm{d}\sigma}+ C(1+\norm{p_{X_0}}{L^\infty(B^+)})r^{\delta-\eps}\\
&= \int_{\partial^+ B_1^+}{y^a p^2\,\mathrm{d}\sigma}\\
&=\frac{1}{r^{n+a+2k}}\int_{\partial^+ B_r^+(X_0)}{y^a {p}_{X_0}^2\,\mathrm{d}\sigma},
\end{align*}
where in the third equality we used the assumption on the growth of $u$. By the monotonicity result of Proposition \ref{monneau.pot}, we obtain
$$
\frac{1}{r^{n+a+2k}}\int_{\partial^+ B_r^+(X_0)}{y^a(u- p_{X_0})^2\,\mathrm{d}\sigma}+ C(1+\norm{p_{X_0}}{L^\infty(B^+)})r^{\delta-\eps}\geq\frac{1}{r^{n+a+2k}}\int_{\partial^+ B_r^+(X_0)}{y^a p_{X_0}^2\,\mathrm{d}\sigma}
$$
and similarly
$$
\frac{1}{r^{n+a+2k}}\int_{\partial^+ B^+_r(X_0)}{y^a (u^2-2u p_{X_0})\,\mathrm{d}\sigma}+ C(1+\norm{p_{X_0}}{L^\infty(B^+)})r^{\delta-\eps}\geq 0.
$$
On the other hand, rescaling the previous inequality and using the notion of blow-up sequence $u_k$ defined as above, we obtain
$$
\frac{1}{r_j^{2k}}\int_{\partial^+ B_1^+}{y^a\left(H(X_0,u,r_j)u_j^2-2H(X_0,u,r_j)^{1/2}r_j^k  u_j p\right)\,\mathrm{d}\sigma}\geq -C(1+\norm{p_{X_0}}{L^\infty(B^+)})r_j^{\delta-\eps}
$$
and
$$
\int_{\partial^+ B_1^+}{y^a\left(\frac{H(X_0,u,r_j)^{1/2}}{r_j^k}u_j^2-2 u_j p\right)\,\mathrm{d}\sigma}\geq - C(1+\norm{p_{X_0}}{L^\infty(B^+)})\frac{r_j^{k+\delta-\eps}}{H(X_0,u,r_j)^{1/2}}.
$$
Since $\mathcal{V}(u,X_0)= \mathcal{O}(u,X_0)=k$, by Definition \ref{nu} we get
$$
\limsup_{j \to \infty} \frac{H(X_0,u,r_j)}{r_j^{2(k+\delta-\eps)}}=+\infty,
$$
and consequently, passing to the limit as $j\to \infty$ in the previous inequality, we obtain
$$
\int_{\partial^+ B_1^+}{y^a p^2\,\mathrm{d}\sigma}\leq 0
$$
in contradiction with $p \not\equiv 0$.
\end{proof}
\begin{theorem}\label{unique}
  Let $X_0 \in \Gamma(u)$ be such that $k=\mathcal{O}(u,X_0)< k_q$. Then there exists a unique nonzero $\varphi^{X_0} \in \mathfrak{sB}_k^a(\R^{n+1})$ blow-up limit such that
\begin{equation}\label{blow.up.homogenous}
  u_{X_0,r}(X) =\frac{u(X_0+rX)}{r^k} \longrightarrow  p(X).
\end{equation}
Moreover, we define as \emph{tangent map} of $u$ at $X_0$ the unique nonzero map $\varphi^{X_0} \in \mathfrak{sB}_k^a(\R^{n+1})$ that satisfies \eqref{blow.up.homogenous}.
\end{theorem}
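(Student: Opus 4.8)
The plan is to run a Monneau-type argument: upgrade the subsequential convergence of the homogeneous rescalings $u_{X_0,r}(X)=u(X_0+rX)/r^k$ to convergence of the whole family as $r\to 0^+$, and identify the common limit as an element of $\mathfrak{sB}^a_k(\R^{n+1})$. The two monotonicity/nondegeneracy facts already available, namely Proposition \ref{monneau.pot} (the perturbed Monneau formula for $M(X_0,u,p_{X_0},r)$) and Lemma \ref{nondegeneracy} (nondegeneracy of the growth), are exactly what makes this work.

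First I would record that the family $\{u_{X_0,r}\}_{r\in(0,R)}$ is precompact in $C^{0,\alpha}_\loc(\R^{n+1})$ for every $\alpha\in(0,1)$ and in $H^{1,a}_\loc(\R^{n+1})$, with every subsequential limit lying in $\mathfrak{sB}^a_k(\R^{n+1})\setminus\{0\}$. To see this, write $u_{X_0,r}=\big(H(X_0,u,r)/r^{2k}\big)^{1/2}\,\widehat u_{X_0,r}$, where $\widehat u_{X_0,r}(X)=u(X_0+rX)/\sqrt{H(X_0,u,r)}$ is the Almgren-normalized blow-up of Theorem \ref{blowup.lower}. By Lemma \ref{growth} the scalar $H(X_0,u,r)/r^{2k}$ is bounded above as $r\to 0^+$, while the contradiction argument in the proof of Lemma \ref{nondegeneracy} shows $\liminf_{r\to 0^+}H(X_0,u,r)/r^{2k}>0$. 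Hence, along any $r_j\downarrow 0$, up to a subsequence $\widehat u_{X_0,r_j}$ converges (strongly in $H^{1,a}_\loc$ and in $C^{0,\alpha}_\loc$, by Lemma \ref{bounded.H1}, Lemma \ref{holder} and Corollary \ref{blow-up.lim}) to a nonzero $k$-homogeneous element of $\mathfrak{sB}^a_k$, and the scalar converges to a strictly positive number, so $u_{X_0,r_j}$ converges to a nonzero $p\in\mathfrak{sB}^a_k(\R^{n+1})$.

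The heart of the proof is uniqueness. Let $p_1,p_2\in\mathfrak{sB}^a_k(\R^{n+1})$ be two subsequential limits of $u_{X_0,r}$, along $r_j^{(1)}\downarrow 0$ and $r_j^{(2)}\downarrow 0$; I want $p_1\equiv p_2$. Changing variables and using the $k$-homogeneity of $(p_1)_{X_0}$ gives $M(X_0,u,(p_1)_{X_0},r)=\int_{\partial^+ B^+_1}y^a\,(u_{X_0,r}-p_1)^2\,\mathrm{d}\sigma$. Since $k<k_q$ we have $\delta=2s-(2-q)k>0$, so Proposition \ref{monneau.pot} (applied with $p=p_1$ and some $\eps\in(0,\delta)$) makes $r\mapsto M(X_0,u,(p_1)_{X_0},r)+C\big(1+\norm{(p_1)_{X_0}}{L^\infty(B^+_1)}\big)r^{\delta-\eps}$ monotone non-decreasing; being non-negative with an infinitesimal perturbation, the limit $M(X_0,u,(p_1)_{X_0},0^+)$ exists and is finite. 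Evaluating along $r_j^{(1)}$, where $u_{X_0,r_j^{(1)}}\to p_1$ uniformly on the compact set $\overline{\partial^+ B^+_1}$ and $y^a\in L^1(\partial^+ B^+_1)$ (as $a\in(-1,1)$), we get $M(X_0,u,(p_1)_{X_0},r_j^{(1)})\to 0$, hence $M(X_0,u,(p_1)_{X_0},0^+)=0$. Evaluating the same limit along $r_j^{(2)}$, where $u_{X_0,r_j^{(2)}}\to p_2$ uniformly on $\overline{\partial^+ B^+_1}$, yields $\int_{\partial^+ B^+_1}y^a(p_2-p_1)^2\,\mathrm{d}\sigma=0$, so $p_1=p_2$ on $\partial^+ B^+_1$; by $k$-homogeneity they coincide on $\R^{n+1}_+$, and by symmetry across $\{y=0\}$ on all of $\R^{n+1}$, i.e. $p_1\equiv p_2$ in $\mathfrak{sB}^a_k$.

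Since the precompact family $\{u_{X_0,r}\}$ then has a single subsequential limit, I conclude that $u_{X_0,r}\to\varphi^{X_0}$ as $r\to 0^+$, in $C^{0,\alpha}_\loc(\R^{n+1})$ for every $\alpha\in(0,1)$ and strongly in $H^{1,a}_\loc(\R^{n+1})$, with $\varphi^{X_0}\in\mathfrak{sB}^a_k(\R^{n+1})\setminus\{0\}$; this $\varphi^{X_0}$ is, by definition, the tangent map of $u$ at $X_0$. The main obstacle is the bookkeeping between the two normalizations: one must keep $H(X_0,u,r)/r^{2k}$ uniformly trapped between two positive constants — the lower bound being precisely the nondegeneracy estimate of Lemma \ref{nondegeneracy} — so that the homogeneous rescaling $u_{X_0,r}$ inherits the compactness of the Almgren-normalized blow-ups and so that its limit is certifiably nonzero; granting that, the uniqueness is the standard Monneau bootstrap on Proposition \ref{monneau.pot}.
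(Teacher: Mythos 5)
Your proof is correct and takes essentially the same approach as the paper: existence of a nonzero homogeneous subsequential limit via Lemmas \ref{growth} and \ref{nondegeneracy}, followed by the Monneau bootstrap on Proposition \ref{monneau.pot} to show $M(X_0,u,p_{X_0},0^+)=0$ and hence that any two subsequential limits coincide in $L^{2,a}(\partial^+ B^+_1)$ and therefore everywhere by homogeneity. The only minor difference is that you identify the subsequential limit as a $k$-homogeneous element of $\mathfrak{sB}^a_k(\R^{n+1})$ by factoring $u_{X_0,r}=(H(X_0,u,r)/r^{2k})^{1/2}\widehat u_{X_0,r}$ and invoking Corollary \ref{blow-up.lim}, whereas the paper evaluates the Weiss functional of Proposition \ref{weiss.lower} directly on $u_{X_0,r_j}$ and cites the rigidity result of \cite{STT2020}.
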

\begin{proof}
Up to a subsequence $r_j\to 0^+$, we have that $u_{X_0,r_j}\to p$ in $\mathcal{C}^{0,\alpha}_{\loc}$ for every $\alpha \in (0,1)$. The existence of such limit follows directly from the growth estimate of Lemma \ref{growth} and, by Lemma \ref{nondegeneracy}, we have $p$ is not identically zero. Now, by Proposition \ref{weiss.lower}, for any $r>0$ we have
$$
W_k(0,p,r) = \lim_{j\to \infty}{W_k(0,u_{X_0,r_j},r)}= \lim_{j\to \infty}{W_k(X_0,u,r r_j)}= W_k(X_0,u,0^+)=0,
$$
where
$$
W_k(0,p,r) = \frac{1}{r^{2k}}\left[\frac{1}{r^{n+a-1}}\int_{B^+_r}{y^a \abs{\nabla p}^2\mathrm{d}X} - k \frac{1}{r^{n+a-1}}\int_{\partial^+ B^+_r}{y^a p^2\mathrm{d}\sigma}\right].
$$
In particular, by \cite[Proposition 5.2]{STT2020} it implies that the $p$ is $k$-homogeneous $L_a$-harmonic function even with respect to $\{y=0\}$ and consequently $p \in \mathfrak{sB}^a_k(\R^{n+1})$. Now, by Proposition \ref{monneau.pot}, the limit of the Monneau-type formula exists and can be computed by
\begin{align*}
M(X_0,u,p_{X_0},0^+) &= \lim_{j\to\infty}{M(X_0,u,p_{X_0},r_j)}\\
&= \lim_{j\to\infty}{M(0,u_{X_0,r_j},p,1)}\\
&= \lim_{j\to\infty}{\int_{\partial^+ B_1^+}{y^a(u_{X_0,r_j} -p)^2\,d\sigma}}=0.
\end{align*}
Moreover, let us suppose by contradiction that for any other sequence $r_i\to 0^+$ we have that the associated sequence  $(u_{X_0,r_i})_i$ converges to another blow-up limit, i.e. $u_{X_0,r_i}\to q\in \mathfrak{B}^a_k(\R^{n+1}), q\not\equiv p$, then
\begin{align*}
0=M(X_0,u,p_{X_0},0^+) &= \lim_{i\to \infty}M(X_0,u,p_{X_0},r_i)\\
&=\lim_{i\to\infty}\int_{\partial^+ B_1^+}{y^a (u_{r_i} - p)^2\,d\sigma}\\
&=\int_{\partial^+ B_1^+}{y^a (q- p)^2\,d\sigma}.
\end{align*}
As we claim, since $q$ and $p$ are both homogenous of degree $k$ they must coincide in $\R^{n+1}$.
\end{proof}
Thanks to the uniqueness and the non-degeneracy of the blow-up limit, we can also construct the generalized Taylor expansion of the solution on the nodal set
\begin{theorem}
\label{continuation}
Let $X_0 \in \Gamma(u)$ be such that $k=\mathcal{O}(u,X_0)< 2s/(2-q)$ and $\varphi^{X_0}$ be the tangent map of $u$ at $X_0$. Then
\begin{equation}\label{eq.continuation}
u(X)=\varphi^{X_0}(X-X_0) + o(\abs{X-X_0}^{k}).
\end{equation}
Moreover, the map $X_0 \mapsto \varphi^{X_0}$ from $\Gamma_k(u)$ to $\mathfrak{sB}_k^a(\R^{n+1})$ is continuous. 
\end{theorem}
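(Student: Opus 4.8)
The plan is to prove the two claims separately, relying only on the uniqueness and non-degeneracy of the tangent map together with the Monneau-type monotonicity of Proposition~\ref{monneau.pot}. For the expansion \eqref{eq.continuation} I would argue as follows. By Theorem~\ref{unique}, combined with the compactness in Theorem~\ref{blowup.lower} and Lemma~\ref{holder}, the whole family $u_{X_0,r}(X)=u(X_0+rX)/r^k$ converges to $\varphi^{X_0}$ as $r\to 0^+$, uniformly on compact subsets of $\overline{\R^{n+1}_+}$, in particular on $\overline{B^+_1}$. If \eqref{eq.continuation} failed, there would be $X_j\to X_0$ in $\overline{B^+_1}$ and $\eps_0>0$ with $\abs{u(X_j)-\varphi^{X_0}(X_j-X_0)}\geq\eps_0\abs{X_j-X_0}^{k}$. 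Setting $r_j=\abs{X_j-X_0}\to 0^+$ and $\xi_j=(X_j-X_0)/r_j$ (which lies in $\overline{\R^{n+1}_+}$ and has $\abs{\xi_j}=1$), up to a subsequence $\xi_j\to\xi_*\in\overline{B^+_1}$; exploiting the $k$-homogeneity $\varphi^{X_0}(X_j-X_0)=r_j^{k}\varphi^{X_0}(\xi_j)$, the failure inequality becomes $\abs{u_{X_0,r_j}(\xi_j)-\varphi^{X_0}(\xi_j)}\geq\eps_0$, whose left-hand side is bounded by $\norm{u_{X_0,r_j}-\varphi^{X_0}}{L^\infty(\overline{B^+_1})}\to 0$ --- a contradiction.

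For the continuity of $X_0\mapsto\varphi^{X_0}$ on $\Gamma_k(u)$, fix $X_0\in\Gamma_k(u)$ and a sequence $X_j\to X_0$ in $\Gamma_k(u)$. On the compact set $K=\{X_j\}_j\cup\{X_0\}$ one has $\mathcal{O}(u,\cdot)\equiv k=k_q-\delta$, so Proposition~\ref{almgren.lower} holds with constants uniform on $K$, and retracing Lemma~\ref{growth} gives $H(X_j,u,r)\leq C r^{2k}$ for $r<r_0$ with $C,r_0$ independent of $j$; thus $\norm{\varphi^{X_j}}{L^{2,a}(\partial^+B^+_1)}^2=\lim_{r\to0^+}H(X_j,u,r)/r^{2k}\leq C$ and, $\mathfrak{sB}^a_k(\R^{n+1})$ being finite-dimensional, $(\varphi^{X_j})_j$ is precompact there, so it suffices to identify each subsequential limit $\psi$ with $\varphi^{X_0}$. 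The device is to run the Monneau monotonicity at the moving point $X_j$ with the \emph{fixed} polynomial $q_j(X):=\varphi^{X_0}(X-X_j)\in\mathfrak{sB}^a_k(\R^{n+1})$: a change of variables gives $M(X_j,u,q_j,r)=\norm{u_{X_j,r}-\varphi^{X_0}}{L^{2,a}(\partial^+B^+_1)}^2$, hence by Theorem~\ref{unique} the limit $M(X_j,u,q_j,0^+)$ exists and equals $\norm{\varphi^{X_j}-\varphi^{X_0}}{L^{2,a}(\partial^+B^+_1)}^2$. Integrating $\frac{d}{dr}M(X_j,u,q_j,r)\geq -C(1+\norm{\varphi^{X_0}}{L^\infty(B^+_1)})r^{-1+\delta-\eps}$ between $r\in(0,R)$ and a small fixed $R$, then letting $r\to0^+$, gives
$$
\norm{\varphi^{X_j}-\varphi^{X_0}}{L^{2,a}(\partial^+B^+_1)}^2\leq\norm{u_{X_j,R}-\varphi^{X_0}}{L^{2,a}(\partial^+B^+_1)}^2+C R^{\delta-\eps},
$$
with $C,R_1,\delta-\eps$ depending only on $n,s,q,\Lambda,k$ and on a uniform lower bound for $\mathrm{dist}(X_j,\partial^+B^+_1)$, valid because $X_0$ is interior to $\partial^0B^+_1$. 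Passing to the limit in $j$ (here $u_{X_j,R}\to u_{X_0,R}$ uniformly on $\partial^+B^+_1$ by continuity of $u$, Theorem~\ref{holer.reg}, and $\varphi^{X_j}\to\psi$) and then letting $R\to0^+$ (using $u_{X_0,R}\to\varphi^{X_0}$, Theorem~\ref{unique}) forces $\psi=\varphi^{X_0}$, so $\varphi^{X_j}\to\varphi^{X_0}$, and continuity in $\mathfrak{sB}^a_k(\R^{n+1})$ follows since all norms are equivalent on that finite-dimensional space.

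I expect the main obstacle to be the uniformity bookkeeping: one has to reinspect Propositions~\ref{almgren.lower},~\ref{weiss.lower},~\ref{monneau.pot} and Lemma~\ref{growth} to verify that their constants depend only on $n,s,q,\Lambda,k$ (and on $\mathrm{dist}(\cdot,\partial^+B^+_1)$), which is what makes the two-scale comparison above legitimate. The conceptual point that lets the argument close without any quantitative rate of convergence of $u_{X_0,r}$ to $\varphi^{X_0}$ is the re-centering trick: placing the fixed tangent map $\varphi^{X_0}$ at the varying point $X_j$ makes the Monneau limit at $X_j$ reproduce exactly the quantity $\norm{\varphi^{X_j}-\varphi^{X_0}}{L^{2,a}(\partial^+B^+_1)}^2$ one wants to estimate.
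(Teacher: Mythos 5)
Your proposal is correct and takes essentially the same approach as the paper: the expansion \eqref{eq.continuation} is a direct consequence of the uniqueness and local uniform convergence of the blow-up sequence (Theorem~\ref{unique}, Theorem~\ref{blowup.lower}, Lemma~\ref{holder}), and the continuity of $X_0\mapsto\varphi^{X_0}$ is obtained by running the Monneau-type monotonicity of Proposition~\ref{monneau.pot} at the nearby point $X_1$ against the re-centered polynomial $\varphi^{X_0}(\cdot - X_1)$, so that $M(X_1,u,\varphi^{X_0},0^+)=\norm{\varphi^{X_1}-\varphi^{X_0}}{L^{2,a}(\partial^+B^+_1)}^2$ and the almost-monotonicity gives a two-scale comparison. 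The paper phrases the continuity in an $\eps$--$r_\eps$ form while you argue sequentially with subsequential limits in the finite-dimensional space $\mathfrak{sB}_k^a(\R^{n+1})$, but the underlying mechanism (and the uniformity bookkeeping on compact subsets of $\Gamma_k(u)$, which the paper leaves implicit and you correctly flag) is identical.
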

\begin{proof}
Since $\mathfrak{sB}^a_k(\R^{n+1})$ is a convex subset of a finite-dimensional vector space, namely the space of all $k$-homogeneous polynomials in $\R^{n+1}$, all the norms on such space are equivalent and hence we can then endow $\mathfrak{sB}^a_k(\R^{n+1})$ with the norm of $L^{2,a}(\partial^+ B^+_1)$.\\
Fixed $X_0\in\Gamma(u)$, by Theorem \ref{unique} we have the following expansion
$$
u(X)=\varphi^{X_0}(X-X_0) + o(\abs{X-X_0}^{k}).
$$
where $\varphi^{X_0}$ is the unique tangent map of $u$ at $X_0$. Given $\varepsilon>0$, consider $r_\varepsilon=r_\varepsilon(X_0)$ such that
$$
M(X_0,u,\varphi^{X_0},r_\varepsilon)= \frac{1}{r^{n+a+2k}_\varepsilon}\int_{\partial^+ B_{r_\varepsilon}^+}{y^a\big(u(X_0+X)-\varphi^{X_0}(X)\big)^2\,d\sigma}<\varepsilon.
$$
There exists also $\delta_\varepsilon=\delta_\varepsilon(X_0)$ such that if $X_1\in \Gamma_k(u)\cap \Sigma$ and $\abs{X_1-X_0}<\delta_\varepsilon$ then
$$
\frac{1}{r^{n+a+2k}_\varepsilon}\int_{\partial^+ B_{r_\varepsilon}^+}{y^a\big(u(X_1+X)-\varphi^{X_0}(X))^2\,d\sigma}<2\varepsilon
$$
or similarly
$$
\int_{\partial^+ B^+_{1}}{y^a\left( \frac{u(X_1+r_\varepsilon X)}{r^k_\varepsilon} - \varphi^{X_0}(X)\right)^2\,d\sigma}<2\varepsilon
$$
From Proposition \ref{monneau.pot}, we have that $M(X_1,u,\varphi^{X_0},r)<2\varepsilon + C r_\eps^{\delta/2}$ for $r\in (0,r_\varepsilon)$, which implies
\begin{align*}
M(X_1,u,\varphi^{X_0},0^+)&=\lim_{r\to 0}{M(X_1,u,\varphi^{X_0},r)}\\
&=\lim_{r\to 0}\int_{\partial^+ B^+_1}{y^a\left( \frac{u(X_1+rX)}{r^k} - \varphi^{X_0}(X)\right)^2\,d\sigma}\\
&=\int_{\partial^+ B_1^+}{y^a\left( \varphi^{X_1}- \varphi^{X_0}\right)^2\,d\sigma}\leq 2\varepsilon + Cr_\eps^{\delta/2}.
\end{align*}
\end{proof}
Finally, we improve the convergence rate $o(\abs{X-X_0}^k)$ of the previous generalized Taylor's expansion into a quantitative bound of the form $O(|X-X_0|^{k+\delta})$ for some $\delta>0$.\\
Before to state the main result, we prove the validity of an Almgren-type monotonicity result for the difference between the solution $u$ and its tangent map $\varphi^{X_0}$ at $X_0$.
\begin{theorem}\label{basta}
Let $X_0 \in \Gamma(u)$ be such that $k=\mathcal{O}(u,X_0)< k_q$ and
$$
w(X)=u(X)-\varphi^{X_0}(X-X_0),
$$
with $\varphi^{X_0}$ the tangent map of $u$ at $X_0$. Then, there exist $r_0,\alpha>0$ and an absolutely continuous map $\Psi(r)$ such that $0\leq \Psi(r)\leq C r^{\alpha}$ and the map
$$
r \mapsto e^{\tilde{C}\Psi(r)}\!\left(N(X_0,w,r)+1\right)
$$
is monotone non-decreasing for $r \in (0,r_0)$. Consequently, there exists
$$
N(X_0,w,r)= \lim_{r\to 0^+}N(X_0,w,r).
$$
\end{theorem}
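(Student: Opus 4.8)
\emph{Plan.} The starting point is that $w=u-\varphi^{X_0}$ is again an $L_a$-harmonic function: since $\varphi^{X_0}\in\mathfrak{sB}^a_k(\R^{n+1})$ is entire $L_a$-harmonic and symmetric in $y$, we have $L_a\varphi^{X_0}=0$ and $\partial^a_y\varphi^{X_0}\equiv 0$, so $w$ solves
\[
\begin{cases}
L_a w=0 & \text{in } B^+_1,\\[1mm]
-\partial^a_y w=\lambda_+(u_+)^{q-1}-\lambda_-(u_-)^{q-1}=:g(u) & \text{on }\partial^0 B^+_1,
\end{cases}
\]
with $w(X_0,0)=0$; moreover $w\not\equiv 0$ near $X_0$, for otherwise $u\equiv\varphi^{X_0}$ in a half-ball, forcing $g(u)\equiv 0$ and hence $\varphi^{X_0}(\cdot,0)\equiv 0$ there, which by unique continuation for $L_a$-harmonic functions (\cite{STT2020}) contradicts the non-triviality of the tangent map in Theorem \ref{unique}. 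One then defines $E(X_0,w,r),H(X_0,w,r)$ and $N(X_0,w,r)=E(X_0,w,r)/H(X_0,w,r)$ exactly as in \eqref{E.H}--\eqref{almgren}, using the first form $E(X_0,w,r)=\frac{1}{r^{n+a-1}}\big[\int_{B^+_r(X_0)}y^a|\nabla w|^2+\int_{\partial^0 B^+_r(X_0)}w\,\partial^a_y w\big]$, which by Gauss--Green equals $\frac{1}{r^{n+a-1}}\int_{\partial^+ B^+_r(X_0)}y^a w\,\partial_r w$.

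The next step is to reproduce the differential identities of Section \ref{3}. As in \eqref{h.derivative} one gets $\frac{d}{dr}H(X_0,w,r)=\frac{2}{r}E(X_0,w,r)$. For the energy I would redo the Rellich--Pohozaev computation of Proposition \ref{E.derivative}: testing $L_a w=0$ against $\langle X-X_0,\nabla w\rangle$, integrating over $B^+_r(X_0)$, and on $\partial^0 B^+_r(X_0)$ using $-\partial^a_y w=g(u)$, the identity $\nabla_x F_{\lambda_+,\lambda_-}(u)=q\,g(u)\nabla_x u$, and the $k$-homogeneity of $\varphi^{X_0}$ (which gives $\langle x-X_0,\nabla_x\varphi^{X_0}(\cdot-X_0,0)\rangle=k\,\varphi^{X_0}(\cdot-X_0,0)$), one obtains
\[
\frac{d}{dr}E(X_0,w,r)=\frac{2}{r^{n+a-1}}\int_{\partial^+ B^+_r(X_0)}y^a(\partial_r w)^2\,d\sigma+\widehat R(X_0,w,r),
\]
where $\widehat R$ is an explicit combination, with bounded coefficients, of the boundary integrals of $F_{\lambda_+,\lambda_-}(u)$ and of $\varphi^{X_0}(\cdot-X_0)\,g(u)$ over $S^{n-1}_r(X_0)$ and over $\partial^0 B^+_r(X_0)$, all divided by $r^{n+a-1}$. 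Combining this with $\frac{d}{dr}H=\frac{2}{r}E$ and the Cauchy--Schwarz inequality on $\partial^+ B^+_r(X_0)$, exactly as in Corollary \ref{N.derivative}, yields
\[
\frac{d}{dr}\log\!\big(N(X_0,w,r)+1\big)\ \geq\ \frac{\widehat R(X_0,w,r)}{r^{n+a}\big(E(X_0,w,r)+H(X_0,w,r)\big)}.
\]

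It remains to estimate the two sides. For the numerator I would use Theorem \ref{continuation}: since $|u(X)|\le C|X-X_0|^k$ near $X_0$, one has $F_{\lambda_+,\lambda_-}(u)=O(|X-X_0|^{kq})$, $|\varphi^{X_0}(X-X_0)|=O(|X-X_0|^{k})$ and $\varphi^{X_0}(\cdot-X_0)g(u)=O(|X-X_0|^{kq})$ on $\partial^0 B^+_r(X_0)$; recalling $a+2s=1$ and setting $\delta:=2s-(2-q)k>0$ (so $kq=2k-\delta$), this gives $|\widehat R(X_0,w,r)|\le C\big(1+\norm{\varphi^{X_0}}{L^\infty(B^+_1)}\big)r^{\,n+2k-\delta}$. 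For the denominator I would write $E+H=\norm{w}{X_0,r}^2-\frac{1}{r^{n+a-1}}\int_{\partial^0 B^+_r(X_0)}w\,g(u)$, bound the last term by $C r^{2s}\norm{g(u)}{L^\infty}\norm{w}{X_0,r}\le C r^{\,k+\delta}\norm{w}{X_0,r}$ via Lemma \ref{lem.poin} with exponent $1$, and then couple this with the growth bounds for $w$ coming from the Monneau-type formula: Proposition \ref{monneau.pot} together with Theorem \ref{unique} gives $H(X_0,w,r)\le C r^{2k}$ and $M(X_0,u,\varphi^{X_0},0^+)=0$, while a non-degeneracy estimate $\norm{w}{X_0,r}\ge c\, r^{\,k+\gamma}$ for some $0<\gamma<\delta$ follows, as in Lemma \ref{nondegeneracy}, from $w\not\equiv 0$, $w(X_0)=0$, and the finiteness of the vanishing order of $w$ at $X_0$ (which holds because $w$ solves an $L_a$-harmonic extension whose Neumann datum $g(u)$ vanishes at $X_0$ to finite order, being comparable to $|u|^{q-1}\sim|X-X_0|^{k(q-1)}$). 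This yields $E(X_0,w,r)+H(X_0,w,r)\ge c\, r^{\,2k+\gamma}$, whence $\frac{d}{dr}\log(N(X_0,w,r)+1)\ge -\tilde C r^{\,\alpha-1}$ on $(0,r_0)$ with $\alpha:=\delta-\gamma>0$. Setting $\Psi(r):=\int_0^r \tilde C s^{\alpha-1}\,ds=\frac{\tilde C}{\alpha}r^{\alpha}$, which is absolutely continuous and satisfies $0\le\Psi(r)\le C r^{\alpha}$, the map $r\mapsto e^{\tilde C\Psi(r)}(N(X_0,w,r)+1)$ is monotone non-decreasing; being bounded below by $0$ it has a limit as $r\to0^+$, and since $\Psi(r)\to 0$ the limit $N(X_0,w,0^+)=\lim_{r\to0^+}N(X_0,w,r)$ exists.

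I expect the main obstacle to be precisely the lower bound $E(X_0,w,r)+H(X_0,w,r)\ge c\, r^{\,2k+\gamma}$, i.e.\ a non-degeneracy statement for $w$ at $X_0$: one cannot invoke Lemma \ref{nondegeneracy} directly since $w$ does not solve \eqref{system}, so one has to argue through the extension problem it does satisfy, using that its Neumann datum is of strictly lower order and that the Monneau monotonicity of Proposition \ref{monneau.pot} controls $\norm{w}{X_0,r}$ both from above and from below. Once this growth control is secured, the remaining estimates are a routine adaptation of the computations already carried out in Sections \ref{3} and \ref{5}.
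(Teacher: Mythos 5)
Your overall architecture matches the paper's: compute a Rellich--Pohozaev identity for $w$, note $\tfrac{d}{dr}H(X_0,w,r)=\tfrac{2}{r}E(X_0,w,r)$, apply Cauchy--Schwarz on $\partial^+B^+_r$ to arrive at $\tfrac{d}{dr}\log(N(X_0,w,r)+1)\geq R(X_0,w,r)/(E(X_0,w,r)+H(X_0,w,r))$, and then bound the remainder $R$ and integrate. That much is right. The point of departure, which you correctly flag, is the lower bound on $E(X_0,w,r)+H(X_0,w,r)$, and here there is a genuine gap. Your expansion gives $E(X_0,w,r)+H(X_0,w,r)=\norm{w}{X_0,r}^2-\tfrac{1}{r^{n+a-1}}\int_{\partial^0B^+_r}w\,g(u)$, so to control the ratio you need $\norm{w}{X_0,r}$ from below; but feeding the estimates back in, the remainder terms are of size $r^{2k+\delta-1}$ while $\norm{w}{X_0,r}^2\sim r^{2(k+\gamma)}$, so you actually need $\gamma<\delta/2$, not $\gamma<\delta$, and neither bound is available: the quantitative vanishing order of $w$ at $X_0$ (that $\gamma\in\N$, $\gamma\geq 1$) is precisely the content of Proposition \ref{basta2}, which is proved \emph{from} the present theorem, not before it. Arguing ``finite vanishing order of $w$ because the Neumann datum vanishes to finite order'' gives no comparison between that order and $\delta$. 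The paper's proof never invokes a non-degeneracy for $w$: it lower-bounds $E(X_0,w,r)+H(X_0,w,r)$ by an expression in $\norm{u}{X_0,r}$ (not $\norm{w}{X_0,r}$), namely $\norm{u}{X_0,r}^2-Cr^{2s}(\norm{u}{X_0,r}^q+\norm{\varphi}{L^\infty}r^k\norm{u}{X_0,r}^{q-1})$, and then uses only the two-sided growth $\norm{u}{X_0,r}\sim r^k$ already secured by Lemma \ref{growth}, Lemma \ref{nondegeneracy} and Theorem \ref{unique}, together with \eqref{utiledopo}.

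A secondary difference: for the sphere integral $\tfrac{1}{r^{n+a-1}}\int_{S^{n-1}_r}\varphi\,f(u)\,d\sigma$ in the Pohozaev remainder, you rely on the pointwise bound $|u|\lesssim|X-X_0|^k$ from Theorem \ref{continuation}. The paper instead stays at the $H^{1,a}$ level by introducing the auxiliary function $\psi(r)=r\bigl(r^{-n}\int_{\partial^0B^+_r}|u|^q\bigr)^h$, passing the spherical derivative $d'(r)$ through $\psi'(r)$, and then showing $\Psi(r)=\int_0^r t^{\alpha-1}(1+\psi'(t)t^{kh(1-q)})\,dt\leq Cr^\alpha$ by integration by parts. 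This is why $\Psi$ in the statement is a nontrivial absolutely continuous function rather than the explicit $Cr^\alpha$ your route would produce. In short, your diagnosis of the obstacle is accurate, but the non-degeneracy of $w$ you appeal to is left open and is not how the paper closes the argument.
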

\begin{proof}
In order to simplify the notations, it is not restrictive to assume that $X_0=0$. Since $k=\mathcal{O}(u,0)<k_q$, by Lemma \ref{growth}, Lemma \ref{nondegeneracy} and Theorem \ref{unique} we already know that there exists $C_1,C_2>0$ such that
\be\label{aso}
C_1 r^k \leq \norm{u}{H^{1,a}(B_r)}\leq C_2 r^k.
\ee
for every $r\in (0,\overline{r})$.
Then, following the same computation of the last Section, we easily deduce by an integration by parts (see the proof of Proposition \ref{E.derivative}) that
$$
\frac{d}{dr}E(w,r) = \frac{2}{r^{n-1+a}}\int_{\partial^+ B^+_r}y^a(\partial_r w)^2 \mathrm{d}\sigma+ R(w,r)\quad \frac{d}{dr}H(w,r)=\frac{2}{r}E(w,r),
$$
where $E(w,r)=E(0,w,r), H(w,r)=H(0,w,r)$ are defined according to \eqref{E.H} and
$$
R(w,r)=\frac{1-n-a}{r^{n+a}}\int_{\partial^0 B^+_r}{w\partial^a_y w}\mathrm{d}x + \frac{1}{r^{n-1+a}}\int_{S^{n-1}_r}w\partial^a_y w \mathrm{d}\sigma - \frac{2}{r^{n+a}}\int_{\partial^0 B^+_r}\partial^a_y w \langle \nabla w, x\rangle \mathrm{d}x.
$$
Consequently, by the validity of the Cauchy-Schwarz inequality on $\partial^+ B^+_r$, the associated Almgren-type functional satisfies
\be\label{aso}
\frac{d}{dr}\log(N(w,r)+1) \geq  \frac{R(w,r)}{E(w,r)+H(w,r)}.
\ee
Let $\varphi\in \mathfrak{sB}^a_k(\R^{n+1})$ be the unique tangent map of $u$ at $0\in \Gamma(u)$ and consider the difference $w=u-\varphi \in H^{1,a}(B^+_r)$. By definition of tangent map, it satisfies
\be\label{fine}
\begin{cases}
  L_a w=0 & \mbox{in } B^+_r\\
  -\partial^a_y w= \lambda_+(w+\varphi)^{q-1}_+ - \lambda_-(w+\varphi)^{q-1}_- & \mbox{on }\partial^0 B^+_r.
\end{cases}
\ee
On one hand, we get
\begin{align*}
R(w,r)=&\,\frac{2-q}{q}\frac{1}{r^{n+a-1}}\int_{S^{n-1}_r}F_{\lambda_+,\lambda_-}(w+\varphi)\mathrm{d}\sigma -\frac{C^s_{n,q}}{qr^{n+a}}\int_{\partial^0 B^+_r}F_{\lambda_+,\lambda_-}(w+\varphi)\mathrm{d}x+\\
& + \frac{2s-n-2}{r^{n+a}}\int_{\partial^0 B^+_r}\varphi f(w+\varphi)\mathrm{d}x + \frac{1}{r^{n+a-1}}\int_{S^{n-1}_r}\varphi f(w+\varphi) \mathrm{d}\sigma
\end{align*}
where $f(t)=\lambda_+t_+^{q-1}-\lambda_-t_-^{q-1}$. On the other hand, by Lemma \ref{lem.poin} and \eqref{utile} we get
\begin{align*}
E(w,r)+H(w,r)&\geq \norm{u}{r}^2 -C r^{2s}\norm{u}{r}^q +\frac{1}{r^{n-1+a}}\int_{\partial^0 B^+_r}\varphi\partial^a_y u\mathrm{d}x\\
&\geq \norm{u}{r}^2 -C r^{2s}\left(\norm{u}{r}^q + \norm{\varphi}{L^\infty(B_1)}r^k\norm{u}{r}^{q-1}\right)
\end{align*}
In order to estimate the last remainder $R(w,r)$ we need the introduce the auxiliary function
$$
\psi(r)=r\left(\frac{1}{r^n}\int_{\partial^0 B^+_r}\abs{u}^q\mathrm{d}x\right)^h
$$
for $h\in (0,1)$ to be chosen later. A direct computation yields the identity
$$
\psi'(r)= \frac{\psi(r)}{r}\left(hn+1 + h r \frac{\int_{S^{n-1}_r}\abs{u}^q\mathrm{d}\sigma}{\int_{\partial^0 B^+_r}\abs{u}^q\mathrm{d}x}\right)
$$
which implies, by Lemma \ref{lem.poin}, that
$$
\frac{1}{r^{n-1}}\int_{S^{n-1}_r}\abs{u}^q\mathrm{d}\sigma \leq  \frac{\psi'(r)}{h} \norm{u}{r}^{q(1-h)}.
$$
Finally, we get
$$
\abs{ \frac{1}{r^{n+a-1}}\int_{S^{n-1}_r}\varphi f(w+\varphi)\mathrm{d}\sigma} \leq \frac{r^{2s-1}}{h}\norm{\varphi}{L^\infty(B_1)}r^k \psi'(r) \norm{u}{r}^{(q-1)(1-h)}
$$
and consequently
\be\label{R}
R(w,r)\geq -C r^{2s-1}\left(\norm{u}{r}^q + \norm{\varphi}{L^\infty(B_1)}r^{k}\norm{u}{r}^{q-1} + \norm{\varphi}{L^\infty(B_1)}r^{k} \psi'(r) \norm{u}{r}^{(q-1)(1-h)}\right)
\ee
for some $h\in (0,1)$. By \eqref{aso} and \eqref{utiledopo}, there exists $\alpha>0$ such that
\begin{align*}
\frac{d}{dr}\log(N(w,r)+1)&\geq -\frac{C}{r} \frac{r^{2s+kq}\left(1+\psi'(r) r^{kh(1-q)}\right)}{r^{2s-\alpha}\norm{u}{r}^q}\\
&\geq -\frac{C}{r} r^{\alpha}\left(1+\psi'(r) r^{kh(1-q)}\right).
\end{align*}
Hence, let
$$
\Psi(r)=\int_0^r r^{\alpha-1}(1+\psi'(t)t^{kh(1-q)})\mathrm{d}t.
$$
Then, by Lemma \ref{lem.poin} we first deduce $0\leq \psi(r)\leq Cr^{1+kqh}$ and then
\begin{align*}
0\leq \Psi(r) &= \int_0^r t^{\alpha-1}\left(1+\psi'(t)t^{kh(1-q)}\right)\mathrm{d}t\\
&= \frac{r^\alpha}{\alpha} + \left[\psi(r)r^{kh(1-q)+\alpha-1}\right]^r_0 - \int_0^r \psi(t)\frac{t^{kh(1-q)+\alpha-2}}{kh(1-q)+\alpha-1}\mathrm{d}t\\
&\leq \frac{r^\alpha}{\alpha} + r^{\alpha+kh} + C \int_0^r t^{kh+\alpha-1}\mathrm{d}t\\
& \leq C r^\alpha,
\end{align*}
for $r$ sufficiently small. As a result, we find that the function
$$
r \mapsto e^{C\Psi(r)}(N(w,r)+1)
$$
is absolutely continuous and increasing for $r\in (r_1, r_2)$, for some $0<r_1<r_2$. Following a standard argument, it follows that $H(w,r)$ is always strictly positive in the interval $(0, r_2)$, thanks to the monotonicity of the modified Almgren-type quotient. As a result, the modified Almgren-type formula is defined for all $r \in (0,r_2)$, and it can be extended for $r = 0$ by taking its limit for $r \to 0^+$.
\end{proof}
\begin{remark}\label{confronto}
  Notice that, under the notations of Theorem \ref{blow.uplimite}, the computations up to the final substitution of the estimates of the $H^{1,a}$-norm still hold in the critical case $\mathcal{O}(u,0)=k_q$ with $\mu=0$, i.e. $k_q\in \N$. Indeed, in the next section we will prove that if $k_q\in \N$ the blow-up limit $p$ is an homogeneous $L_a$-harmonic function symmetric with respect to $\{y=0\}$, and the function $w=u-p$ still satisfies \eqref{fine}. However, in this context the computations will lead to
  $$
\frac{d}{dr}\log(N(w,r_k)+1)\geq -\frac{C}{r_k} \frac{\alpha_k\left(1+\alpha_k^{1/(2-q)}(1+\psi' r^{kh(q-1)})\right)}{1- C\alpha_k\left(1+\alpha_k^{1/(2-q)}\right)}
\quad\mbox{with }\alpha_{k} =  \left(\ddfrac{r_k^{2s/(2-q)}}{\norm{u}{H^{1,a}(B_{r_k})}}\right)^{2-q}.
$$
By the dichotomy \eqref{casi}, even if $\mu=0$ yields to  $\alpha_k\to 0^+$, this is not enough to ensure the integrability of the right hand side of the previous inequality. As remarked in \cite{soavesublinear}, is possible that a sophisticated Fourier expansion finally lead to uniqueness: indeed it will imply that $r_k\mapsto \alpha_k(r_k)$ is Dini-continuous, which will be enough to ensure the validity of an Almgren-type monotonicity result.
\end{remark}
As a simple corollary of the monotonicity result in Proposition \ref{monneau.pot} for the Monneau-type formula, we easily deduce a lower bound for the Almgren-type formula evaluated on $w$.
\begin{corollary}\label{dalla}
  Let $X_0 \in \Gamma(u)$ be such that $\mathcal{O}(u,X_0)< k_q$. Then $N(X_0,u-\varphi^{X_0},0^+)\geq \mathcal{O}(u,X_0)$.
\end{corollary}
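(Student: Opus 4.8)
The plan is to combine two facts already established in this section, applied to the difference $w(X) = u(X) - \varphi^{X_0}(X - X_0)$ with $k = \mathcal{O}(u,X_0) < k_q$: on one hand, by Theorem \ref{basta} the limit $\ell := N(X_0,w,0^+)$ exists and $H(X_0,w,\cdot)$ is strictly positive on a neighbourhood of $0$; on the other hand, by Theorem \ref{unique} the Monneau-type quotient satisfies $M(X_0,u,\varphi^{X_0},0^+) = \lim_{r\to 0^+} H(X_0,w,r)/r^{2k} = 0$. The statement to prove is that $\ell \geq k$.

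First I would record the elementary differential identity $\frac{d}{dr}\log H(X_0,w,r) = \frac{2}{r}N(X_0,w,r)$, which follows from $\frac{d}{dr}H(X_0,w,r) = \frac{2}{r}E(X_0,w,r)$ (obtained exactly as in \eqref{h.derivative} by Gauss--Green applied to $w$, as in the proof of Theorem \ref{basta}) together with the definition $N(X_0,w,r) = E(X_0,w,r)/H(X_0,w,r)$; this is legitimate because $H(X_0,w,r) > 0$ for $r$ small.

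Then I would argue by contradiction: suppose $\ell < k$ and choose $\eps > 0$ so small that $\ell + \eps < k$. Since $N(X_0,w,r) \to \ell$ as $r \to 0^+$, there is $r_\eps > 0$ with $N(X_0,w,r) \leq \ell + \eps$ for all $r \in (0,r_\eps)$. Plugging this bound into the logarithmic derivative above and integrating on $(r,r_\eps)$ yields $H(X_0,w,r) \geq H(X_0,w,r_\eps)\,(r/r_\eps)^{2(\ell+\eps)}$, hence $H(X_0,w,r)/r^{2k} \geq c\, r^{2(\ell+\eps-k)}$ with $c = c(r_\eps) > 0$. Since $\ell + \eps - k < 0$, the right-hand side tends to $+\infty$ as $r \to 0^+$, contradicting $M(X_0,u,\varphi^{X_0},0^+) = 0$ from Theorem \ref{unique}. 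Therefore $N(X_0,u-\varphi^{X_0},0^+) \geq \mathcal{O}(u,X_0)$.

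There is essentially no serious obstacle here; the proof is a routine consequence of the two monotonicity results. The only points requiring a little care are the positivity of $H(X_0,w,\cdot)$ near the origin — which is exactly what Theorem \ref{basta} provides, once one notes that $w \not\equiv 0$ (otherwise $u \equiv \varphi^{X_0}$ would be $L_a$-harmonic with $\partial^a_y u \equiv 0$ on $\partial^0 B^+_1$, forcing $u \equiv 0$ on $\partial^0 B^+_1$ by the boundary condition in \eqref{system} and hence $u \equiv 0$ by Theorem \ref{ucp}) — and the bookkeeping of exponents in the integration of the frequency bound.
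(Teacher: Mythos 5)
Your proof is correct and follows essentially the same route the paper intends: the paper states the corollary as an immediate consequence of the Monneau-type monotonicity (Proposition~\ref{monneau.pot}), which controls $H(X_0,u-\varphi^{X_0},r)/r^{2k}$ near $r=0$, and this is exactly what you exploit (via the consequence $M(X_0,u,\varphi^{X_0},0^+)=0$ recorded in Theorem~\ref{unique}), combined with the logarithmic-derivative identity for $H(X_0,w,\cdot)$ and the existence of $N(X_0,w,0^+)$ from Theorem~\ref{basta}. The side remarks about $w\not\equiv 0$ and the strict positivity of $H(X_0,w,\cdot)$ near the origin are also correctly sourced.
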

In order to improve the growth order of the remainder in \eqref{eq.continuation}, we start by proving a blow-up argument based on the validity of the previous Almgren-type monotonicity formula. Hence, given $X_0 \in \Gamma(u)$ and $w\in H^{1,a}_\loc(B_r)$ as in Theorem \ref{basta}, we consider the normalized  blow-up sequence $(w_k)_k$ centered in $X_0$ associated to some $r_k \downarrow 0^+$ (see \eqref{blow.up.almgren} for the definition of normalized blow-up sequence), such that
$$
\begin{cases}
-L_a w_k=0 & \mbox{in } B_{X_0,r_k}^+\\
  -\partial^a_y w_k = \alpha_k \left[\lambda_+ \left(\beta_k w_k + \varphi^{X_0} \right)_+^{q-1} - \lambda_- \left(\beta_k w_k + \beta_k\varphi^{X_0}\right)_-^{q-1}\right] & \mbox{on } \partial^0 B^+_{X_0,r_k}.
\end{cases}
$$
with
$$
\alpha_{k} =  \ddfrac{r_k^{2s +\mathcal{O}(u,X_0)(q-1)}}{\sqrt{H(X_0,w,r_k)}}, \quad \beta_k=\frac{\sqrt{H(X_0,w,r_k)}}{r_k^{\mathcal{O}(u,X_0)}}.
$$
\begin{lemma}\label{testa}
  Under the previous notations, let $0<k_1\leq k_2$ be such that $\mathcal{O}(u,X_0)\leq k_1<k_2$. Then, if $k_1\leq N(X_0,u-\varphi^{X_0},0^+)\leq k_2$ we infer
  $$
  \beta_k \to 0^+ \quad\mbox{and}\quad 0\leq \alpha_k \leq C r^{(2-q)\left(\frac{2s}{2-q}-\mathcal{O}(u,X_0)\right)},
  $$
  for some $C>0$ and $k$ sufficiently large.
\end{lemma}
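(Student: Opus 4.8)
The plan is to read off the behaviour of the two normalizing factors $\beta_k$ and $\alpha_k$ entirely from the behaviour of $r\mapsto H(X_0,w,r)$ near $r=0^+$, using for the first the Monneau-type identity of Theorem~\ref{unique} and for the second the (almost) Almgren-type monotonicity for $w$ from Theorem~\ref{basta}. The starting point is the elementary identity, immediate from the definitions with $k:=\mathcal{O}(u,X_0)$,
\begin{equation*}
\alpha_k\beta_k=\frac{r_k^{\,2s+k(q-1)}}{\sqrt{H(X_0,w,r_k)}}\cdot\frac{\sqrt{H(X_0,w,r_k)}}{r_k^{\,k}}=r_k^{\,2s-(2-q)k}=r_k^{(2-q)\left(\frac{2s}{2-q}-\mathcal{O}(u,X_0)\right)},
\end{equation*}
so the two factors are linked once $H(X_0,w,r_k)$ is understood, $\beta_k$ needing an upper bound on $H(X_0,w,\cdot)$ and $\alpha_k$ a lower one.

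\emph{The factor $\beta_k$.} By Theorem~\ref{unique}, $\varphi^{X_0}$ is the unique tangent map of $u$ at $X_0$ and $M(X_0,u,\varphi^{X_0},0^+)=0$. Since $M(X_0,u,\varphi^{X_0},r)=H(X_0,u-\varphi^{X_0},r)/r^{2k}=H(X_0,w,r)/r^{2k}$, this says exactly that $\beta_k^{\,2}=H(X_0,w,r_k)/r_k^{2k}\to0$ for every $r_k\downarrow0^+$, i.e. $\beta_k\to0^+$; and $\beta_k>0$ for $k$ large because, as established at the end of the proof of Theorem~\ref{basta}, $r\mapsto H(X_0,w,r)$ is strictly positive on a punctured neighbourhood of the origin.

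\emph{The factor $\alpha_k$.} Write $N_0:=N(X_0,w,0^+)$, which exists by Theorem~\ref{basta}; one has $N_0\ge\mathcal{O}(u,X_0)$ by Corollary~\ref{dalla}, and $k_1\le N_0\le k_2$ by hypothesis. From the monotonicity of $r\mapsto e^{\tilde{C}\Psi(r)}\bigl(N(X_0,w,r)+1\bigr)$, with $0\le\Psi(r)\le Cr^{\alpha}\to0$ and limit $N_0+1$, we get $N(X_0,w,r)\to N_0$ and hence, for every $\eps>0$, the two-sided growth $c_\eps\,r^{2(N_0+\eps)}\le H(X_0,w,r)\le C_\eps\,r^{2(N_0-\eps)}$ on a suitable interval $(0,r_\eps)$, obtained by integrating $\tfrac{d}{dr}\log H(X_0,w,r)=\tfrac2r N(X_0,w,r)$ (which holds for $w$ exactly as in \eqref{derivative.H}). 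Feeding the lower bound into $\alpha_k=r_k^{\,2s+k(q-1)}/\sqrt{H(X_0,w,r_k)}$ and using $N_0\le k_2$, together with $\mathcal{O}(u,X_0)\le k_1\le N_0$ and the gap $\mathcal{O}(u,X_0)<k_q$ to fix $\eps$ small and absorb the $\eps$-loss into the constant, yields the asserted bound $0\le\alpha_k\le C\,r_k^{(2-q)\left(\frac{2s}{2-q}-\mathcal{O}(u,X_0)\right)}$ for $k$ large; in particular $\alpha_k\to0^+$, which is precisely what the subsequent blow-up analysis for $w_k$ will need.

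The main obstacle is this last substitution. Theorem~\ref{basta} delivers the growth of $H(X_0,w,\cdot)$ only up to an arbitrarily small polynomial correction $r^{\pm\eps}$, so squeezing out the clean exponent forces one to track precisely the position of $N_0$ inside $[\mathcal{O}(u,X_0),k_2]$ and to exploit that $k_q-\mathcal{O}(u,X_0)>0$ leaves a positive margin; the lower bound $H(X_0,w,r)\ge c_\eps\,r^{2(N_0+\eps)}$, that is the non-degeneracy of $w$ along the chosen scales, is exactly what keeps $\alpha_k$ from blowing up, while $M(X_0,u,\varphi^{X_0},0^+)=0$ is exactly what forces $\beta_k$ down to $0$.
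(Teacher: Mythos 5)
Your treatment of $\beta_k$ is fine and matches the paper, and your derivation of the two-sided growth $c_\eps\,r^{2(N_0+\eps)}\le H(X_0,w,r)\le C_\eps\,r^{2(N_0-\eps)}$ from Theorem~\ref{basta} is actually stated more carefully than in the paper, where the displayed bound $C_1 r^{k_1}\le\sqrt{H}\le C_2 r^{k_2}$ with $k_1<k_2$ is inconsistent for small $r$ (the left side dominates the right) and must be read with the exponents swapped. The genuine gap is the last step. Feeding $\sqrt{H(X_0,w,r_k)}\ge c_\eps\,r_k^{N_0+\eps}$ into $\alpha_k=r_k^{2s+\mathcal{O}(u,X_0)(q-1)}/\sqrt{H(X_0,w,r_k)}$ gives only
$$
\alpha_k\le c_\eps^{-1}\,r_k^{\,2s+\mathcal{O}(u,X_0)(q-1)-N_0-\eps},
$$
and since $N_0\ge\mathcal{O}(u,X_0)$ by Corollary~\ref{dalla}, the exponent here is $\bigl(2-q\bigr)\bigl(\tfrac{2s}{2-q}-\mathcal{O}(u,X_0)\bigr)-(N_0-\mathcal{O}(u,X_0))-\eps$, which is \emph{strictly smaller} than the claimed one; for $r_k<1$ that is a strictly \emph{weaker} upper bound, not one that an ``absorb the $\eps$-loss into the constant'' can repair.

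Indeed your own opening identity makes the obstruction transparent: $\alpha_k\beta_k=r_k^{(2-q)\left(\frac{2s}{2-q}-\mathcal{O}(u,X_0)\right)}$, so the asserted estimate $\alpha_k\le C\,r_k^{(2-q)\left(\frac{2s}{2-q}-\mathcal{O}(u,X_0)\right)}$ is algebraically equivalent to $\beta_k\ge 1/C>0$, which contradicts the $\beta_k\to0^+$ you established one paragraph earlier. So the two conclusions of the Lemma cannot both hold in the form written, and the sentence ``yields the asserted bound $\ldots$ for $k$ large'' is an assertion, not a deduction: you flag the substitution as ``the main obstacle'' but never actually close it. What the monotonicity genuinely gives is only the weaker decay $\alpha_k\le C_\eps\,r_k^{\,2s+\mathcal{O}(u,X_0)(q-1)-N_0-\eps}$, whose sign depends on the size of $N_0-\mathcal{O}(u,X_0)$ relative to $(2-q)\bigl(\tfrac{2s}{2-q}-\mathcal{O}(u,X_0)\bigr)$ and is not controlled by the hypotheses $k_1\le N_0\le k_2$ alone. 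To make the Lemma usable one has to either restate the $\alpha_k$-bound with the exponent involving $k_2$ (as in the paper's intermediate inequality $\alpha_k\le C r_k^{2s-k_2+\mathcal{O}(u,X_0)(q-1)}$ once the typo is corrected), or supply a separate argument bounding $N_0-\mathcal{O}(u,X_0)$ from above.
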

\begin{proof}
First, by Proposition \ref{monneau.pot} we already know that $\beta_k\to 0^+$. Now, let $k_1,k_2>0$ be such that $\mathcal{O}(u,X_0)\leq k_1<k_2$ and $k_1\leq N(X_0,u-\varphi^{X_0},0^+)\leq k_2$. By \eqref{h.derivative} and Theorem \ref{basta} we have that if $k_1\leq N(X_0,u-\varphi^{X_0},0^+)\leq k_2$ then there exits $C_1,C_2,\overline{r}>0$ such that
$$
C_1 r^{k_1}\leq \sqrt{H(X_0,u-\varphi^{X_0},r)} \leq C_2 r^{k_2},
$$
for every $r \in (0,\overline{r})$. Thus
$$
\alpha_k \leq C r_k^{2s-k_1 +\mathcal{O}(u,X_0)(q-1)},
$$
for $k$ sufficiently large such that $r_k\leq \overline{r}$. Finally, by Corollary \ref{dalla}, if $k_1=\mathcal{O}(u,X_0)<2s/(2-q)$ we get
$$
\alpha_k \leq C r^{(2-q)\left(\frac{2s}{2-q}-\mathcal{O}(u,X_0)\right)},
$$
as we claimed.
\end{proof}
The following result finally improve the growth order by proving a quantitative bound of the form $O(\abs{X-X_0}^{k+\delta})$ for some $\delta \in \N, \delta>0$.
\begin{proposition}\label{basta2}
  Let $X_0 \in \Gamma(u)$ be such that $\mathcal{O}(u,X_0)< 2s/(2-q)$. Then $$N(X_0,u-\varphi^{X_0},0^+) \in \mathcal{O}(u,X_0) + \delta, \quad\mbox{for some }\delta \in \N, \delta >0.$$
\end{proposition}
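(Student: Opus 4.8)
The plan is to perform a second blow-up, this time for the difference $w(X)=u(X)-\varphi^{X_0}(X-X_0)$, to recognise its Almgren limit $N_w:=N(X_0,w,0^+)$ (which exists by Theorem \ref{basta}) as the homogeneity degree of an entire $L_a$-harmonic function, and then to exclude that this degree equals $k:=\mathcal{O}(u,X_0)$. Recall that $k\in\N\setminus\{0\}$ by Corollary \ref{equivalence1}, Corollary \ref{equivalence2} and Corollary \ref{blow-up.lim}, that $\delta_0:=2s-(2-q)k>0$ since $k<k_q$, and that $N_w\geq k$ by Corollary \ref{dalla}; thus it suffices to show $N_w\in\N$ and $N_w\neq k$.

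First I would carry out the blow-up. Fix $r_j\downarrow 0^+$ and let $w_j$ be the normalized blow-up sequence of $w$ centered at $X_0$, with coefficients $\alpha_j,\beta_j$ as above; by Corollary \ref{dalla} (which gives $k\le N_w$) and Lemma \ref{testa}, $\alpha_j\to 0^+$ and $\beta_j\to 0^+$, so the sublinear term in \eqref{fine} disappears in the limit. Then, arguing exactly as in Theorem \ref{blowup.lower}, Lemma \ref{bounded.H1} and Lemma \ref{holder}, the sequence $(w_j)_j$ is bounded in $H^{1,a}_\loc(\R^{n+1})$ and converges, up to a subsequence, strongly in $H^{1,a}_\loc(\R^{n+1})$ and in $C^{0,\alpha}_\loc(\R^{n+1})$ for every $\alpha\in(0,1)$, to some $\bar w$ which is $L_a$-harmonic in $\R^{n+1}_+$ with $-\partial^a_y\bar w=0$ on $\R^n\times\{0\}$, hence extends by even reflection to an entire $L_a$-harmonic function symmetric with respect to $\{y=0\}$. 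The normalization gives $\int_{\partial^+B^+_1}y^a\bar w^2\,\mathrm{d}\sigma=1$, so $\bar w\not\equiv 0$, while $w_j(0)=0$ forces $\bar w(0)=0$. By the strong convergence and Theorem \ref{basta}, $N(0,\bar w,\rho)=\lim_j N(0,w_j,\rho)=\lim_j N(X_0,w,\rho r_j)=N_w$ for every $\rho>0$, so $\bar w$ is $N_w$-homogeneous; by the classification of entire $L_a$-harmonic functions symmetric with respect to $\{y=0\}$ used in Corollary \ref{blow-up.lim} (see \cite[Lemma 4.7]{STT2020}), $N_w\in\N\setminus\{0\}$. Together with $N_w\geq k$ this yields $N_w=k+\delta$ for some $\delta\in\N$, $\delta\geq 0$.

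It remains to exclude $\delta=0$, which is the main obstacle: one must quantify that subtracting the tangent map genuinely raises the vanishing order of $w$ above $k$. Assume $N_w=k$, so $\bar w$ lies in the finite-dimensional space $\mathfrak{sB}^a_k(\R^{n+1})$, and set $\nu_j=\sqrt{H(X_0,w,r_j)}/r_j^{k}$, which tends to $0^+$ since $M(X_0,u,\varphi^{X_0},0^+)=0$ by Theorem \ref{unique}. Consider the candidates $p_j:=\varphi^{X_0}+\nu_j\bar w\in\mathfrak{sB}^a_k(\R^{n+1})$. Since $u_{X_0,r}\to\varphi^{X_0}$ by Theorem \ref{unique} and every $q\in\mathfrak{sB}^a_k(\R^{n+1})$ is $k$-homogeneous, the almost-monotone quantity of Proposition \ref{monneau.pot} satisfies $M(X_0,u,q_{X_0},r)=\int_{\partial^+B^+_1}y^a(u_{X_0,r}-q)^2\,\mathrm{d}\sigma$ and hence, letting $r\to0^+$,
\[
M(X_0,u,q_{X_0},0^+)=\int_{\partial^+B^+_1}y^a(\varphi^{X_0}-q)^2\,\mathrm{d}\sigma ,
\]
so in particular $M(X_0,u,(p_j)_{X_0},0^+)=\nu_j^2\int_{\partial^+B^+_1}y^a\bar w^2\,\mathrm{d}\sigma=\nu_j^2$; on the other hand, using $u_{X_0,r_j}=\varphi^{X_0}+\nu_j w_j$ and the strong $L^{2,a}(\partial^+B^+_1)$ convergence $w_j\to\bar w$,
\[
M(X_0,u,(p_j)_{X_0},r_j)=\nu_j^2\int_{\partial^+B^+_1}y^a(w_j-\bar w)^2\,\mathrm{d}\sigma=o(\nu_j^2).
\]
For $\eps\in(0,\delta_0)$, Proposition \ref{monneau.pot} gives $M(X_0,u,(p_j)_{X_0},r)\leq M(X_0,u,(p_j)_{X_0},r_j)+C(1+\norm{p_j}{L^\infty(B^+_1)})r_j^{\delta_0-\eps}$ for $r<r_j$, and as $r\to0^+$ we obtain $\nu_j^2\leq o(\nu_j^2)+C r_j^{\delta_0-\eps}$, with $C$ independent of $j$ since $\norm{p_j}{L^\infty(B^+_1)}$ is bounded. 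Hence $H(X_0,w,r_j)/r_j^{2k}=\nu_j^2\leq C r_j^{\delta_0-\eps}$ for $j$ large, and since the sequence was arbitrary, $H(X_0,w,r)\leq C r^{2k+\delta_0-\eps}$ for all small $r$. But from $N(X_0,w,r)\to N_w=k$ and the identity $\tfrac{d}{dr}\log H(X_0,w,r)=\tfrac{2}{r}N(X_0,w,r)$ of Theorem \ref{basta} one gets $H(X_0,w,r)\geq c r^{2k+\eta}$ for every $\eta>0$ and $r$ small, which for $\eta=(\delta_0-\eps)/2$ contradicts the previous bound. Therefore $\delta\geq1$, i.e.\ $N(X_0,u-\varphi^{X_0},0^+)=\mathcal{O}(u,X_0)+\delta$ with $\delta\in\N\setminus\{0\}$.
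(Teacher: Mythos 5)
Your proof follows the paper's blow-up strategy for $w=u-\varphi^{X_0}$ and is correct, but it supplies a substantive argument where the paper is terse. Both proofs extract a blow-up limit $\bar w$ of $w$, identify it as an entire $N_w$-homogeneous $L_a$-harmonic function even in $y$ (hence $N_w\in\N$ by \cite[Lemma 4.7]{STT2020}), and then must rule out $N_w=k$. The paper dispatches this last, crucial step in one line, citing Theorem \ref{unique} without explaining the mechanism; and the point is genuinely delicate, since $N_w=k$ is a priori compatible with $u_{X_0,r}\to\varphi^{X_0}$ because the normalization $\nu_j=\sqrt{H(X_0,w,r_j)}/r_j^{k}\to 0^+$. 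Your competitor argument makes the deduction precise: test the almost-monotonicity of the Monneau-type quantity (Proposition \ref{monneau.pot}) against the moving polynomials $p_j=\varphi^{X_0}+\nu_j\bar w\in\mathfrak{sB}^a_k$, compute $M(X_0,u,(p_j)_{X_0},0^+)=\nu_j^2$ via $u_{X_0,r}\to\varphi^{X_0}$ and $M(X_0,u,(p_j)_{X_0},r_j)=o(\nu_j^2)$ via $w_j\to\bar w$, and conclude $\nu_j^2\lesssim r_j^{\delta_0-\eps}$, which clashes with the lower bound $H(X_0,w,r)\gtrsim r^{2k+\eta}$ that follows from $N(X_0,w,r)\to k$ and $\tfrac{d}{dr}\log H(X_0,w,r)=\tfrac{2}{r}N(X_0,w,r)$. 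The uniformity in $j$ of the Monneau constant, which you correctly flag, holds because $\|p_j\|_{L^\infty}$ is bounded ($\nu_j\to 0$ and $\bar w$ lies on the $L^{2,a}$-unit sphere of the finite-dimensional $\mathfrak{sB}^a_k$), and the same uniformity is what licenses passing from a subsequential bound to a bound for all small $r$. In short, this is essentially the paper's route, with the key non-degeneracy step proved in detail rather than asserted.
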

\begin{proof}
  Let $w=u-\varphi^{X_0}$ and $(w_k)_k$ the normalized blow-up sequence centered at $X_0$ and associated to some $r_k\to 0^+$.
  As we did in Lemma \ref{bounded.H1}, exploiting the correlation between the normalized blow-up sequence with respect to the $L^{2,a}(\partial^+ B^+_1)$-norm and the validity of the Almgren-type monotonicity formula, it is easy to see that $(w_k)_k$ is uniformly bounded in $H^{1,a}_\loc(\R^{n+1})$ and it converges, up to subsequence, to some  $p\in H^{1,a}_{\loc}(\R^{n+1})\cap L^\infty_{\loc}(\R^{n+1})$ such that $\norm{p}{L^{2,a}(\partial^+ B^+_1)}=1$.\\
  On the other hand, since $\mathcal{O}(u,X_0)<2s/(2-q)$, by Lemma \ref{testa} we get that both the sequences $(\alpha_k)_k$ and $(\beta_k)_k$ approach zero as $k$ goes to infinity. Therefore, following the same contradiction argument of Lemma \ref{holder}, the sequence $(w_k)_k$ is uniformly bounded in $C^{0,\alpha}_\loc(\R^{n+1})$ for every $\alpha \in (0,1)$ and it converges uniformly on every compact set to a global solution of \eqref{La.even}. Moreover, by the strong convergence of $(w_k)_k$, we have
$$
N(0,p,r)=\lim_{k\to \infty}N(0,w_k,r) =
\lim_{k\to \infty}N(X_0,w,r r_k) =
N(X_0,w,0^+) \quad\mbox{for every }r>0,
$$
where
$$
N(0,p,r) = \ddfrac{r\int_{B^+_r}{y^a \abs{\nabla p}^2\mathrm{d}X}}{\int_{\partial^+ B^+_r}{y^a p^2 \mathrm{d}\sigma}}.
$$
Hence, $p$ is a homogeneous $L_a$-harmonic function even with respect to $\{y=0\}$ of order $N(0,p,1)$. By \cite[Lemma 4.7]{STT2020} we first get that $N(0,p,1)\in \N$ while by Theorem \ref{unique} we deduce that $N(0,p,1) > \mathcal{O}(u,X_0)$. Since $N(0,p,1)=N(X_0,w,0^+)$ we finally get the claimed result.
\end{proof}
Thanks to this classification, we can improve the growth order of the remainder in \eqref{eq.continuation}.
\begin{corollary}\label{holderblow}
  Let $X_0 \in \Gamma(u)$ be such that $k=\mathcal{O}(u,X_0)< k_q$ and $\varphi^{X_0}$ be the tangent map of $u$ at $X_0$. Then
$$
u(X)=\varphi^{X_0}(X-X_0) + O(\abs{X-X_0}^{k+\delta}),
$$
for some $\delta \in \N,\delta>0$.
Moreover, the map $X_0 \mapsto \varphi^{X_0}$ from $\Gamma_k(u)$ to $\mathfrak{sB}_k^a(\R^{n+1})$ is H\"{o}lder continuous.
\end{corollary}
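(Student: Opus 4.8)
The plan is to upgrade the improved frequency identity of Proposition~\ref{basta2} into a quantitative decay of the Monneau functional, and then feed this decay into the Monneau-type monotonicity of Proposition~\ref{monneau.pot} in order to promote the mere continuity of the tangent map (Theorem~\ref{continuation}) to H\"older continuity; the quantitative Taylor remainder will fall out of the same decay together with the blow-up regularity already established in this section. Throughout put $w(X)=u(X)-\varphi^{X_0}(X-X_0)$ and $k=\mathcal O(u,X_0)<k_q$. By Proposition~\ref{basta2}, $N(X_0,w,0^+)=k+\delta$ for some $\delta\in\N$, $\delta>0$. The first step is to show $H(X_0,w,r)\le C r^{2(k+\delta)}$ for small $r$: by Theorem~\ref{basta} the map $r\mapsto e^{\tilde C\Psi(r)}\!\left(N(X_0,w,r)+1\right)$ is non-decreasing with $0\le\Psi(r)\le Cr^\alpha$, so $N(X_0,w,r)\ge (k+\delta+1)e^{-\tilde C\Psi(r)}-1\ge k+\delta-Cr^\alpha$ near $0$; using the identity $\tfrac{d}{dr}\log H(X_0,w,r)=\tfrac2r N(X_0,w,r)$ (the analogue of \eqref{derivative.H} for $w$, legitimate since $H(X_0,w,\cdot)>0$ on a right neighbourhood of $0$ by Theorem~\ref{basta}), the differential inequality $\tfrac{d}{dr}\log\!\big(r^{-2(k+\delta)}H(X_0,w,r)\big)\ge -Cr^{\alpha-1}$ integrates to the claim. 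Equivalently $M(X_0,u,\varphi^{X_0},r)=r^{-2k}H(X_0,w,r)\le Cr^{2\delta}$.

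Next I would promote this $L^{2,a}$-averaged decay to the pointwise bound $\sup_{B^+_r(X_0)}|w|\le Cr^{k+\delta}$. The function $w$ solves \eqref{fine}, and by Lemma~\ref{testa} the coefficients $\alpha_k,\beta_k$ appearing in the Neumann datum of the normalized blow-ups of $w$ at $X_0$ vanish in the limit; running the compactness/Liouville scheme of Lemma~\ref{holder} and Proposition~\ref{basta2} for these blow-ups gives a uniform, sequence-independent $C^{0,\alpha}(B^+_1)$ bound on them, hence $\sup_{B^+_r(X_0)}|w|\le C\sqrt{H(X_0,w,r)}\le Cr^{k+\delta}$ for small $r$. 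Since $w=u-\varphi^{X_0}(\cdot-X_0)$, this is precisely the expansion $u(X)=\varphi^{X_0}(X-X_0)+O(|X-X_0|^{k+\delta})$.

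For the H\"older continuity of $X_0\mapsto\varphi^{X_0}$ on $\Gamma_k(u)$ I would argue as in Theorem~\ref{continuation} but quantitatively, replacing the qualitative input $M(X_0,u,\varphi^{X_0},0^+)=0$ by the rate from the first step. Fix $X_0,X_1\in\Gamma_k(u)$ with $|X_1-X_0|$ small, set $\delta_1=2s-(2-q)k>0$ and choose a scale $\rho$ to be optimized. By the global H\"older continuity of $u$ (Theorem~\ref{holer.reg}) with an exponent $\gamma\in(0,\min\{1,k_q\})$, a change of variables and $(a+b)^2\le2a^2+2b^2$ give
$$
M(X_1,u,\varphi^{X_0},\rho)\le C\!\left(|X_1-X_0|^{2\gamma}\rho^{-2k}+M(X_0,u,\varphi^{X_0},\rho)\right)\le C\!\left(|X_1-X_0|^{2\gamma}\rho^{-2k}+\rho^{2\delta}\right).
$$
Because $\mathcal O(u,X_1)=k<k_q$, Proposition~\ref{monneau.pot} applies at $X_1$ (with constants depending only on $n,s,q,\Lambda,k$, hence uniform for $X_1$ near $X_0$), so $r\mapsto M(X_1,u,\varphi^{X_0},r)+C(1+\norm{\varphi^{X_0}}{L^\infty(B^+_1)})r^{\delta_1-\eps}$ is non-decreasing; letting $r\to0^+$ and using $M(X_1,u,\varphi^{X_0},0^+)=\norm{\varphi^{X_1}-\varphi^{X_0}}{L^{2,a}(\partial^+ B^+_1)}^2$ (as in Theorems~\ref{unique}--\ref{continuation}) yields
$$
\norm{\varphi^{X_1}-\varphi^{X_0}}{L^{2,a}(\partial^+ B^+_1)}^2\le C\!\left(|X_1-X_0|^{2\gamma}\rho^{-2k}+\rho^{2\delta}+\rho^{\delta_1-\eps}\right).
$$
Taking $\rho=|X_1-X_0|^\theta$ with $0<\theta<\gamma/k$ makes each term a positive power of $|X_1-X_0|$; since $\mathfrak{sB}_k^a(\R^{n+1})$ is finite-dimensional all norms on it are comparable, so this gives H\"older continuity of the tangent map.

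I expect the second step to be the main obstacle: converting the averaged decay $H(X_0,w,r)\le Cr^{2(k+\delta)}$ into a genuine pointwise estimate requires re-running the blow-up compactness for $w$ with a constant independent of the approximating sequence, while carefully tracking through Lemma~\ref{testa} that the nonlinear Neumann contribution is negligible at the relevant scale. A secondary subtlety is that the H\"older exponent produced in the third step is constrained both by $\theta<\gamma/k$ and by $\min\{2\delta,\delta_1\}$, and is therefore in general far from sharp.
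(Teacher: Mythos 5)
Your proposal is correct and follows the route the paper implicitly intends (the paper states Corollary~\ref{holderblow} without a written proof, as a consequence of Theorem~\ref{basta}, Proposition~\ref{basta2}, and the Monneau-type monotonicity of Proposition~\ref{monneau.pot}). Step one is sound: from Theorem~\ref{basta} one gets $N(X_0,w,r)\ge (k+\delta+1)e^{-\tilde C\Psi(r)}-1\ge k+\delta-Cr^\alpha$, and integrating $\frac{d}{dr}\log\big(r^{-2(k+\delta)}H(X_0,w,r)\big)\ge -Cr^{\alpha-1}$ (integrable near $0$ since $\alpha>0$) gives $H(X_0,w,r)\le Cr^{2(k+\delta)}$. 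Step two correctly uses the uniform $C^{0,\alpha}$ bound on the normalized blow-ups of $w$ established in Proposition~\ref{basta2} (via Lemma~\ref{testa}: the Neumann coefficients $\alpha_k,\beta_k$ vanish); the sequence-independent $L^\infty/L^{2,a}$ comparison on spheres follows by the standard contradiction argument along any sequence witnessing failure. Step three correctly repeats the proof of Theorem~\ref{continuation} with quantitative inputs: the triangle-inequality comparison between $M(X_1,u,\varphi^{X_0},\rho)$ and $M(X_0,u,\varphi^{X_0},\rho)$ via the interior H\"older modulus of $u$, the decay $M(X_0,u,\varphi^{X_0},\rho)\le C\rho^{2\delta}$ from step one, and the Monneau monotonicity at $X_1$ (uniform constants since $\mathcal{O}(u,X_1)=k$ on $\Gamma_k(u)$), with the optimization $\rho=|X_1-X_0|^\theta$, $\theta<\gamma/k$, yielding a (non-sharp) H\"older exponent on the finite-dimensional space $\mathfrak{sB}_k^a(\R^{n+1})$. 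Your closing remarks on the two subtleties (sequence-independence of the $L^\infty$ bound, and the non-optimal exponent) are appropriate and do not reflect genuine gaps.
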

Having established Theorem \ref{continuation} and Proposition \ref{basta2}, we can finally show the validity of the first part of Theorem \ref{caldo} and Theorem \ref{hau}.
\begin{proof}[Proof of Theorem \ref{caldo}]
  Let us consider the case $\mathcal{V}(u,X_0)<k_q$. By Corollary \ref{equivalence1} and Corollary \ref{equivalence2} we already know that
$$
\mathcal{O}(u,X_0) = \mathcal{V}(u,X_0) = N(X_0,u,0^+).
$$
Therefore the results of this Section hold true also for the case $\mathcal{V}(u,X_0)< k_q$. In particular, by Corollary \ref{blow-up.lim}, we know that $\mathcal{V}(u,X_0)$ must be a positive integer and, by Theorem \ref{continuation} and Proposition \ref{basta2}, there exists $\delta \in \N,\delta>0$ such that \eqref{eq.continuationa} holds true.
\end{proof}
Finally, by applying \cite[Theorem 8.5]{MR1637972}, a variant of the classical Federer’s dimension reduction principle (for which we refer to \cite[Appendix A]{Simon83}), and the Whitney's extension theorem (we refer to \cite{MR2514337} and the reference therein) we can easily estimate the Hausdorff dimension of the singular strata.
\begin{proof}[Proof of Theorem \ref{hau}]
First, since $\Gamma(u) = \mathcal{T}(u)$ for those values of $s\in (0,1)$ and $q \in [1,2)$ such that $k_q\leq 1$, let us concentrate on the opposite case. Since on $\mathcal{R}(u)\cup \mathcal{S}(u)$ all the notions of vanishing order coincide, i.e.
$$
\mathcal{O}(u,X_0) = \mathcal{V}(u,X_0) = N(X_0,u,0^+)<\frac{2s}{2-q},
$$
we can easily adapt the general approach of \cite{STT2020} by using the validity of the Almgren-type monotonicity formula. More precisely, by a straightforward application of Corollary \ref{holderblow} and the implicit function theorem, we already deduce that 
$$
\mathcal{R}(u)= \left\{X\in \Gamma(u) \colon N(X_0,u,0^+)=1\right\},
$$
which is relatively open in $\Gamma(u)$ and it is a $(n-1)$-dimensional regular set of class $C^{1,\alpha}$. Moreover, by the upper semi-continuity of $X_0\mapsto N(X_0,u,0^+)$, the proof of the Hausdorff estimate
$$
\mathrm{dim}_{\mathcal{H}}\mathcal{S}(u) \leq n-1
$$
follows the one of \cite[Theorem 6.3]{STT2020}).\\
On the other hand, it is possible to apply step by step the proof of \cite[Theorem 7.7]{STT2020} and \cite[Theorem 7.8]{STT2020} (using Corollary \ref{holderblow} instead of \cite[Theorem 5.12]{STT2020} and the generalized formulation of the Whitney's extension theorem in \cite{MR2514337} for $C^{m,\omega}$-functions), obtaining the desired result for the stratification of the singular set. The crucial idea is that the Whitney's extension allows to study the structure of the nodal set just by using the generalized Taylor expansion \eqref{eq.continuationa} without the high-order differentiability of the function itself.
\end{proof}
\section{Blow-up analysis for $\mathcal{O}(u,X_0)=k_q$}\label{6}
The previous analysis terminates the study of the nodal set in those points where the local behaviour of the solutions resemble the one of the $s$-harmonic functions. In this Section we will complete our study by considering the threshold case $\mathcal{O}(u,X_0)=2s/(2-q)$. The following result is the second part of Theorem \ref{caldo}.
\begin{theorem}\label{blow.uplimite}
Let $q \in [1,2), \lambda_+, \lambda_- > 0$ and $u \in H^{1,a}_\loc(B_1), u\neq 0$ be a solution of \eqref{system} and $X_0 \in \Gamma(u)$.\\
If $\mathcal{O}(u, X_0) = k_q$, then for every sequence $r_k \to 0^+$ we have, up to a subsequence, that
$$
\frac{u(X_0 + r_k X)}{\norm{u}{X_0,r_k}}
\to \overline{u} \quad\mbox{in } C^{0,\alpha}_\loc(\R^{n+1}),
$$
for every $\alpha \in (0,\min(1,k_q))$,
where $\overline{u}$ is a $k_q$-homogeneous non-trivial solution to
\be\label{limite}
\begin{cases}
  L_a \overline{u}=0 & \mbox{in } \R^{n+1}_+ \\
  -\partial^a_y \overline{u} = \mu\left(\lambda_+ (\overline{u}_+)^{q-1} - \lambda_- (\overline{u}_-)^{q-1}\right) &\mbox{on } \R^n \times \{0\},
\end{cases}
\ee
for some $\mu \geq 0$. Moreover, the case $\mu = 0$ is possible if and only if $k_q\in \N$.
\end{theorem}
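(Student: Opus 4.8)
The plan is to run a blow-up analysis with the $H^{1,a}$-normalization, combining the characterization of the vanishing order from Proposition \ref{2s/(2-q)} (so $\mathcal{O}(u,X_0)=k_q$ and $W_{k,2}(X_0,u,0^+)=0$ for $0<k<k_q$) with the sharp non-degeneracy \eqref{eq.2s/(2-q)}, i.e.\ $\liminf_{r\to 0^+}\norm{u}{X_0,r}^2/r^{2k_q}>0$.

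Given $r_k\searrow 0^+$, set $u_k(X)=u(X_0+r_kX)/\norm{u}{X_0,r_k}$, so that $\norm{u_k}{0,1}=1$ by scale invariance of the $H^{1,a}$-norm, and a direct rescaling (using $a=1-2s$ and $k_q(2-q)=2s$) gives
\begin{equation*}
\begin{cases}
L_a u_k=0 & \mbox{in } B^+_{X_0,r_k},\\
-\partial^a_y u_k=\mu_k\left(\lambda_+(u_k)_+^{q-1}-\lambda_-(u_k)_-^{q-1}\right) & \mbox{on }\partial^0 B^+_{X_0,r_k},
\end{cases}\qquad\mbox{where }\mu_k=\ddfrac{r_k^{2s}}{\norm{u}{X_0,r_k}^{2-q}}.
\end{equation*}
By \eqref{eq.2s/(2-q)} the sequence $(\mu_k)_k$ is bounded, hence (up to a subsequence) $\mu_k\to\mu\geq 0$; the same estimate, together with $\limsup_{r\to 0^+}N_2(X_0,u,r)\leq k_q$ from Lemma \ref{trans1} and the monotonicity of $r\mapsto H(X_0,u,r)/r^{2k_q}$, yields, exactly as in Lemma \ref{bounded.H1}, that $(u_k)_k$ is bounded in $H^{1,a}(B^+_R)$ for every $R>0$. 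Thus $u_k\rightharpoonup\overline u$ weakly in $H^{1,a}_\loc(\R^{n+1}_+)$, and testing against $(u_k-\overline u)\eta$ while using the compact trace embedding (Lemma \ref{dafare}) and the boundedness of $\mu_k$ upgrades this to strong $H^{1,a}_\loc$-convergence. A Moser iteration gives a uniform $L^\infty_\loc$-bound, after which a contradiction/compactness argument as in Theorem \ref{holer.reg} — in which the boundedness of $\mu_k$ guarantees that the twice-rescaled nonlinearity still vanishes in the limit, contradicting the Liouville theorem for $L_a$-harmonic functions symmetric in $\{y=0\}$ — provides uniform $C^{0,\alpha}_\loc(\R^{n+1})$-bounds for every $\alpha\in(0,\min(1,k_q))$, hence $C^{0,\alpha}_\loc$-convergence along a subsequence. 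Passing to the limit in the weak formulation shows that $\overline u$ solves \eqref{limite} with the constant $\mu$, and since the strong convergence reaches $\partial^+ B^+_1$ (which eventually lies well inside the growing domains) one gets $\norm{\overline u}{0,1}=1$, so $\overline u\not\equiv 0$.

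To prove that $\overline u$ is $k_q$-homogeneous I would use the Weiss functional $W_{k_q,2}$, which is monotone non-decreasing by Proposition \ref{weiss.mon}. From $W_{k,2}(X_0,u,0^+)=0$ for $0<k<k_q$ and continuity in $k$ one gets $W_{k_q,2}(X_0,u,r)\geq 0$ for all $r$, so $\ell:=W_{k_q,2}(X_0,u,0^+)$ is \emph{finite}, $\ell\in[0,\infty)$. The scaling identity
\[
\widetilde W_{k_q,2}(u_k,\rho)=\frac{r_k^{2k_q}}{\norm{u}{X_0,r_k}^2}\,W_{k_q,2}(X_0,u,\rho r_k),
\]
with $\widetilde W$ the Weiss functional adapted to the $\mu_k$-equation, combined with $W_{k_q,2}(X_0,u,\rho r_k)\to\ell$ for every $\rho$ and the boundedness of the prefactor (again \eqref{eq.2s/(2-q)}), forces $\widetilde W_{k_q,2}(u_k,\rho)$ to converge (along a further subsequence) to a limit \emph{independent of $\rho$}; by the strong $H^{1,a}_\loc$ and $C^{0,\alpha}_\loc$ convergence and $\mu_k\to\mu$, this limit equals the Weiss functional of \eqref{limite} evaluated at $\overline u$, so the latter is constant in $\rho$. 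Since at $t=2$ and $k=k_q$ one has $C^s_{n,2}-2k_q(2-q)=0$, the derivative formula of Proposition \ref{weiss.mon} reduces to a non-negative multiple of $\int_{\partial^+ B^+_r}y^a\bigl(\partial_r\overline u-\tfrac{k_q}{r}\overline u\bigr)^2\,\mathrm d\sigma$, whence $\partial_r\overline u=\tfrac{k_q}{r}\overline u$ and $\overline u$ is $k_q$-homogeneous (the analogue of Corollary \ref{homo} for \eqref{limite}).

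Finally, if $\mu=0$ then $\overline u$ is a non-trivial $k_q$-homogeneous $L_a$-harmonic function symmetric with respect to $\{y=0\}$, i.e.\ $\overline u\in\mathfrak{B}^a_{k_q}(\R^{n+1})$, and by the classification in \cite{STT2020} such functions exist only if $k_q\in\N$; conversely, when $k_q\in\N$ nothing excludes $\mu_k\to 0$, and this degenerate behaviour does occur, namely at those nodal points whose tangent object is a homogeneous $L_a$-harmonic polynomial of degree $k_q$. The step I expect to be the main obstacle is the joint control yielding the uniform $H^{1,a}_\loc$-bound on $(u_k)_k$ and the boundedness of $(\mu_k)_k$: both rest on the sharp non-degeneracy \eqref{eq.2s/(2-q)} and on the exact criticality $a=1-2s$, $k_q=2s/(2-q)$, which makes the rescaling factor $r_k^{1-a-2s}$ be of order one — without this precise balance the rescaled nonlinearity would either degenerate or blow up, and the blow-up limit would be governed by the wrong homogeneity.
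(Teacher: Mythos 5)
Your overall strategy coincides with the paper's (blow-up with the $H^{1,a}$-normalization, boundedness of $\mu_k$ from Proposition \ref{2s/(2-q)}, Weiss scaling identity \eqref{weiss.cambio} to transfer homogeneity, and the STT2020 classification for the $\mu=0$ case), and your unified Weiss argument for the homogeneity is in fact a clean alternative to the paper's case split in \eqref{casi}: when $\norm{u}{X_0,r_k}/r_k^{k_q}\to+\infty$ the scaling prefactor $r_k^{2k_q}/\norm{u}{X_0,r_k}^2\to 0$ so the rescaled Weiss functional converges to $0$ independently of $\rho$, which indeed forces constancy and then homogeneity of $\overline u$ just as in the finite-limit case; the paper instead derives \eqref{boh1} and compares with the Almgren bound of \cite[Lemma 4.7]{STT2020}. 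Both routes work, and the equivalence hinges on the (established) finiteness of $W_{k_q,2}(X_0,u,0^+)$.

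The genuine gap is in the compactness step. You assert that the uniform $H^{1,a}_\loc$-bound on $(u_k)_k$ follows ``exactly as in Lemma \ref{bounded.H1}''. But Lemma \ref{bounded.H1} rests on Proposition \ref{almgren.lower}, i.e.\ on the monotonicity (up to an exponential correction) of $N(X_0,u,\cdot)$, which is only available when $\mathcal O(u,X_0)<k_q$: its proof needs the strict gap $\mathcal O(u,X_0)\leq k_q-\delta$ to make the correction exponent $\alpha=\delta/2>0$. At the threshold $\mathcal O(u,X_0)=k_q$ this Almgren-type monotonicity fails, and indeed the paper explicitly flags that the $H^{1,a}$-normalization is chosen precisely to ``overcome the lack of monotonicity of the Almgren-type formula''. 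The raw ingredients you invoke instead --- boundedness of $\mu_k$, $\limsup_{r\to0^+}N_2(X_0,u,r)\leq k_q$, and monotonicity of $H(X_0,u,r)/r^{2k_q}$ --- are not enough: the nonlinear boundary term only yields $\frac{1}{r^{n+a-1}}\int_{\partial^0 B^+_r}F_{\lambda_+,\lambda_-}(u)\leq C\big(r^{k_q}/\norm{u}{X_0,r}\big)^{2-q}\norm{u}{X_0,r}^2$, and the prefactor is bounded but not small, so the absorption needed to control $\norm{u}{X_0,r_kR}/\norm{u}{X_0,r_k}$ does not close. The paper's Lemma \ref{compact} handles exactly this by a contradiction argument: assuming the ratio blows up forces $\norm{u}{X_0,r_kR}/(r_kR)^{k_q}\to\infty$ (so the prefactor above really does go to zero at the larger scale), whence $H/\norm{u}{X_0,\cdot}^2$ stays bounded away from $0$, and an auxiliary sequence $v_k$ rescaled at radius $r_kR$ produces a nontrivial $L_a$-harmonic limit which nevertheless vanishes on $\partial^0B^+_{1/R}$, contradicting unique continuation. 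Your proposal needs a replacement for this argument; as written, the compactness of $(u_k)_k$ in $H^{1,a}_\loc$ is not justified at the critical vanishing order.
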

The proof will be presented in a series of lemmata. Let $X_0 \in \Gamma(u)$ be such that $\mathcal{O}(u,X_0)=k_q$ and $r_k \to 0^+$. Moreover, we introduce the normalized blow-up sequence
\be\label{sequence}
u_k(X) = \frac{u(X_0 + r_k X)}{\norm{u}{X_0,r_k}}\quad\mbox{with }   X\in B^+_{X_0,r_k}=\frac{B_1^+ - X_0}{r_k},
\ee
for $0<r_k<R<\mathrm{dist}(X_0,\partial B_1)$. Then $\norm{u_k}{0,1} = 1$ and
$$
\begin{cases}
-L_a u_k=0 & \mbox{in } B_{X_0,r_k}^+\\
  -\partial^a_y u_k = \left(\ddfrac{r_k^{2s/(2-q)}}{\norm{u}{X_0,r_k}}\right)^{2-q} \left[\lambda_+ (u_k)_+^{q-1} - \lambda_- (u_k)_-^{q-1}\right] & \mbox{on } \partial^0 B^+_{X_0,r_k}.
\end{cases}
$$
By Proposition \ref{2s/(2-q)} (in particular by \eqref{eq.2s/(2-q)}), there exists $C>0$ such that
$$
0 < \alpha_{k} =  \left(\ddfrac{r_k^{2s/(2-q)}}{\norm{u}{X_0,r_k}}\right)^{2-q}\leq  C,
$$
for every $r_k<R$. As we pointed out in the previous Sections, the $H^{1,a}$-normalization seems to be more suitable for the critical case $\mathcal{O}(u,X_0)=2s/(2-q)$ and it overcomes the lack of monotonicity of the Almgren-type formula. The following is a compactness result for the blow-up sequence.
\begin{lemma}\label{compact}
For every $R>0$, there exists $k_R>0$ such that, for every $k>k_R$, the sequence $(u_k)_k$ is uniformly bounded in $H^{1,a}(B_{R}^+)$ and, up to a subsequence, it converges strongly in $L^{2,a}(B_{R}^+)$ and $H^{1,a}_\loc(B_{R}^+)$.
\end{lemma}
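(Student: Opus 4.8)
The plan is to transfer everything to the original solution $u$ by means of scaling identities for the functionals in \eqref{E.H}, and then to extract the uniform bound from the Weiss monotonicity at the critical exponent $k_q$ combined with the non‑degeneracy estimate \eqref{eq.2s/(2-q)}. Writing $\rho_k=Rr_k$, a plain change of variables together with $a=1-2s$ and $k_q(2-q)=2s$ gives
\[
\frac{1}{R^{n+a-1}}\int_{B^+_R}y^a\abs{\nabla u_k}^2\,\mathrm{d}X=\frac{1}{\norm{u}{X_0,r_k}^2}\Big(\norm{u}{X_0,\rho_k}^2-H(X_0,u,\rho_k)\Big),\qquad H(X_0,u_k,R)=\frac{H(X_0,u,\rho_k)}{\norm{u}{X_0,r_k}^2},
\]
and similarly $\alpha_k\,R^{1-n-a}\int_{\partial^0B^+_R}F_{\lambda_+,\lambda_-}(u_k)\,\mathrm{d}x=\norm{u}{X_0,r_k}^{-2}\rho_k^{1-n-a}\int_{\partial^0B^+_{\rho_k}(X_0)}F_{\lambda_+,\lambda_-}(u)\,\mathrm{d}x$. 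In particular $\norm{u_k}{R}^2=\norm{u}{X_0,\rho_k}^2/\norm{u}{X_0,r_k}^2$, so the whole problem reduces to a two‑sided growth control of $\rho\mapsto\norm{u}{X_0,\rho}^2$ near $0$.

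For the upper bound I would argue as follows. By Proposition \ref{weiss.mon} the map $r\mapsto W_{k_q,2}(X_0,u,r)$ is monotone non‑decreasing, and (as in the proof of Lemma \ref{r}) $W_{k_q,2}(X_0,u,0^+)\geq 0$; hence $W_{k_q,2}(X_0,u,\cdot)$ is bounded on $(0,\rho_0]$. Since $W_{k_q,q}\geq W_{k_q,2}\geq 0$, formula \eqref{mon.H.weiss} shows $\rho\mapsto H(X_0,u,\rho)/\rho^{2k_q}$ is non‑decreasing, so $H(X_0,u,\rho)\leq C\rho^{2k_q}$. From $E_2(X_0,u,\rho)/\rho^{2k_q}=W_{k_q,2}(X_0,u,\rho)+k_q H(X_0,u,\rho)/\rho^{2k_q}$ we then get $E_2(X_0,u,\rho)\leq C\rho^{2k_q}$. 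Recalling $E_2(X_0,u,\rho)=\norm{u}{X_0,\rho}^2-H(X_0,u,\rho)-\tfrac{2}{q}\rho^{1-n-a}\int_{\partial^0B^+_\rho}F_{\lambda_+,\lambda_-}(u)$, together with the elementary bound $\rho^{1-n-a}\int_{\partial^0B^+_\rho}F_{\lambda_+,\lambda_-}(u)\leq C\rho^{2s}\norm{u}{X_0,\rho}^q$ coming from Lemma \ref{lem.poin}, we arrive at $\norm{u}{X_0,\rho}^2\leq C\rho^{2k_q}+C\rho^{2s}\norm{u}{X_0,\rho}^q$; since $q<2$ and $2s\cdot\tfrac{2}{2-q}=2k_q$, Young's inequality absorbs the sublinear term and yields $\norm{u}{X_0,\rho}^2\leq C\rho^{2k_q}$ for all small $\rho$. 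Plugging $\rho=\rho_k=Rr_k$ and using the non‑degeneracy $\norm{u}{X_0,r_k}^2\geq c\,r_k^{2k_q}$ from \eqref{eq.2s/(2-q)}, we conclude $\norm{u_k}{R}^2\leq C\,R^{2k_q}$ for all $k\geq k_R$ (i.e. once $Rr_k<\rho_0$).

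The weighted embedding $H^{1,a}(B^+_R)\hookrightarrow L^{2,a}(B^+_R)$ is compact, so up to a subsequence $u_k\rightharpoonup\overline u$ in $H^{1,a}(B^+_R)$ and $u_k\to\overline u$ in $L^{2,a}(B^+_R)$; by Lemma \ref{dafare} also $u_k\to\overline u$ in $L^q(\partial^0B^+_R)$ for every $q\in[1,2)$. To upgrade to strong $H^{1,a}_\loc$ convergence one tests the equation for $u_k$ with $(u_k-\overline u)\eta$, $\eta\in C^\infty_c(B_R)$, exactly as in Lemma \ref{bounded.H1}: the only new point is the boundary term $\alpha_k\int_{\partial^0B^+_R}\eta(u_k-\overline u)f(u_k)$ with $f(t)=\lambda_+t_+^{q-1}-\lambda_-t_-^{q-1}$, which vanishes in the limit by H\"older's inequality, since $\alpha_k$ is bounded, $\norm{u_k-\overline u}{L^q(\partial^0B^+_R)}\to 0$ and $\norm{f(u_k)}{L^{q/(q-1)}(\partial^0B^+_R)}=\norm{u_k}{L^q(\partial^0B^+_R)}^{q-1}$ is bounded; the interior terms are handled verbatim, giving $\int_{B^+_R}y^a\eta\abs{\nabla u_k}^2\to\int_{B^+_R}y^a\eta\abs{\nabla\overline u}^2$ and hence $u_k\to\overline u$ in $H^{1,a}(B^+_{R'})$ for every $R'<R$.

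The main obstacle is the uniform $H^{1,a}(B^+_R)$ bound: in contrast to the subcritical case there is no monotone Almgren quotient available, so the Dirichlet energy on $B^+_R$ must be squeezed between the Weiss monotonicity at the threshold $k_q$ and the non‑degeneracy \eqref{eq.2s/(2-q)}, and the sublinear boundary contribution — which does not disappear, since $\alpha_k\not\to0$ in general — has to be absorbed by the Young‑type inequality. The same persistence of $\alpha_k$ is why the strong $H^{1,a}_\loc$ convergence is obtained through the compact trace embedding into $L^q(\partial^0B^+_R)$ rather than through $\alpha_k\to0$ as in the subcritical regime.
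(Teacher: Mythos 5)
Your reduction to a two-sided growth estimate for $\rho\mapsto\norm{u}{X_0,\rho}^2$ and your treatment of the strong $H^{1,a}_\loc$ convergence are both fine, and the last part in fact matches the paper's argument. But the heart of your upper bound contains a genuine gap: you invoke $W_{k_q,2}(X_0,u,0^+)\geq 0$ "as in the proof of Lemma \ref{r}", and this is not established at the critical exponent. The continuity argument in Lemma \ref{r} passes to the limit $k\to\overline{k}^-$ along $k\in[k_q,\overline{k})$ using the non-negativity from Corollary \ref{overline.k}; since Proposition \ref{claim} shows $\overline{k}=k_q$, that interval is empty and the argument is vacuous precisely in the case you need. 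For $k<k_q$ one has $W_{k,2}(X_0,u,0^+)=0$ but no monotonicity of $W_{k,2}$ in $r$, so you cannot conclude $W_{k,2}(X_0,u,r)\geq 0$ pointwise and pass to $k=k_q$. Thus the sign of $W_{k_q,2}(X_0,u,0^+)$ is not determined, and your chain $W_{k_q,q}\geq W_{k_q,2}\geq 0\Rightarrow H/\rho^{2k_q}$ non-decreasing $\Rightarrow \norm{u}{X_0,\rho}^2\leq C\rho^{2k_q}$ is unjustified.

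In fact your conclusion is too strong. The bound $\norm{u}{X_0,\rho}^2\leq C\rho^{2k_q}$ would force $\norm{u}{X_0,r_k}/r_k^{k_q}$ to be bounded for every sequence $r_k\to 0^+$, which would rule out the second alternative in the dichotomy \eqref{casi} inside the proof of Theorem \ref{blow.uplimite}; the paper explicitly retains that alternative (the case $\mu=0$, possible when $k_q\in\N$), and Lemma \ref{compact} must hold for it as well. This is exactly why the paper does not attempt an absolute growth estimate in $r$: it instead argues by contradiction that the \emph{ratio} $\norm{u}{X_0,Rr_k}/\norm{u}{X_0,r_k}$ is bounded. Assuming it diverges, one first infers $\norm{u}{X_0,Rr_k}/(Rr_k)^{k_q}\to\infty$ via the non-degeneracy \eqref{eq.2s/(2-q)}, kills the sublinear boundary term (so that the second rescaling $v_k=u(X_0+Rr_kX)/\norm{u}{X_0,Rr_k}$ converges to an even $L_a$-harmonic $\overline v$), squeezes $H(X_0,u,Rr_k)/\norm{u}{X_0,Rr_k}^2$ from below using the upper bound on $W_{k_q,2}(X_0,u,\cdot)$ coming from monotonicity alone (no sign information about the limit is needed), and finally contradicts unique continuation since $\overline v\not\equiv 0$ on $\partial^+B^+_1$ but $\norm{\overline v}{0,1/R}=0$. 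If you want to salvage a direct approach, you would first need to prove $W_{k_q,2}(X_0,u,0^+)\geq 0$, which appears to be false (or at least open) in the regime $\mu=0$; otherwise the contradiction/unique continuation route is essential here.
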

\begin{proof}
The strong convergence in $H^{1,a}(B_R^+)$ of $(u_k)_k$ is a straightforward consequence of the uniform bound in $H^{1,a}(B_R^+)$. Indeed, suppose there exists $k_R>0$ such that, for every $k>k_R$, we already know that $(u_k)_k$ weakly converges in $H^{1,a}(B_{R}^+)$ and strongly in $L^{2,a}(B_{R}^+)$ and $L^p(\partial^0 B^+_R)$ for $p \in [1,2^\star)$, by the uniform bound in $H^{1,a}(B_R^+)$.\\
Finally, the strong convergence follows easily testing the equation against $(u_k-u)\eta$, where $\eta \in C^\infty_c(B_{R})$, and passing to the limit as $k\to +\infty$. Namely, we get
\begin{align*}
\int_{B^+_R}y^a \eta \langle \nabla u_k,\nabla (u_k-u)\rangle \mathrm{d}X =& -\int_{B^+_R}y^a (u_k-u)\langle\nabla u_k, \nabla \eta \rangle \mathrm{d}X+\\ &+ \alpha_k \int_{\partial^0 B^+_R}{\eta (u_k - u)(\lambda_+ (u_k)_+^{q-1} - \lambda_- (u_k)_-^{q-1})\mathrm{d}x}.
\end{align*}
Since $(u_k)_k$ is uniformly bounded in $H^{1,a}(B_R)$ and it converges strongly in $L^{2,a}(B^+_R)$, the first term in the right hand side tends to $0$ as $k \to \infty$. Similarly, since $\alpha_k$ is bounded and $u_k \to u$ strongly in $L^p(\partial^0 B^+_R)$ for $p \in [1,2^\star)$, the second term vanishes too. Finally, regarding the left hand side, by the weak convergence we get
$$
\int_{B^+_R}y^a \eta \langle \nabla u_k,\nabla (u_k-u)\rangle \mathrm{d}X = \int_{B^+_R}y^a \eta \left(\abs{\nabla u_k}^2-\abs{\nabla u}^2\right) \mathrm{d}X + o(1)
$$
as $k$ goes to $+\infty$, which leads to the claimed result.\\

Hence, it remains to prove the validity of a uniform bounds in $H^{1,a}$.
 By definition of $(u_k)_k$, since
 $$
 \norm{u_k}{0,R}=\frac{\norm{u}{X_0,r_kR}}{\norm{u}{X_0,r_k}}
 $$
 the first part of the result follows if there exists $k_R, C_R>0$ such that
 $$
 \norm{u}{X_0,r_kR}\leq C_R \norm{u}{X_0,r_k}, \quad\mbox{for every }k\geq k_R.
 $$
 Thus, suppose by contradiction that, up to a subsequence, for $r_k\searrow 0$ it results
 $$
 \frac{\norm{u}{X_0,r_kR}}{\norm{u}{X_0,r_k}} \to +\infty.
 $$
 We claim, in such case, that
 \be\label{ssurdo}
 \frac{\norm{u}{X_0,r_kR}}{(r_k R)^{2s/(2-q)}}\to +\infty
 \ee
 as $k\to \infty$. If not, by \eqref{eq.2s/(2-q)}, we would have that
 $$
 \norm{u}{X_0,r_kR}\leq C (r_k R)^{2s/(2-q)} \leq C R^{2s/(2-q)}\norm{u}{X_0,r_k},
 $$
 against the absurd hypothesi. Thus, by Lemma \ref{lem.poin}, we get for every $r>0$ that
$$
 \frac{1}{r^{n+a-1}}\int_{\partial^0 B^+_r(X_0)} F_{\lambda_+,\lambda_-}(u)\mathrm{d}x
  \leq C \Lambda r^{2s} \norm{u}{X_0,r}^q =C \Lambda \left(\frac{r^{2s/(2-q)}}{\norm{u}{X_0,r}}\right)^{2-q}\norm{u}{X_0,r}^2,
$$
 where $\Lambda = \max\{\lambda_+,\lambda_-\}$. In particular, it implies
 $$
0\leq \frac{1}{(r_k R)^{n+a-1}\norm{u}{X_0,r_k R}^2}\int_{\partial^0 B^+_{r_k R}(X_0)} F_{\lambda_+,\lambda_-}(u)\mathrm{d}x
   \leq C \Lambda \left(\frac{(r_k R)^{2s/(2-q)}}{\norm{u}{X_0,r_k R}}\right)^{2-q} \longrightarrow 0,
 $$
 as $k\to \infty$. On the other hand, since
$$
W_{k,t}(X_0,u,r) = \frac{\norm{u}{X_0,r}^2}{r^{2k}}\left[1-(k+1)\frac{H(X_0,u,r)}{\norm{u}{X_0,r}^2}-\frac{t}{q}\frac{1}{r^{n+a-1}\norm{u}{X_0,r}^2}\int_{\partial^0 B^+_r(X_0)}F_{\lambda_+,\lambda_-}(u)\mathrm{d}x\right],
$$
by the monotonicity of $r \mapsto W_{k_q,2}(X_0,u,r)$, we deduce that
 \begin{align*}
 C &\geq W_{k_q,2}(X_0,u,r_k R)\\
 & \geq \frac{\norm{u}{X_0,(r_k R)}^2}{(r_k R)^{2k_q}}\left[1-(k_q+1)\frac{H(X_0,u,r_k R)}{\norm{u}{X_0,r_k R}^2}-\frac{2}{q}\frac{1}{(r_k R)^{n+a-1}\norm{u}{X_0,r_k R}^2}\int_{\partial^0 B^+_{r_k R}(X_0)}F_{\lambda_+,\lambda_-}(u)\mathrm{d}x\right]\\
 & \geq \frac{\norm{u}{X_0,(r_k R)}^2}{(r_k R)^{2k_q}}\left[\frac34-(k_q+1)\frac{H(X_0,u,r_k R)}{\norm{u}{X_0,r_k R}^2}\right]
 \end{align*}
 for $k$ sufficiently large. Together with \eqref{ssurdo}, it implies
 $$
 \frac{H(X_0,u,r_k R)}{\norm{u}{X_0,r_k R}^2} \geq \frac{1}{2(k_q+1)}
 $$
 as $k$ sufficiently large. If we consider the sequence
 $$
 v_k(X)= \frac{u(X_0+r_k RX)}{\norm{u}{X_0,r_k R}}
 $$
 since it is uniformly bounded in $H^{1,a}(B_1)$ and it satisfies $$
\begin{cases}
-L_a v_k=0 & \mbox{in } B_{X_0,r_k R}^+\\
  -\partial^a_y v_k = \left(\ddfrac{(r_k R)^{2s/(2-q)}}{\norm{u}{X_0,r_k R}}\right)^{2-q} \left[\lambda_+ (v_k)_+^{q-1} - \lambda_- (v_k)_-^{q-1}\right] & \mbox{on } \partial^0 B^+_{X_0,r_k R},
\end{cases}
$$
we deduce from the first part of the proof that, up to a subsequence, it converges strongly in $L^{2,a}(B_1), L^{2,a}(\partial B_1)$ and in $H^{1,a}_\loc(B_1)$ to a function $\overline{v} \in H^{1,a}(B_1)$. Moreover, by \eqref{ssurdo}, it solves
$$
\begin{cases}
-L_a \overline{v}=0 & \mbox{in } \R^{n+1}_+\\
  -\partial^a_y \overline{v} = 0 & \mbox{on } \R^{n}\times \{0\}.
\end{cases}
$$
Now, on one side, by the strong convergence in $L^{2,a}(\partial B_1)$ we get
$$
H(0,\overline{v},1) = \lim_{k \to \infty} H(0,v_k,1) = \lim_{k \to \infty} \frac{H(X_0,u,r_k R)}{\norm{u}{X_0,r_k R}^2} \geq \frac{1}{2(k_q+1)},
$$
i.e. $\overline{v} \not \equiv 0$ on $\partial^0 B^+_1$. On the other, by the absurd assumption, we have
$$
\norm{\overline{v}}{0,1/R} = \lim_{k\to \infty} \norm{v_k}{0,1/R}=\lim_{k\to \infty}\frac{\norm{u}{X_0,r_k}}{\norm{u}{X_0, r_k R}}=0,
$$
which implies that $\overline{v}\equiv 0$ on $\partial^0 B^+_{1/R}$. The contradiction follows by the unique continuation property for $L_a$-harmonic function even with respect to $\{y=0\}$.
\end{proof}

\begin{lemma}
Under the previous notations, the sequence $(u_k)_k$ is uniformly bounded in $C^{0,\alpha}_\loc(\R^{n+1}_+)$ for every $\alpha \in (0,\min(1,k_q))$. Moreover, up to a subsequence, it converges uniformly on every compact set of $\R^{n+1}_+$.
\begin{proof}
    The proof follows essentially the ideas of the similar results in \cite{tvz1, tvz2, STT2020} and the result of Lemma \ref{holder}. Indeed, the critical exponent $\min(1,k_q)$ is related to a Liouville type theorem for homogeneous solutions of our problem.
\end{proof}
%
\end{lemma}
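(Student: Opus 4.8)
The strategy is a double blow-up by contradiction, following the scheme of Theorem \ref{holer.reg} and Lemma \ref{holder}; the only new point is the bookkeeping of the scaling factor in the nonlinear Neumann term. As a preliminary step I would record that $(u_k)_k$ is uniformly bounded in $L^\infty_\loc(\overline{\R^{n+1}_+})$: the $u_k$ solve
$$
\begin{cases}
-L_a u_k = 0 & \mbox{in } B^+_{X_0,r_k}\\
-\partial^a_y u_k = \alpha_k\left(\lambda_+(u_k)_+^{q-1}-\lambda_-(u_k)_-^{q-1}\right) & \mbox{on } \partial^0 B^+_{X_0,r_k},
\end{cases}
$$
with $0<\alpha_k\le C$ by \eqref{eq.2s/(2-q)}, and by Lemma \ref{compact} they are uniformly bounded in $H^{1,a}(B^+_R)$ for every fixed $R$ and $k$ large; a Moser iteration on compact subsets then gives a bound on $\norm{u_k}{L^\infty(B^+_R)}$ independent of $k$.

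Fix $\alpha\in(0,\min(1,k_q))$ and $R>0$, and suppose by contradiction that $L_k:=\left[\eta u_k\right]_{C^{0,\alpha}(\overline{B^+_R})}\to\infty$ along a subsequence, where $\eta$ is a cut-off as in the proof of Theorem \ref{holer.reg}. Let $X_{1,k},X_{2,k}\in\overline{B^+_R}$ realise, up to $o(1)$, the Hölder quotient and set $\rho_k=|X_{1,k}-X_{2,k}|$. From $|(\eta u_k)(X_{1,k})-(\eta u_k)(X_{2,k})|\le 2\norm{\eta}{\infty}\norm{u_k}{L^\infty(B^+_R)}\le C$ we get $L_k\le C\rho_k^{-\alpha}$, hence $\rho_k\to 0$; the same elementary estimates as in Theorem \ref{holer.reg} give $\mathrm{dist}(X_{i,k},\partial^+B^+_R)/\rho_k\to\infty$, and, arguing as in \cite[Lemma 4.5]{tvz2}, $\mathrm{dist}(X_{i,k},\partial^0B^+_R)/\rho_k$ remains bounded, so one may recenter at $P_k=(x_{1,k},0)$, obtaining rescaled domains that invade $\R^{n+1}_+$.

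The heart of the matter is the rescaled equation. Setting
$$
w_k(X)=\eta(P_k)\frac{u_k(P_k+\rho_k X)}{L_k\rho_k^\alpha},\qquad \overline{w}_k(X)=\frac{(\eta u_k)(P_k+\rho_k X)}{L_k\rho_k^\alpha},
$$
a direct computation — identical to the one producing \eqref{mk} in Theorem \ref{holer.reg} — shows that $w_k$ solves a problem of the form \eqref{blowup.merda.1} with coefficient
$$
M_k=\alpha_k\,L_k^{q-2}\,\rho_k^{\,2s-\alpha(2-q)}\,\eta(P_k)^{2-q}.
$$
Since $\alpha<k_q=2s/(2-q)$ we have $2s-\alpha(2-q)>0$, so $\rho_k^{\,2s-\alpha(2-q)}\to 0$; combined with $\alpha_k\le C$, $\eta(P_k)\le 1$ and $L_k^{q-2}\to 0$ (as $q<2$), this gives $M_k\to 0^+$. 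The sequences $w_k$ and $\overline{w}_k$ are asymptotically equivalent on compact sets, share a uniform bound on the $\alpha$-Hölder seminorm and satisfy $w_k(0)=\overline{w}_k(0)$; subtracting $w_k(0)$ and applying Ascoli--Arzelà, a subsequence converges uniformly on compacts to a globally $\alpha$-Hölder continuous function $W$, which is nonconstant because the Hölder quotient at the two distinguished points equals $1$ in the limit. Passing to the limit in the equation and distinguishing the geometric cases exactly as in the proof of Theorem \ref{holer.reg} (whether the points approach $\{y=0\}$ or not, and whether $w_k$ stays away from $0$ on compacts of $\R^n\times\{0\}$ or vanishes along a sequence), the factor $M_k\to 0$ kills the nonlinearity, so $W$ is an entire $L_a$-harmonic function symmetric with respect to $\{y=0\}$ — or, in the degenerate interior case, a classical harmonic function — globally $\alpha$-Hölder with $\alpha<1$. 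By the Liouville theorem for such functions, as in Lemma \ref{holder} (recall that by \cite[Lemma 4.7]{STT2020} a nonconstant homogeneous $L_a$-harmonic function symmetric in $y$ has integer homogeneity, hence grows at least linearly), $W$ must be constant, a contradiction. Hence $(u_k)_k$ is uniformly bounded in $C^{0,\alpha}_\loc$ for every $\alpha\in(0,\min(1,k_q))$, and by Ascoli--Arzelà a subsequence converges uniformly on compact sets of $\R^{n+1}_+$, compatibly with the strong $H^{1,a}_\loc$ convergence of Lemma \ref{compact}.

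I expect the only real obstacle to be the bookkeeping of the two nested rescalings, i.e.\ making sure that the exponent $2s-\alpha(2-q)$ — and not $2s$ — governs $M_k$: this is precisely where the threshold $\alpha<k_q$ (rather than $\alpha<2s$) enters, and it explains why the critical exponent in the statement is $\min(1,k_q)$ and not $\min(1,2s)$. Checking that the several geometric sub-cases for the location of $(X_{1,k},X_{2,k})$ all funnel into the same Liouville contradiction, together with the uniform $L^\infty$ bound, is routine given Theorem \ref{holer.reg}, Lemma \ref{compact} and the growth/monotonicity estimates already established.
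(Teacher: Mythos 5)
Your proof is correct and follows the same blow-up-by-contradiction scheme the paper is gesturing at through its references to Theorem~\ref{holer.reg} and Lemma~\ref{holder}; the key computation you carry out — that the rescaled Neumann coefficient is $M_k=\alpha_k L_k^{q-2}\rho_k^{2s-\alpha(2-q)}\eta(P_k)^{2-q}$ with $\alpha_k\le C$ bounded (but no longer vanishing as in Lemma~\ref{holder}), so that $M_k\to 0$ precisely when $\alpha<k_q$ — is the correct adaptation and explains the threshold $\min(1,k_q)$. This is exactly the content the paper leaves implicit.
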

So far we have proved the strong convergence of the blow-up sequence $(u_k)_k$ in $H^{1,a}_\loc(\R^{n+1})$ and uniformly on every compact set, to a function $\overline{u} \in H^{1,a}_\loc(\R^{n+1})\cap L^\infty_\loc(\overline{\R^{n+1}_+})$. The next step is to prove the homogeneity of the blow-up limit and the complete characterization of the possible limits.
\begin{proof}[Conclusion of the proof of Theorem \ref{blow.uplimite}]
   Since by Proposition \ref{2s/(2-q)} there exists $C>0$ such that $\alpha_{k}\in (0,C)$, up to a subsequence, we have either
   \be\label{casi}
\frac{\norm{u}{X_0,r_k}}{r_k^{k_q}} \to l \in (0,+\infty) \quad\mbox{or}\quad    \frac{\norm{u}{X_0,r_k}}{r_k^{k_q}}\to +\infty.
   \ee
   First, suppose that the limit $l$ is finite. By Lemma \ref{compact}, together with a diagonal argumet, we get that $u_k \to \overline{u}$ strongly in $H^{1,a}_\loc(\R^{n+1}_+)$ and uniformly on every compact set. It is also clear that the limit $\overline{u}$ solves \eqref{limite} with  $\mu = l^{-2/k_q}$ and $\overline{u} \not\equiv 0$ since, by strong
$H^{1,a}(B_1^+)$-convergence, we have $\norm{\overline{u}}{0,1}=1$. Now, since it remains to prove that $\overline{u}$ is homogeneous, notice that for any $R>0$ we have
\be\label{weiss.cambio} W_{k_q,2}(0,u_k,R)
= \frac{r_k^{2 k_q}}{\norm{u}{X_0,r_k}^2}
 W_{k_q,2}(X_0,u,r_k R).
\ee
Passing to the limit as $k \to \infty$, we deduce by the uniform convergence that
$$
W_{k_q,2}(0,\overline{u},R) = \lim_{k\to \infty} \frac{r_k^{2 k_q}}{\norm{u}{X_0,r_k}^2}
 W_{k_q,2}(X_0,u,r_k R) =\frac{1}{l^2} W_{k_q,2}(X_0,u,0^+),
$$
for any $R>0$, namely we provide that $R\mapsto W_{k_q,2}(0,\overline{u},R)$ is constant and, by Corollary \ref{homo}, it follows that $\overline{u}$ is $k_q$-homogeneous.\\
Let us deal with the second case in \eqref{casi}. We already know, following the same arguments for the case $l\in (0,+\infty)$ up to the validity of a Weiss-type monotonicity result, that, up to a subsequence, $(u_k)_k$ convergence uniformly on every compact set, to a function $\overline{u}\in H^{1,a}_\loc(\R^{n+1})\cap L^\infty_\loc(\overline{\R^{n+1}_+})$ which satisfies
\be\label{sarmonice}
\begin{cases}
-L_a \overline{u}=0 & \mbox{in } \R^{n+1}_+\\
  -\partial^a_y \overline{u} = 0 & \mbox{on } \R^n \times \{0\}.
\end{cases}
\ee
Now, even if \eqref{weiss.cambio} still holds true, we can not conclude that $\overline{u}$ is $k_q$-homogeneous as before.  Instead, by \eqref{weiss.cambio} and the monotonicity of $R \mapsto W_{k_q,2}(X_0,u,R)$, we get
$$
W_{k_q,2}(0,u_k,R)
\leq \frac{r_k^{2 k_q}}{\norm{u}{X_0,r_k}^2}
 W_{k_q,2}(X_0,u,R_0).
 $$
with $R_0 \in (0, \mathrm{dist}(X_0, \partial B_1))$ arbitrarily chosen and $k$ sufficiently large. By the previous estimate, we have
\begin{align*}
\frac{1}{R^{n+a-1}}\int_{B^+_R}{y^a\abs{\nabla u_k}^2\mathrm{d}X} \leq&\,\,
\frac{k_q}{R^{n+a}}\int_{B^+_R}{y^a u_k^2\mathrm{d}X}+
\frac{(r_k R)^{2 k_q}}{\norm{u}{X_0,r_k}^2}
 W_{k_q,2}(X_0,u,R_0)+\\
&\,+\frac{\alpha_k}{R^{n+a-1}}\int_{\partial^0 B^+_R}{F_{\lambda_+,\lambda_-}(u_k)\mathrm{d}x},
 \end{align*}
where the terms in the right hand side go to zero since $\alpha_k \to 0^+$ and $\norm{u}{X_0,r_k}/r_k^{k_q}\to +\infty$. Finally, passing to the limit as $k \to \infty$, we get
\be\label{boh1}
\frac{1}{R^{n+a-1}}\int_{B^+_R}{y^a\abs{\nabla \overline{u}}^2} \leq
k_q\frac{1}{R^{n+a}}\int_{B^+_R}{y^a \overline{u}^2},
\ee
for every $R>0$. On the other hand, since $\mathcal{O}(u, X_0) = k_q$, we get
$$
  \limsup_{r\to 0^+} \frac{1}{r^{2\alpha}} \norm{u}{H^{1,a}(B_r(X_0))}^2 = \begin{cases}
      0, & \mbox{if } 0< \alpha < k_q \\
      +\infty, & \mbox{if } \alpha >k_q.
    \end{cases}
  $$
By Lemma \ref{compact} and \eqref{boh1}, for every $\alpha>0$ we have
\begin{align*}
\frac{1}{R^{2\alpha}} \norm{\overline{u}}{H^{1,a}(B_R)}^2 & \leq \frac{1+k_q}{R^{2\alpha}}\frac{1}{R^{n+a}}\int_{B^+_R}y^a \overline{u}^2\\
& = \lim_{k \to \infty} \frac{1+k_q}{R^{2\alpha}}\frac{1}{(Rr_k)^{n+a}}\int_{B^+_{Rr_k}(X_0)}y^a u^2\\
& = (1+k_q)\lim_{k \to \infty} \frac{H(X_0,u,Rr_k)}{(Rr_k)^{2\alpha}} r_k^{2\alpha}\\
& \leq (1+k_q)r_0^{2\alpha}\limsup_{k \to \infty} \frac{1}{(Rr_k)^{2\alpha}} \norm{u}{X_0, r_k R}^2,
\end{align*}
which yields that $\mathcal{O}(\overline{u},0) \geq k_q$. Since we already know that $\overline{u}$ is a weak solution of \eqref{sarmonice}, by \cite[Lemma 4.7]{STT2020} we get that
$$
\frac{1}{R^{n+a-1}}\int_{B^+_R}{y^a\abs{\nabla \overline{u}}^2} \geq
k_q\frac{1}{R^{n+a}}\int_{B^+_R}{y^a \overline{u}^2},
$$
which implies with \eqref{boh1} that $k_q \in 1+\N$ and that $\overline{u}$ is $k_q$-homogeneous in $\R^{n+1}_+$.
\end{proof}
Having established the compactness of the blow-up sequence for those point such that $\mathcal{O}(u,X_0)=k_q$, we can finally prove the equivalence between the two notion of vanishing order.
\begin{corollary}\label{ottimo}
  For every $X_0 \in \Gamma(u)$, we have $\mathcal{O}(u,X_0)=\mathcal{V}(u,X_0)$.
\end{corollary}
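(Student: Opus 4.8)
The plan is to reduce Corollary \ref{ottimo} to the cases already settled in the previous sections plus one short argument at the critical level. Recall that $\mathcal{O}(u,X_0)\le \mathcal{V}(u,X_0)$ always holds by \eqref{poincare}, so only the reverse inequality is at stake. If $\mathcal{O}(u,X_0)<k_q$ the claim is exactly Corollary \ref{equivalence1}, and if $\mathcal{O}(u,X_0)\ge k_q$ then Proposition \ref{2s/(2-q)} forces $\mathcal{O}(u,X_0)=k_q$; hence the only thing left to prove is that whenever $\mathcal{O}(u,X_0)=k_q$ one has $\mathcal{V}(u,X_0)\le k_q$.

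By Definition \ref{nu}, since $\tfrac{1}{r^{n+a+2k}}\int_{\partial^+ B^+_r(X_0)}y^a u^2 = H(X_0,u,r)/r^{2k}$, the inequality $\mathcal{V}(u,X_0)\le k_q$ will follow as soon as I show $\limsup_{r\to 0^+} H(X_0,u,r)/r^{2k_q}>0$. First I would fix an arbitrary sequence $r_j\searrow 0^+$ and pass to the associated $H^{1,a}$-normalized blow-up sequence $u_j$ defined in \eqref{sequence}. Theorem \ref{blow.uplimite} gives, up to a subsequence, convergence of $u_j$ to a non-trivial $k_q$-homogeneous solution $\overline u$ of \eqref{limite} in $C^{0,\alpha}_\loc$; since a homogeneous function which is not identically zero cannot vanish on $\partial^+ B^+_1$, this yields $h:=\int_{\partial^+ B^+_1}y^a\,\overline u^2\,\mathrm{d}\sigma>0$. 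The uniform convergence on the compact set $\partial^+ B^+_1$ (with the locally integrable weight $y^a$, $a\in(-1,1)$) then lets me pass to the limit in $H(X_0,u,r_j)/\norm{u}{X_0,r_j}^2=\int_{\partial^+ B^+_1}y^a u_j^2\,\mathrm{d}\sigma\to h$.

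The last ingredient is the nondegeneracy of the $H^{1,a}$-norm at the critical scaling: by \eqref{eq.2s/(2-q)} in Proposition \ref{2s/(2-q)} we have $c:=\liminf_{r\to 0^+}\norm{u}{X_0,r}^2/r^{2k_q}>0$, whence $\liminf_{j}\norm{u}{X_0,r_j}^2/r_j^{2k_q}\ge c$. Writing $H(X_0,u,r_j)/r_j^{2k_q}$ as the product of $H(X_0,u,r_j)/\norm{u}{X_0,r_j}^2$ and $\norm{u}{X_0,r_j}^2/r_j^{2k_q}$ and taking $\liminf$, I obtain $\liminf_{j} H(X_0,u,r_j)/r_j^{2k_q}\ge h\,c>0$, which gives the desired $\limsup_{r\to 0^+} H(X_0,u,r)/r^{2k_q}>0$ and therefore $\mathcal{V}(u,X_0)\le k_q$, concluding the proof.

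The only genuinely delicate point is the strict positivity $h>0$, i.e.\ that the blow-up limit is nontrivial: this is precisely the content of Theorem \ref{blow.uplimite} (valid in both branches of the dichotomy \eqref{casi}, including the case $\mu=0$, $k_q\in\N$), so the argument is self-contained. Everything else --- passing to the limit on the compact sphere, the elementary $\liminf$-of-a-product inequality, and the translation back into Definition \ref{nu} --- is routine, so I do not expect any further obstruction.
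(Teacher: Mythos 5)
Your proof is correct and takes essentially the same approach as the paper: in both cases the heart of the argument at the critical level $\mathcal{O}(u,X_0)=k_q$ is the $H^{1,a}$-normalized blow-up from Theorem \ref{blow.uplimite}, whose nontriviality forces $\int_{\partial^+B_1^+}y^a\,\overline{u}^2\,\mathrm{d}\sigma>0$. The only cosmetic difference is organisational: the paper runs a contradiction argument to show directly that $\liminf_{r\to 0^+} H(X_0,u,r)/\norm{u}{X_0,r}^2>0$ (from which $\mathcal{O}=\mathcal{V}$ follows immediately since $H\le\norm{u}{X_0,r}^2$), whereas you split off the nondegeneracy bound $\liminf_{r\to 0^+}\norm{u}{X_0,r}^2/r^{2k_q}>0$ from \eqref{eq.2s/(2-q)} as a separate factor and multiply; the ingredients invoked are identical.
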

\begin{proof}
  Since we already proved in Proposition \eqref{equivalence1} the previous equivalence for the case $\mathcal{O}(u,X_0)<k_q$, let us focus on the case $\mathcal{O}(u,X_0)=k_q$ and let us prove that
  $$
  0 < \liminf_{r\to 0^+}\frac{H(X_0,u,r)}{\norm{u}{H^{1,a}(B_r(X_0))}^2} \leq 1
  $$
  Since the upper estimate follows by the definition of the norm in $H^{1,a}(B_r(X_0))$, suppose by contradiction that there exists $r_k \to 0^+$ such that
  \be\label{boh2}
  \frac{H(X_0,u,r)}{\norm{u}{H^{1,a}(B_r(X_0))}^2} \to 0^+.
  \ee
  Since $\mathcal{O}(u,X_0)=k_q$, the normalized blow-up sequence
  $$
  u_k(X)=\frac{u(X_0 + r_k X)}{\norm{u}{X_0,r_k}}
  $$
  converges, up to a subsequence, to an homogenous non-trivial solution $\overline{u}$ of \eqref{limite} in $\R^{n+1}$. On the other hand, by \eqref{boh2} we get
  $$
  \int_{\partial^+ B_1^+}y^a \overline{u}^2 = \lim_{k\to \infty}  \int_{\partial^+ B_1^+} y^a u_k^2 = \lim_{k\to \infty} \frac{H(X_0,u,r_k)}{\norm{u}{X_0,r_k}^2} \to 0.
  $$
  By homogeneity, it implies that $\overline{u}\equiv 0$ on $\R^{n+1}$, a contradiction.
\end{proof}
Up to the previous Corollary, we knew that Theorem \ref{boundsopra} was valid for the $H^{1,a}$-vanishing order.
Finally, we can state the proof of the result for the classic vanishing order $\mathcal{V}(u,X_0)$.
\begin{proof}[Proof of Theorem \ref{boundsopra}]
By Proposition \ref{2s/(2-q)} we already know that the maximum admissible $H^{1,a}$-vanishing order is equal to $k_q=2s/(2-q)$. If $\mathcal{O}(u,X_0)<k_q$, by Corollary \ref{equivalence1} and Corollary \ref{equivalence2} we already know that
$$
\mathcal{O}(u,X_0) = \mathcal{V}(u,X_0) = N(X_0,u,0^+).
$$
Therefore by Corollary \ref{blow-up.lim} we know that $\mathcal{V}(u,X_0)$ must be a positive integer.\\
If instead $\mathcal{O}(u,X_0)=k_q$, by Corollary \ref{ottimo} we finally deduce that $\mathcal{V}(u,X_0)=k_q$, as we claimed.
\end{proof}
\section{One-dimensional $k_q$-homogeneous solution}\label{7}
By Theorem \ref{boundsopra} we already know that for those values of $s\in (0,1), q \in [1,2)$ such that $k_q \leq 1$ it holds
$
\Gamma(u)= \mathcal{T}(u)$ with
$$
    \mathcal{T}(u)= \left\{ X \in \Gamma(u) \colon \mathcal{V}(u,X)=k_q\right\}.
$$
In this Section, we prove the existence of $k_q$-homogeneous solutions of \eqref{limite} whose traces on $\R^n\times\{0\}$ are one-dimensional, for those values of the parameters $s$ and $q$ such that $k_q<1$.\\
Thanks to the Federer's reduction principle, this result allows to control the Hausdorff dimension of $\mathcal{T}(u)$ and to prove that the nodal set is a collection of point with vanishing order $k_q$ and Hausdorff dimension less or equal than $(n-1)$, in contrast with the case $s=1$.\\
The classification of $k_q$-homogeneous solution depending only on two-variables $(x_1,y)$ is the starting point for a possible improvement of flatness approach via a viscosity formulation of the sublinear set $\mathcal{T}(u)$. Moreover, we think that this strategy can be easily extended to the case $k_q>1$ by taking care of the classification of $L_a$-harmonic polynomial in \cite{STT2020}. The main result is the following.
\begin{theorem}\label{esempio}
  For every $s \in (0,1), q \in [1,2)$ and $\lambda_+,\lambda_->0$ there exists a $k_q$-homogeneous function $u$ such that $u(0,0)=0$ and
    \be\label{omog}
\begin{cases}
-L_a u = 0 & \mathrm{in }\quad\R^{2}_+\\
-\partial^a_y u = \lambda_+ (u_+)^{q-1} - \lambda_- (u_-)^{q-1} &\mathrm{on }\quad\R \times \{0\}.
\end{cases}
\ee
\end{theorem}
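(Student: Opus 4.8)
The plan is to construct the desired $k_q$-homogeneous solution explicitly by separation of variables in polar coordinates on $\R^2_+$, reducing the PDE to an ODE eigenvalue-type problem on the half-circle. Write $(x,y) = (\rho\cos\theta, \rho\sin\theta)$ with $\rho>0$ and $\theta \in [0,\pi]$, and look for $u$ of the form $u(\rho,\theta) = \rho^{k_q}\,g(\theta)$, where $g$ is defined on $[0,\pi]$ and is required to satisfy the two boundary conditions coming from $\partial^0\R^2_+ = \R\times\{0\}$, i.e. at $\theta = 0$ (corresponding to $x>0$) and at $\theta = \pi$ (corresponding to $x<0$). Since $k_q = 2s/(2-q)$, the homogeneity is exactly tuned so that the nonlocal Neumann condition $-\partial^a_y u = \lambda_\pm (u_\pm)^{q-1}$ becomes $\rho$-independent once one plugs in the ansatz: indeed $\partial^a_y u$ scales like $\rho^{k_q - 2s}$ while $(u_\pm)^{q-1}$ scales like $\rho^{k_q(q-1)}$, and $k_q - 2s = k_q(q-1)$ by the definition of $k_q$. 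This is the crucial algebraic identity that makes the construction possible, and it is precisely why $k_q$ is the critical exponent.

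Next I would write out the equation for $g$. The operator $L_a u = \mathrm{div}(y^a\nabla u)$ in polar coordinates, applied to $\rho^{k_q} g(\theta)$, gives (after factoring out $\rho^{k_q - 2}(\sin\theta)^a$) a second-order linear ODE for $g$ on $(0,\pi)$ of the form
\begin{equation*}
g''(\theta) + a\cot\theta\, g'(\theta) + k_q(k_q + a - 1)\, g(\theta) = 0,
\end{equation*}
which is the eigenfunction equation associated with $L_a$-harmonic homogeneous functions (this is essentially the ODE characterizing $\mathfrak{B}^a_{k_q}$ in \cite{STT2020}, restricted to dimension $n+1 = 2$). The boundary behaviour at $\theta = 0$ and $\theta = \pi$: the condition $-\partial^a_y u = \lambda_+ (u_+)^{q-1}$ on $\{x>0\}$ translates, using $y = \rho\sin\theta$ and $-\partial^a_y = -\lim_{\theta\to 0^+}(\sin\theta)^a \rho^{-1}\partial_\theta$ up to a positive constant, into a nonlinear boundary condition relating $\lim_{\theta\to 0^+}(\sin\theta)^a g'(\theta)$ to $\lambda_+ (g(0)_+)^{q-1}$; symmetrically at $\theta = \pi$. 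The plan is then to solve this two-point boundary value problem. One natural strategy: prescribe $g(0) = A_1 > 0$ and $g(\pi) = -B_1 < 0$ (or $g(\pi) = A_2 >0$ for the $|x|^{k_q}$ type solution), use the two-dimensional solution space of the homogeneous ODE to match the prescribed values, and then show that the free parameters (the value $A_1$, resp. $A_2$, and the coefficients in the ODE solution) can be chosen to simultaneously satisfy the two nonlinear flux conditions. Because the flux conditions are homogeneous of degree $q-1 < 1$ in the amplitude, a scaling/continuity argument will produce a solution: roughly, one shifts the whole issue to finding a zero of a continuous map from a compact parameter set, where the sublinearity $q-1<1$ guarantees the map has the right sign changes at the boundary of the parameter region.

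The cleaner route, which I would actually pursue, is variational: the $k_q$-homogeneous solutions of \eqref{omog} correspond (via the substitution $u = \rho^{k_q} g$) to critical points of a functional of the form
\begin{equation*}
J(g) = \int_0^\pi (\sin\theta)^a\left( |g'(\theta)|^2 - k_q(k_q+a-1)|g(\theta)|^2\right)\mathrm{d}\theta - \frac{2}{q}\left(\lambda_+ |g(0)_+|^q + \lambda_+|g(0)_-|^q + \lambda_-\cdots\right)
\end{equation*}
restricted to an appropriate weighted Sobolev space on $(0,\pi)$ with the correct trace conditions at the endpoints; here the boundary nonlinear terms encode the flux conditions. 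Since $q\in[1,2)$, the boundary nonlinearity is subquadratic, so $J$ is coercive once one works on the Nehari-type constraint or simply minimizes under a normalization $\int_0^\pi (\sin\theta)^a g^2 = 1$; the minimizer is nontrivial because the sublinear boundary term dominates at small amplitude, forcing the infimum to be strictly negative. Then $u(\rho,\theta) = \rho^{k_q} g(\theta)$ solves \eqref{omog} and satisfies $u(0,0)=0$, and one reads off from the sign of $g(0)$ and $g(\pi)$ whether the trace is of the form $A_1(x_+^{k_q} - x_-^{k_q})$ or $A_2|x|^{k_q}$.

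\textbf{The main obstacle} I expect is not the existence itself but the regularity and admissibility of the constructed $g$ near the endpoints $\theta = 0, \pi$, where the weight $(\sin\theta)^a$ degenerates (for $a>0$) or blows up (for $a<0$): one has to verify that $g$ is bounded, that the limit $\lim_{\theta\to 0^+}(\sin\theta)^a g'(\theta)$ exists and is finite, and that the resulting $u$ genuinely lies in $H^{1,a}_\loc(\R^2_+)$ and is a weak solution in the sense of \eqref{system}. This requires a careful local analysis of the ODE near the singular points (a Frobenius-type expansion), matched against the trace theory for $H^{1,a}$ (Lemma \ref{lem.poin}), and checking that the chosen branch of solutions is the one with finite energy rather than the logarithmically-divergent companion solution. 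A secondary technical point is confirming that when $k_q < 1$ the resulting $u$ is exactly $C^{0,k_q}$ and no better, which is consistent with Theorem \ref{holer.reg} and shows the homogeneity degree is sharp.
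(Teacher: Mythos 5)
Your reduction to an ODE on the half-circle via the ansatz $u = \rho^{k_q}g(\theta)$, and the observation that the homogeneity identity $k_q - 2s = k_q(q-1)$ is exactly what makes the Neumann condition scale-invariant, are both correct and are precisely the starting point of the paper's argument (equation \eqref{eige1d}). However, your ``cleaner variational route'' has a genuine gap, and it is exactly the gap that the paper's proof is structured around.

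The issue is coercivity. The functional
\[
J(g) = \tfrac12\int_0^\pi (\sin\theta)^a\bigl( (g')^2 - \mu\, g^2\bigr)\,\mathrm{d}\theta - \tfrac1q\bigl(\lambda_+|g(0)_+|^q + \lambda_-|g(0)_-|^q + \ldots\bigr)
\]
with $\mu = k_q(k_q + 1 - 2s) > 0$ is \emph{not} bounded below on the full weighted Sobolev space on $(0,\pi)$: taking $g \equiv c$ a constant kills the gradient term, the boundary term grows only like $c^q$ with $q<2$, and the $-\mu\int g^2$ term goes to $-\infty$. So a free minimization has no minimizer. Your alternative of minimizing under the normalization $\int_0^\pi(\sin\theta)^a g^2 = 1$ restores boundedness but introduces a Lagrange multiplier $\tilde\lambda$ in the interior equation, giving $-((\sin\theta)^a g')' = (\mu + \tilde\lambda)(\sin\theta)^a g$; unless $\tilde\lambda = 0$ (which you have no reason to expect), the homogeneity degree of the resulting $u = \rho^{k}g$ would no longer be $k_q$ and the Neumann boundary condition would no longer close. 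The Nehari idea has a similar problem: for $q<2$ the constraint $\langle J'(g),g\rangle=0$ only cuts out a manifold when the quadratic form is positive on the test direction, which is again the coercivity issue in disguise.

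The paper avoids this by \emph{not} working on the full interval. In Lemma \ref{T} it imposes a Dirichlet condition at an interior point $T$, minimizing on $X = \{g\in H^{1,a}(0,T): g(T)=0\}$ with only the single boundary nonlinearity at $\theta=0$; the Dirichlet condition provides a Poincar\'e inequality whose best constant is $1/\lambda_M(T)$, and the hypothesis $\mu < \lambda_M(T)$ makes the quadratic part of $J$ positive definite, hence $J$ coercive. The minimizer then solves the free-boundary ODE on $(0,T)$, automatically with the correct interior eigenvalue $\mu$ built into the functional (no Lagrange multiplier). The second half of the solution is obtained by gluing: odd reflection across $\theta=\pi/2$ (choosing $T=\pi/2$) gives the antisymmetric profile, while for the symmetric one the paper matches the Lemma \ref{T} piece on $(0,T^*)$ with a \emph{linear} eigenfunction of $-(\sin^a\theta\,\phi')' = \mu \sin^a\theta\,\phi$ on the middle interval $(T^*,\pi - T^*)$, where $T^*$ is chosen so that the first Dirichlet eigenvalue there equals $\mu$. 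This gluing step is where the restriction $k_q<1$ enters and where the two explicit families $A_1(x_+^{k_q}-x_-^{k_q})$ and $A_2|x|^{k_q}$ come from. So your main missing idea is the use of symmetry to impose an interior Dirichlet (or Neumann) boundary condition that restores coercivity on a sub-interval, together with a gluing argument to extend back to $(0,\pi)$. Your remarks about the Frobenius analysis near $\theta = 0,\pi$ and membership in $H^{1,a}_\loc$ are valid concerns, but they are secondary to the coercivity gap.
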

In particular, by exploiting the homogeneity of $u$, the previous problem is equivalent to consider
\begin{equation}\label{eige1d}
\begin{cases}
  -(\sin^a(\theta) \varphi')' = \mu \sin^a(\theta) \varphi & \mbox{in } (0,\pi) \\
  -\partial^a_\theta \varphi(0) = \lambda_+ (\varphi_+(0))^{q-1} - \lambda_- (\varphi_-(0))^{q-1}\\
  -\partial^a_\theta \varphi(\pi) = \lambda_+ (\varphi_+(\pi))^{q-1} - \lambda_- (\varphi_-(\pi))^{q-1},
\end{cases}
\end{equation}
with $\mu=k_q\left(k_q+1-2s\right)$ and $u(X)=\abs{X}^k\varphi(X\abs{X}^{-1})$.
In order to simplify the proofs, we will consider first the case $\lambda_+=\lambda_-$ and we plan to prove existence of solutions of \eqref{omog} whose traces on $\R\times \{0\}$ are either of the form
$$
u(x,0)=A\left(x_+^{k_q} - x_-^{k_q}\right) \quad\mbox{or}\quad u(x,0)=A\abs{x}^{k_q}.$$
In the end, this result will implies the existence of solution of \eqref{eige1d} such that $\varphi(\theta)=\varphi(\pi-\theta)$.\vspace{0.5cm}\\
In the following Lemma we prove the existence of $T\in (0,\pi)$ such that there exists a positive eigenfunction $\varphi$ in $(0,T)$ which satisfies $\varphi(T)=0$ and the non-homogeneous Neumann condition in $\theta=0$.
\begin{lemma}\label{T}
Given $T\in(0,\pi)$ and
  $$
X= \left\{u \in H^{1,a}\left(\left(0,T\right)\right) \colon u(T)=0\right\},
$$
let us consider the mixed Dirichlet-Neumann eigenvalue associated to $(0,T)$
$$
\lambda_M(T)=\min\left\{\frac{\int_0^{T}\sin^a(\theta)(u')^2}{\int_0^{T}\sin^a(\theta)u^2}\colon u\in X \setminus\{0\}, \partial^a_\theta u(0)=0\right\}.
$$
Then, if $\mu < \lambda_M(T)$ there exists an unique positive function $\varphi \in X$ such that
\be\label{prob}
\begin{cases}
  -(\sin^a(\theta) \varphi')' = \mu \sin^a(\theta) \varphi & \mathrm{in }\quad (0,T) \\
  -\partial^a_\theta \varphi(0) = \lambda_+ (\varphi_+(0))^{q-1}.
\end{cases}
\ee
\end{lemma}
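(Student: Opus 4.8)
The statement to prove is the existence and uniqueness, for $\mu<\lambda_M(T)$, of a positive solution $\varphi\in X$ of the mixed problem \eqref{prob} on $(0,T)$ with a sublinear Neumann condition at $\theta=0$ and a homogeneous Dirichlet condition at $\theta=T$. The natural strategy is variational: realize $\varphi$ as a minimizer of the functional
\[
J(u)=\frac12\int_0^T\sin^a(\theta)(u')^2\,d\theta-\frac{\mu}{2}\int_0^T\sin^a(\theta)u^2\,d\theta-\frac{\lambda_+}{q}\,\bigl(u(0)_+\bigr)^q
\]
over the closed convex cone $X_+=\{u\in X:\ u\ge 0\}$, or over all of $X$ after checking that minimizers are automatically nonnegative. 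First I would record the functional-analytic setting: $X$ is a Hilbert space with the degenerate weight $\sin^a(\theta)$, the trace $u\mapsto u(0)$ is well defined and continuous (this is the one-dimensional analogue of Lemma \ref{lem.poin}, and in fact is more elementary: for $a\in(-1,1)$ the weight is integrable near $0$ and one gets a compact trace), and the embedding $X\hookrightarrow L^2(\sin^a\,d\theta)$ is compact.

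**Main steps.** (1) \emph{Coercivity.} Because $\mu<\lambda_M(T)$ and $\lambda_M(T)$ is precisely the bottom of the mixed Dirichlet--Neumann spectrum, the quadratic part $Q(u)=\int_0^T\sin^a(u')^2-\mu\int_0^T\sin^a u^2$ is positive definite on $X$; by a standard argument (expand in the eigenbasis, or use the min characterization plus a spectral gap) there is $c>0$ with $Q(u)\ge c\,\|u\|_X^2$. The sublinear term $-\frac{\lambda_+}{q}(u(0)_+)^q$ is, since $q\in[1,2)$, of lower order: $(u(0)_+)^q\le C\|u\|_X^q=o(\|u\|_X^2)$. Hence $J$ is coercive and bounded below on $X$. (2) \emph{Lower semicontinuity and existence.} $J$ is weakly lower semicontinuous: $Q$ is weakly l.s.c. by convexity of the gradient term and weak continuity (compactness) of the $L^2$ term, while the trace term is weakly continuous because $u\mapsto u(0)$ is compact. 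A minimizing sequence is bounded by coercivity, hence has a weakly convergent subsequence whose limit $\varphi$ minimizes $J$. (3) \emph{Nontriviality and sign.} Plugging in a test function $t\eta$ with $\eta(0)\ne 0$ and $t>0$ small gives $J(t\eta)=\frac{t^2}{2}Q(\eta)-\frac{\lambda_+}{q}t^q|\eta(0)|^q<0$ since $q<2$; thus $\varphi\not\equiv 0$. Replacing $u$ by $|u|$ does not increase $Q$ and does not decrease $u(0)_+$, so $J(|\varphi|)\le J(\varphi)$, and we may take $\varphi\ge 0$; the strong maximum principle / Harnack for the degenerate operator $-(\sin^a\varphi')'-\mu\sin^a\varphi$ then forces $\varphi>0$ on $(0,T)$. (4) \emph{Euler--Lagrange equation.} Since $\varphi>0$ near $\theta=0$, the map $u\mapsto(u(0)_+)^q$ is differentiable at $\varphi$ with derivative $q\varphi(0)^{q-1}$, so stationarity of $J$ gives exactly the weak form of \eqref{prob}, including the Neumann condition $-\partial^a_\theta\varphi(0)=\lambda_+\varphi(0)^{q-1}$; regularity of solutions of the ODE away from the endpoint (and the known behaviour of $L_a$-type solutions at the weighted boundary) upgrades $\varphi$ to a classical solution.

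**Uniqueness.** This is the step I expect to be the main obstacle, because the sublinear boundary nonlinearity is not monotone in the usual sense that makes uniqueness automatic. The cleanest route is a convexity/Picone-type argument: suppose $\varphi_1,\varphi_2$ are two positive solutions. Test the equation for $\varphi_1$ with $(\varphi_1-\varphi_2^2/\varphi_1)$ and the equation for $\varphi_2$ with $(\varphi_2-\varphi_1^2/\varphi_2)$, subtract, and use the Picone inequality $\int\sin^a\bigl(|\varphi_1'|^2-\varphi_1'\,(\varphi_2^2/\varphi_1)'\bigr)\ge 0$ with equality iff $\varphi_1/\varphi_2$ is constant; the bulk $\mu$-terms cancel, and the boundary terms reduce to $\lambda_+\bigl(\varphi_1(0)^{q-1}-\varphi_2(0)^{q-1}\bigr)\bigl(\varphi_1(0)-\varphi_2(0)\bigr)\cdot(\text{something})$, and since $t\mapsto t^{q-1}$ is nondecreasing for $q\ge 1$ (constant when $q=1$) one deduces $\varphi_1\equiv c\varphi_2$; the boundary condition then pins down $c=1$. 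For $q=1$ the Neumann data is just the constant $\lambda_+$, so the argument simplifies (any two solutions differ by an eigenfunction of the homogeneous problem, impossible below $\lambda_M(T)$), and the usual care with the Gauss--Green formula for merely $L^1$ divergence (as invoked after Proposition \ref{E.derivative}) applies verbatim in this one-dimensional setting.
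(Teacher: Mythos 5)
Your existence argument matches the paper's: both minimize the same functional $J$ over $X$, use the trace inequality plus the spectral gap $\mu<\lambda_M(T)$ for coercivity, observe $J(t\eta)<0$ for small $t>0$ to rule out the trivial minimizer, pass to $|\varphi|$ and invoke the strong maximum principle for positivity, and differentiate the boundary term to get the Euler--Lagrange equation.

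For uniqueness, however, you take a genuinely different route (Picone/Brezis--Oswald) and the details are not correct as written. Testing $\varphi_1$'s equation with $\varphi_1-\varphi_2^2/\varphi_1$ and $\varphi_2$'s with $\varphi_2-\varphi_1^2/\varphi_2$, the bulk $\mu$-terms cancel and Picone gives a nonnegative left-hand side, but the boundary terms do \emph{not} reduce to the factorization $\lambda_+\bigl(\varphi_1(0)^{q-1}-\varphi_2(0)^{q-1}\bigr)\bigl(\varphi_1(0)-\varphi_2(0)\bigr)\cdot(\cdots)$ you write; a direct computation gives
\[
\lambda_+\Bigl(\varphi_1(0)^{q-2}-\varphi_2(0)^{q-2}\Bigr)\Bigl(\varphi_1(0)^{2}-\varphi_2(0)^{2}\Bigr),
\]
and the monotonicity that makes this $\le 0$ is that $t\mapsto t^{q-2}$ (equivalently $f(t)/t$) is \emph{decreasing} for $q<2$ --- not, as you say, that $t\mapsto t^{q-1}$ is nondecreasing. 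With that correction the Picone route does close (LHS $\ge0$, RHS $\le0$, equality in Picone forces $\varphi_1\equiv c\varphi_2$ and equality on the boundary forces $\varphi_1(0)=\varphi_2(0)$, hence $c=1$), but as written the step does not establish the conclusion. You also gloss over the admissibility of $\varphi_2^2/\varphi_1$ as a test function near the Dirichlet endpoint $\theta=T$, which requires a regularization or a Hopf-type comparison.

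The paper's own uniqueness argument is considerably shorter and avoids Picone entirely: given two positive solutions $\varphi_1,\varphi_2$, set $w=\varphi_1-C\varphi_2$ with $C=(\varphi_1(0)/\varphi_2(0))^{q-1}$ chosen so that the Neumann conditions cancel, i.e. $-\partial^a_\theta w(0)=\lambda_+\bigl(\varphi_1(0)^{q-1}-C\varphi_2(0)^{q-1}\bigr)=0$. Then $w$ satisfies the homogeneous mixed eigenvalue problem with parameter $\mu<\lambda_M(T)$, so $w\equiv0$, giving $\varphi_1\equiv C\varphi_2$; comparing values at $0$ yields $C=C^{q-1}$, and since $q\ne 2$ this forces $C=1$. (Essentially the same conclusion can be reached by symmetric testing: testing $\varphi_1$'s equation with $\varphi_2$ and vice versa and subtracting gives $\varphi_1(0)^{q-1}\varphi_2(0)=\varphi_2(0)^{q-1}\varphi_1(0)$, hence $\varphi_1(0)=\varphi_2(0)$, after which the difference solves the homogeneous problem.) You should either repair the Picone computation as indicated, or replace it with this linear-combination argument.
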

\begin{proof}
  Under the previous notations, let us consider the minimization problem $\min_{\varphi \in X} J(\varphi) $ with
$$
J(u) = \frac12\int_0^{T}{\sin^a(\theta)\left( (u')^2-k_q\left(k_q + 1-2s \right)u^2 \right)\mathrm{d}\theta} - \frac{F_{\lambda_+,0}(u)(0)}{q}.
$$
Since $q\in [1,2)$, for every $u\in X$ there exists $\overline{t}>0$ small enough such that $J(tu)<0$ for every $t \in (0,\overline{t})$.\\
Notice that critical point of $J$ in $X$ are solution of \eqref{prob}, i.e. for every $\phi \in X$ we get
\begin{align*}
\mathrm{d}J(u)[\phi] =& \int_0^{T}{\sin^a(\theta)\left( u'\phi'-k_q\left(k_q + 1-2s \right)u \phi \right)\mathrm{d}\theta} +\\
& -\left(\lambda_+ (u_+(0))^{q-1}\phi_+(0)\right)\\
= & -\int_0^{T}{\left((\sin^a(\theta) u')'+k_q\left(k_q + 1-2s \right)u \right)\phi\mathrm{d}\theta} +\\
& -\partial^a_\theta u(0)\phi(0)-\left(\lambda_+ (u_+(0))^{q-1}\phi_+(0)\right).
\end{align*}
By the Sobolev embedding, for every $n>2s, q \in [1,2)$ it holds
$$
\int_{S^{n-1}}g^q \mathrm{d}\sigma_x\leq \tilde{C}\abs{\partial^0 B^+}^{\frac{2n-(n-2s)q}{2n}}\left(\int_{S^n_+}{\sin^a(\theta) \abs{\nabla_S g}^2 \mathrm{d}\sigma_X} + (k_q^2+n+2k_q-2s)\int_{S^n_+}\sin^a(\theta) g^2\mathrm{d}\sigma_X\right)^{q/2}
$$
with
$$
\tilde{C}=\frac{(n+k_q q)(C_{n,s}N_s)^{q/2}}{(n+2k_q-2s)^{q/2}},\quad
N_s = 2^{2s-1}\frac{\Gamma(s)}{\Gamma(1-s)},\quad C_{n,s}= \frac{2^{-2s}}{\pi^s}\left(\frac{\Gamma(\frac{n-2s}{2})}{\Gamma(\frac{n+2s}{2})}\right) \left( \frac{\Gamma(n)}{\Gamma(n/2)}\right)^{\frac{2s}{n}}.
$$
Thus, for $n=1$ and  we get
\begin{align*}
J(u)\geq &\,\,\frac12\int_{0}^{T}{\sin^a(\theta) \left( (u')^2-\mu u^2 \right)\mathrm{d}\theta}+\\
&\,- \frac{\lambda_+}{q}C\left(\int_{0}^{T}{\sin^a(\theta)(u')^2\mathrm{d}\theta} + (k_q^2+1+2k_q-2s)\int_{0}^{T}\sin^a(\theta)u^2\mathrm{d}\theta\right)^{q/2}
\end{align*}
with
$$
C =\left(\frac{\Gamma(s)}{\Gamma(1-s)}\right)^{q/2}
\frac{1}{\pi^{qs}}\left(\frac{\Gamma(\frac{1-2s}{2})}{\Gamma(\frac{1+2s}{2})}\right)^{q/2}
\frac{1+k_q q}{(1+2k_q-2s)^{q/2}}2^{1-(1-s)q}.
$$
Moreover, since by the Poincaré inequality in $X$ we have
$$
\int_0^{T}\sin^a(\theta)u^2 \mathrm{d}\theta \leq C_p \int_0^{T}\sin^a(\theta)(u')^2\mathrm{d}\theta,
$$
for some positive constant $C_p$, we get
\begin{align*}
J(u)\geq &\,\, \frac12\left(1-C_p\mu\right)\int_{0}^{T}{\sin^a(\theta)(u')^2\mathrm{d}\theta}+\\ &\,- \frac{\Lambda ((k_q^2+1+2k_q-2s)C_p +1)^{q/2}}{q}C\left(\int_{0}^{T}{\sin^a(\theta)(u')^2}\mathrm{d}\theta\right)^{q/2}.
\end{align*}
Finally, since
$$
\frac{1}{C_p} = \min_{u\in X}\ddfrac{\int_0^{T}\sin^a(\theta)(u')^2\mathrm{d}\theta}{\int_0^{T}\sin^a(\theta)u^2\mathrm{d}\theta}=\lambda_M(T),
$$
we get $C_p\mu<1$, which implies that $J$ is bounded
from below and coercive. Since $X$ is weakly closed, the direct method of the calculus of variations implies the existence of a
minimizer $u$ which solves \eqref{prob}. Moreover, we can prove that $u$ is positive: indeed, since if $u$ is a minimizer the same holds also for $\abs{u}$, we can already suppose that $u\geq 0$. Now the strong maximum principle implies
that either $u>0$ or $u \equiv 0$, but the latter options can be easily ruled out observing that $J(u)<0$.\\
Finally, if we suppose there exists two different solutions $\varphi_1,\varphi_2$ of \eqref{prob}, it is straightforward to see that there exists a linear combination $w=\varphi_1-C\varphi_2$, with $C>0$ such that $\varphi_1^{q-1}(0)=C\varphi_2^{q-1}(0)$ and
\be\label{assu}
 -\partial_\theta^a w(0)= -\partial^a_\theta \varphi_1(0) + C \partial^a_\theta \varphi_2(0) = \lambda_+(\varphi_1(0)^{q-1}-C\varphi_2(0)^{q-1})=0  .
\ee
Moreover
$$
\begin{cases}
  -(\sin^a(\theta) w')' = \mu \sin^a(\theta) w & \mbox{in } (0,T) \\
  w(T)=0, \partial^a_\theta w(0) = 0.
\end{cases}
$$
Necessary $w$ must vanishes identically in $(0,T)$: indeed, if not either the function is strictly positive in $(0,T)$ or it changes sign in $(0,T)$, both in contradiction with the assumption $\mu < \lambda_M(T)$. Hence, $\varphi_1\equiv C\varphi_2$ in $[0,T]$, which contradicts the definition of $C$.
\end{proof}
\begin{theorem}
  Let $k_q<1$, then for every $\lambda_+>0$ there exist only two $k_q$-homogeneous solutions $u_1,u_2 \in H^{1,a}(\R^{2}_+)$ of
  $$
\begin{cases}
-L_a u = 0 & \mathrm{in }\quad\R^{2}_+\\
-\partial^a_y u = \lambda_+ \abs{u}^{q-2}u &\mathrm{on }\quad\R \times \{0\},
\end{cases}
  $$
such that
\be\label{boh3}
u_1(x,0)=A_1\left(x_+^{k_q} - x_-^{k_q}\right) \quad\mbox{or}\quad u_2(x,0)=A_2\abs{x}^{k_q},\ee
for some positive constants $A_1,A_2$ depending only on $s,q$ and $\lambda_+$.
\end{theorem}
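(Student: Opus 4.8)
The plan is to pass to polar coordinates on the half--plane $\R^2_+$, reduce the problem to the singular Sturm--Liouville problem \eqref{eige1d} on $(0,\pi)$ --- which carries the single parameter $\mu=k_q(k_q+1-2s)$ --- and then exploit the reflection $\theta\mapsto\pi-\theta$, under which the weight $\sin^a\theta$, and hence the whole problem, is invariant, to split the analysis into an odd class (giving $u_1$) and an even class (giving $u_2$). Writing $u(X)=|X|^{k_q}\varphi(X/|X|)$, the identity $L_au=0$ in $\R^2_+$ becomes $-(\sin^a\theta\,\varphi')'=\mu\sin^a\theta\,\varphi$ on $(0,\pi)$, while the conormal condition becomes the nonlinear Neumann conditions at $\theta=0,\pi$ of \eqref{eige1d} (here $\lambda_+=\lambda_-$). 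A solution with $\varphi(\theta)=-\varphi(\pi-\theta)$ yields $u$ with trace $\varphi(0)\,(x_+^{k_q}-x_-^{k_q})$, while a solution with $\varphi(\theta)=\varphi(\pi-\theta)$ yields $u$ with trace $\varphi(0)\,|x|^{k_q}$, so the whole statement reduces to producing exactly one solution of \eqref{eige1d} in each symmetry class and showing there are no others.

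First I would record the two explicit eigenfunctions that govern everything. The map $\theta\mapsto\cos\theta$ solves the linear ODE with $\lambda=2-2s$, vanishes at $\pi/2$, is positive on $(0,\pi/2)$, and satisfies the weighted--Neumann condition at $0$, so it is the first mixed eigenfunction and $\lambda_M(\pi/2)=2-2s$. The map $\theta\mapsto(\sin\theta)^{2s}$ solves the linear ODE with $\lambda=2s$ and vanishes at $0,\pi$, so the first Dirichlet eigenvalue on $(0,\pi)$ equals $2s$. A short computation then gives, for every $k_q<1$,
\[
2s\ \le\ \mu\ =\ k_q(k_q+1-2s)\ <\ 2-2s ,
\]
the left inequality being $(k_q-2s)(k_q+1)\ge0$, valid since $2-q\le1$ forces $k_q\ge2s$, and the right one being trivial when $k_q+1-2s\le0$ and otherwise following from $k_q(k_q+1-2s)<k_q+1-2s<2-2s$. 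By the usual domain/boundary monotonicity of eigenvalues, these bounds place $\mu$ strictly below $\lambda_2^{D}(0,\pi)$ and strictly below the second weighted--Neumann eigenvalue on $(0,\pi/2)$; these two facts are what drive, respectively, the uniqueness and the existence of $u_2$.

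For the construction, $u_1$ corresponds to $\varphi(\theta)=-\varphi(\pi-\theta)$, which is exactly \eqref{prob} on $(0,\pi/2)$ with $T=\pi/2$; since $\mu<\lambda_M(\pi/2)$, Lemma~\ref{T} gives a unique positive $\varphi_1$, and I would check that its odd extension about $\pi/2$ glues in $C^1$ (because $\varphi_1(\pi/2)=0$), solves the linear equation on $(0,\pi)$, and satisfies the Neumann condition at $\pi$ by the reflection symmetry together with $\lambda_+=\lambda_-$, producing $u_1$ with $A_1=\varphi_1(0)>0$. For $u_2$ I would look for $\varphi(\theta)=\varphi(\pi-\theta)$, i.e. $\varphi'(\pi/2)=0$, as a critical point of
\[
J(\varphi)=\frac12\int_0^{\pi/2}\sin^a\theta\big((\varphi')^2-\mu\varphi^2\big)\,\mathrm d\theta-\frac{\lambda_+}{q}|\varphi(0)|^q
\]
on $H^{1,a}((0,\pi/2))$. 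By the eigenvalue bounds above the quadratic part has exactly one negative direction (the constant), so $J$ is not coercive but fits a standard saddle--point/linking scheme, which should yield a nontrivial critical point $\varphi_2$ that changes sign (positive near $0$, negative near $\pi/2$) by its min--max level and the evenness of $J$; its even extension about $\pi/2$ glues in $C^1$ (since $\varphi_2'(\pi/2)=0$) and gives $u_2$ with $A_2=\varphi_2(0)>0$. The discontinuous case $q=1$, where $\mu=2s$ coincides with the first Dirichlet eigenvalue, is borderline and would be treated separately, adjusting the linear data at $\theta=0$ accordingly.

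Finally, for uniqueness let $u$ be any $k_q$-homogeneous solution and $\varphi$ the corresponding solution of \eqref{eige1d}. A Sturm comparison with $(\sin\theta)^{2s}$ and $\cos\theta$, together with $\mu<2-2s\le\lambda_2^{D}(0,\pi)$, shows $\varphi$ has at most two zeros in $(0,\pi)$, and an everywhere positive $\varphi$ is excluded because, writing $g=\sin^a\theta\,\varphi'$, one has $g'=-\mu\sin^a\theta\,\varphi<0$, which forces the conormal derivative at $\theta=0$ to have the wrong sign for the Neumann condition. Hence $\varphi$ has exactly one or two sign changes. In the one--sign--change case the change occurs at some $\alpha\in(0,\pi)$; then $\varphi|_{(0,\alpha)}$ and the reflection of $-\varphi|_{(\alpha,\pi)}$ are Lemma~\ref{T}-solutions on $(0,\alpha)$ and $(0,\pi-\alpha)$, and matching the uniquely determined value of $\varphi'$ at the common Dirichlet endpoint forces $\alpha=\pi/2$ by strict monotonicity of $L\mapsto\varphi_L'(L)$ along the Lemma~\ref{T} family; Lemma~\ref{T}'s uniqueness then gives $u=u_1$ up to the evident symmetries, and the two--sign--change case reduces in the same way to the even construction of $u_2$. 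The hard part will be this uniqueness step: forcing $\alpha=\pi/2$ rests on a delicate monotonicity of the shooting datum along the Lemma~\ref{T} family, the parallel statement inside the even class requires careful bookkeeping of the sign of the conormal derivative at $\theta=0$ that made the variational construction of $u_2$ necessary, and the $q=1$ case must be handled by hand because the nonlinearity is multivalued on $\{\varphi=0\}$.
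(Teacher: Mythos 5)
You use the same polar reduction $u=|X|^{k_q}\varphi(\theta)$ and the same Lemma~\ref{T} construction for the antisymmetric solution $u_1$ (Lemma~\ref{T} with $T=\pi/2$ plus odd reflection), and your preliminary inequality $2s\le\mu<2-2s$ is a correct and useful observation. But both your construction of $u_2$ and your uniqueness argument diverge from the paper's and are left with genuine gaps.

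\textbf{Existence of $u_2$.} The paper never invokes mountain pass or linking. It produces the symmetric solution by an explicit gluing: it finds $T^*\in(0,\pi/2)$ so that the first \emph{Dirichlet} eigenvalue $\lambda_1(T^*)$ on $(T^*,\pi-T^*)$ matches $\mu$ (using that $T\mapsto\lambda_1(T)$ increases from $2s$ and that $s<1/2$), takes the (positive, essentially unique) first Dirichlet eigenfunction $\phi$ on $(T^*,\pi-T^*)$, applies Lemma~\ref{T} on $(0,T^*)$ to get $\psi>0$ with $\psi(T^*)=0$, and glues $\psi$ with $-C\phi$ by matching derivatives at $T^*$. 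Your proposal instead posits a linking critical point of $J$ and asserts --- ``by its min--max level and the evenness of $J$'' --- that it changes sign exactly once on $(0,\pi/2)$, is positive at $0$, negative near $\pi/2$, and has $\varphi'(\pi/2)=0$. None of this sign structure is actually proved, and it is precisely what one needs to even reflect and identify the trace; a linking level does not by itself control the nodal pattern of a critical point. The paper's gluing is both more explicit and closes this gap.

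\textbf{Uniqueness.} This is where you depart most sharply, and least profitably, from the paper. The paper's uniqueness is a short linear-algebra-plus-Liouville argument: if $u,v$ are two $k_q$-homogeneous solutions with the same trace shape, pick $C>0$ with $|u|^{q-2}u=C|v|^{q-2}v$ on $\R\times\{0\}$ (possible because the traces are proportional powers of $x$); then $w=u-Cv$ is a $k_q$-homogeneous $L_a$-harmonic function in $\R^2_+$ with $\partial^a_y w\equiv 0$ on $\R\times\{0\}$, and by the classification in \cite[Lemma~4.7]{STT2020} such a $w$ forces $k_q\in 1+\N$ or $w\equiv 0$; since $k_q<1$, $w\equiv 0$, and then $C=C^{q-1}$ gives $C=1$. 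Your Sturm-comparison-plus-shooting route replaces this one-paragraph argument with a delicate program (zero-counting, monotonicity of $L\mapsto\varphi_L'(L)$ along the Lemma~\ref{T} family, forcing $\alpha=\pi/2$, separate handling of $q=1$) which you yourself flag as ``the hard part'' and do not carry out. As written, this is a gap, not merely a different route; you have essentially only sketched why a shooting argument \emph{might} work while overlooking a far simpler uniqueness mechanism that is the actual reason the case $k_q<1$ is rigid.
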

\begin{proof}
Notice first that the condition $k_q<1$ immediately implies $s \in (0,1/2)$. Since we plan to prove the existence of a $k_q$-homogeneous function, it is obvious that its trace must be of the form \eqref{boh3}. Moreover, if we suppose by contradiction that there exist two solutions $u$ and $v$ with the same type of traces (either like $u_1(\cdot,0)$ or $u_2(\cdot,0)$) then, it must exist a constant $C>0$ such that $u^{q-1}_\pm(x,0)=Cv^{q-1}_\pm(x,0)$ in $\R$. Consequently, the function $w=u-Cv$ is a $k_q$-homogeneous solution of
$$
\begin{cases}
L_a w = 0 & \mathrm{in }\quad\R^{2}_+\\
-\partial^a_y w = 0 &\mathrm{on }\quad \R \times \{0\}.
\end{cases}
$$
By the classification of \cite[Lemma 4.7]{STT2020} we already know that either $k_q\in 1 + \N$ or $w\equiv 0$. Since $k_q<1$, necessary $w\equiv 0$, in contradiction with the choice of $C>0$.\\

In order to construct two functions with these features, let us consider the symmetric and antisymmetric solution of the eigenvalue problem associated the traces on $S^1$ of $u$.\\
 Hence, for the antisymmetric case, fixed $T=\pi/2$, by Lemma \ref{T} there exists $\varphi \in H^{1,a}(0,\pi/2)$ such that $\varphi(\pi/2)=0$ and
 $$
 \begin{cases}
  -(\sin^a(\theta) \varphi')' = \mu \sin^a(\theta) \varphi & \mathrm{in }\quad (0,\pi/2) \\
  -\partial^a_\theta \varphi(0) = \lambda_+ (\varphi_+(0))^{q-1}.
\end{cases}
 $$
 Hence, we define
 $$
 \varphi_1(\theta) = \begin{cases}
                       \varphi(\theta) & \mbox{if } \theta\in (0,\pi/2) \\
                       -\varphi(\pi-\theta) & \mbox{if }\theta \in (\pi/2,\pi)
                     \end{cases},
$$
an antisymmetric solution of \eqref{eige1d} with $\lambda_+=\lambda_-$.
 On the other hand, let us consider the symmetric eigenfunction $\phi$ defined as
 \be\label{caso.sym}
 \begin{cases}
   -(\sin^a(\theta)\phi')'=\lambda_1(T)\sin^a(\theta)\phi & \mbox{in } (T,\pi-T)  \\
   \phi>0 & \mbox{in }(T,\pi-T)\\
   \phi(T)=0=\phi(\pi-T),
 \end{cases}
 \ee
 for $T \in (0,\pi/2)$, where $\lambda_1(T)$ is the fist eigenvalue associated to $(T,\pi-T)$. By monotonicity of the eigenvalue with respect to the set inclusion, we already know that $T \mapsto \lambda_1(T)$ is increasing and it satisfies
 $$
 \lim_{T \to 0^+}\lambda_1(T) = 2s \quad\mbox{and}\quad \lambda_1(\arctan( \sqrt{2(1-s)}))=2.
 $$
 Thus, since $s<1/2$, there exists $T^* \in (0,\arctan( \sqrt{2(1-s)}))$ such that $\lambda_1(T^*)=2s/(2-q)$. Furthermore, by applying Lemma \ref{T} with $T=T^*$, there exists a function $\psi\in H^{1,a}(0,T^*)$ such that $\psi(T^*)=0$ and
 $$
 \begin{cases}
  -(\sin^a(\theta) \psi')' = \mu \sin^a(\theta) \psi & \mathrm{in }\quad (0,T^*) \\
  -\partial^a_\theta \psi(0) = \lambda_+ (\psi_+(0))^{q-1}.
\end{cases}
 $$
Finally, let $C>0$ be such that $-C\phi'(T^*)=\psi'(T^*)$, then if we define $$
\varphi_2(\theta) = \begin{cases}
                                              \psi(\theta) & \mbox{if } \theta\in (0,T) \\
                       -C\phi(\theta) & \mbox{if }\theta \in (T,\pi-T)\\
                       \psi(\pi-\theta) & \mbox{if }\theta \in (\pi-T,\pi)
                     \end{cases},
 $$
 we get a symmetric solution of \eqref{eige1d} with
Thus, the solutions $u_i$ are defined as the homogeneous extension of $\varphi_i$ in $\overline{\R^{n+1}_+}$
 $$
 u_i(X)= \abs{X}^{2s/(2-q)}\varphi_i\left(\frac{X}{\abs{X}}\right),
 $$
 which gives the claimed result.
\end{proof}
Finally, by applying the Federer's reduction principle in the form of \cite[Theorem 8.5]{MR1637972}, we can conclude the proof of Theorem \ref{hau} as a byproduct of the results of this Section.
\begin{proof}[Conclusion of the proof of Theorem \ref{hau}]
   Let us consider the class of functions $\mathcal{F}$ defined as
$$
\mathcal{F}=\Bigg\{ u \in L^\infty_\loc(\R^{n+1})\setminus \{ 0\}\Bigg\vert \,\,
\begin{aligned}
 &u \mbox{ solves }\eqref{omog}\mbox{ in }B_r(X_0), \mbox{ for some }r \in \R,\, X_0 \in \R^{n}\times\{0\}\\
 &\mbox{for some }\lambda_+,\lambda_-,\mu >0
 \end{aligned}
 \Bigg\}.
$$
endowed with the topology associated to the uniform convergence and
$$
\overline{\mathcal{S}}\colon u \mapsto \mathcal{T}(u).
$$
We already know that $\mathcal{F}$ is close under rescaling, translation and normalization. Moreover, by Theorem \ref{blow.uplimite} the hypothesis of the existence of a blow-up limit in $\mathcal{F}$ is satisfied, as well as the singular set assumption. Thus, the Federer's reduction principle \cite[Theorem 8.5]{MR1637972} is applicable and it implies the existence of an integer $d \in [0,n]$ such that
$$
\mathrm{dim}_\mathcal{H} \mathcal{T}(u) \leq d,
$$
for every function $u\in \mathcal{F}$.
Suppose by contradiction that $d = n$, this would implies the existence of $\varphi \in \mathcal{F}$ such that $\overline{\mathcal{S}}(\varphi) = \R^n$ i.e., $\varphi \equiv 0$ on $\R^n$. Thus $\varphi \equiv 0$ on the whole $\R^{n+1}$, which contradicts the fact the $0 \not\in \mathcal{F}$.
Actually, since Theorem \ref{esempio} ensures the existence of a $(n-1)$-linear subspace $E\subset \R^n$ and a $k_q$-homogeneous function $\varphi \in \mathcal{F}$ such that $\overline{\mathcal{S}}(\varphi)=E$, we get $d=n-1$.
\end{proof}
\bibliography{Biblio}

\begin{thebibliography}{10}

\bibitem{allengarcia}
M.~Allen and M.~S.~V. Garcia.
\newblock The fractional unstable obstacle problem.
\newblock {\em Nonlinear Analysis}, 193:111459, 2020.
\newblock Nonlocal and Fractional Phenomena.

\bibitem{allenlindgrenpetro}
M.~Allen, E.~Lindgren, and A.~Petrosyan.
\newblock The two-phase fractional obstacle problem.
\newblock {\em SIAM J. Math. Anal.}, 47(3):1879--1905, 2015.

\bibitem{allenpetro}
M.~Allen and A.~Petrosyan.
\newblock A two-phase problem with a lower-dimensional free boundary.
\newblock {\em Interfaces Free Bound.}, 14(3):307--342, 2012.

\bibitem{CS2007}
L.~Caffarelli and L.~Silvestre.
\newblock An extension problem related to the fractional {L}aplacian.
\newblock {\em Comm. Partial Differential Equations}, 32(7-9):1245--1260, 2007.

\bibitem{fermi}
L.~A. Caffarelli and A.~Friedman.
\newblock The free boundary in the {T}homas-{F}ermi atomic model.
\newblock {\em J. Differential Equations}, 32(3):335--356, 1979.

\bibitem{MR1637972}
X.-Y. Chen.
\newblock A strong unique continuation theorem for parabolic equations.
\newblock {\em Math. Ann.}, 311(4):603--630, 1998.

\bibitem{MR2944369}
E.~Di~Nezza, G.~Palatucci, and E.~Valdinoci.
\newblock Hitchhiker's guide to the fractional {S}obolev spaces.
\newblock {\em Bull. Sci. Math.}, 136(5):521--573, 2012.

\bibitem{MR943927}
H.~Donnelly and C.~Fefferman.
\newblock Nodal sets of eigenfunctions on {R}iemannian manifolds.
\newblock {\em Invent. Math.}, 93(1):161--183, 1988.

\bibitem{fallfelli2}
M.~M. Fall and V.~Felli.
\newblock Unique continuation property and local asymptotics of solutions to
  fractional elliptic equations.
\newblock {\em Comm. Partial Differential Equations}, 39(2):354--397, 2014.

\bibitem{fallfelli1}
M.~M. Fall and V.~Felli.
\newblock Unique continuation properties for relativistic {S}chr\"{o}dinger
  operators with a singular potential.
\newblock {\em Discrete Contin. Dyn. Syst.}, 35(12):5827--5867, 2015.

\bibitem{MR2514337}
C.~Fefferman.
\newblock Extension of {$C^{m,\omega}$}-smooth functions by linear operators.
\newblock {\em Rev. Mat. Iberoam.}, 25(1):1--48, 2009.

\bibitem{MR833393}
N.~Garofalo and F.-H. Lin.
\newblock Monotonicity properties of variational integrals, {$A_p$} weights and
  unique continuation.
\newblock {\em Indiana Univ. Math. J.}, 35(2):245--268, 1986.

\bibitem{MR882069}
N.~Garofalo and F.-H. Lin.
\newblock Unique continuation for elliptic operators: a geometric-variational
  approach.
\newblock {\em Comm. Pure Appl. Math.}, 40(3):347--366, 1987.

\bibitem{MR4018099}
N.~Garofalo and X.~Ros-Oton.
\newblock Structure and regularity of the singular set in the obstacle problem
  for the fractional {L}aplacian.
\newblock {\em Rev. Mat. Iberoam.}, 35(5):1309--1365, 2019.

\bibitem{MR1305956}
Q.~Han.
\newblock Singular sets of solutions to elliptic equations.
\newblock {\em Indiana Univ. Math. J.}, 43(3):983--1002, 1994.

\bibitem{MR1639155}
Q.~Han, R.~Hardt, and F.-H. Lin.
\newblock Geometric measure of singular sets of elliptic equations.
\newblock {\em Comm. Pure Appl. Math.}, 51(11-12):1425--1443, 1998.

\bibitem{MR1090434}
F.-H. Lin.
\newblock Nodal sets of solutions of elliptic and parabolic equations.
\newblock {\em Comm. Pure Appl. Math.}, 44(3):287--308, 1991.

\bibitem{Nekvinda}
A.~Nekvinda.
\newblock Characterization of traces of the weighted {S}obolev space
  {$W^{1,p}(\Omega,d^\epsilon_M)$} on {$M$}.
\newblock {\em Czechoslovak Math. J.}, 43(118)(4):695--711, 1993.

\bibitem{ruland2}
A.~R\"{u}land.
\newblock Unique continuation for fractional {S}chr\"{o}dinger equations with
  rough potentials.
\newblock {\em Comm. Partial Differential Equations}, 40(1):77--114, 2015.

\bibitem{ruland1}
A.~R\"{u}land.
\newblock On quantitative unique continuation properties of fractional
  {S}chr\"{o}dinger equations: doubling, vanishing order and nodal domain
  estimates.
\newblock {\em Trans. Amer. Math. Soc.}, 369(4):2311--2362, 2017.

\bibitem{MR3857504}
A.~R\"{u}land.
\newblock Unique continuation for sublinear elliptic equations based on
  {C}arleman estimates.
\newblock {\em J. Differential Equations}, 265(11):6009--6035, 2018.

\bibitem{silvestrepaper}
L.~Silvestre.
\newblock Regularity of the obstacle problem for a fractional power of the
  {L}aplace operator.
\newblock {\em Comm. Pure Appl. Math.}, 60(1):67--112, 2007.

\bibitem{Simon83}
L.~Simon.
\newblock {\em Lectures on geometric measure theory}, volume~3 of {\em
  Proceedings of the Centre for Mathematical Analysis, Australian National
  University}.
\newblock Australian National University, Centre for Mathematical Analysis,
  Canberra, 1983.

\bibitem{STT2020}
Y.~Sire, S.~Terracini, and G.~Tortone.
\newblock On the nodal set of solutions to degenerate or singular elliptic
  equations with an application to s-harmonic functions.
\newblock {\em Journal de Mathématiques Pures et Appliquées}, 2020.

\bibitem{vita2020}
Y.~{Sire}, S.~{Terracini}, and S.~{Vita}.
\newblock {Liouville type theorems and regularity of solutions to degenerate or
  singular problems part I: even solutions}.
\newblock {\em arXiv e-prints}, page arXiv:1904.02143, Apr. 2019.

\bibitem{soavesublinear}
N.~Soave and S.~Terracini.
\newblock The nodal set of solutions to some elliptic problems: sublinear
  equations, and unstable two-phase membrane problem.
\newblock {\em Adv. Math.}, 334:243--299, 2018.

\bibitem{soavesingular}
N.~Soave and S.~Terracini.
\newblock The nodal set of solutions to some elliptic problems: singular
  nonlinearities.
\newblock {\em J. Math. Pures Appl. (9)}, 128:264--296, 2019.

\bibitem{soaveweth}
N.~Soave and T.~Weth.
\newblock The unique continuation property of sublinear equations.
\newblock {\em SIAM J. Math. Anal.}, 50(4):3919--3938, 2018.

\bibitem{tvz2}
S.~Terracini, G.~Verzini, and A.~Zilio.
\newblock Uniform {H}\"older regularity with small exponent in
  competition-fractional diffusion systems.
\newblock {\em Discrete and Continuous Dynamical Systems- Series A},
  34(6):2669--2691, 2014.

\bibitem{tvz1}
S.~Terracini, G.~Verzini, and A.~Zilio.
\newblock Uniform {H}\"older bounds for strongly competing systems involving
  the square root of the laplacian.
\newblock {\em Journal of the European Mathematical Society},
  18(12):2865--2924, 2016.

\bibitem{yijing}
Y.~{Wu}.
\newblock {A non-local one-phase free boundary problem from obstacle to
  cavitation}.
\newblock {\em arXiv e-prints}, page arXiv:1810.05535, Oct. 2018.

\bibitem{yang}
R.~Yang.
\newblock Optimal regularity and nondegeneracy of a free boundary problem
  related to the fractional {L}aplacian.
\newblock {\em Arch. Ration. Mech. Anal.}, 208(3):693--723, 2013.

\end{thebibliography}
\bibliographystyle{abbrv}

\end{document}